\documentclass[reqno, 12pt]{amsart}

\usepackage{array}
\usepackage{amsmath}
\usepackage{amsfonts}
\usepackage{amssymb}
\usepackage{enumerate}
\usepackage{amsthm}
\usepackage{amsmath, amscd}
\usepackage{xy}
\xyoption{all}
\usepackage{supertabular}
\usepackage{marvosym}
\usepackage{wasysym}
\usepackage{hyperref}
\usepackage{tikz}
\usetikzlibrary{matrix,arrows,backgrounds}
	
    \oddsidemargin  0.0in
    \evensidemargin 0.0in
    \textwidth      6.5in
    \headheight     0.0in
    \topmargin      0.0in
    \textheight=9.0in

\setlength{\marginparwidth}{.85in}

\newtheorem{theorem}{Theorem}[section]

\newtheorem{lemma}[theorem]{Lemma}
\newtheorem{proposition}[theorem]{Proposition}
\newtheorem{corollary}[theorem]{Corollary}

\newtheorem{conjecture}[theorem]{Conjecture}

\theoremstyle{definition}
\newtheorem{definition}[theorem]{Definition}
\newtheorem{definition-lemma}[theorem]{Definition/Lemma}
\newtheorem{remark}[theorem]{Remark}

\newtheorem{example}[theorem]{Example}

\newcommand{\op}[1]{\operatorname{#1}}

\newcommand{\leftexp}[2]{{\vphantom{#2}}^{#1}{#2}}

\newcommand{\dbcoh}[1]{\operatorname{D}^{\operatorname{b}}(\operatorname{coh }#1)}

\newcommand{\inj}[1]{\mathsf{Inj}_{\operatorname{coh}}(#1)}
\newcommand{\dbmod}[1]{\operatorname{D}^{\operatorname{b}}(\operatorname{mod }#1)}

\newcommand{\proj}[3]{\mathsf{proj}(#1,#2,#3)}
\newcommand{\barproj}[3]{\overline{\mathsf{proj}}(#1,#2,#3)}

\newcommand{\dabs}{\op{D}^{\op{abs}}}

\newcommand{\newterm}{\textsf}

\def\Z{\mathbb{Z}}
\def\C{\mathbb{C}}

\def\O{\mathcal{O}}

\def\P{\mathbb{P}}

\def\smallfermatgroup{\bar{G}(B_\mathbf{d})}
\def\intermediatefermatgroup{H}

\def\hodgcat{\op{Ho}(\op{dg-cat}_k)}

\def\tritime{\text{\Clocklogo}}

\def\tor{\text{ or } }
\def\tand{\text{ and } }

\def\ok{\otimes_k}

\title[Kernels for equivariant factorizations, II]{A category of kernels for equivariant factorizations, II: further implications}

\author[Ballard]{Matthew Ballard}
\address{
  \begin{tabular}{l}
   Matthew Ballard  \\
   \hspace{.1in} University of South Carolina, Department of Mathematics, Columbia, SC, USA \\
   \hspace{.1in} Email: {\bf ballard@math.sc.edu} \\
  \end{tabular}
}

\author[Favero]{David Favero}
\address{
  \begin{tabular}{l}
   David Favero \\
   \hspace{.1in} University of Alberta, Department of Mathematics,  Edmonton, AB, Canada \\
   \hspace{.1in} Email: {\bf favero@gmail.com} \\
  \end{tabular}
}

\author[Katzarkov]{Ludmil Katzarkov}
\address{
  \begin{tabular}{l}
   Ludmil Katzarkov \\
   \hspace{.1in} Universit\"at von Wien, Fakult\"at f\"ur Mathematik,  Wien, \"Osterreich \\
   \hspace{.1in} Email: {\bf lkatzark@math.uci.edu} \\
  \end{tabular}
}

\numberwithin{equation}{section}
\begin{document}
\renewcommand{\labelenumi}{\emph{\alph{enumi})}}

\begin{abstract}
 We leverage the results of the prequel, \cite{BFK11}, in combination with a theorem of D.\ Orlov to create a categorical covering picture for factorizations.   As applications, we provide a conjectural geometric framework to further understand M. Kontsevich's Homological Mirror Symmetry conjecture and obtain new cases of a conjecture of Orlov concerning the Rouquier dimension of the bounded derived category of coherent sheaves on a smooth variety.
\end{abstract}

\maketitle

\section{Introduction}

The homotopy category of dg-categories comes equipped with a natural commutative and associative product, which we term the Morita product. Given two dg-categories, $\mathsf C$ and $\mathsf D$, the Morita product $\mathsf C \circledast \mathsf D$, is any choice of dg-enhancement, contained in $(\mathsf C \otimes_k \mathsf D)^{\op{op}}\op{-Mod}$, of the derived category of perfect $\mathsf{C} \otimes_k \mathsf{D}$-modules, $\op{perf}( \mathsf{C} \otimes_k \mathsf{D} )$.

The product is a natural one in many settings. For a variety, $Z$, let $\mathsf{Inj}_{\op{coh}}(Z)$ denote the dg-category of bounded below complexes of injectives with bounded and coherent cohomology. B. T\"oen proved \cite[Section 8]{Toe}, see also \cite{BFN}, that there is a quasi-equivalence
\begin{equation} \label{eq: Toen product}
 \inj{X} \circledast \inj{Y} \simeq \inj{X \times Y}
\end{equation}
whenever $X$ and $Y$ are smooth and proper varieties. Thus, the monoidal product in the Grothendieck group of smooth varieties is naturally lifted by the Morita product. As correspondences arise as cohomology classes on $X \times Y$, the Morita product of $X$ and $Y$ is an enriched setting to study correspondences and other Hodge theoretic questions.

However, there are many more dg-categories than those coming from varieties. One strictly larger class is that of equivariant factorizations. Let $X$ be a smooth variety equipped with an action of an affine algebraic group $G$. Let $w \in \Gamma(X,\mathcal O_X(\chi))^G$ be a semi-invariant regular function on $X$. Then, one can form a dg-category of factorizations mimicking $\inj{Z}$. Let $\mathsf{Inj}_{\op{coh}}(X,G,w)$ be the dg-category of $G$-equivariant factorizations of $w$ with injective components and such that each is quasi-isomorphic to a factorization with coherent components in L. Positselski's (absolute) derived category of factorizations, $\dabs[\mathsf{Fact}(X,G,w)]$ \cite{Pos1}. This dg-category is an enhancement of the derived category of coherent factorizations,
\begin{displaymath}
 [\mathsf{Inj}_{\op{coh}}(X,G,w)] \cong \dabs[\mathsf{fact}(X,G,w)].
\end{displaymath}

In the prequel to this article \cite{BFK11}, the authors determined Morita products and internal Hom dg-categories for such dg-categories of equivariant factorizations. Let $(X,G,w, \mathcal L, \chi)$ be as above and let $(Y,H,v, \mathcal L^{\prime}, \chi^{\prime})$ be another such five-tuple. Define
\begin{displaymath}
 w \boxplus v := w \otimes_k 1 + 1 \otimes_k v \in \Gamma(X, \mathcal O_X) \otimes_k \Gamma(Y, \mathcal O_Y) \cong \Gamma(X \times Y, \mathcal O_{X \times Y}).
\end{displaymath}
The potential $w \boxplus v$ is not fully $G \times H$ semi-invariant. The largest, natural, group for which it is semi-invariant is
\begin{displaymath}
 G \times_{\mathbb{G}_m} H : = \op{ker}(\chi - \chi^{\prime}: G \times H \to \mathbb{G}_m).
\end{displaymath}

Let $Z_w, Z_v,$ and $Z_{w \boxplus v}$ denote the zero loci of $w, v$, and $w \boxplus v$ respectively. The following is one of the main results of \cite{BFK11}.

\begin{theorem} \label{theorem: morita product of equivariant factorizations}
 Assume that $\chi - \chi^{\prime}: G \times H \to \mathbb{G}_m$ is not torsion. Further, assume that $\op{Sing} Z_{w \boxplus v} \subset Z_w \times Z_v$. Then, there is an isomorphism in the homotopy category of dg-categories
 \begin{displaymath}
  \inj{X,G,w} \circledast \inj{Y,H,v} \simeq \overline{\mathsf{Inj}}_{\op{coh}}(X \times Y, G \times_{\mathbb{G}_m} H, w \boxplus v)
 \end{displaymath}
 where $\overline{\mathsf{Inj}}_{\op{coh}}(X \times Y, G \times_{\mathbb{G}_m} H, w \boxplus v)$ is a dg-idempotent completion of $\mathsf{Inj}_{\op{coh}}(X \times Y, G \times_{\mathbb{G}_m} H, w \boxplus v)$.
\end{theorem}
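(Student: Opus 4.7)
The plan is to realize the claimed equivalence as the one induced by the external tensor product of factorizations. First I would construct a dg-functor
\[
\boxtimes \colon \inj{X,G,w} \otimes_k \inj{Y,H,v} \to \mathsf{Inj}_{\op{coh}}(X \times Y, G \times_{\mathbb{G}_m} H, w \boxplus v)
\]
sending a pair $(E,F)$ to an injective replacement of its external tensor product together with the natural $G \times_{\mathbb{G}_m} H$-equivariant structure; that $w \boxplus v$ is semi-invariant precisely under the kernel of $\chi - \chi'$ is what forces the target equivariance group to be $G \times_{\mathbb{G}_m} H$ and no larger. By the universal property of the Morita product, this dg-functor extends to a canonical map from $\inj{X,G,w} \circledast \inj{Y,H,v}$ into the dg-idempotent completion of the right-hand side, and the remaining task is to check that this map is a quasi-equivalence.

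For full faithfulness I would establish a K\"unneth-type isomorphism of morphism complexes. For pairs of external products $E_1 \boxtimes F_1, E_2 \boxtimes F_2$ there is a natural comparison map from $\op{Hom}(E_1,E_2) \otimes_k \op{Hom}(F_1,F_2)$ into the Hom-complex of the factorizations on the product; using injectivity of the components and flatness of external tensor products, one reduces this to a local K\"unneth calculation on affine charts, combined with taking $G \times_{\mathbb{G}_m} H$-invariants. The hypothesis that $\chi - \chi'$ be non-torsion enters here to guarantee the expected decomposition of isotypic components: were $\chi - \chi'$ torsion, the character lattice of $G \times_{\mathbb{G}_m} H$ would acquire periodicity producing extra invariants on the right-hand side and the Künneth quasi-isomorphism would fail.

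Essential surjectivity, up to the stated dg-idempotent completion, is the main obstacle. The strategy is to show that external products of compact generators on the two factors form a compact generating set of $\dabs[\mathsf{fact}(X \times Y, G \times_{\mathbb{G}_m} H, w \boxplus v)]$. This is where the hypothesis $\op{Sing} Z_{w \boxplus v} \subset Z_w \times Z_v$ does the decisive work: via Orlov's theorem identifying absolute derived factorization categories with (graded) singularity categories, objects of the target are controlled by coherent sheaves supported on $\op{Sing} Z_{w \boxplus v}$, and the containment in $Z_w \times Z_v$ lets one resolve such sheaves by complexes of objects manifestly built from external products on the two factors. Pushing the resolution argument through equivariantly, tracking how $G \times_{\mathbb{G}_m} H$-equivariant injective replacements interact with the external product, and checking that no further idempotent completion is required beyond the stated one, is the most delicate technical step and where I expect the bulk of the work to lie.
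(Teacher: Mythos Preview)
This theorem is not proved in the present paper: it is quoted in the introduction as one of the main results of the prequel \cite{BFK11}, and the closely related affine version (Proposition~\ref{proposition: morita product of factorizations}) is likewise disposed of by citing \cite[Corollary 5.14]{BFK11}. So there is no ``paper's own proof'' here to compare against beyond the reference.

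That said, your sketch is broadly in line with how \cite{BFK11} proceeds: build an external tensor product functor, prove a K\"unneth isomorphism for morphism complexes to get full faithfulness, and then argue generation. One point to sharpen: your essential surjectivity step appeals to ``Orlov's theorem identifying absolute derived factorization categories with singularity categories'' to say objects are controlled by $\op{Sing} Z_{w\boxplus v}$. That is not quite the right invocation. What is actually used is that the derived category of factorizations is compactly generated, and a compact factorization has support contained in the singular locus of the zero fiber (this is the singular-support theory going back to Dyckerhoff and developed equivariantly in \cite{BFK11}); the containment $\op{Sing} Z_{w\boxplus v}\subset Z_w\times Z_v$ then lets one match generators with external products. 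Orlov's comparison with singularity categories is a different statement and does not directly give you control over supports in the way you describe. Apart from this mislabeling, your outline is a faithful summary of the intended strategy.
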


This built on work of T. Dyckerhoff for dg-categories of non-equivariant factorizations \cite{Dyc}. Additionally, a special case of Theorem~\ref{theorem: morita product of equivariant factorizations} where $X, Y$ are affine spaces $G, H$ are finite extensions of $\mathbb{G}_m$, and $w, v$ have isolated singular loci was proven independently by A. Polishchuk and A. Vaintrob \cite{PVnew}.

In this paper, we leverage our knowledge of these Morita products to create a categorical covering picture for factorizations. We apply our results to Homological Mirror Symmetry and Orlov spectra. After setting up the necessary background in Section~\ref{sec: background}, the primary observation, appearing in Section~\ref{sec: Orlov}, is that Orlov's relationship between graded categories of singularities and derived categories of coherent sheaves \cite{Orl09} has some very interesting geometric consequences when combined with Theorem~\ref{theorem: morita product of equivariant factorizations}.  

Namely, for  $1 \leq i \leq t$ let $X_i$ be affine spaces, $G_i$ be abelian groups acting linearly on $X_i$ with positive weights, and $w_i$ be sections of the $G_i$-equivariant line bundles $\O(\chi_i)$ corresponding to characters $\chi_i : G_i \to \mathbb G_m$.  The sum 
$w_1 + ... + w_t$ is naturally a $G$-equivariant section of $\O_{X_1 \times ... \times X_t}(\chi) := \O_{X_1 \times ... \times X_t}(\chi_i)$ with
\[
G  := G_1 \times_{\mathbb G_m} ... \times_{\mathbb G_m} G_t.
\]
Suppose $G \cong \mathbb G_m \times \Gamma$ where $\Gamma$ is a finite group and fix a splitting $s: \mathbb G_m \to G$.  From this we construct the stack $Z$ which is the quotient of the zero locus of the sum of the $w_i$ in $[X_1 \times ... \times X_n /G]$.

\begin{corollary}
Suppose that the weights of the $\mathbb G_m$ action on $X_1 \times ... \times X_t$ are all positive.  There is a parameter $\mu$ and a factorization denoted by $k^{\otimes t}$ such that

 \renewcommand{\labelenumi}{\emph{\roman{enumi})}}
 \begin{enumerate}
  \item If $\mu > 0$, there is a semi-orthogonal decomposition,
  \begin{displaymath}
  \mathsf{Inj}_{\op{coh}} (Z) \simeq \left \langle \bigoplus_{ \chi \circ s =  t^{-\mu}} \O_Z(\chi ), \ldots , \bigoplus_{{ \chi \circ s =  t^{-1}}} \O_Z(\chi), \mathsf{Inj}_{\op{coh}}(X_1,G_1,w_1) \circledast \cdots \circledast \mathsf{Inj}_{\op{coh}}(X_t, G_t,w_t) \right \rangle.
  \end{displaymath}
  \item If $\mu = 0$, there is a quasi equivalence of dg-categories,
  \begin{displaymath}
   \mathsf{Inj}_{\op{coh}} (Z) \simeq  \mathsf{Inj}_{\op{coh}}(X_1,G_1,w_1) \circledast \cdots \circledast \mathsf{Inj}_{\op{coh}}(X_t, G_t,w_t).
  \end{displaymath}
  \item If $\mu < 0$, there is a semi-orthogonal decomposition,
  \begin{displaymath}
   \mathsf{Inj}_{\op{coh}}(X_1,G_1,w_1) \circledast \cdots \circledast \mathsf{Inj}_{\op{coh}}(X_t, G_t,w_t) \simeq \left\langle \bigoplus_{ \pi \circ s =  t^{-\mu-1}}k^{\otimes t}(\chi),\ldots,\bigoplus_{ \pi \circ s =  \op{Id} }k^{\otimes t}(\chi),\mathsf{Inj}_{\op{coh}}(Z) \right\rangle.
  \end{displaymath}
 \end{enumerate}
\end{corollary}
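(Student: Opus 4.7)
The plan is to combine Theorem~\ref{theorem: morita product of equivariant factorizations} with D. Orlov's theorem from \cite{Orl09} relating graded categories of singularities to derived categories of coherent sheaves. First I would iterate Theorem~\ref{theorem: morita product of equivariant factorizations} to collapse the $t$-fold Morita product into a single dg-category of factorizations on $(X, G, w)$, where $X := X_1 \times \cdots \times X_t$, $G := G_1 \times_{\mathbb{G}_m} \cdots \times_{\mathbb{G}_m} G_t$, and $w := w_1 + \cdots + w_t$; then I would apply Orlov's theorem to compare that factorization category with $\mathsf{Inj}_{\op{coh}}(Z)$.

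For the iteration, I would induct on $t$, verifying at each step the two hypotheses of Theorem~\ref{theorem: morita product of equivariant factorizations} for the pair $(X_1 \times \cdots \times X_k, G_1 \times_{\mathbb{G}_m} \cdots \times_{\mathbb{G}_m} G_k, u)$ and $(X_{k+1}, G_{k+1}, w_{k+1})$ with $u := w_1 + \cdots + w_k$. The positivity of the $\mathbb{G}_m$-weights on each $X_i$ forces the critical locus of any nonzero $\mathbb{G}_m$-weighted semi-invariant on a partial product to be concentrated at the $\mathbb{G}_m$-fixed point, so the singular-locus condition $\op{Sing} Z_{u \boxplus w_{k+1}} \subset Z_u \times Z_{w_{k+1}}$ holds automatically. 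The non-torsion hypothesis on the difference of characters is guaranteed by the $\mathbb{G}_m$-factor carved out by the splitting $s$: after splitting, each $\chi_i$ restricts nontrivially to the common $\mathbb{G}_m$-factor, yielding a free direction in the character group along which $\chi_k - \chi_{k+1}$ is non-torsion. The result of the induction is a quasi-equivalence
$$\mathsf{Inj}_{\op{coh}}(X_1, G_1, w_1) \circledast \cdots \circledast \mathsf{Inj}_{\op{coh}}(X_t, G_t, w_t) \simeq \overline{\mathsf{Inj}}_{\op{coh}}(X, G, w).$$

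Next, write $S := k[X]$ and observe that, via the splitting $s$, $S$ is a positively graded polynomial algebra with Gorenstein parameter equal to the total sum of $\mathbb{G}_m$-weights on the $X_i$; the hypersurface $S/(w)$ is then graded Gorenstein with parameter $\mu$, the combinatorial quantity named in the corollary. The dg-category $\overline{\mathsf{Inj}}_{\op{coh}}(X, G, w)$ enhances the $G$-equivariant graded singularity category of $S/(w)$ (see \cite{Pos1}), while $\mathsf{Inj}_{\op{coh}}(Z)$ enhances the bounded derived category of $Z$. Orlov's theorem \cite{Orl09} in its $\Gamma$-equivariant form then relates these two categories by a semi-orthogonal decomposition whose exceptional part is indexed by the Gorenstein interval of width $|\mu|$: for $\mu > 0$ the exceptional blocks are the line bundle shifts $\mathcal{O}_Z(\chi)$ with $\chi \circ s$ in $\{t^{-\mu}, \ldots, t^{-1}\}$, for $\mu = 0$ one obtains an equivalence, and for $\mu < 0$ the exceptional blocks are the residue-field factorizations $k^{\otimes t}(\chi)$ supported in the dual interval. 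This produces the three cases \emph{i)}, \emph{ii)}, \emph{iii)}.

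The main obstacle is the bookkeeping of characters on both sides of the argument. On the Morita side, I must verify the non-torsion condition at every intermediate inductive step while tracking how the fibered products over $\mathbb{G}_m$ and their character groups accumulate. On the Orlov side, I must match the Gorenstein parameter of $S/(w)$ precisely with the combinatorial $\mu$, and extract the $\Gamma$-equivariant indexing of the exceptional collection from the usual $\mathbb{Z}$-graded form of \cite{Orl09}, so that the line-bundle and residue-field summands are grouped exactly as in the statement.
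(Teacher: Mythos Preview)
Your approach is essentially the paper's own: iterate the Morita product theorem to collapse the $t$-fold product into a single factorization category on $(X,G,w)$, then invoke the $A$-graded form of Orlov's theorem (Theorem~\ref{thm:Orlov generalized}) with $\mu$ the degree of the Gorenstein parameter computed via Lemma~\ref{lem:parameter}. The paper carries this out in the graded-module language (Corollary~\ref{cor: Morita product of MF} iterated, then Corollary~\ref{cor:MF products}), but the translation is exactly Lemma~\ref{lemma: graded = equivariant facts}.

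Two small points deserve tightening. First, your verification of the singular-locus hypothesis is stated too strongly: positivity of the weights does \emph{not} force the critical locus of a semi-invariant to lie at the origin (e.g.\ $x^2y$ with weights $(1,1)$). What positivity gives you, via Euler's identity, is $w \in (\partial w)$ and $v \in (\partial v)$, hence $w\otimes 1,\,1\otimes v \in (\partial(w\boxplus v),\,w\boxplus v)$, which is precisely the containment $\op{Sing}Z_{w\boxplus v}\subset Z_w\times Z_v$ required; this is the argument in the Remark following Corollary~\ref{cor: Morita product of MF}. Second, the iteration yields $\overline{\mathsf{Inj}}_{\op{coh}}(X,G,w)$, not $\mathsf{Inj}_{\op{coh}}(X,G,w)$; you need to invoke idempotent completeness (Corollary~\ref{corollary: thickness for Orlov components}) to drop the bar before matching with the statement.
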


This produces an enormous web of interrelated categories.  For example, as an application, we have the following result which is derived from
a form of graded Kn\"orrer periodicity.  The idea is an extension of the conic bundle construction, first observed by the third named author as a means of understanding mirror symmetry for varieties of general type.
\begin{proposition}
Let $Z_1, \ldots, Z_t$ be hypersurfaces of general type.  There is a smooth Fano Deligne-Mumford stack $Z$ such that
$\mathsf{Inj}_{\op{coh}}(Z_1 \times \cdots  \times Z_t)$ is an admissible subcategory of $\mathsf{Inj}_{\op{coh}}(Z)$.
\end{proposition}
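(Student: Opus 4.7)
The plan is to produce a chain of admissible embeddings
\[
\inj{Z_1 \times \cdots \times Z_t} \;\hookrightarrow\; \mathcal{M} \;\hookrightarrow\; \inj{Z}
\]
with $\mathcal{M}$ a Morita product of graded matrix factorization categories. The first embedding is assembled from Orlov's theorem applied to each $Z_i$ (as case (iii) of the preceding Corollary in a single factor), together with T\"oen's equivalence \eqref{eq: Toen product} and Theorem~\ref{theorem: morita product of equivariant factorizations}. The second embedding is case (i) of the preceding Corollary applied to the assembled data, after using graded Kn\"orrer periodicity to force the parameter $\mu$ positive.

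Realise each $Z_i \subset \mathbb{P}^{n_i}$ as $V(w_i)$ with $w_i$ homogeneous of degree $d_i > n_i + 1$, and set $X_i = \mathbb{A}^{n_i+1}$ with $G_i = \mathbb{G}_m$ acting by scalars. For each $i$, case (iii) of the preceding Corollary (the single-factor case being the Orlov semi-orthogonal decomposition) exhibits $\inj{Z_i}$ as an admissible subcategory of $\inj{X_i, G_i, w_i}$. Since admissibility is preserved under $\circledast$ (the adjoints tensor factorwise), applying the Morita product and then invoking \eqref{eq: Toen product} together with Theorem~\ref{theorem: morita product of equivariant factorizations} yields an admissible embedding
\[
\inj{Z_1 \times \cdots \times Z_t} \;\simeq\; \inj{Z_1} \circledast \cdots \circledast \inj{Z_t} \;\hookrightarrow\; \mathcal{M},
\]
where $\mathcal{M} := \inj{X_1, G_1, w_1} \circledast \cdots \circledast \inj{X_t, G_t, w_t}$ is, up to idempotent completion, $\inj{X, G, w}$ with $X = \prod X_i$, $G$ the iterated $\mathbb{G}_m$-fibre product, and $w = w_1 \boxplus \cdots \boxplus w_t$.

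To apply case (i) of the Corollary we need the parameter $\mu$ positive. Under the canonical splitting $\mathbb{G}_m \hookrightarrow G \cong \mathbb{G}_m \times \Gamma$, the weights on $X_i$ are $d/d_i$ (with $d = \op{lcm}(d_1,\ldots,d_t)$), giving $\mu = \sum_i (n_i+1)\,d/d_i - d$; since each $(n_i+1)/d_i < 1$, this may fail to be positive. We remedy this by graded Kn\"orrer periodicity: replace $(X, G, w)$ by
\[
\bigl(X \times \mathbb{A}^{2M},\; G,\; w + y_1 z_1 + \cdots + y_M z_M\bigr),
\]
extending the $\mathbb{G}_m$-action so that each pair $(y_j, z_j)$ has positive weights summing to $d$. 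This replacement leaves $\mathcal{M}$ unchanged but shifts $\mu \mapsto \mu + Md$, so already $M = 1$ forces $\mu > 0$. By smoothness of each $Z_i$, the singular locus of $w_i$ is the origin of $X_i$, and consequently the singular locus of $w + \sum y_j z_j$ collapses to the origin of $X \times \mathbb{A}^{2M}$; the associated projective stack $Z$ is therefore a smooth Deligne-Mumford stack, and $\mu > 0$ makes it Fano. Case (i) of the Corollary then supplies the admissible embedding $\mathcal{M} \hookrightarrow \inj{Z}$, and composing with the earlier embedding finishes the argument by transitivity of admissibility.

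The main obstacle I anticipate is the equivariant bookkeeping required to make graded Kn\"orrer periodicity interact cleanly with the group $G \cong \mathbb{G}_m \times \Gamma$: choosing the weights $(a_j, b_j)$ on each pair $(y_j, z_j)$ so that $y_j z_j$ is semi-invariant of the correct character, and verifying the hypothesis $\op{Sing}(Z_{w \boxplus v}) \subset Z_w \times Z_v$ of Theorem~\ref{theorem: morita product of equivariant factorizations} at each stage of the iterated Morita product. Once the splitting of $G$ is fixed, both verifications are routine within the framework of \cite{BFK11}.
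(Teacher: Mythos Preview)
Your proposal is correct and follows essentially the same two-step architecture as the paper: embed each $\inj{Z_i}$ into a factorization category via the general-type case of Orlov's theorem, take the Morita product, then apply graded Kn\"orrer periodicity to force the Gorenstein parameter positive and invoke the Fano case of Orlov's theorem.

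The only substantive difference is in the implementation of the Kn\"orrer step. You keep the group $G$ fixed and add $y_jz_j$ with $G$-weights on $y_j,z_j$ summing to the character $\chi$; the paper instead enlarges the grading group to $A\boxminus\Z\boxminus\Z$ and adds $u^2+v^2$, then invokes a clean lemma (Lemma~\ref{lem: always positive}) showing the resulting Gorenstein parameter is \emph{always} positive, independent of the original $\mu$. Your version works and your computation $\mu\mapsto\mu+d$ is correct (so indeed $M=1$ suffices), but the paper's version sidesteps exactly the equivariant bookkeeping you flag as an obstacle: with separate $\Z$-gradings on $u$ and $v$ there is no need to split $\chi$ across two positively-weighted variables compatibly with the finite part $\Gamma$. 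If you want to avoid that bookkeeping entirely, you can simply replace your $y_jz_j$ terms with the paper's $u^2+v^2$ and cite the $A\boxminus\Z\boxminus\Z$ form of Kn\"orrer periodicity (Equation~\eqref{+x^2+y^2}) together with Lemma~\ref{lem: always positive}.
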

Using the above proposition, one can try to reduce mirror symmetry for general type varieties to the better understood case of mirror symmetry for Fano varieties \cite{Aur}.  Let us state the precise conjecture, which also extends nicely to products as discussed in Section~\ref{sec: graded knorrer}.

\begin{conjecture} \label{introconj: general}
 Suppose $Z \subseteq \P^{n-1}$ is a smooth hypersurface of general type defined by a polynomial $w$ of degree $d$.  Let the group,
\begin{displaymath}
 G(A) := \{(\alpha, \beta, \gamma) \mid \alpha^d=\beta^2 = \gamma^2\} \subseteq \mathbb{G}_m^3,
\end{displaymath}
 act on $k[x_1, \ldots, x_n, u,v]$ by multiplication by $\alpha$ on the $x_i$, multiplication by $\beta$ on $u$, and multiplication by $\gamma$ on $v$.  Denote by $Y$ the affine variety defined by the vanishing of $w+u^2+v^2$.

 Then, the mirror to $Z$ can be realized as the Landau-Ginzburg mirror to the global quotient stack $[(Y \setminus 0) / G(A)]$ with all singular fibers away from the origin removed.
\end{conjecture}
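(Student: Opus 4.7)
The plan is to leverage the Proposition just above, which gives a fully faithful admissible embedding $\mathsf{Inj}_{\op{coh}}(Z) \hookrightarrow \mathsf{Inj}_{\op{coh}}([(Y \setminus 0)/G(A)])$, and then transport this embedding through Homological Mirror Symmetry for the Fano stack to identify the Landau-Ginzburg model described in the conjecture as the mirror of $Z$.

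First I would unpack the B-side embedding, since its internal structure is needed for the transport. One combines: (i) Orlov's theorem applied to the degree $d > n$ hypersurface $Z \subseteq \P^{n-1}$, which embeds $\mathsf{Inj}_{\op{coh}}(Z)$ into $\mathsf{Inj}_{\op{coh}}(\mathbb{A}^n, \mathbb{G}_m, w)$ with semi-orthogonal complement generated by twists of the trivial factorization; (ii) double graded Kn\"orrer periodicity, which extends the potential to $w + u^2 + v^2$ and the group to $G(A)$, equivalently realized via Theorem~\ref{theorem: morita product of equivariant factorizations} as the Morita product of $\mathsf{Inj}_{\op{coh}}(\mathbb{A}^n,\mathbb{G}_m,w)$ with the two quadratic pieces; and (iii) the Corollary applied to the three factors $(w, u^2, v^2)$. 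A direct weight count in $(x,u,v)$ places the resulting stack in the $\mu > 0$ (Fano) regime --- an anticanonical calculation in weighted projective coordinates shows $[(Y \setminus 0)/G(A)]$ has positive anticanonical degree --- so the Corollary exhibits the Morita product as an admissible subcategory of $\mathsf{Inj}_{\op{coh}}([(Y \setminus 0)/G(A)])$ with complement generated by line bundles $\mathcal O_Z(\chi)$. Composing (i), (ii), (iii) produces the required admissible embedding.

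For the mirror stage, assume Homological Mirror Symmetry for the Fano Deligne-Mumford stack $[(Y \setminus 0)/G(A)]$ in Landau-Ginzburg form: this produces an LG mirror $(W, f)$ whose Fukaya-Seidel category realizes $\mathsf{Inj}_{\op{coh}}([(Y \setminus 0)/G(A)])$. Under HMS one expects the generating line bundles $\mathcal O_Z(\chi)$ from the Corollary to correspond to Lagrangian thimbles over specific critical values of $f$; these critical values should be precisely the "singular fibers away from the origin" in the conjecture. Removing the associated singular fibers --- passing to the open LG model obtained by puncturing the base at these critical values --- should then leave a Fukaya-Seidel category equivalent to the admissible subcategory $\mathsf{Inj}_{\op{coh}}(Z)$, yielding the desired mirror.

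The main obstacle is this last HMS matching. HMS for the relevant Fano Deligne-Mumford stack is highly nontrivial in its own right, known in generality only for toric DM stacks and a handful of related cases, and even granting it, the identification of the generating line bundles $\mathcal O_Z(\chi)$ with specific thimbles requires detailed control of the critical locus of $f$ and its vanishing-cycle structure. A natural sanity check is to verify the conjecture in a low-dimensional example --- a smooth plane curve of degree $d \geq 4$, say --- where both sides admit concrete descriptions and the dictionary between "singular fibers away from the origin" and the exceptional objects appearing in the semi-orthogonal complement can be confirmed directly.
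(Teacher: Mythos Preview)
The statement you are addressing is a \emph{conjecture} in the paper, not a theorem; there is no proof in the paper to compare against. What the paper offers is exactly the heuristic you outline: the admissible embedding $\mathsf{Inj}_{\op{coh}}(Z) \hookrightarrow \mathsf{Inj}_{\op{coh}}([(Y\setminus 0)/G(A)])$ built from Orlov's theorem, graded Kn\"orrer periodicity, and the Morita-product corollary, together with the HMS expectation that semi-orthogonal components of the Fano side correspond to singular fibers of the mirror Landau--Ginzburg model (see the Remark immediately following the conjecture in Section~\ref{sec: graded knorrer}). Your steps (i)--(iii) and the Fano check via the Gorenstein-parameter computation reproduce precisely the B-side content the paper establishes.

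Your final paragraph is the honest assessment, and it coincides with the paper's own stance: the matching of the exceptional line bundles $\mathcal O(\chi)$ with thimbles over specific non-origin critical values, and indeed HMS for the Fano DM stack $[(Y\setminus 0)/G(A)]$ itself, are not proven here. So what you have written is not a proof but a correct and faithful account of the motivation behind the conjecture---which is all the paper claims. If you intended this as an actual proof, the gap is that both HMS for the stack and the thimble--line-bundle dictionary remain open assumptions; the paper does not close them either.
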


Aside from mirror symmetry, our results conjecturally have nontrivial implications towards decompositions of motives, see \cite{Orl05}. We also believe that in the case of a curve, graded Kn\"orrer periodicity should be related to the classical Prym variety construction of D. Mumford \cite{Mum} for Jacobians of conic bundles by work of A. Kuznetsov in \cite{Kuz05, Kuz09a}.  Building on the seminal work of C. Voisin, see \cite{Voison}, this picture is also readily applicable to the study of Griffiths groups, see \cite{FIK}.

In the context of homological algebra and triangulated categories, the precise relationship between the quotients arising in the HMS picture above can be expressed in the language of orbit categories. Inspired further by the ideas of B. Keller, D. Murfet, and M. Van den Bergh in \cite{KMV}, we provide this description in Section~\ref{sec:orbit}.

This categorical/geometric picture is readily applicable to the notions of Rouquier dimension, Orlov spectra, and generation time - an impetus of this work. Roughly, given a strong generator, $G$, of a triangulated category, the generation time is the number of exact triangles necessary to reconstruct the category from $G$ after closing under sums, shifts, and summands.  The Orlov spectrum of a triangulated category is the set of all such generation times, and the Rouquier dimension is the minimum of the Orlov spectrum.  Orlov has conjectured that this categorical notion of dimension coincides with the geometric one in \cite{O4}, where he proves the following for smooth algebraic curves.

\begin{conjecture}[Orlov] \label{conj: dimension}
 Let $\mathcal X$ be a smooth and tame Deligne-Mumford stack. Then, the Krull dimension of $\mathcal X$ equals the Rouquier dimension of $\mathcal X$:
 \begin{displaymath}
  \op{dim} \mathcal X = \op{rdim} \dbcoh{\mathcal X}.
 \end{displaymath}
\end{conjecture}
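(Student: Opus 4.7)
Since this is a conjecture, I cannot hope to prove it in full generality; rather, the plan is to leverage the categorical covering picture, in particular the Corollary, together with Orlov's theorem relating graded singularity categories and derived categories of coherent sheaves, to obtain new cases. The lower bound $\op{rdim} \dbcoh{\mathcal X} \geq \op{dim} \mathcal X$ is known (Rouquier, extended to the stacky setting), so all effort goes into the upper bound, which requires exhibiting an explicit strong generator with small generation time.

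First, I would set up the factorization side. For a smooth Deligne--Mumford stack $\mathcal X$ presented as (a piece of) a hypersurface or complete intersection, I would use Orlov's theorem to realize $\dbcoh{\mathcal X}$ as a semi-orthogonal piece of a graded singularity category $\dgrsing{R}{\mathbb G_m}$, which by Positselski is equivalent to an absolute derived category of equivariant factorizations $\dabs[\op{Fact}(X, \mathbb G_m, w)]$ on an affine space. Here the generation time of the factorization category admits direct combinatorial control: on affine $n$-space, one can often generate by a single factorization (a Koszul-type factorization of $w$ together with twists), and the number of cones needed is controlled by $n$ and the weights.

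Next, to obtain new cases I would feed products into this machine via Theorem~\ref{theorem: morita product of equivariant factorizations}. If each $(X_i, G_i, w_i)$ has a factorization category with known generation time $\tau_i$, then the Morita product computes the generation time of the factorization category of $(X_1 \times \cdots \times X_t, \prod_{\mathbb G_m} G_i, w_1 \boxplus \cdots \boxplus w_t)$ up to a controlled additive error: generation times are subadditive under $\circledast$ for factorizations built from Koszul-type generators, since one can tensor the generators on each factor. Combining this with the semi-orthogonal decomposition of the Corollary, and using the standard inequality $\op{rdim} \langle \mathcal A_1, \ldots, \mathcal A_r \rangle \leq \sum_i (\op{rdim} \mathcal A_i + 1) - 1$, gives an upper bound on $\op{rdim} \dbcoh{Z}$ in terms of bounds on $\op{rdim} \dbcoh{X_i}$ and the number $|\mu|$ of line bundles appearing. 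Choosing the parameter $\mu$ optimally (e.g. so that $\mu = 0$ and the decomposition degenerates to a pure Morita product) should give the sharpest bounds.

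The main obstacle will be producing generators of the equivariant factorization categories that are sharp, i.e.\ achieve generation time equal to $n$ on $\mathbb A^n$. Without sharpness, the semi-orthogonal bound loses dimensions and one cannot match the lower bound from Rouquier. Secondary obstacles are: (a) ensuring the construction applied to the stack $\mathcal X$ of interest really does express $\dbcoh{\mathcal X}$ as an admissible subcategory of a factorization category to which the machinery applies (this is essentially the content of Orlov's theorem, but with equivariance and tameness one must check the singularity/regularity hypotheses); and (b) keeping track of the $\mathbb G_m$-splitting and the parameter $\mu$ so that the bookkeeping of twists in the semi-orthogonal decomposition does not inflate the dimension bound. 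Successful resolution of (a) and (b) in specific families (complete intersections, products of curves, certain weighted hypersurfaces) should yield the promised new cases.
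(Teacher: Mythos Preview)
Your overall shape is right---lower bound from Rouquier, upper bound from explicit generators, and Morita products to handle products of factorizations---but the upper-bound mechanism you sketch is not the one the paper uses, and as written it would not produce sharp enough bounds to match the lower bound.

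The paper does \emph{not} bound generation time by exhibiting Koszul-type factorizations on affine space and invoking subadditivity under $\circledast$. Instead, for the weighted Fermat hypersurfaces treated, it first uses Corollary~\ref{cor: Morita product of MF} to rewrite the relevant factorization category as
\[
\mathsf{proj}\bigl(k[x_0,\ldots,x_n],\,B_{\mathbf d},\,\textstyle\sum x_i^{d_i}\bigr)\ \simeq\ \mathsf{proj}\bigl(\textstyle\bigotimes_{i=0}^n A_{d_i-1}\bigr),
\]
a derived category of modules over a tensor product of type-$A$ path algebras. It then applies an ADE regrouping (Proposition~\ref{prop: is ADE}) to replace this by $\op{D}^{\op{b}}(\op{mod}\,\bigotimes_{l=1}^{h(\mathbf d)} kQ_l)$ with each $Q_l$ an ADE quiver, and finally invokes Rouquier's Loewy-length bound (Lemma~\ref{lem: ADE-1}, from \cite[Lemma 7.36]{Ro2}): peeling off one ADE factor leaves an algebra of Loewy length $h(\mathbf d)$, giving $\op{rdim}\leq h(\mathbf d)-1$. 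The lower bound is obtained not by setting $\mu=0$ but by embedding a product of $q(\mathbf d)$ nonpositive weighted Fermat hypersurfaces as an admissible subcategory (Proposition~\ref{prop: sitting inside ADE}), and the two bounds meet precisely when $n+1=h(\mathbf d)+2q(\mathbf d)-1$.

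You also omit a tool that is essential for passing from the maximally weighted Fermat stacks to the ordinary Fermat varieties: invariance of Rouquier dimension under $\Gamma$-covers/orbit categories (Proposition~\ref{prop: orbit dimension} and Corollary~\ref{cor: orbit dimension}). Without this, you cannot move between different gradings of the same ring, which is what allows the paper to deduce e.g.\ Theorem~\ref{thm: rdim ExE} for honest varieties rather than just for the maximally equivariant quotients. Your proposed ``subadditivity of generation time under $\circledast$'' is neither stated nor proved in the paper, and the semi-orthogonal bound $\op{rdim}\langle\mathcal A_1,\ldots,\mathcal A_r\rangle\leq\sum(\op{rdim}\mathcal A_i+1)-1$ is too lossy by itself; the Loewy-length argument is what supplies the needed sharpness.
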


\noindent Conjecture~\ref{conj: dimension} seems like a difficult question in general, see \cite{BF} for a discussion of known examples. Section~\ref{sec: gt} provides an overview of these notions and related questions and works out the case of weighted Fermat hypersurfaces in full detail. As a consequence, we obtain various special cases of the above conjecture by considering the relationship between Orlov spectra of triangulated categories and their orbit categories. In particular, we prove the first case of Orlov's conjecture for a non-rational projective variety of dimension $> 1$. Let us include one example from Section \ref{subsec: Fermat},  see Theorem~\ref{thm: Fermat bound} and Example~\ref{eg: ExExK3}.
\begin{theorem}\label{thm: rdim ExE}
 The Rouquier dimension of the product of two Fermat elliptic curves with the Fermat $K3$ surface is $4$.
\end{theorem}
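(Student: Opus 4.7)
The lower bound $\op{rdim}\,\inj{E \times E \times K} \ge 4$ is immediate from Rouquier's inequality $\op{rdim} \ge \op{dim}$ applied to the smooth fourfold $E \times E \times K$. The substantive content is the matching upper bound $\le 4$, which the plan is to derive by running the covering picture of Section~\ref{sec: Orlov} on the triple product and then feeding the resulting equivariant factorization category into the explicit generation bound of Section~\ref{subsec: Fermat}.

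First, reduce to an equivariant matrix factorization category. The Fermat elliptic curve is Calabi--Yau with $n=d=3$ and the Fermat $K3$ is Calabi--Yau with $n=d=4$, so Orlov's theorem yields quasi-equivalences $\inj{E} \simeq \inj{\mathbb{A}^3,\mathbb{G}_m,w_E}$ and $\inj{K} \simeq \inj{\mathbb{A}^4,\mathbb{G}_m,w_K}$ with no extraneous line-bundle summand. Apply Theorem~\ref{theorem: morita product of equivariant factorizations} twice to form the Morita product of these three dg-categories. The required hypothesis $\op{Sing} Z_{w \boxplus v} \subseteq Z_w \times Z_v$ is automatic because each Fermat potential has isolated singularity at the origin. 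Combined with~(\ref{eq: Toen product}) this identifies
\begin{displaymath}
 \inj{E \times E \times K} \simeq \overline{\mathsf{Inj}}_{\op{coh}}(\mathbb{A}^{10},G,W), \qquad W = w_E \boxplus w_E \boxplus w_K,
\end{displaymath}
with $G$ the fiber product of three copies of $\mathbb{G}_m$ along the characters $t \mapsto t^3, t^3, t^4$.

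Next, realize this factorization category inside a smooth Fano Deligne--Mumford stack $\mathcal Z$. Apply the covering corollary stated after Theorem~\ref{theorem: morita product of equivariant factorizations}: depending on the sign of the numerical invariant $\mu$ attached to the Fermat data, the factorization category above is either admissibly embedded in $\inj{\mathcal Z}$ (case (i), with complement a block of twists of $\mathcal O_{\mathcal Z}$) or directly quasi-equivalent to it (case (ii)). In either case, $\inj{E \times E \times K}$ is an admissible subcategory of $\inj{\mathcal Z}$, hence
\begin{displaymath}
 \op{rdim}\,\inj{E \times E \times K} \;\le\; \op{rdim}\,\inj{\mathcal Z}
\end{displaymath}
by weak monotonicity of Rouquier dimension under admissibility (push the ambient generator through the right adjoint projection).

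Finally, bound $\op{rdim}\,\inj{\mathcal Z}$ using Theorem~\ref{thm: Fermat bound} of Section~\ref{subsec: Fermat}. That theorem exhibits a strong generator of $\inj{\mathcal Z}$ as a direct sum of line bundles $\bigoplus_\chi \mathcal O_{\mathcal Z}(\chi)$ indexed by a fundamental domain of characters, and bounds its generation time by a Koszul resolution argument on the affine covering $\mathbb{A}^{10}$. Example~\ref{eg: ExExK3} specializes this tally to the triple $(3,3,4)$ and yields the upper bound $4$, matching the Rouquier lower bound. The main obstacle is this last step: producing \emph{some} strong generator is routine, but pinning its generation time to \emph{exactly} $4$ --- rather than a coarser upper bound --- requires the careful Koszul bookkeeping of Section~\ref{subsec: Fermat}, which must simultaneously track the resolution length (set by the total number of variables in $W$) and the size of the character block (set by the fiber product $G$) so that they cancel precisely against $\dim\mathcal Z$.
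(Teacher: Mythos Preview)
Your lower bound and the identification $\inj{E\times E\times K}\simeq \overline{\mathsf{Inj}}_{\op{coh}}(\mathbb{A}^{10},G,W)$ via Orlov and the Morita product are fine. The upper-bound step, however, does not work as written. The stack $\mathcal Z$ you invoke is the hypersurface $\{W=0\}$ in $[(\mathbb{A}^{10}\setminus 0)/G]$ with $G$ a one-dimensional group, so $\dim\mathcal Z=8$. By Lemma~\ref{lem:lowerstacks} one has $\op{rdim}\,\inj{\mathcal Z}\ge 8$, and hence the chain $\op{rdim}\,\inj{E\times E\times K}\le\op{rdim}\,\inj{\mathcal Z}$ cannot possibly yield $4$. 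Passing through the Fano stack goes in the wrong direction for the upper bound.

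Relatedly, your description of Theorem~\ref{thm: Fermat bound} is not what that theorem does: there is no Koszul argument and no line-bundle generator for $\inj{\mathcal Z}$. The paper's upper bound bypasses $\mathcal Z$ entirely. One identifies the factorization category with $\mathsf{proj}(\bigotimes_{i=0}^{9}A_{d_i-1})\cong\op{D}^{\op{b}}(\op{mod}\,A_2^{\otimes 6}\otimes A_3^{\otimes 4})$ for the weight sequence $\mathbf d=(3^6,4^4)$, and then uses the ADE decomposition $\mathbf d=(3,4)\amalg(3,4)\amalg(3,4)\amalg(3,4)\amalg(3,3)$ together with Rouquier's Loewy-length bound (Lemma~\ref{lem: ADE-1}) to get $\op{rdim}\le h(\mathbf d)-1=4$. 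The product $E\times E\times K$ sits admissibly inside this quiver category (Proposition~\ref{prop: sitting inside ADE}, using the nonpositive decomposition $(3,3,3)\amalg(3,3,3)\amalg(4,4,4,4)$), which is what transmits the bound. The equality $n+1=h(\mathbf d)+2q(\mathbf d)-1$ in Example~\ref{eg: ExExK3} is exactly the numerical coincidence that makes the two bounds meet at $4$; there is no cancellation of ``resolution length against character block'' of the sort you sketch.
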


\vspace{2.5mm}
\noindent \textbf{Acknowledgments:}
 The authors are greatly appreciative of the valuable insight gained from conversations and correspondence with Mohammed Abouzaid, Denis Auroux, Andrei C\u{a}ld\u{a}raru, Dragos Deliu, Colin Diemer, Tobias Dyckerhoff, Manfred Herbst, M. Umut Isik, Gabriel Kerr, Maxim Kontsevich, Jacob Lewis, Dmitri Orlov, Pranav Pandit, Tony Pantev, Anatoly Preygel, Victor Przyjalkowski, Ed Segal, Paul Seidel, and Paolo Stellari and would like to thank them for their time and patience.  Furthermore, the authors are deeply grateful to Alexander Polishchuk and Arkady Vaintrob for providing us with a preliminary version of their work \cite{PVnew} and for allowing us time to prepare the original version of this paper in order to synchronize arXiv posting of the articles due to the overlap. The second named author would also like to thank the Korean Institute for Advanced Study for its hospitality during final preparation of this document.  The first named author was funded by NSF DMS 0636606 RTG, NSF DMS 0838210 RTG, and NSF DMS 0854977 FRG. The second and third named authors were funded by NSF DMS 0854977 FRG, NSF DMS 0600800, NSF DMS 0652633 FRG, NSF DMS 0854977, NSF DMS 0901330, FWF P 24572 N25, by FWF P20778 and by an ERC Grant.
\vspace{2.5mm}

\section{Background} \label{sec: background}

Let us recall some background and results from \cite{BFK11}. Throughout this article, we will only encounter $G$-equivariant factorizations on affine varieties with $G$ an Abelian affine algebraic group. This setting allows for a pleasant simplification and more algebraic reformulation of much of \cite{BFK11}. We take the more algebraic perspective of graded modules instead of equivariant sheaves. For the convenience of the reader, we first provide some details on the translation between the two languages. For the whole of the paper, $k$ will denote an algebraically-closed field of characteristic zero.

\subsection{Group actions and graded rings}

Let $X$ be an affine scheme and $G$ be an Abelian affine algebraic group acting on $X$. We denote the category of quasi-coherent $G$-equivariant sheaves on $X$ by $\op{Qcoh}_G X$. Let $[X/G]$ denote the associated quotient stack. It is well-known, see e.g. \cite{Vis}, that there is an equivalence of Abelian categories
\begin{displaymath}
 \op{Qcoh}_G X \cong \op{Qcoh} [X/G].
\end{displaymath}
We shall often use this equivalence and its descendants implicitly.

\begin{definition}
 Given an Abelian affine algebraic group, $G$, we let
 \begin{displaymath}
  \widehat{G} := \op{Hom}_{\op{alg \ grp}}(G,\mathbb{G}_m).
 \end{displaymath}
 The finitely-generated Abelian group, $\widehat{G}$, is called the \newterm{group of characters} of $G$. As $\widehat{G}$ is Abelian, we shall use additive notation for the group structure on $\widehat{G}$.

 For a character, $\chi \in \widehat{G}$, we also get an autoequivalence
 \begin{align*}
  (\chi) : \op{Qcoh}_G X & \to \op{Qcoh}_G X \\
  \mathcal E & \mapsto \mathcal E \otimes_{\mathcal O_X} p^*\mathcal L_{\chi}
 \end{align*}
 where $p: X \to \op{Spec} k$ is the structure map and $\mathcal L_{\chi}$ is the object of $\op{Qcoh}_G (\op{Spec} k)$ corresponding to $\chi$.
\end{definition}

Let us write $X = \op{Spec} R$. The action of $G$ on $X$, $\sigma: G \times X \to X$, corresponds to a ring homomorphism,
\begin{displaymath}
 \Delta: R \to \Gamma(G,\mathcal O_G) \otimes_k R,
\end{displaymath}
after taking global sections. Since we have assumed that $G$ is Abelian, we have an isomorphism
\begin{displaymath}
 \Gamma(G,\mathcal O_G) \cong k[\widehat{G}]
\end{displaymath}
where $k[\widehat{G}]$ is the monoid algebra of $\widehat{G}$, i.e. elements are finite formal sums
\begin{displaymath}
 \sum \alpha_{\chi} u^{\chi}
\end{displaymath}
with componentwise addition and multiplication given by $u^{\chi} \cdot u^{\chi^{\prime}} = u^{\chi + \chi^{\prime}}$. We can decompose $R$ as
\begin{displaymath}
 R = \bigoplus_{\chi} R_{\chi}
\end{displaymath}
where
\begin{displaymath}
 R_{\chi} := \{ r \in R \mid \Delta(r) = u^{\chi} \otimes r \}.
\end{displaymath}
Since $\Delta$ is a ring homomorphism, we have
\begin{displaymath}
 R_{\chi} \cdot R_{\chi^{\prime}} \subseteq R_{\chi + \chi^{\prime}}
\end{displaymath}
and $R$ becomes a $\widehat{G}$-graded ring.

The following is very well-known. We recall it and provide a proof for the ease of the reader.

\begin{proposition} \label{proposition: Qcoh-G = Mod-hat-G}
 Let $X = \op{Spec} R$ be an affine variety acted on by an Abelian affine algebraic group $G$. There exists an equivalence of Abelian categories
 \begin{displaymath}
  \Gamma: \op{Qcoh}_G X \to \op{Mod}_{\widehat{G}} R
 \end{displaymath}
 induced by taking global sections. Moreover, this restricts to equivalences
 \begin{align*}
  \Gamma & : \op{coh}_G X \to \op{mod}_{\widehat{G}} R \\
  \Gamma & : \op{Vect}_G X \to \op{Proj}_{\widehat{G}} R
 \end{align*}
 between the Abelian category of coherent $G$-equivariant sheaves on $X$ and the Abelian category of finitely-generated $\widehat{G}$-graded modules over $R$, and between the category of locally-free $G$-equivariant sheaves on $X$ and the category of $\widehat{G}$-graded $R$-modules which are projective as $R$-modules.
\end{proposition}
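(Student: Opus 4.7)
The plan is to factor the equivalence through $k[\widehat{G}]$-comodule structures. The key input is that, because $G$ is abelian, $\Gamma(G, \mathcal{O}_G) \cong k[\widehat{G}]$ as Hopf algebras, with the characters $u^\chi$ forming a basis of group-like elements, meaning $\Delta(u^\chi) = u^\chi \otimes u^\chi$ and $\epsilon(u^\chi) = 1$.

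First, I would construct $\Gamma$. Given $\mathcal{E} \in \op{Qcoh}_G X$ with equivariant structure $\theta: \sigma^*\mathcal{E} \xrightarrow{\sim} p_2^*\mathcal{E}$, take global sections and use $\Gamma(G \times X, \mathcal{O}_{G \times X}) \cong k[\widehat{G}] \otimes_k R$ to obtain a coaction $\Delta_M: M \to k[\widehat{G}] \otimes_k M$ on $M := \Gamma(X, \mathcal{E})$. The cocycle and unit conditions on $\theta$ translate directly to the coassociativity and counit axioms for $\Delta_M$, while $R$-linearity of $\theta$ becomes the compatibility $\Delta_M(rm) = \Delta(r) \cdot \Delta_M(m)$ with the coaction $\Delta$ on $R$ from the excerpt.

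Next, I would extract the grading. Expanding $\Delta_M(m) = \sum_\chi u^\chi \otimes m_\chi$ in the basis $\{u^\chi\}$, coassociativity applied term by term forces $\Delta_M(m_\chi) = u^\chi \otimes m_\chi$ (using that $u^\chi$ is group-like), so $m_\chi \in M_\chi := \{m \in M : \Delta_M(m) = u^\chi \otimes m\}$, and the counit axiom yields $m = \sum_\chi m_\chi$. This produces $M = \bigoplus_\chi M_\chi$, and the compatibility with $\Delta$ forces $R_\chi \cdot M_{\chi'} \subseteq M_{\chi + \chi'}$. For the inverse direction, a $\widehat{G}$-graded module $N = \bigoplus_\chi N_\chi$ yields a comodule via $n \mapsto u^\chi \otimes n$ for $n \in N_\chi$, which in turn induces a $G$-equivariant structure on the associated quasi-coherent sheaf $\widetilde{N}$ on $X$. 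The two constructions are manifestly quasi-inverse, and both functors are exact because the weight-space decomposition is exact and the Serre construction on an affine scheme is exact.

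For the restricted equivalences, the usual Serre correspondence between coherent sheaves on $X$ and finitely generated $R$-modules is compatible with passage to weight spaces, yielding the first restriction; for the second, local freeness of a quasi-coherent sheaf on the affine scheme $X$ is equivalent to $R$-projectivity of the underlying module, independently of the grading, so equivariant vector bundles correspond precisely to graded modules projective over $R$. The main technical point---not so much an obstacle as the heart of the calculation---is verifying that the group-like property of the basis $\{u^\chi\}$ makes a $k[\widehat{G}]$-comodule structure the same data as a $\widehat{G}$-grading, and that the standard correspondence between $G$-equivariant structures on $\widetilde{M}$ and $k[G]$-coactions on $M$ is indeed a bijection with the expected compatibilities.
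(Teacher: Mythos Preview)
Your proposal is correct and follows essentially the same route as the paper: both identify a $G$-equivariant structure on $\widetilde{M}$ with a map $M \to k[\widehat{G}] \otimes_k M$ satisfying the compatibility $\vartheta(rm) = \Delta(r)\vartheta(m)$, then define $M_\chi$ as the $\chi$-weight space and verify this yields a $\widehat{G}$-grading, with the inverse construction sending a graded module back to the obvious coaction. The only difference is one of emphasis: you make the Hopf-algebra/comodule framework explicit and use the counit axiom to show $M = \bigoplus_\chi M_\chi$, whereas the paper leaves this decomposition implicit and works directly with the isomorphism $\vartheta$.
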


\begin{proof}
 Forgetting the $G$-action, it is well-known that $\Gamma$ has the sheafification functor,
 \begin{displaymath}
  \widetilde{(\bullet)} : \op{Mod} R \to \op{Qcoh} X,
 \end{displaymath}
 as its inverse. In addition, $\widetilde{(\bullet)}$ provides an inverse to
 \begin{align*}
  \Gamma & : \op{coh} X \to \op{mod} R \\
  \Gamma & : \op{Vect} X \to \op{Proj} R.
 \end{align*}
 Let $\pi: G \times X \to X$ be the projection. We must check that equipping a quasi-coherent sheaf $\mathcal E$ with an equivariant structure,
 \begin{displaymath}
  \theta: \sigma^* \mathcal E \overset{\sim}{\to} \pi^* \mathcal E,
 \end{displaymath}
 is equivalent to equipping $\Gamma(X, \mathcal E)$ with a $\widehat{G}$-grading compatible with the action of $R$.

 Set $M := \Gamma(X, \mathcal E)$. The equivariant structure, $\theta$, is equivalent to an isomorphism of $k[\widehat{G}] \otimes_k R$-modules
 \begin{displaymath}
  \vartheta: (k[\widehat{G}] \otimes_k R) \otimes_{R} M \overset{\sim}{\to} (k[\widehat{G}] \otimes_k R) \otimes_R M
 \end{displaymath}
 where on the left-hand side we use the copy of $R$ under the homomorphism $\Delta: R \to k[\widehat{G}] \otimes_k R$ corresponding to $\Gamma(\sigma^*)$ while on the right-hand side we use the copy of $R$ under the homomorphism
 \begin{displaymath}
  r \mapsto 1 \otimes r
 \end{displaymath}
 corresponding to $\Gamma(\pi^*)$. The isomorphism $\vartheta$ is determined by where it sends $M$ and, on $M$, it must satisfy
 \begin{equation} \label{equation: vartheta}
  \vartheta(rm) = \Delta(r) \vartheta(m).
 \end{equation}
 Furthermore, any morphism $\vartheta: M \to k[\widehat{G}] \otimes M$ satisfying Equation \eqref{equation: vartheta} extends to give a desired isomorphism of $k[\widehat{G}] \otimes_k R$-modules.

 Assume we have an equivariant structure, $\theta$, and $\vartheta = \Gamma(\theta)$. We set
 \begin{displaymath}
  M_{\chi} := \{ a \in A \mid \vartheta(m) = u^{\chi} \otimes m\}.
 \end{displaymath}
 Since we satisfy Equation \eqref{equation: vartheta}, we have
 \begin{displaymath}
  R_{\chi} \cdot M_{\chi^{\prime}} \subseteq M_{\chi+\chi^{\prime}}
 \end{displaymath}
 and $M$ now has the structure of a $\widehat{G}$-graded module.

 Now assume $M$ has the structure of a $\widehat{G}$-graded $R$-module. We set
 \begin{displaymath}
  \vartheta(m) := u^{\chi} \otimes m
 \end{displaymath}
 if $m \in M_{\chi}$. It is straightforward to see that $\vartheta$ satisfies Equation \eqref{equation: vartheta}. Thus, the data of an equivariant structure on $\mathcal E$ is equivalent to a $\widehat{G}$-grading on $M$ making $M$ into a $\widehat{G}$-graded $R$-module. As such, $\Gamma$ and $\widetilde{(\bullet)}$ induce mutually inverse equivalences
 \begin{align*}
  \Gamma: \op{Qcoh}_G X & \leftrightarrow \op{Mod}_{\widehat{G}} R : \widetilde{(\bullet)} \\
  \Gamma: \op{coh}_G X & \leftrightarrow \op{mod}_{\widehat{G}} R : \widetilde{(\bullet)} \\
  \Gamma: \op{vect}_G X & \leftrightarrow \op{Proj}_{\widehat{G}} R : \widetilde{(\bullet)}
 \end{align*}
 as desired.
\end{proof}

\begin{remark}
 Given a finitely-generated Abelian group $A$, we can form an Abelian algebraic group $G(A) := \op{Spec} k[A]$. If $R$ is an $A$-graded ring, then $G(A)$ acts on $\op{Spec} R$. Thus, one can approach the Abelian categories above, and the factorizations categories below, either from the geometric perspective of group actions or from the algebraic perspective of graded rings.
\end{remark}

\begin{lemma} \label{lemma: proj equiv = equiv + proj}
 Let $A$ be a finitely-generated Abelian group and let $R$ be an $A$-graded commutative $k$-algebra finitely-generated over $k$. Then, projective $A$-graded modules are exactly those $A$-graded modules that are projective as ungraded $R$-modules. Furthermore, an injective $R$-module that possesses an $A$-grading is an injective $A$-graded module.
\end{lemma}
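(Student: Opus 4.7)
My plan is to derive both statements from a single averaging mechanism: given any $R$-linear map between $A$-graded modules, decomposing the target via its grading and projecting onto the degree-preserving part produces an honest graded morphism. Conceptually this is the exactness of $G(A)$-invariants, since $G(A) = \op{Spec} k[A]$ is a diagonalizable algebraic group and hence linearly reductive in characteristic zero. A mild subtlety is that $\op{Hom}_R(M,N)$ need not itself be a rational $G(A)$-module when $M$ is not finitely generated, so I would argue directly with the pointwise degree-$0$ projection rather than invoking invariants abstractly.

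For the projective statement, the forward direction is formal: any graded projective is a summand of a graded free $\bigoplus_i R(\chi_i)$, which is free once the grading is forgotten. For the converse, let $M$ be $A$-graded and projective as an ungraded $R$-module, and consider a graded surjection $\pi : N \to L$ together with a graded map $f : M \to L$. Step one: use ungraded projectivity of $M$ to choose an $R$-linear lift $\tilde f : M \to N$ of $f$. Step two: for each $m \in M_d$, expand $\tilde f(m) = \sum_{e \in A} \tilde f(m)_e$ using the direct sum decomposition of $N$, and define $\tilde f_0(m) := \tilde f(m)_d$, extending $k$-linearly to all of $M$. Step three: verify that $\tilde f_0$ is graded of degree zero by construction, that it lifts $f$ because $f(m) \in L_d$ is already homogeneous so the degree-$d$ component of $\pi \tilde f(m) = f(m)$ is just $f(m)$, and that it is $R$-linear by comparing degree-$(d+e)$ components of the identity $\tilde f(rm) = r\tilde f(m)$ for $r \in R_e$ and $m \in M_d$.

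The injective statement runs in the mirror. Given a graded injection $L' \hookrightarrow L$, a graded morphism $g : L' \to N$, and $N$ injective as an ungraded $R$-module with a compatible $A$-grading, I would extend $g$ ungradedly to $\tilde g : L \to N$ using ungraded injectivity, and then take the degree-preserving projection $\tilde g_0$ by the same recipe: on homogeneous $\ell \in L_d$ set $\tilde g_0(\ell) := \tilde g(\ell)_d$, extending $k$-linearly. The key step in both halves is the $R$-linearity check for the degree-$0$ projection; the rest of the work is routine bookkeeping about homogeneous components, and the compatibility of the projection with the map to $L$ (respectively with restriction to $L'$) is immediate because $f$ (respectively $g$) is graded of degree zero to begin with.
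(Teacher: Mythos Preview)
Your proof is correct and rests on the same idea as the paper's: the degree-$0$ projection, equivalently the exactness of $G(A)$-invariants for the diagonalizable group $G(A)$. The paper argues more abstractly, passing through the equivalence $\op{Mod}_A R \cong \op{Qcoh}_{G(A)} X$ of Proposition~\ref{proposition: Qcoh-G = Mod-hat-G} and invoking linear reductivity of $G(A)$ to conclude that $\op{Hom}_{\op{Qcoh}_{G(A)} X}(\bullet,\bullet) = \op{Hom}_{\op{Qcoh} X}(\bullet,\bullet)^{G(A)}$ is the composition of an exact functor with an exact invariants functor; you instead build the degree-$0$ projection $\tilde f \mapsto \tilde f_0$ by hand on homogeneous elements and verify $R$-linearity directly.

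The practical difference is the one you flag yourself: when $M$ is not finitely generated, $\op{Hom}_R(M,N)$ need not be a rational $G(A)$-module (a map can have nonzero components in infinitely many degrees), so the paper's appeal to ``exactness of invariants'' requires a moment's care to interpret. Your explicit formula $\tilde f_0(m)_d := \tilde f(m_d)_d$ sidesteps this entirely and works uniformly. Conversely, the paper's formulation makes the conceptual reason transparent and generalizes immediately to any linearly reductive group action, not just diagonalizable ones. Both routes are short; yours is more self-contained, the paper's more structural.
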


\begin{proof}
 Set $X := \op{Spec} R$. By Proposition~\ref{proposition: Qcoh-G = Mod-hat-G}, we have an equivalence
 \begin{displaymath}
  \op{Mod}_A R \cong \op{Qcoh}_{G(A)} X
 \end{displaymath}
 which takes $A$-graded projective $R$-modules to locally-free $G(A)$-equivariant sheaves.

 We have by definition of morphisms
 \[
 \op{Hom}_{\op{Qcoh}_{G(A)} X} (\bullet, \bullet) := \op{Hom}_{\op{Qcoh} X} (\bullet, \bullet)^{G(A)}.
 \]
 Since $G(A)$ is an Abelian affine algebraic group, it is the product of a torus and a finite group, and hence is reductive.  As $k$ has characteristic zero, $G(A)$ is linearly reductive, i.e.\ the functor of taking $G(A)$ invariants is exact.

 Therefore, if $\mathcal E$ is locally-free, so $\op{Hom}_{\op{Qcoh} X} (\mathcal E, \bullet)$ is exact, then $\op{Hom}_{\op{Qcoh}_{G(A)} X} (\mathcal E, \bullet)$ is exact and $\mathcal E$ is projective as a quasi-coherent $G(A)$-equivariant sheaf. If $\mathcal E$ is a projective quasi-coherent $G(A)$-equivariant sheaf, then it is an equivariant summand of $\mathcal O_X^{\oplus I}$ making it projective as an non-equivariant quasi-coherent sheaf.

 Similarly, if $\mathcal E$ is an injective quasi-coherent sheaf with an equivariant structure, then $\op{Hom}_{\op{Qcoh} X}(\bullet, \mathcal E)$ is exact and consequently so is $\op{Hom}_{\op{Qcoh}_{G(A)} X}(\bullet, \mathcal E)$. Hence, $\mathcal E$ is injective as a quasi-coherent $G(A)$-equivariant sheaf.
\end{proof}

As before, let $X = \op{Spec} R$ be an affine variety and let $G$ be an Abelian affine algebraic group acting on $X$. Let $H$ be a closed subgroup of $G$. Operations of fundamental import to the applications in this paper are restriction and induction of equivariant sheaves along the inclusion of $H$ in $G$.

\begin{definition}
 We have a \newterm{restriction functor}
 \begin{align*}
  \op{Res}^G_H : \op{Qcoh}_G X & \to \op{Qcoh}_H X \\
  (\mathcal E, \theta) & \mapsto (\mathcal E, \theta|_{H \times X}).
 \end{align*}
 Denote the functor
 \begin{displaymath}
  \Gamma(\op{Res}^G_H) : \op{Mod}_{\widehat{G}} R \to \op{Mod}_{\widehat{H}} R
 \end{displaymath}
 by $\op{R}^{\widehat{G}}_{\widehat{H}}$. Furthermore, if $R$ is a finitely-generated $k$-algebra equipped with a grading by a finitely-generated Abelian group $A$ and we have an epimorphism $A \to B$, then we set
 \begin{displaymath}
  \op{R}^{A}_{B} := \Gamma(\op{Res}^{G(A)}_{G(B)}).
 \end{displaymath}

 Recall, from \cite[Lemma 2.13 and Lemma 2.16]{BFK11}, that the inclusion $i: X \to G/H \times X$, along the identity coset, induces an equivalence
 \begin{displaymath}
  i^* : \op{Qcoh}_H X \to \op{Qcoh}_G G/H \times X.
 \end{displaymath}
 Let $p: G/H \times X \to X$ denote the projection onto $X$. The \newterm{induction functor} is
 \begin{align*}
  \op{Ind}^G_H: \op{Qcoh}_H X & \to \op{Qcoh}_G X \\
  (\mathcal E, \theta) & \mapsto p_* (i^*)^{-1}( \mathcal E, \theta).
 \end{align*}
 Denote the functor
 \begin{displaymath}
  \Gamma(\op{Ind}^G_H) : \op{Mod}_{\widehat{H}} R \to \op{Mod}_{\widehat{G}} R
 \end{displaymath}
 by $\op{I}^{\widehat{G}}_{\widehat{H}}$. Furthermore, if $R$ is a finitely-generated $k$-algebra equipped with a grading by a finitely-generated Abelian group $A$ and we have an epimorphism $A \to B$, then we set
 \begin{displaymath}
  \op{I}^{A}_{B} := \Gamma(\op{Ind}^{G(A)}_{G(B)}).
 \end{displaymath}
\end{definition}

\begin{proposition} \label{proposition: restriction and induction properties}
 Assume $R$ is a finitely-generated $k$-algebra equipped with a grading by a finitely-generated Abelian group $A$ and we have an epimorphism $\psi: A \to B$. Also, equip $R$ with the induced $B$-grading
 \begin{displaymath}
  R_b := \bigoplus_{\psi(a) = b} R_a.
 \end{displaymath}
 Then,
 \begin{itemize}
  \item We have
  \begin{displaymath}
   (\op{R}^{A}_{B}M)_b = \bigoplus_{\psi(a) = b} M_a.
  \end{displaymath}
  \item We have
  \begin{displaymath}
   (\op{I}^A_B N)_a = N_{\psi(a)}.
  \end{displaymath}
  The $R$-module structure on $\op{I}^A_B N$ is as follows. For $r \in R_a$ and $n \in (\op{I}^A_B N)_{a^{\prime}}$,
  \begin{displaymath}
   r \cdot n  = rn \in  N_{\psi(a+a^{\prime})} = (\op{I}^A_B N)_{a+a^{\prime}}.
  \end{displaymath}
  \item Both $\op{R}^{A}_{B}$ and $\op{I}^{A}_{B}$ are exact functors.
  \item The functor, $\op{R}^{A}_{B}$, is left adjoint to $\op{I}^{A}_{B}$ so we write
  \begin{displaymath}
   \op{R}^{A}_{B} \dashv \op{I}^{A}_{B}.
  \end{displaymath}
  \item For any $M \in \op{Mod}_A R$, we have an isomorphism of $A$-graded modules
  \begin{displaymath}
   ( \op{I}^{A}_{B} \circ \op{R}^{A}_{B})(M) \cong \bigoplus_{a \in \op{ker} \psi} M(a).
  \end{displaymath}
 \end{itemize}
\end{proposition}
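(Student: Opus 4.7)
The plan is to work entirely on the algebraic side via Proposition~\ref{proposition: Qcoh-G = Mod-hat-G}, translating the geometric definitions of $\op{Res}$ and $\op{Ind}$ into explicit operations on graded modules. I would first establish the explicit formulas in items (1) and (2), and then derive items (3), (4), and (5) formally from those descriptions.

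For (1), the surjection $\psi: A \twoheadrightarrow B$ induces a surjection $k[A] \twoheadrightarrow k[B]$, and thus a closed embedding $G(B) \hookrightarrow G(A)$. Under the equivalence $\op{Qcoh}_{G(A)} X \cong \op{Mod}_A R$, restricting the $G(A)$-equivariant structure on $\widetilde{M}$ to a $G(B)$-equivariant structure corresponds to coarsening the $A$-grading on $M$ to a $B$-grading along $\psi$; grouping the components $M_a$ by the value of $\psi(a)$ gives the stated formula for $\op{R}^A_B M$. For (2), I would use the definition $\op{Ind}^G_H = p_* \circ (i^*)^{-1}$ together with $G/H \cong G(\ker \psi)$. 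Unwinding the equivalence on global sections, the underlying $R$-module of $\op{I}^A_B N$ is $k[\ker \psi] \otimes_k N$, with the $A$-grading placing the summand $u^k \otimes N_b$ into the degrees $a \in A$ determined by $b$ and $k$; reindexing by $a$ yields $(\op{I}^A_B N)_a = N_{\psi(a)}$, with the advertised $R$-module structure inherited from the coarsened $B$-grading on $R$.

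With (1) and (2) in hand, the remaining items are formal. For (3), a sequence of $A$- or $B$-graded modules is exact iff each graded component is exact as a $k$-vector space, and both functors merely regroup or duplicate graded components, so exactness is preserved on the nose. For (4), a $B$-graded $R$-linear map $f: \op{R}^A_B M \to N$ is determined by its restrictions $f_a: M_a \to N_{\psi(a)}$; by (2), the target is canonically $(\op{I}^A_B N)_a$, so the collection $\{f_a\}$ is the same datum as an $A$-graded $R$-linear map $\hat f: M \to \op{I}^A_B N$, and this bijection is manifestly natural in both arguments. For (5), a direct calculation gives
\[
(\op{I}^A_B \op{R}^A_B M)_a \;=\; (\op{R}^A_B M)_{\psi(a)} \;=\; \bigoplus_{\psi(a') = \psi(a)} M_{a'} \;=\; \bigoplus_{k \in \ker \psi} M_{a+k} \;=\; \Bigl(\bigoplus_{k \in \ker \psi} M(k)\Bigr)_a,
\]
and a small check confirms that the $R$-module structures on the two sides agree.

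The main obstacle is the bookkeeping in (2): carefully tracing the $G$-equivariant structure through $(i^*)^{-1}$ and $p_*$, and matching it against an $A$-grading, requires some care to arrive cleanly at $(\op{I}^A_B N)_a = N_{\psi(a)}$. Once that formula is secure, every remaining assertion in the proposition reduces to a routine computation on graded components.
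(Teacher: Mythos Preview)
Your proposal is correct, but the paper organizes the argument differently and in a way that sidesteps exactly the bookkeeping you flag as the main obstacle.

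The paper does not derive items (3), (4), and (5) from the explicit formulas; it simply cites \cite[Lemma 2.16, 2.18]{BFK11} for those. Item (1) is handled essentially as you do. The interesting difference is item (2): rather than unwinding $p_* \circ (i^*)^{-1}$ through the equivalence of Proposition~\ref{proposition: Qcoh-G = Mod-hat-G}, the paper \emph{defines} a candidate functor $\op{J}^A_B$ by the formula $(\op{J}^A_B N)_a := N_{\psi(a)}$ with the advertised $R$-action, and then checks directly that $\op{R}^A_B \dashv \op{J}^A_B$. This check is exactly your argument for (4): a $B$-graded map $\op{R}^A_B M \to N$ is the same datum as an $A$-graded map $M \to \op{J}^A_B N$. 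Since the adjunction $\op{R}^A_B \dashv \op{I}^A_B$ is already known from the prequel, uniqueness of right adjoints forces $\op{I}^A_B \cong \op{J}^A_B$, which gives the formula in (2) without ever tracing the equivariant structure through $(i^*)^{-1}$ and $p_*$.

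So your route is more self-contained (it does not lean on \cite{BFK11} for the adjunction), while the paper's route is slicker: it leverages the already-established adjunction and uniqueness of adjoints to avoid the one genuinely delicate computation. Your argument for (4) is in fact the same computation the paper uses, just deployed toward a different end.
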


\begin{proof}
 Let $\phi: G(B) \to G(A)$ be the inclusion corresponding to $\psi: A \to B$. The final three statements are from \cite[Lemma 2.16, 2.18]{BFK11} using the equivalence of Proposition~\ref{proposition: Qcoh-G = Mod-hat-G}.

 For the first statement, note that $\op{Res}^{G(A)}_{G(B)}$ does not change the underlying quasi-coherent sheaf, just the equivariant structure. Similarly, $\op{R}^A_B$ only introduces a new $B$-grading on an $A$-graded module, $M$. An element $m \in \op{R}^A_B(M)$ has degree $b \in B$ if and only if $\Gamma(\theta|_{H \times X})(m) = u^{b} \otimes m$.

 Write $m = \sum_a m_a$ so that $\Gamma(\theta)(m) = \sum u^a \otimes m_a$. As $\Gamma(\theta|_{H \times X}) = (\phi \otimes 1)\Gamma(\theta)$, we have
 \[
\Gamma(\theta)(m) =  (\phi \otimes 1)\left(\sum_a u^a \otimes m_a\right) = \sum u^{\psi(a)} \otimes m_a = u^{b} \otimes \left( \sum_a m_a \right).
 \]
Therefore $m$ has degree $b$ if and only if $m_a = 0$ for $\psi(a) \not = b$.

For the second statement, we define a functor
 \begin{displaymath}
  \op{J}^A_B: \op{Mod}_B R \to \op{Mod}_A R
 \end{displaymath}
 by
 \begin{displaymath}
  (\op{J}^A_B N)_a := N_{\psi(a)}
 \end{displaymath}
 and $r \cdot n = rn \in N_{\psi(a+a^{\prime})} = (\op{J}^A_B N)_{a + a^{\prime}}$ if $a \in R_a$ and $n \in (\op{J}^A_B N)_{a^{\prime}} = N_{\psi(a^{\prime})}$. We check that $\op{J}^A_B$ is right adjoint to $\op{R}^A_B$. Since we already know that $\op{I}^A_B$ is right adjoint to $\op{R}^A_B$, this will give an isomorphism of functors, $\op{I}^A_B \cong \op{J}^A_B$.

 A morphism $f: M \to \op{J}^A_B N$ of $A$-graded modules is a morphism of $R$-modules satisfying
 \begin{displaymath}
  f(M_a) \subseteq (\op{J}^A_B N)_a = N_{\psi(a)}.
 \end{displaymath}
 Define
 \begin{align*}
  \tilde{f}: \op{R}^A_B M & \to N \\
  m & \mapsto f(n).
 \end{align*}
 This is a $B$-graded morphism of $R$-modules. Moreover, the assignment is natural in $M$ and $N$. Given a morphism $g: \op{R}^A_B M \to N$ define
 \begin{align*}
  \bar{g}: M & \to \op{J}^A_B N \\
  m & \mapsto g(m) \in N_{\psi(a)} = (\op{J}^A_B N)_a \text{ if } m \in M_a.
 \end{align*}
 This is inverse to the previous assignment.
\end{proof}

\subsection{Factorizations} \label{subsection: background on factorizations}

In this subsection, we recall some definitions and results from \cite[Section 3]{BFK11} and \cite{BDFIK} restricting attention to Abelian group actions on affine varieties.

Let $\mathcal A$ be a $k$-linear Abelian category,
\begin{displaymath}
 \Phi: \mathcal A \to \mathcal A
\end{displaymath}
be an autoequivalence of $\mathcal A$, and
\begin{displaymath}
 w: \op{Id}_{\mathcal A} \to \Phi
\end{displaymath}
be a natural transformation from the identity functor to $\Phi$. We assume that
\begin{displaymath}
 w_{\Phi(A)} = \Phi(w_A)
\end{displaymath}
for all $A \in \mathcal A$.

The objects in the following definition are a natural generalization of D. Eisenbud's matrix factorizations, \cite{EisMF}.

\begin{definition}
 The \newterm{dg-category of factorizations}, $\mathsf{Fact}(\mathcal A,w)$, of the triple, $(\mathcal A, \Phi, w)$, has objects a pair of objects, $E_{-1}, E_0 \in \mathcal A$, and a pair of morphisms,
 \begin{align*}
  \phi_{-1}^{E} &: E_{0} \to \Phi(E_{-1}) \\
  \phi_0^{E} &: E_{-1} \to E_0
 \end{align*}
 such that
 \begin{align*}
  \phi_{-1}^{E} \circ \phi_{0}^{E} & = w_{E_{-1}} : E_{-1} \to \Phi(E_{-1}), \\
  \Phi(\phi_{0}^{E}) \circ \phi^{E}_{-1} & = w_{E_{0}} : E_{0} \to \Phi(E_{0}).
 \end{align*}
 We shall often simply denote the factorization, $(E_{-1}, E_0, \phi^{E}_{-1}, \phi^{E}_0)$, by $E$.

 The morphism complex between two objects, $E$ and $F$, as a graded vector space, can be described as follows. For $n=2l$, we have
 \begin{displaymath}
  \op{Hom}^n_{\mathsf{Fact}(\mathcal A,w)}( E, F) = \op{Hom}_{\mathcal A}(E_{-1}, \Phi^l(F_{-1})) \oplus \op{Hom}_{\mathcal A}(E_0, \Phi^l(F_0))
 \end{displaymath}
 and for $n=2l+1$, we have
 \begin{displaymath}
  \op{Hom}^n_{\mathsf{Fact}(\mathcal A,w)}( E, F) = \op{Hom}_{\mathcal A}(E_{-1}, \Phi^l( F_0 )) \oplus \op{Hom}_{\mathcal A}( E_{0}, \Phi^{l+1}( F_{-1})).
 \end{displaymath}
 The differential applied to $(f_{-1},f_0) \in \op{Hom}^n_{\mathsf{Fact}(\mathcal A,w)}( E, F)$
 \begin{displaymath}
  = \begin{cases}
   \left((f_0 \circ \phi_0^{ E} - \Phi^l(\phi^{ F}_0) \circ f_{-1}, \Phi(f_{-1}) \circ \phi^{ E}_{-1} - \Phi^l( \phi^{ F}_{-1}) \circ f_0\right) & \text{if }n=2l \\
   \left((f_0 \circ \phi_0^{ E} + \Phi^l(\phi^{F}_{-1}) \circ f_{-1}, \Phi(f_{-1}) \circ \phi^{E}_{-1} + \Phi^{l+1}(\phi^{F}_0) \circ f_0\right) & \text{if }n=2l+1. \\
  \end{cases}
 \end{displaymath}
\end{definition}

One can pass to an associated Abelian category. It has the same objects as $\mathsf{Fact}(\mathcal A,w)$, but morphisms between $E$ and $F$ are closed degree-zero morphisms in $\op{Hom}_{\mathsf{Fact}(\mathcal A,w)}(E, F)$. Denote this category by $Z^0\mathsf{Fact}(\mathcal A,w)$. The category, $Z^0\mathsf{Fact}(\mathcal A,w)$, with component-wise kernels and cokernels is an Abelian category.

\begin{definition}
 Given a complex of objects from $Z^0\mathsf{Fact}(\mathcal A,w)$,
\begin{displaymath}
 \cdots \to E^{b} \overset{f^b}{\to} E^{b+1} \overset{f^{b+1}}{\to} \cdots \overset{f^{t-1}}{\to} E^{t} \to \cdots ,
\end{displaymath}
 the \newterm{totalization}, $T$, is the factorization

\begin{align*}
 T_{-1} & := \bigoplus_{i=2l} \Phi^{-l}(E_{-1}^i) \oplus \bigoplus_{i=2l-1} \Phi^{-l}(E_0^i) \\
 T_0 & := \bigoplus_{i=2l} \Phi^{-l}(E_{0}^i) \oplus \bigoplus_{i=2l+1} \Phi^{-l}(E_{-1}^i) \\
 \phi^{T}_0 & := \begin{pmatrix} \ddots & 0 & 0 & 0 & 0 \\ \ddots & -\phi_{-1}^{E^{-1}} & 0 & 0 & 0 \\ 0 & f_0^{-1} & \phi_0^{E^{0}} & 0 & 0 \\ 0 & 0 & f_{-1}^{0} & -\Phi^{-1}(\phi_{-1}^{E^1}) & 0 \\ 0 & 0 & 0 & \ddots & \ddots \end{pmatrix} \\
 \phi^{T}_{-1} & := \begin{pmatrix} \ddots & 0 & 0 & 0 & 0 \\ \ddots & -\Phi(\phi_{0}^{E^{-1}}) & 0 & 0 & 0 \\ 0 & \Phi(f_{-1}^{-1}) & \phi_{-1}^{E^{0}} & 0 & 0\\ 0 & 0 & f_0^{0} & -\phi_{0}^{E^1} & 0 \\ 0 & 0 & 0 & \ddots & \ddots \end{pmatrix}
\end{align*}
 For any closed morphism of cohomological degree zero, $f: E \to F$, in $\mathsf{Fact}(\mathcal A,w)$, we can form the cone factorization, $C(f)$, as the totalization of the complex
 \begin{displaymath}
  E \overset{f}{\to} F
 \end{displaymath}
 where $F$ is in degree zero.

 Let $\op{Tot}: \mathsf{Ch}^{\op{b}}(Z^0\mathsf{Fact}(\mathcal A,w)) \to \mathsf{Fact}(\mathcal A,w)$ denote the totalization dg-functor from the dg-category of bounded chain complexes over $\mathsf{Fact}(\mathcal A,w)$ to $\mathsf{Fact}(\mathcal A,w)$.
\end{definition}

\begin{proposition} \label{prop: facts are triangulated}
 The homotopy category, $[\mathsf{Fact}(\mathcal A,w)]$, is a triangulated category.
\end{proposition}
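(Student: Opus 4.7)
The plan is to verify that $\mathsf{Fact}(\mathcal A,w)$ is a \emph{pretriangulated} dg-category in the sense of Bondal--Kapranov, from which the triangulated structure on the homotopy category follows by a general principle: once one exhibits a shift autoequivalence and cones of closed degree-zero morphisms inside the dg-category itself, the collection of cone triangles generates a triangulated structure on the homotopy category.

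First I would define the shift functor $[1]$ on $\mathsf{Fact}(\mathcal A,w)$ by setting
\[
E[1] := \bigl(E_0,\ \Phi(E_{-1}),\ -\Phi(\phi_0^E),\ -\phi_{-1}^E\bigr),
\]
and extending it to morphism complexes by the obvious shift of gradings. One checks directly from the morphism-complex formulae that $[1]$ is a dg-functor commuting with $\op{Hom}^\bullet$ up to the expected shift, and that $[2]$ agrees with the autoequivalence induced by $\Phi$; in particular $[1]$ descends to an autoequivalence on $[\mathsf{Fact}(\mathcal A,w)]$ with quasi-inverse $[-1]$.

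Next, given a closed degree-zero morphism $f:E\to F$, I would use the totalization construction from the excerpt to produce the cone factorization $C(f) := \op{Tot}(E\overset{f}{\to}F)$, which is naturally equipped with a closed morphism $F\to C(f)$ of degree $0$ and a closed morphism $C(f)\to E[1]$ of degree $0$. I would declare a candidate triangle in $[\mathsf{Fact}(\mathcal A,w)]$ to be \emph{distinguished} if it is isomorphic in the homotopy category to the standard cone triangle
\[
E \xrightarrow{f} F \to C(f) \to E[1]
\]
for some closed degree-zero morphism $f$. The axioms (TR1) and (TR2) are then formal: (TR1) holds for identity morphisms because $C(\op{id}_E)$ is null-homotopic (a standard contracting homotopy is built from the identity on components), and the rotation axiom (TR2) reduces to showing that the natural morphism $C(f)/F \to E[1]$ is a homotopy equivalence, which one checks by writing down an explicit contracting homotopy on the telescope factorization.

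Axiom (TR3) follows, as usual, by constructing the third morphism between cones componentwise from a commutative square up to homotopy. The main obstacle, as in any such proof, is the octahedral axiom (TR4): given $E\xrightarrow{f}F\xrightarrow{g}G$, one must produce a distinguished triangle $C(f)\to C(gf)\to C(g)\to C(f)[1]$. I would handle this by realizing all three cones as totalizations of a single bicomplex of factorizations, namely the $2\times 2$ diagram with rows $E\xrightarrow{f}F$ and $E\xrightarrow{gf}G$ connected by $g$, and then invoking the fact that partial and full totalizations of a bicomplex fit into a canonical exact sequence of factorizations that is split on components; the computation is parallel to the classical verification for $Z^0\mathsf{Fact}(\mathcal A,w)$ viewed as a Frobenius exact category with projective-injectives the totalizations of zero-homotopic complexes. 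With those checks in hand, the class of mapping-cone triangles satisfies (TR1)--(TR4) and endows $[\mathsf{Fact}(\mathcal A,w)]$ with the structure of a triangulated category.
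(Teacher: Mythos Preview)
Your approach is essentially the same as the paper's: define the shift $[1]$ explicitly, declare distinguished triangles to be those isomorphic to mapping-cone triangles, and then verify (TR1)--(TR4) by the same computations one does for the homotopy category of chain complexes over an abelian category. The paper in fact suppresses the entire verification after giving the shift and the class of triangles, so your sketch supplies more detail than the paper does; note only that the paper's shift carries a single minus sign rather than your two (either convention yields a valid factorization and an autoequivalence squaring to $\Phi$).
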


\begin{proof}
 The translation is $[1]$ is the dg-functor given by
 \begin{displaymath}
  E[1] := (E_0, \Phi(E_{-1}), -\phi^E_{-1}, \Phi(\phi^E_0)).
 \end{displaymath}
 The class of triangles are the sequences of morphisms
 \begin{displaymath}
  E \overset{f}{\to} F \to C(f) \to E[1].
 \end{displaymath}
 The proof now runs completely analogously to proving that the homotopy category of chain complexes of an Abelian category is triangulated. It is therefore suppressed.
\end{proof}

The following definitions are due to Positselski, \cite{Pos1}.

\begin{definition}{(Positselski)}
 Let $\mathsf{Acyc}(\mathcal A,w)$ denote the full subcategory of objects of $\mathsf{Fact}(\mathcal A,w)$ consisting of totalizations of bounded exact complexes from $Z^0\mathsf{Fact}(X,G,w)$. Objects of $\mathsf{Acyc}(\mathcal A,w)$ are called \newterm{acyclic}.

 The \newterm{absolute derived category}, or simply the \newterm{derived category}, of $[\mathsf{Fact}(\mathcal A,w)]$ is the Verdier quotient of $[\mathsf{Fact}(\mathcal A,w)]$ by $[\mathsf{Acyc}(\mathcal A,w)]$,
 \begin{displaymath}
  \op{D}^{\op{abs}}[\mathsf{Fact}(\mathcal A,w)] := [\mathsf{Fact}(\mathcal A,w)]/[\mathsf{Acyc}(\mathcal A,w)].
 \end{displaymath}

 We say that two factorizations are \newterm{quasi-isomorphic} if they are isomorphic in the derived category.
\end{definition}

Let us recall a useful fact, due essentially to Positselski, about $\op{D}^{\op{abs}}[\mathsf{Fact}(\mathcal A,w)]$. Let $\mathsf{Proj}(\mathcal A,w)$ be the dg-subcategory consisting of factorizations with projective components.

\begin{proposition} \label{prop: projective enhancement}
 Assume that any object of $\mathcal A$ has finite projective dimension. The composition
 \begin{align*}
  [\mathsf{Proj}(\mathcal A,w)] & \to [\mathsf{Fact}(\mathcal A,w)] \to \op{D}^{\op{abs}}[\mathsf{Fact}(\mathcal A,w)]
 \end{align*}
 is an equivalence.
\end{proposition}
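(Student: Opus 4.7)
The plan is to split the statement into essential surjectivity and full faithfulness, with both resting on a single orthogonality lemma: for every $P \in \mathsf{Proj}(\mathcal{A},w)$ and every $A \in \mathsf{Acyc}(\mathcal{A},w)$, the morphism complex $\op{Hom}^{\bullet}_{\mathsf{Fact}(\mathcal{A},w)}(P,A)$ is acyclic, so in particular $\op{Hom}_{[\mathsf{Fact}(\mathcal{A},w)]}(P,A) = 0$. To prove this lemma, write $A = \op{Tot}(E^{\bullet})$ for a bounded exact complex $E^{\bullet}$ in $Z^0\mathsf{Fact}(\mathcal{A},w)$. Because $P_{-1}$ and $P_0$ are projective in $\mathcal{A}$, the functors $\op{Hom}_{\mathcal{A}}(P_{-1},-)$ and $\op{Hom}_{\mathcal{A}}(P_0,-)$ are exact, so applying $\op{Hom}^{\bullet}_{\mathsf{Fact}(\mathcal{A},w)}(P,-)$ termwise to $E^{\bullet}$ yields a bounded exact complex of complexes. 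Since the morphism complex into a totalization is identified with the totalization of the resulting bicomplex, and the total complex of a bounded exact complex of complexes is acyclic, the lemma follows.

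For essential surjectivity, given $E \in \mathsf{Fact}(\mathcal{A},w)$, I would use the finite projective dimension hypothesis to choose bounded projective resolutions $P_{-1}^{\bullet} \to E_{-1}$ and $P_0^{\bullet} \to E_0$ in $\mathcal{A}$. Lift $\phi_0^E$ and $\phi_{-1}^E$ to chain maps between the resolutions, and then use the naturality assumption $w_{\Phi(-)} = \Phi(w_{-})$ together with a standard iterative correction on the resolutions to arrange the factorization relations $\phi_{-1}^P \phi_0^P = w_{P_{-1}}$ and $\Phi(\phi_0^P) \phi_{-1}^P = w_{P_0}$ strictly. Totalizing in the resolution direction yields a projective factorization $P$ together with a closed degree-zero morphism $P \to E$ whose cone is the totalization of a bounded exact complex of factorizations, hence is acyclic. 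Therefore $P \simeq E$ in $\dabs[\mathsf{Fact}(\mathcal{A},w)]$.

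Full faithfulness is then a formal consequence of the orthogonality lemma via the usual Verdier-localization calculation. A morphism $P \to P'$ between projective factorizations in $\dabs[\mathsf{Fact}(\mathcal{A},w)]$ is represented by a roof $P \xleftarrow{s} X \xrightarrow{f} P'$ with $\op{Cone}(s)$ acyclic. Applying $\op{Hom}_{[\mathsf{Fact}(\mathcal{A},w)]}(P,-)$ to the distinguished triangle $X \to P \to \op{Cone}(s) \to X[1]$ and invoking the orthogonality lemma shows that $\op{Hom}_{[\mathsf{Fact}(\mathcal{A},w)]}(P,X) \to \op{Hom}_{[\mathsf{Fact}(\mathcal{A},w)]}(P,P)$ is an isomorphism; lifting $\op{id}_P$ produces a section $g: P \to X$ of $s$, and $f \circ g: P \to P'$ represents the original roof. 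Conversely, if a closed morphism $f: P \to P'$ becomes null in $\dabs$, it factors up to homotopy through an acyclic, and the orthogonality lemma forces $f$ to be null-homotopic already in $[\mathsf{Fact}(\mathcal{A},w)]$.

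The main obstacle is the rigidification step inside essential surjectivity, namely producing a projective factorization whose structure maps satisfy the factorization relations on the nose rather than merely up to chain homotopy. Everything else is either formal triangulated-category yoga or standard homological algebra, but this strictification requires a careful perturbation of the lifted chain maps using the hypothesis $w_{\Phi(-)} = \Phi(w_{-})$; once it is arranged the remainder of the argument assembles mechanically.
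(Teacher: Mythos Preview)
The paper does not give a self-contained proof of this proposition: it simply cites \cite[Corollary 2.23]{BDFIK}. So there is no detailed argument in the paper to compare against, only the reference.

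Your outline is the standard one and is correct in spirit: the orthogonality lemma is exactly right, and full faithfulness follows from it formally as you describe. The one place where your sketch is genuinely incomplete is the strictification step in essential surjectivity, which you yourself flag as the main obstacle. Lifting $\phi_0^E$ and $\phi_{-1}^E$ to chain maps between projective resolutions only gives the factorization relations up to chain homotopy, and turning those homotopies into honest equalities by perturbation is fiddly to carry out in full. A cleaner route, and the one typically taken in the literature (including, in essence, \cite{BDFIK}), is to avoid resolving the components separately and instead observe that the abelian category $Z^0\mathsf{Fact}(\mathcal A,w)$ itself has enough projectives with projective components: for any $M \in \mathcal A$ one has the contractible factorization $(M, M \oplus \Phi M)$ with the obvious maps, and using these one builds a finite projective resolution of $E$ directly in $Z^0\mathsf{Fact}(\mathcal A,w)$, whose length is controlled by the projective dimensions of $E_{-1}$ and $E_0$. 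Totalizing that resolution gives a projective factorization mapping to $E$ with acyclic cone, and no strictification is needed because the factorization relations hold on the nose at every stage.
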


\begin{proof}
 This is \cite[Corollary 2.23]{BDFIK}.
\end{proof}

We shall study dg-categories of factorizations in two closely-related situations: one,
\begin{itemize}
 \item $\mathcal A = \op{Qcoh}_G X$ or $\op{coh}_G X$ with $X$ a smooth variety acted on by an affine algebraic group, $G$,
 \item $\Phi = \bullet \otimes_{\mathcal O_X} \mathcal L$ for an invertible equivariant sheaf $\mathcal L$, and
 \item $w \in \Gamma(X, \mathcal L)^G$,
\end{itemize}
and, two,
\begin{itemize}
 \item $\mathcal A = \op{Mod}_A R$ or $\op{mod}_A R$ with $A$ a finitely-generated Abelian group and $R$ a smooth, commutative finitely-generated $A$-graded $k$-algebra,
 \item $\Phi = (a)$ for $a \in A$, and
 \item $w \in R_a$.
\end{itemize}

\begin{definition} \label{defn: multi facts}
 In situation one, we will denote the dg-category of quasi-coherent equivariant factorizations by $\mathsf{Fact}(X,G,w)$ and denote the dg-category of coherent equivariant factorizations by $\mathsf{fact}(X,G,w)$. Furthermore, $\mathsf{Vect}(X,G,w)$, respectively $\mathsf{vect}(X,G,w)$, will denote the full sub-dg-category of equivariant factorizations with locally-free components, respectively locally-free coherent components.

 In situation two, we will denote the dg-category of graded factorizations by $\mathsf{Fact}(R,A,w)$ and denote the dg-category of finitely-generated graded factorizations by $\mathsf{fact}(R,A,w)$. Furthermore, $\mathsf{Proj}(R,A,w)$, respectively $\mathsf{proj}(R,A,w)$, will denote the full sub-dg-category of graded factorizations with projective components, respectively finitely-generated projective components.
\end{definition}

\begin{remark}
 By Lemma~\ref{lemma: proj equiv = equiv + proj}, projective graded modules are graded projective modules and the slight ambiguity in Definition~\ref{defn: multi facts} is acceptable.
\end{remark}

\begin{lemma} \label{lemma: graded = equivariant facts}
 Let $X = \op{Spec} R$ and let $w \in \Gamma(X,\mathcal O_X(\chi))^G = R_{\chi}$. Taking global sections induces equivalences of dg-categories
 \begin{align*}
  \mathsf{Fact}(X,G,w) & \cong \mathsf{Fact}(R,\widehat{G},w) \\
  \mathsf{fact}(X,G,w) & \cong \mathsf{fact}(R,\widehat{G},w) \\
  \mathsf{Vect}(X,G,w) & \cong \mathsf{Proj}(R,\widehat{G},w) \\
  \mathsf{Proj}(X,G,w) & \cong \mathsf{proj}(R,\widehat{G},w).
 \end{align*}
\end{lemma}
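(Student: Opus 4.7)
The plan is to bootstrap Proposition~\ref{proposition: Qcoh-G = Mod-hat-G} (which gives equivalences of the underlying Abelian categories) to equivalences of the dg-categories of factorizations. The key observation is that the dg-category $\mathsf{Fact}(\mathcal A, w)$ depends only on the triple $(\mathcal A, \Phi, w)$, so any equivalence of triples automatically lifts to an equivalence of factorization dg-categories. Everything is therefore reduced to checking that $\Gamma$ intertwines the relevant autoequivalences and natural transformations.

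First I would record the intertwining. Proposition~\ref{proposition: Qcoh-G = Mod-hat-G} gives mutually inverse equivalences $\Gamma$ and $\widetilde{(\bullet)}$ between $\op{Qcoh}_G X$ and $\op{Mod}_{\widehat G} R$. Because $\mathcal O_X(\chi)$ corresponds to $R(\chi)$ and $\Gamma$ is compatible with tensor product over $\mathcal O_X$, there is a natural isomorphism
\[
 \Gamma(\mathcal E \otimes_{\mathcal O_X} \mathcal O_X(\chi)) \cong \Gamma(\mathcal E)(\chi),
\]
i.e.\ $\Gamma$ intertwines $\Phi = \bullet \otimes_{\mathcal O_X} \mathcal O_X(\chi)$ with the shift autoequivalence $(\chi)$ on graded modules. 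Moreover $w \in \Gamma(X, \mathcal O_X(\chi))^G = R_\chi$ gives the same natural transformation $\op{Id} \to \Phi$ on both sides: multiplication by $w$ on (graded) modules, and the corresponding morphism of sheaves on $X$. So the triples $(\op{Qcoh}_G X, \bullet\otimes\mathcal O_X(\chi), w)$ and $(\op{Mod}_{\widehat G} R, (\chi), w)$ are equivalent in the obvious sense.

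Next I would define the dg-functor componentwise. Given a factorization $E = (E_{-1}, E_0, \phi^E_{-1}, \phi^E_0)$, set
\[
 \Gamma(E) := (\Gamma(E_{-1}), \Gamma(E_0), \Gamma(\phi^E_{-1}), \Gamma(\phi^E_0)),
\]
where we implicitly use the intertwiner to view $\Gamma(\phi^E_{-1})$ as a map $\Gamma(E_0)\to\Gamma(E_{-1})(\chi)$. The two factorization identities pass through $\Gamma$ because $\Gamma$ is a functor and the intertwiner matches $\Phi$ with $(\chi)$. The graded components
\[
 \op{Hom}^n_{\mathsf{Fact}(\mathcal A,w)}(E,F)
\]
are built from $\op{Hom}_{\mathcal A}$ groups shifted by powers of $\Phi$, and the differential is defined by universal formulas in $\phi^E_i$, $\phi^F_i$, and $\Phi$. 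Since $\Gamma$ is an equivalence of Abelian categories intertwining $\Phi$ with $(\chi)$, it induces isomorphisms of these Hom complexes that respect the differential. Hence $\Gamma$ is a quasi-equivalence (in fact an equivalence) of dg-categories $\mathsf{Fact}(X,G,w) \to \mathsf{Fact}(R,\widehat G, w)$, with quasi-inverse induced by $\widetilde{(\bullet)}$.

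Finally I would restrict to subcategories. By Proposition~\ref{proposition: Qcoh-G = Mod-hat-G}, $\Gamma$ restricts to equivalences $\op{coh}_G X \simeq \op{mod}_{\widehat G} R$ and $\op{Vect}_G X \simeq \op{Proj}_{\widehat G} R$ (with the analogous coherent version for locally-free/finitely-generated projective), and restriction to the subcategory of factorizations with components in a given subcategory is automatic. This yields the remaining three equivalences, with the fourth being the obvious coherent analogue. There is no substantive obstacle here: the whole argument is an unravelling of definitions, and the only thing to watch is that the intertwiner between $\Phi$ and $(\chi)$ is applied consistently when translating the factorization maps $\phi^E_{\pm 1}$ and the higher-degree Hom components.
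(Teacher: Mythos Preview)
Your proof is correct and is essentially the same as the paper's: the paper merely states that the lemma is an immediate consequence of Proposition~\ref{proposition: Qcoh-G = Mod-hat-G}, and you have spelled out precisely why---namely that $\Gamma$ intertwines the twist $\otimes\,\mathcal O_X(\chi)$ with the shift $(\chi)$ and matches the natural transformation $w$, so the factorization dg-categories built from the equivalent triples are equivalent.
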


\begin{proof}
 This is an immediate consequence of Proposition~\ref{proposition: Qcoh-G = Mod-hat-G}.
\end{proof}

\begin{definition}
 We will let $\overline{\mathsf{vect}}(X,G,w)$ be the dg-subcategory of $\mathsf{Vect}(X,G,w)$ consisting of summands, in $[\mathsf{Vect}(X,G,w)]$, of objects from $[\mathsf{vect}(X,G,w)]$. Similarly, let $\overline{\mathsf{proj}}(R,A,w)$ be the dg-subcategory of $\mathsf{Proj}(R,A,w)$ consisting of summands, in $[\mathsf{Proj}(R,A,w)]$, of objects from $\mathsf{proj}(R,A,w)$.
\end{definition}

We will use the following functor later. Assume that $R$ is smooth. Let $\mathsf{mod}^{\op{b}}_A R$ be the dg-category of bounded complexes of finitely-generated $A$-graded $R$-modules. We have a dg-functor
\begin{align*}
 \otimes_R : \mathsf{mod}^{\op{b}}_A R \otimes_k \mathsf{fact}(R,A,w) & \to \mathsf{fact}(R,A,w) \\
 C \otimes_k E & \mapsto \op{Tot}(C^i \otimes_R E).
\end{align*}

If $C$ is a complex of flat $R$-modules, then $C \otimes_R \bullet$ takes acyclic factorizations to acyclic factorizations. Thus, we get an exact functor
\begin{displaymath}
 C \otimes_R \bullet: \dabs[\mathsf{fact}(R,A,w)] \to \dabs[\mathsf{fact}(R,A,w)].
\end{displaymath}
If $C$ is an arbitrary bounded complex of finitely-generated $A$-graded $R$-modules, choose $D$ a bounded complex of finitely-generated flat modules quasi-isomorphic to $C$ and define
\begin{displaymath}
 C \overset{\mathbf{L}}{\otimes}_R \bullet := D \otimes_R \bullet.
\end{displaymath}
Note this does not depend on the choice of $D$, at least up to isomorphism.

Similarly, if $F$ is a factorization with flat components, then we have an exact functor
\begin{displaymath}
 \bullet \otimes_R F : \op{D}^{\op{b}}(\op{mod}_A R) \to \dabs[\mathsf{fact}(R,A,w)].
\end{displaymath}
If $E$ is a factorization with finitely-generated components, by Proposition~\ref{prop: projective enhancement}, there is a factorization, $F$, with finitely-generated projective components that is quasi-isomorphic to $E$. Define
\begin{displaymath}
 \bullet \overset{\mathbf{L}}{\otimes}_R E := \bullet \otimes_R F.
\end{displaymath}
Note, again, that this does not depend upon the choice of $F$.

\begin{remark}
 The functor $\otimes_R$ is a special case of \cite[Definition 3.22]{BFK11} under the equivalences of Lemma~\ref{lemma: graded = equivariant facts}.
\end{remark}

\subsection{Morita products and factorizations}

Let $\op{dg-cat}_k$ denote the category of dg-categories over $k$. If $\mathsf D$ is a dg-category, the homotopy category, $[\mathsf D]$, has the same objects as $\mathsf D$ and
\begin{displaymath}
 \op{Hom}_{[\mathsf D]}(d,d^{\prime}) := \op{H}^0(\op{Hom}_{\mathsf D}(d,d^{\prime})).
\end{displaymath}

\begin{definition}
 Let $\mathsf D$ be a small dg-category. The category of left $\mathsf D$-modules, denoted $\mathsf D\op{-Mod}$, is the dg-category of dg-functors, $\mathsf D \to \mathsf{C}(k)$ where $\mathsf{C}(k)$ is the dg-category of chain complexes of vector spaces over $k$. The category of right $\mathsf D$-modules is the category of left $\mathsf D^{\op{op}}$-modules.

 Each object $d \in \mathsf D$ provides a representable right module
 \begin{align*}
  h_d : \mathsf{D}^{\op{op}} & \to \mathsf{C}(k) \\
  d^{\prime} & \mapsto \op{Hom}_{\mathsf{D}}(d^{\prime},d).
 \end{align*}
 We denote the dg-Yoneda embedding by $h: \mathsf{D} \to \mathsf{D}^{\op{op}}\op{-Mod}$. We also denote the contravariant version by
 \begin{align*}
  h^d : \mathsf{D} & \to \mathsf{C}(k) \\
  d^{\prime} & \mapsto \op{Hom}_{\mathsf{D}}(d,d^{\prime}).
 \end{align*}

 The Verdier quotient of $[\mathsf{D}\op{-Mod}]$ by the subcategory of acyclic modules is called the \newterm{derived category of $\mathsf{D}$-modules} and is denoted by $\op{D}[\mathsf{D}\op{-Mod}]$. The smallest thick subcategory of $\op{D}[\mathsf{D}^{\op{op}}\op{-Mod}]$ containing the image of $[h]$ is called the \newterm{category of perfect $\mathsf{D}$-modules} and is denoted by $\op{perf}(\mathsf{D})$.
\end{definition}

\begin{remark}
 We will also consider dg-categories that are \newterm{quasi-small}. A dg-category $\mathsf{D}$ is quasi-small if $[\mathsf{D}]$ is essentially small. In this case, we can choose a small full subcategory of $\mathsf{D}$ quasi-equivalent to $\mathsf{D}$ and work with that subcategory to define categories of modules and bimodules. However, doing this in each example is tedious and not edifying. So we will suppress these arguments throughout the paper.
\end{remark}

\begin{definition}
 A dg-category, $\mathsf D$, is \newterm{pre-triangulated} if $[\mathsf D]$ is a triangulated subcategory, via $h$, of $\op{D}[\mathsf{D}^{\op{op}}\op{-Mod}]$. A dg-category, $\mathsf{D}$, is \newterm{pre-thick} if $[\mathsf D]$ is a thick, which is by definition triangulated, subcategory of $\op{D}[\mathsf{D}^{\op{op}}\op{-Mod}]$.
\end{definition}

\begin{definition}
 A dg-functor, $f: \mathsf{C} \to \mathsf{D}$, is a \newterm{quasi-equivalence} if
 \begin{displaymath}
  \op{H}^{\bullet}(f): \op{H}^{\bullet}(\op{Hom}_{\mathsf{C}}(c,c^{\prime})) \to \op{H}^{\bullet}(\op{Hom}_{\mathsf{D}}(f(c),f(c^{\prime})))
 \end{displaymath}
 is a an isomorphism for all $c,c^{\prime} \in \mathsf{C}$ and $[f]: [\mathsf{C}] \to [\mathsf{D}]$ is essentially surjective.

 Let $\op{Ho(dg-cat)}_k$ denote the localization of $\op{dg-cat}_k$ at the class of quasi-equivalences. This category is called  \newterm{the homotopy category of dg-categories}. If $\mathsf{C}$ and $\mathsf{D}$ are quasi-equivalent, we shall write $\mathsf{C} \simeq \mathsf{D}$.
\end{definition}

\begin{definition}
 Let $\mathsf{C}$ and $\mathsf{D}$ be two dg-categories. A \newterm{quasi-functor} $a: \mathsf{C} \to \mathsf{D}$ is a dg-functor
 \begin{displaymath}
  a: \mathsf{C} \to \mathsf{D}^{\op{op}}\op{-Mod}
 \end{displaymath}
 such that for each $c \in \mathsf{C}$, $a(c)$ is \newterm{quasi-representable}, i.e. quasi-isomorphic to $h_d$ for some $d \in \mathsf{D}$. Note that a quasi-functor corresponds to a bimodule $a \in \mathsf{C} \otimes_k \mathsf{D}^{\op{op}}\op{-Mod}$. Also note, that any quasi-functor induces a functor on homotopy categories which we denote by $[a]: [\mathsf{C}] \to [\mathsf{D}]$.
\end{definition}

\begin{lemma} \label{lemma: morphisms in hodgcat}
 The isomorphism classes of morphisms from $\mathsf{C}$ to $\mathsf{D}$ in $\op{Ho(dg-cat)}_k$ are in bijection with isomorphism classes of quasi-functors from $\mathsf{C}$ to $\mathsf{D}$ viewed as objects of $\op{D}[\mathsf{C} \otimes_k \mathsf{D}^{\op{op}}\op{-Mod}]$.
\end{lemma}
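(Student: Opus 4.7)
The plan is to invoke B.~T\"oen's derived Morita theory, which identifies the internal Hom in $\hodgcat$ with a derived category of bimodules. I would organize the proof into three steps.

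First, I would place $\op{dg-cat}_k$ into a suitable model-theoretic framework, namely Tabuada's model structure, whose weak equivalences are precisely the quasi-equivalences. In such a model category, morphisms in $\hodgcat$ are computed as homotopy classes of dg-functors out of a cofibrant replacement $\mathsf{Q}\mathsf{C} \to \mathsf{C}$. Thus $\op{Hom}_{\hodgcat}(\mathsf{C},\mathsf{D})$ is in bijection with $\pi_0$ of the simplicial mapping space, and the task reduces to giving an explicit model for the latter in terms of bimodules.

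Second, I would appeal to T\"oen's theorem providing an explicit dg-enhancement of the internal Hom: for cofibrant $\mathsf{Q}\mathsf{C}$, the dg-category $\op{Rep}(\mathsf{Q}\mathsf{C},\mathsf{D})$ of dg-functors $\mathsf{Q}\mathsf{C} \to \mathsf{D}^{\op{op}}\op{-Mod}$ landing in the quasi-representable modules is quasi-equivalent to the internal Hom $\mathbf{R}\underline{\op{Hom}}(\mathsf{C},\mathsf{D})$. Under the tensor-Hom adjunction, such a dg-functor corresponds to a $\mathsf{C}\otimes_k \mathsf{D}^{\op{op}}$-bimodule, and the quasi-representability condition is precisely the defining property of a quasi-functor. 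Passing to isomorphism classes of $\op{H}^0$ on both sides, the homotopy classes of such dg-functors match isomorphism classes of quasi-representable objects in $\op{D}[\mathsf{C} \otimes_k \mathsf{D}^{\op{op}}\op{-Mod}]$.

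Third, I would verify that this bijection is independent of the choice of cofibrant replacement and compatible with composition, so that the statement is intrinsic to $\mathsf{C}$ and $\mathsf{D}$. Here one uses that any two cofibrant replacements are homotopy equivalent, inducing a canonical isomorphism between the corresponding bimodule categories in $\op{D}[\mathsf{C} \otimes_k \mathsf{D}^{\op{op}}\op{-Mod}]$. The main technical obstacle is the construction of explicit cofibrant replacements (via semi-free resolutions) and the verification that the assignment is surjective --- that every morphism in $\hodgcat$ arises from an honest quasi-functor. This is the heart of T\"oen's argument in \cite{Toe}, and I would cite it directly rather than reprove it, since the result is now standard and foundational for everything that follows in the paper.
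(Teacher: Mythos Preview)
Your proposal is correct and takes essentially the same approach as the paper: both defer to T\"oen's internal Hom theorem \cite[Theorem 6.1]{Toe}. The paper's proof is a single sentence citing this result, while you have simply unpacked more of the surrounding context (Tabuada's model structure, cofibrant replacement, quasi-representability) before arriving at the same citation.
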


\begin{proof}
 This is an immediate consequence of the internal Hom constructed by T\"oen for $\op{Ho(dg-cat)}_k$, \cite[Theorem 6.1]{Toe}.
\end{proof}

\begin{remark}
 As a consequence of Lemma~\ref{lemma: morphisms in hodgcat}, to demonstrate that two dg-categories are isomorphic in $\op{Ho(dg-cat)}_k$ is equivalent to proving the existence of a quasi-functor between them whose associated functor on homotopy categories is an equivalence.
\end{remark}

\begin{definition}
 Let $\mathsf{C}$ be a small dg-category. The category $\mathsf{C}\op{-Mod}$ possesses the structure of a model category with $f: F \to G$ being a fibration, respectively a weak equivalence, if $f(c): F(c) \to G(c)$ is an epimorphism in each degree, respectively a quasi-isomorphism, for each $c \in \mathsf{C}$. This determines the cofibrations: they are those morphisms satisfying the left lifting property with respect to all acyclic fibrations, i.e. those maps that are fibrations and weak equivalences.

 Any object of $\mathsf{C}\op{-Mod}$ is fibrant. We let $\widehat{\mathsf{C}}$ be the subcategory of cofibrant objects in $\mathsf{C}^{\op{op}}\op{-Mod}$. The dg-category $\widehat{\mathsf{C}}$ is an enhancement of $\op{D}[\mathsf{C}^{\op{op}}\op{-Mod}]$. We let $\widehat{\mathsf{C}}_{\op{pe}}$ be the full sub-dg-category of $\widehat{\mathsf{C}}$ consisting of all objects that are compact in $\op{D}[\mathsf{C}^{\op{op}}\op{-Mod}]$. As any representable dg-module is cofibrant, we have a dg-functor \begin{displaymath}
  h: \mathsf{C} \to \widehat{\mathsf{C}}_{\op{pe}}.
 \end{displaymath}

 Following the lead of T\"oen, we introduce the following product. Let $\mathsf{D}$ be another small dg-category over $k$. The \newterm{Morita product} of $\mathsf{C}$ and $\mathsf{D}$ is
 \begin{displaymath}
  \mathsf{C} \circledast \mathsf{D} := \widehat{( \mathsf{C} \otimes_k \mathsf{D} )}_{\op{pe}}
 \end{displaymath}
 viewed as an object of $\hodgcat$. Because we view it as an object of $\hodgcat$, it is unique up to quasi-equivalence.
\end{definition}

\begin{remark}
 The cofibrant objects of $\mathsf{C}^{\op{op}}\op{-Mod}$ are exactly the summands of semi-free dg-modules \cite{FHT}. One can check that summands of semi-free dg-modules have the appropriate lifting property. Furthermore, for any dg-module, $M$, there exists a semi-free dg-module, $F$, and an acyclic fibration, $F \to M$. If we assume that $M$ is cofibrant, this must split.
\end{remark}

\begin{lemma} \label{lemma: alternative characterization of Morita product}
 Let $\mathsf{C}$ be a small dg-category. Let $\mathsf{D}$ be a pre-thick dg-subcategory of $\mathsf{C}^{\op{op}}\op{-Mod}$ such that the functor
 \begin{displaymath}
  [\mathsf{D}] \to \op{D}[\mathsf{C}^{\op{op}}\op{-Mod}]
 \end{displaymath}
 is fully-faithful and its essential image is $\op{perf}(\mathsf{C})$. Then, there is a quasi-equivalence
 \begin{displaymath}
  \widehat{\mathsf C}_{\op{pe}} \simeq \mathsf{D}.
 \end{displaymath}
\end{lemma}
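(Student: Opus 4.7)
The plan is to exhibit a quasi-functor $\Phi: \mathsf D \to \widehat{\mathsf C}_{\op{pe}}$ whose induced functor $[\Phi]$ on homotopy categories is an equivalence; by Lemma~\ref{lemma: morphisms in hodgcat}, this is the same data as producing the claimed isomorphism in $\hodgcat$, and then the standard characterization of quasi-equivalences promotes this to the stated quasi-equivalence of dg-categories.

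To construct $\Phi$, I would exploit the fact that both $\mathsf D$ and $\widehat{\mathsf C}_{\op{pe}}$ sit inside the common ambient dg-category $\mathsf C^{\op{op}}\op{-Mod}$ and consider the $\mathsf D \otimes_k \widehat{\mathsf C}_{\op{pe}}^{\op{op}}$-module defined by
\[
F(M, N) := \op{Hom}_{\mathsf C^{\op{op}}\op{-Mod}}(N, M).
\]
To recognize $F$ as a quasi-functor, the task is to verify that for each fixed $M \in \mathsf D$, the right $\widehat{\mathsf C}_{\op{pe}}$-module $F(M, -)$ is quasi-representable. For this, let $q_M: Q(M) \to M$ denote the functorial cofibrant replacement supplied by the projective model structure on $\mathsf C^{\op{op}}\op{-Mod}$. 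Because $M$ is perfect by the hypothesis on the essential image, $Q(M)$ lies in $\widehat{\mathsf C}_{\op{pe}}$. Since every $N \in \widehat{\mathsf C}_{\op{pe}}$ is cofibrant and every object of $\mathsf C^{\op{op}}\op{-Mod}$ is fibrant, post-composition with $q_M$ produces a quasi-isomorphism
\[
\op{Hom}_{\widehat{\mathsf C}_{\op{pe}}}(N, Q(M)) \longrightarrow F(M, N),
\]
exhibiting $F(M, -)$ as quasi-represented by $Q(M)$.

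The remaining step is to confirm that $[\Phi]: [\mathsf D] \to [\widehat{\mathsf C}_{\op{pe}}]$ is an equivalence. By the above, $[\Phi](M)$ is $Q(M)$, which is quasi-isomorphic to $M$ in $\op{D}[\mathsf C^{\op{op}}\op{-Mod}]$. Under the fully faithful inclusion $[\mathsf D] \hookrightarrow \op{D}[\mathsf C^{\op{op}}\op{-Mod}]$ with essential image $\op{perf}(\mathsf C)$ (by hypothesis) and the standard equivalence $[\widehat{\mathsf C}_{\op{pe}}] \simeq \op{perf}(\mathsf C)$, the functor $[\Phi]$ becomes naturally isomorphic to the identity on $\op{perf}(\mathsf C)$; in particular it is an equivalence.

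The principal technical hurdle is the quasi-representability step, i.e.\ producing $Q$ dg-naturally in $M$ so that $F$ actually assembles into a bimodule rather than only an object-wise comparison of Hom complexes. This is standard for the projective model structure (the small-object-argument factorization is dg-functorial), but it is the one place where the full model-categorical machinery is genuinely used; once it is in hand, the identification of $[\Phi]$ with the identity on $\op{perf}(\mathsf C)$ is essentially bookkeeping with the fully-faithfulness and essential-image hypotheses.
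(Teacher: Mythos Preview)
Your argument is essentially the paper's: both define the same bimodule $F(M,N)=\op{Hom}_{\mathsf C^{\op{op}}\op{-Mod}}(N,M)$, verify quasi-representability by mapping a cofibrant replacement $Q(M)\to M$, and then identify $[\Phi]$ with the identity on $\op{perf}(\mathsf C)$ via the fully-faithfulness and essential-image hypotheses.

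One correction, though: your ``principal technical hurdle'' is not a hurdle at all. The bimodule $F$ is already a genuine $\mathsf D\otimes_k\widehat{\mathsf C}_{\op{pe}}^{\op{op}}$-module simply because $\op{Hom}_{\mathsf C^{\op{op}}\op{-Mod}}(-,-)$ is dg-bifunctorial and you are restricting it. No functoriality of the cofibrant replacement $Q$ is needed anywhere: quasi-representability is by definition an \emph{object-wise} condition (for each $M$, the right module $F(M,-)$ is quasi-isomorphic to some representable), and the paper checks it exactly that way, picking for each $d$ an ad hoc cofibrant $N$ with a quasi-isomorphism $N\to d$. So you may drop the appeal to a dg-functorial small-object argument; the rest stands.
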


\begin{proof}
 Both $\widehat{\mathsf C}_{\op{pe}}$ and $\mathsf{D}$ are dg-subcategories of $\mathsf{C}^{\op{op}}\op{-Mod}$. Consider the dg-functor,
 \begin{align*}
  a: \mathsf{D} & \to (\widehat{\mathsf C}_{\op{pe}})^{\op{op}}\op{-Mod} \\
  d & \mapsto \op{Hom}_{\mathsf{C}^{\op{op}}\op{-Mod}}(\bullet, d).
 \end{align*}
 We claim that $a$ is a quasi-functor and $[a]: [\mathsf{D}] \to [\widehat{\mathsf{C}}_{\op{pe}}]$ is an equivalence.

 Any object of $\mathsf{C}^{\op{op}}\op{-Mod}$ is fibrant so
 \begin{displaymath}
  \op{H}^0(\op{Hom}_{\mathsf{C}^{\op{op}}\op{-Mod}}(N, M)) \cong \op{Hom}_{ \op{D}[\mathsf{C}^{\op{op}}\op{-Mod}]}(N,M)
 \end{displaymath}
 for $N \in \widehat{\mathsf{C}}_{\op{pe}}$. In particular, if $d \in \mathsf D$ is isomorphic to $N \in \widehat{\mathsf{C}}_{\op{pe}}$ in $\op{D}[\mathsf{C}^{\op{op}}\op{-Mod}]$, then there is a morphism of modules, $f: N \to d$, inducing the quasi-isomorphism. The induced natural transformation
 \begin{displaymath}
  \op{Hom}(\bullet,f): \op{Hom}_{\mathsf{C}^{\op{op}}\op{-Mod}}(\bullet, N) \to \op{Hom}_{\mathsf{C}^{\op{op}}\op{-Mod}}(\bullet, d)
 \end{displaymath}
 is a quasi-isomorphism when applied to any object of $\widehat{\mathsf C}_{\op{pe}}$. Thus, $a(d)$ is quasi-representable when $d$ is isomorphic to an object of $\widehat{\mathsf C}_{\op{pe}}$ in $\op{D}[\mathsf{C}^{\op{op}}\op{-Mod})]$. Assume $d$ is quasi-isomorphic to $N$ and $d$ is quasi-isomorphic to $N^{\prime}$, we have natural isomorphisms
 \begin{align*}
  \op{Hom}_{\op{D}[\widehat{\mathsf{C}}_{\op{pe}}^{\op{op}}\op{-Mod}]} (a(d),a(d^{\prime})))
  & \cong \op{Hom}_{\op{D}[\widehat{\mathsf{C}}_{\op{pe}}^{\op{op}}\op{-Mod}]}(h_{N},h_{N^{\prime}}) \\
  & \cong \op{Hom}_{[\widehat{\mathsf{C}}_{\op{pe}}^{\op{op}}]}(N,N^{\prime}) \\
  & \cong \op{Hom}_{\op{D}[\mathsf{C}^{\op{op}}\op{-Mod}]}(N,N^{\prime}) \\
  & \cong \op{Hom}_{\op{D}[\mathsf{C}^{\op{op}}\op{-Mod}]}(d,d^{\prime}) \\
  & \cong \op{H}^0(\op{Hom}_{[\mathsf{D}]}(d,d^{\prime}))
 \end{align*}
  where the first isomorphism uses that $a(d)$ is quasi-isomorphic to $h_N$ and $a(d^{\prime})$ is quasi-isomorphic to $h_{N^{\prime}}$, the second isomorphism uses the Yoneda embedding, the third isomorphism that $N$ is cofibrant, the fourth uses the quasi-isomorphisms we have assumed, and the final isomorphism is because we assumed that $[\mathsf{D}] \to \op{D}[\mathsf{C}^{\op{op}}\op{-Mod}]$ is fully-faithful. Thus, $a$ is fully-faithful whenever the two objects of $\mathsf{D}$ are mapped to quasi-representable modules.

  Consequently, $a$ is fully-faithful on the full subcategory of $\mathsf{D}$ consisting of modules that are isomorphic to an object of $\op{perf}(\mathsf{C})$ in $\op{D}[\mathsf{C}^{\op{op}}\op{-Mod}]$. However, by assumption, this is every object of $\mathsf{D}$. Thus, $[a]$ is fully-faithful and, by assumption, its essential image is $\op{perf} \mathsf C$. Thus, $[a] : [\mathsf D] \to [\widehat{\mathsf C}_{\op{pe}}]$ is an equivalence and $\mathsf D$ and $\widehat{\mathsf C}_{\op{pe}}$ are quasi-equivalent.
\end{proof}

\begin{remark}
 By Lemma~\ref{lemma: alternative characterization of Morita product}, the Morita product of $\mathsf{C}$ and $\mathsf{D}$ can be defined to be \textit{any} enhancement of $\op{perf}(\mathsf C \otimes \mathsf D)$ contained in $(\mathsf{C} \otimes_k \mathsf{D})^{\op{op}}\op{-Mod}$.
\end{remark}

\begin{lemma} \label{lemma: comm and ass of Morita product}
 The Morita product, $\circledast$, is commutative up to equivalence and associative up to quasi-equivalence.
\end{lemma}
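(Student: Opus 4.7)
The plan is to apply Lemma~\ref{lemma: alternative characterization of Morita product} in both cases, by realizing the two sides as pre-thick enhancements of the same perfect derived category sitting inside a common module dg-category.

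For commutativity, the canonical swap dg-isomorphism $\tau \colon \mathsf{C} \otimes_k \mathsf{D} \overset{\sim}{\to} \mathsf{D} \otimes_k \mathsf{C}$ pulls back to a strict isomorphism of module dg-categories preserving cofibrancy, quasi-isomorphism and compactness; hence it restricts to a dg-isomorphism $\widehat{(\mathsf{C} \otimes_k \mathsf{D})}_{\op{pe}} \cong \widehat{(\mathsf{D} \otimes_k \mathsf{C})}_{\op{pe}}$. This is stronger than a quasi-equivalence, which matches the phrasing ``commutative up to equivalence'' versus the weaker ``associative up to quasi-equivalence''.

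For associativity, together with commutativity it suffices to produce a quasi-equivalence $(\mathsf{C} \circledast \mathsf{D}) \circledast \mathsf{E} \simeq \widehat{(\mathsf{C} \otimes_k \mathsf{D} \otimes_k \mathsf{E})}_{\op{pe}}$. Tensoring the canonical Yoneda-type dg-functor $y \colon \mathsf{C} \otimes_k \mathsf{D} \to \mathsf{C} \circledast \mathsf{D}$ with the identity on $\mathsf{E}$ yields a dg-functor $y \otimes \op{Id} \colon \mathsf{C} \otimes_k \mathsf{D} \otimes_k \mathsf{E} \to (\mathsf{C} \circledast \mathsf{D}) \otimes_k \mathsf{E}$, and restriction along $y \otimes \op{Id}$ realizes $(\mathsf{C} \circledast \mathsf{D}) \circledast \mathsf{E}$ (which sits inside $((\mathsf{C} \circledast \mathsf{D}) \otimes_k \mathsf{E})^{\op{op}}\op{-Mod}$) as a pre-thick dg-subcategory of $(\mathsf{C} \otimes_k \mathsf{D} \otimes_k \mathsf{E})^{\op{op}}\op{-Mod}$. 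One then invokes Lemma~\ref{lemma: alternative characterization of Morita product} with $\mathsf{C}$ there replaced by $\mathsf{C} \otimes_k \mathsf{D} \otimes_k \mathsf{E}$.

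The main obstacle is verifying the hypothesis of that lemma in this setting: restriction along $y \otimes \op{Id}$ must embed $[(\mathsf{C} \circledast \mathsf{D}) \circledast \mathsf{E}]$ fully-faithfully into $\op{D}[(\mathsf{C} \otimes_k \mathsf{D} \otimes_k \mathsf{E})^{\op{op}}\op{-Mod}]$ with essential image exactly $\op{perf}(\mathsf{C} \otimes_k \mathsf{D} \otimes_k \mathsf{E})$. This is a base-change statement: the Morita equivalence $y$ must survive tensoring with $\mathsf{E}$. To check it, I would verify that the external tensor products $h_{c \otimes d} \otimes h_e$ compactly generate the target module derived category, that their images under $y \otimes \op{Id}$ agree, up to quasi-isomorphism, with the restriction of the compact generators $h_{y(c \otimes d) \otimes e}$ of $\op{perf}((\mathsf{C} \circledast \mathsf{D}) \otimes_k \mathsf{E})$, and that morphisms between such representables are computed the same way on both sides via the dg-Yoneda lemma. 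Full-faithfulness on the thick subcategory they generate then follows, and Lemma~\ref{lemma: alternative characterization of Morita product} concludes.
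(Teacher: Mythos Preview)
Your proposal is correct and follows essentially the same route as the paper: for commutativity you use the swap isomorphism on $\mathsf{C}\otimes_k\mathsf{D}$ and transport it to cofibrant-compact modules, exactly as the paper does; for associativity you restrict modules along $y\otimes\op{Id}$ to land in $(\mathsf{C}\otimes_k\mathsf{D}\otimes_k\mathsf{E})^{\op{op}}\op{-Mod}$ and then invoke Lemma~\ref{lemma: alternative characterization of Morita product}, which is precisely the paper's strategy.

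The only cosmetic difference is in how the hypotheses of Lemma~\ref{lemma: alternative characterization of Morita product} are verified. You outline an explicit check on the external-tensor representables $h_{c\otimes d}\otimes h_e$ and then propagate; the paper packages this as a one-line thick-subcategory argument: the full subcategory of $[(\mathsf{C}\circledast\mathsf{D})\circledast\mathsf{E}]$ on which the restriction functor into $\op{D}[(\mathsf{C}\otimes_k\mathsf{D}\otimes_k\mathsf{E})^{\op{op}}\op{-Mod}]$ is fully faithful is thick and contains $\mathsf{C}\otimes_k\mathsf{D}\otimes_k\mathsf{E}$ by Yoneda, hence is everything; the essential image is then the thick subcategory generated by the representables, i.e.\ $\op{perf}(\mathsf{C}\otimes_k\mathsf{D}\otimes_k\mathsf{E})$. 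This is the same verification you sketch, just said more tersely.
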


\begin{proof}
 For two small dg-categories, $\mathsf{C}$ and $\mathsf{D}$, there is an obvious equivalence between $\mathsf{C} \otimes_k \mathsf{D}$ and $\mathsf{D} \otimes_k \mathsf{C}$ and their corresponding categories of modules. This clearly preserves fibrations and cofibrations and therefore induces an equivalence between $\mathsf{C} \circledast \mathsf{D}$ and $\mathsf{D} \circledast \mathsf{C}$.

 Let $\mathsf{C}, \mathsf{D},$ and $\mathsf{E}$ be small dg-categories. Let
 \begin{displaymath}
  \mathsf{F} := \widehat{( \mathsf{C} \otimes_k \mathsf{D} \otimes_k \mathsf{E} )}_{\op{pe}}.
 \end{displaymath}
 We will show that $\mathsf{F}$ is quasi-equivalent to $(\mathsf{C} \circledast \mathsf{D}) \circledast \mathsf{E}$ by applying Lemma~\ref{lemma: alternative characterization of Morita product}. Clearly any object of $(\mathsf{C} \circledast \mathsf{D}) \circledast \mathsf{E}$ induces a $\mathsf{C} \otimes_k \mathsf{D} \otimes_k \mathsf{E}$-module. Consider the largest thick subcategory of $[ (\mathsf{C} \circledast \mathsf{D}) \circledast \mathsf{E} ]$ for which
 \begin{displaymath}
  [ (\mathsf{C} \circledast \mathsf{D}) \circledast \mathsf{E} ] \to \op{D}[(\mathsf{C} \otimes_k \mathsf{D} \otimes_k \mathsf{E})^{\op{op}}\op{-Mod}]
 \end{displaymath}
 is fully-faithful. By the Yoneda embedding, this contains $\mathsf{C} \otimes_k \mathsf{D} \otimes_k \mathsf{E}$ which generates $[ (\mathsf{C} \circledast \mathsf{D}) \circledast \mathsf{E} ]$.  It follows that this category agrees with $[ (\mathsf{C} \circledast \mathsf{D}) \circledast \mathsf{E} ]$ i.e.\ the functor is fully-faithful.

 The essential image of $[ (\mathsf{C} \circledast \mathsf{D}) \circledast \mathsf{E} ]$ is a thick subcategory generated by the objects from $\mathsf{C} \otimes_k \mathsf{D} \otimes_k \mathsf{E}$ and, therefore, must be $\op{perf}(\mathsf{C} \otimes_k \mathsf{D} \otimes_k \mathsf{E})$. We have verified the hypotheses of Lemma~\ref{lemma: alternative characterization of Morita product} and get a quasi-equivalence
 \begin{displaymath}
  (\mathsf{C} \circledast \mathsf{D}) \circledast \mathsf{E} \simeq \mathsf{F}.
 \end{displaymath}
 A similar argument gives a quasi-equivalence
 \begin{displaymath}
  \mathsf{C} \circledast (\mathsf{D} \circledast \mathsf{E}) \simeq \mathsf{F}.
 \end{displaymath}
\end{proof}

\begin{lemma} \label{lemma: Morita product and idempotent completion}
 Assume that $\mathsf{D}$ is a dg-subcategory of another dg-category $\overline{\mathsf{D}}$ which is pre-thick and assume that the image of $[\mathsf{D}]$ generates $[\overline{\mathsf{D}}]$. Then, there are quasi-equivalences
 \begin{displaymath}
  \widehat{\mathsf{D}}_{\op{pe}} \simeq \mathsf{D} \circledast \mathsf{C}(k)_{\op{pe}} \simeq \overline{\mathsf{D}} \circledast \mathsf{C}(k)_{\op{pe}} \simeq \overline{\mathsf{D}}.
 \end{displaymath}
\end{lemma}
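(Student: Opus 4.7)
The plan is to establish all three quasi-equivalences by repeatedly invoking Lemma~\ref{lemma: alternative characterization of Morita product}, reducing each step to identifying a pre-thick dg-enhancement of the appropriate perfect module category. The key preliminary fact is that $\mathsf{C}(k)_{\op{pe}}$ acts as a unit for the Morita product: for any small dg-category $\mathsf A$ there should be a quasi-equivalence $\mathsf A \circledast \mathsf{C}(k)_{\op{pe}} \simeq \widehat{\mathsf A}_{\op{pe}}$. This holds because the natural dg-functor $\mathsf A \to \mathsf A \otimes_k \mathsf{C}(k)_{\op{pe}}$ sending $a \mapsto a \otimes k$ induces, via restriction of scalars, an equivalence $\op{D}[(\mathsf A \otimes_k \mathsf{C}(k)_{\op{pe}})^{\op{op}}\op{-Mod}] \simeq \op{D}[\mathsf A^{\op{op}}\op{-Mod}]$ (since $k$ compactly generates $\op{D}[\mathsf{C}(k)_{\op{pe}}^{\op{op}}\op{-Mod}]$), and thus identifies perfect modules on both sides. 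Hence $\widehat{\mathsf A}_{\op{pe}}$ is a pre-thick enhancement of $\op{perf}(\mathsf A \otimes_k \mathsf{C}(k)_{\op{pe}})$ sitting inside $(\mathsf A \otimes_k \mathsf{C}(k)_{\op{pe}})^{\op{op}}\op{-Mod}$, and the claim follows from Lemma~\ref{lemma: alternative characterization of Morita product}.

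With the unit property in hand, the first quasi-equivalence $\widehat{\mathsf D}_{\op{pe}} \simeq \mathsf D \circledast \mathsf{C}(k)_{\op{pe}}$ is immediate, and the third, $\overline{\mathsf D} \circledast \mathsf{C}(k)_{\op{pe}} \simeq \overline{\mathsf D}$, reduces via the unit property to $\widehat{\overline{\mathsf D}}_{\op{pe}} \simeq \overline{\mathsf D}$. For the latter, note that by the pre-thick hypothesis the Yoneda embedding identifies $[\overline{\mathsf D}]$ with a thick subcategory of $\op{D}[\overline{\mathsf D}^{\op{op}}\op{-Mod}]$ containing all representable modules; since representables generate $\op{perf}(\overline{\mathsf D})$ as a thick subcategory, this image equals $\op{perf}(\overline{\mathsf D})$. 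Applying Lemma~\ref{lemma: alternative characterization of Morita product} with $\mathsf C = \overline{\mathsf D}$ and the enhancement $\overline{\mathsf D}$ itself produces the desired quasi-equivalence.

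For the middle quasi-equivalence $\mathsf D \circledast \mathsf{C}(k)_{\op{pe}} \simeq \overline{\mathsf D} \circledast \mathsf{C}(k)_{\op{pe}}$, by the preceding steps it suffices to establish $\widehat{\mathsf D}_{\op{pe}} \simeq \widehat{\overline{\mathsf D}}_{\op{pe}}$. The inclusion $\mathsf D \hookrightarrow \overline{\mathsf D}$ induces a fully-faithful dg-functor $\widehat{\mathsf D}_{\op{pe}} \to \widehat{\overline{\mathsf D}}_{\op{pe}}$ on perfect module enhancements, whose essential image on homotopy categories is a thick subcategory containing the representables indexed by objects of $\mathsf D$. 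Under the identification $\widehat{\overline{\mathsf D}}_{\op{pe}} \simeq \overline{\mathsf D}$ from the previous step, these representables correspond to the image of $[\mathsf D]$ in $[\overline{\mathsf D}]$, which generates the target by hypothesis. Therefore the functor is essentially surjective, yielding the quasi-equivalence.

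The main obstacle is the rigorous verification of the unit property for $\mathsf{C}(k)_{\op{pe}}$; this is a standard but subtle foundational point requiring careful bookkeeping with cofibrant replacements and compact generation in the derived category of bimodules. Once this is pinned down, each of the three quasi-equivalences is a direct application of Lemma~\ref{lemma: alternative characterization of Morita product} combined with the generation hypothesis on $[\mathsf D] \subseteq [\overline{\mathsf D}]$.
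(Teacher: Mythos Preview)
Your proposal is correct and uses the same essential ingredient as the paper: repeated appeals to Lemma~\ref{lemma: alternative characterization of Morita product}. The organization differs slightly. You first isolate a unit property $\mathsf{A} \circledast \mathsf{C}(k)_{\op{pe}} \simeq \widehat{\mathsf{A}}_{\op{pe}}$, then separately establish $\widehat{\overline{\mathsf{D}}}_{\op{pe}} \simeq \overline{\mathsf{D}}$ from pre-thickness, and finally compare $\widehat{\mathsf{D}}_{\op{pe}}$ with $\widehat{\overline{\mathsf{D}}}_{\op{pe}}$ via the generation hypothesis. The paper instead observes that $\mathsf{D} \cong \mathsf{D} \otimes_k k$ sits as a full dg-subcategory inside each of $\mathsf{D} \circledast \mathsf{C}(k)_{\op{pe}}$, $\overline{\mathsf{D}}$, and $\overline{\mathsf{D}} \circledast \mathsf{C}(k)_{\op{pe}}$, and then runs a \emph{single uniform argument}: restrict each of these to $\mathsf{D}^{\op{op}}$-modules, check that the induced functor to $\op{D}[\mathsf{D}^{\op{op}}\op{-Mod}]$ is fully faithful with essential image $\op{perf}(\mathsf{D})$ (using that $[\mathsf{D}]$ generates each target), and invoke Lemma~\ref{lemma: alternative characterization of Morita product} with $\mathsf{C} = \mathsf{D}$ in every case. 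This avoids singling out the unit property as a separate lemma and sidesteps your ``main obstacle'' entirely, since the compact-generation bookkeeping you flag is absorbed into the same thick-subcategory argument used for the other two cases. Your route is sound but slightly longer; the paper's is more economical.
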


\begin{proof}
 Note that $\mathsf{D} \otimes_k k \cong \mathsf{D}$ is a full dg-subcategory of $\mathsf{D} \circledast \mathsf{C}(k)_{\op{pe}},$ of $\overline{\mathsf{D}}$, and of $\overline{\mathsf{D}} \circledast \mathsf{C}(k)_{\op{pe}}$. The argument providing a quasi-equivalence with $\widehat{\mathsf{D}}_{\op{pe}}$ runs the same for each category so we only present it for $\overline{\mathsf{D}} \circledast \mathsf{C}(k)_{\op{pe}}$.

 By restriction, each object of $\overline{\mathsf{D}} \circledast \mathsf{C}(k)_{\op{pe}}$ provides a $\mathsf{D}^{\op{op}}$-module,
 \begin{displaymath}
  a: M \mapsto \op{Hom}_{\mathsf{D} \circledast \mathsf{C}(k)_{\op{pe}}}(\bullet, M)|_{\mathsf{D}}.
 \end{displaymath}
 We will apply Lemma~\ref{lemma: alternative characterization of Morita product} to prove that this induces a quasi-equivalence
 \begin{displaymath}
  \widehat{\mathsf{D}}_{\op{pe}} \simeq \overline{\mathsf{D}} \circledast \mathsf{C}(k)_{\op{pe}}.
 \end{displaymath}
 N.B. the slight notational conflict, namely in the notation of  Lemma~\ref{lemma: alternative characterization of Morita product} we are letting $\mathsf{C} = \mathsf{D}$ and $\mathsf{D} =  \overline{\mathsf{D}} \circledast \mathsf{C}(k)_{\op{pe}}$.

 For any object, $d$, of $\mathsf{D}$, viewed as $d \otimes k \in \overline{\mathsf{D}} \circledast \mathsf{C}(k)_{\op{pe}}$, the induced module is representable, and, by Yoneda, the functor is fully-faithful on objects from $\mathsf{D}$. The full subcategory of $\overline{\mathsf{D}} \circledast \mathsf{C}(k)_{\op{pe}}$ on which
 \begin{equation*} \label{equation: something something}
  \bar{a}: [\overline{\mathsf{D}} \circledast \mathsf{C}(k)_{\op{pe}}] \to \op{D}[\mathsf{D}^{\op{op}}\op{-Mod}]
 \end{equation*}
 is fully-faithful is thick and contains objects from $\mathsf{D}$. Since $[\mathsf{D}]$ generates $[\overline{\mathsf{D}}]$ and $[\overline{\mathsf{D}}] \otimes_k k$ generates $[\overline{\mathsf{D}} \circledast \mathsf{C}(k)_{\op{pe}}]$, $\bar{a}$ must be fully-faithful on all of $[\overline{\mathsf{D}} \circledast \mathsf{C}(k)_{\op{pe}}]$. Hence, its essential image is a thick subcategory containing and generated by $[\mathsf{D}]$.  Thus, the essential image is $\op{perf}(\mathsf{D})$. Consequently, $\bar{a}$ is fully-faithful onto $\op{perf}(\mathsf{D})$ and we may apply Lemma~\ref{lemma: alternative characterization of Morita product}.
\end{proof}

Let us give a few examples of Morita products.

\begin{definition}
 Let $R$ be a $k$-algebra. We do not assume that $R$ is commutative. Recall that $R$ is \newterm{(left) coherent} if the kernel of any map between finitely-generated left $R$-modules is finitely-generated.

 If $R$ is coherent, denote by $\mathsf{proj}(R)$ the dg-category of bounded above complexes of finitely-generated projective left $R$-modules with bounded cohomology.
\end{definition}

\begin{remark}
 If $R$ is Noetherian, then it is coherent. In particular, if $R$ is finite-dimensional over $k$, it is coherent.
\end{remark}

\begin{lemma} \label{lemma: Morita product of algebras}
 Assume $R, S$, and $R \otimes_k S$ are coherent $k$-algebras, not necessarily commutative. Assume that either $R \otimes_k R^{\op{op}}$ or $S \otimes_k S^{\op{op}}$ is coherent and has finite global dimension. Then,
 \begin{displaymath}
  \mathsf{proj}(R) \circledast \mathsf{proj}(S) \simeq \mathsf{proj}(R \otimes_k S).
 \end{displaymath}
\end{lemma}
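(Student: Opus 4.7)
The plan is to apply Lemma~\ref{lemma: alternative characterization of Morita product} with $\mathsf{C} = \mathsf{proj}(R) \otimes_k \mathsf{proj}(S)$ and $\mathsf{D} = \mathsf{proj}(R \otimes_k S)$, by constructing an explicit embedding
\[
  \Phi: \mathsf{proj}(R \otimes_k S) \to (\mathsf{proj}(R) \otimes_k \mathsf{proj}(S))^{\op{op}}\op{-Mod}, \qquad M \mapsto \op{Hom}^{\bullet}_{R \otimes_k S}(\bullet \otimes_k \bullet, M)
\]
sending a complex $M$ to the bimodule $(P, Q) \mapsto \op{Hom}^{\bullet}_{R \otimes_k S}(P \otimes_k Q, M)$, then verifying the two hypotheses of Lemma~\ref{lemma: alternative characterization of Morita product}: that $[\Phi]$ is fully-faithful into $\op{D}[(\mathsf{proj}(R) \otimes_k \mathsf{proj}(S))^{\op{op}}\op{-Mod}]$ and that its essential image is $\op{perf}(\mathsf{proj}(R) \otimes_k \mathsf{proj}(S))$.

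First I would verify the Kunneth compatibility on the level of external tensors. For $P, P' \in \mathsf{proj}(R)$ and $Q, Q' \in \mathsf{proj}(S)$, the natural map
\[
  \op{Hom}^{\bullet}_R(P, P') \otimes_k \op{Hom}^{\bullet}_S(Q, Q') \longrightarrow \op{Hom}^{\bullet}_{R \otimes_k S}(P \otimes_k Q, P' \otimes_k Q')
\]
is a quasi-isomorphism: this is clear when $P, Q$ are finitely-generated free, extends by additivity and retract to finitely-generated projective components, and extends to bounded complexes by the usual spectral sequence/totalization argument (all complexes in sight are bounded above with bounded cohomology, so convergence is not an issue). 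In particular, $\Phi$ carries the tensor product $P \otimes_k Q$ of representables in $\mathsf{proj}(R) \otimes_k \mathsf{proj}(S)$ to the representable $h_{(P,Q)}$. Hence $[\Phi]$ is fully-faithful, and its image lies in $\op{perf}(\mathsf{proj}(R) \otimes_k \mathsf{proj}(S))$, on the thick subcategory $\mathcal{T} \subseteq [\mathsf{proj}(R \otimes_k S)]$ generated by external tensors $P \otimes_k Q$.

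The main obstacle is then showing $\mathcal{T}$ is all of $[\mathsf{proj}(R \otimes_k S)] \cong \op{D}^{\op{b}}(\op{mod}(R \otimes_k S))$, since fg projective $R \otimes_k S$-modules are not in general pure tensors. Here the finite global dimension hypothesis does the work; say $R \otimes_k R^{\op{op}}$ is coherent of finite global dimension, so that the diagonal bimodule $R$ admits a finite resolution $P_{\bullet} \to R$ by fg projective $R$-bimodules, each a summand of a finite free $R$-bimodule $R \otimes_k R$. Given any $M \in \op{mod}(R \otimes_k S)$, viewed as a left $R$-module with a commuting $S$-action, we have a finite resolution
\[
  P_{\bullet} \otimes_R M \xrightarrow{\ \sim\ } R \otimes_R M \cong M
\]
in $\op{D}^{\op{b}}(\op{mod}(R \otimes_k S))$; each term is, up to summand, of the form $R \otimes_k (M|_S)$ where the residual $R$-action comes from the free factor and the $S$-action is inherited from $M$. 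Resolving $M|_S$ by fg projective $S$-modules (using coherence of $S$) exhibits each $R \otimes_k (M|_S)$ as a bounded complex of external tensors of fg projectives. Dévissage along these two finite resolutions places $M$ in $\mathcal{T}$.

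Finally, $\Phi([\mathsf{proj}(R \otimes_k S)])$ is a thick subcategory containing the representables $h_{(P,Q)}$, which generate $\op{perf}(\mathsf{proj}(R) \otimes_k \mathsf{proj}(S))$, so its essential image is exactly $\op{perf}(\mathsf{proj}(R) \otimes_k \mathsf{proj}(S))$. Lemma~\ref{lemma: alternative characterization of Morita product} then yields the desired quasi-equivalence
\[
  \mathsf{proj}(R) \circledast \mathsf{proj}(S) \simeq \mathsf{proj}(R \otimes_k S).
\]
The case where $S \otimes_k S^{\op{op}}$ is coherent of finite global dimension is identical after swapping the roles of $R$ and $S$, which is legitimate by the commutativity of $\circledast$ recorded in Lemma~\ref{lemma: comm and ass of Morita product}.
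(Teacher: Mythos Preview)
Your proposal is correct and follows essentially the same route as the paper: both apply Lemma~\ref{lemma: alternative characterization of Morita product} to the restriction functor $\mathsf{proj}(R\otimes_k S)\to(\mathsf{proj}(R)\otimes_k\mathsf{proj}(S))^{\op{op}}\op{-Mod}$, use Yoneda/K\"unneth on external tensors to get full faithfulness on the thick subcategory they generate, and then invoke the finite global dimension of the enveloping algebra to resolve the diagonal bimodule and show that external tensors generate all of $[\mathsf{proj}(R\otimes_k S)]$. The only cosmetic difference is that the paper truncates a projective resolution and uses the splitting $R\mid C$ in $\op{D}^{\op{b}}(\op{mod}\,R\otimes_k R^{\op{op}})$, whereas you phrase it as a genuine finite resolution followed by an $S$-resolution of $M|_S$; the content is the same.
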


\begin{proof}
 We first use the assumption that either $R \otimes_k R^{\op{op}}$ or $S \otimes_k S^{\op{op}}$ is coherent and has finite global dimension. For the sake of specificity, let us assume $R \otimes_k R^{\op{op}}$ is coherent and has finite global dimension. For $t  >> 0$, form an exact sequence of $R$-$R$ bimodules
 \begin{displaymath}
  0 \to K \to P_t \otimes_k P_t^{\prime} \to \cdots \to P_0 \otimes_k P_0^{\prime} \to R \to 0
 \end{displaymath}
 with $P_i,P_i^{\prime}$ projective $R$-modules. Since we have assumed that $R \otimes_k R^{\op{op}}$ is coherent, we can take $P_i$ and $P_i^{\prime}$ to be finitely-generated. Since we have assumed that $R \otimes_k R^{\op{op}}$ has finite global dimension, we have
 \begin{displaymath}
  \op{Ext}^t_{R \otimes_k R^{\op{op}}}(R, K) = 0
 \end{displaymath}
 for sufficiently large $t$ and hence $R$ is a summand of the complex
 \begin{equation} \label{equation: diagonal}
  C := P_t \otimes_k P_t^{\prime} \to \cdots \to P_0 \otimes_k P_0^{\prime}
 \end{equation}
 in $\dbmod{R \otimes_k R^{\op{op}}}$.
 Take an object $M \in \mathsf{proj}(R \otimes_k S)$ and tensor over $R$ with $C$. As $R$ is a summand of $C$  in $\dbmod{R \otimes_k R^{\op{op}}}$, we know that $M$ is a summand of $C \otimes_R M$ in $\dbmod{R \otimes_k S}$. However, each of the terms in $C \otimes_R M$ lie in $\mathsf{proj}(R) \otimes_k \mathsf{proj}(S)$. Consequently, $[\mathsf{proj}(R \otimes_k S)]$ is generated by $[\mathsf{proj}(R)] \otimes_k [\mathsf{proj}(S)]$. 
 
 Next, we seek to apply Lemma~\ref{lemma: alternative characterization of Morita product}. Let $\mathsf{mod}(R \otimes_k S)$ be the dg-category of complexes with bounded and finitely-generated cohomology. Both $\mathsf{proj}(R) \otimes_k \mathsf{proj}(S)$ and $\mathsf{proj}(R \otimes_k S)$ are naturally subcategories of $\mathsf{mod}(R \otimes_k S)$. As such we can consider the dg-functor
 \begin{align*}
  a: \mathsf{proj}(R \otimes_k S) & \to (\mathsf{proj}(R) \otimes_k \mathsf{proj}(S))^{\op{op}}\op{-Mod} \\
  M & \mapsto \op{Hom}_{\mathsf{mod}(R \otimes_k S)}(\bullet, M).
 \end{align*}
 
 We need to check that $[a]$ is fully-faithful and has essential image $\op{perf}(\mathsf{proj}(R) \otimes_k \mathsf{proj}(S))$. By Yoneda, $[a]$ is fully-faithful on $[\mathsf{proj}(R)] \otimes_k [\mathsf{proj}(S)]$. We have seen that $[\mathsf{proj}(R)] \otimes_k [\mathsf{proj}(S)]$ generates $[\mathsf{proj}(R \otimes_k S)]$. Thus, $[a]$ is fully-faithful on all of $[\mathsf{proj}(R \otimes_k S)]$. 
 
 Similarly, the essential image of $[\mathsf{proj}(R)] \otimes_k [\mathsf{proj}(S)]$ under $[a]$ is $[\mathsf{proj}(R)] \otimes_k [\mathsf{proj}(S)] \subset \op{perf}(\mathsf{proj}(R) \otimes_k \mathsf{proj}(S))$. Since $[\mathsf{proj}(R)] \otimes_k [\mathsf{proj}(S)]$ generates both $[\mathsf{proj}(R \otimes_k S)]$ and $\op{perf}(\mathsf{proj}(R) \otimes_k \mathsf{proj}(S))$, the essential image of $[a]$ must be $\op{perf}(\mathsf{proj}(R) \otimes_k \mathsf{proj}(S))$.
 q
\end{proof}

\begin{remark}
 The essential fact in the proof of Lemma~\ref{lemma: Morita product of algebras} is that the derived category $\dbmod{R \otimes_k S}$ is generated by the image of $\dbmod{R} \otimes_k \dbmod{S}$. The restriction on the global dimension was made to guarantee this. If we work with commutative rings, essentially of finite type, then this fact follows from the methods of \cite{Ro2} without assumptions on global dimension.
\end{remark}

\begin{definition}
 Let $Z$ be a variety equipped with the action of an algebraic group $G$. Let $\mathsf{Inj}_{\op{coh}_G}(Z)$ be the dg-category of bounded below complexes of injective quasi-coherent $G$-equivariant sheaves with bounded and coherent cohomology.
\end{definition}

\begin{remark}
 Here we encounter our main quasi-small, but not small, dg-category.
\end{remark}

\begin{theorem} \label{theorem: Morita product of varieties}
 Let $X$ and $Y$ be smooth and projective varieties over $k$. Then,
 \begin{displaymath}
  \mathsf{Inj}_{\op{coh}}(X) \circledast \mathsf{Inj}_{\op{coh}}(Y) \simeq \mathsf{Inj}_{\op{coh}}(X \times Y).
 \end{displaymath}
\end{theorem}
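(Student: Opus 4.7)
The plan is to apply Lemma~\ref{lemma: alternative characterization of Morita product} in close analogy with the proof of Lemma~\ref{lemma: Morita product of algebras}. Set $\mathsf{C} := \mathsf{Inj}_{\op{coh}}(X) \otimes_k \mathsf{Inj}_{\op{coh}}(Y)$ and $\mathsf{D} := \mathsf{Inj}_{\op{coh}}(X \times Y)$. First, I would construct a quasi-functor $\boxtimes \colon \mathsf{C} \to \mathsf{D}$ sending a pair $(E,F)$ to a functorial injective resolution of the external tensor product $p_X^* E \otimes_{\mathcal{O}_{X \times Y}} p_Y^* F$. This yields a dg-functor
\begin{displaymath}
a \colon \mathsf{D} \to \mathsf{C}^{\op{op}}\op{-Mod}, \qquad a(M)(E \otimes F) := \op{Hom}_{\mathsf{D}}(\boxtimes(E,F), M),
\end{displaymath}
which realizes $\mathsf{D}$ as a dg-subcategory of $\mathsf{C}^{\op{op}}\op{-Mod}$; one must then verify that $[a]$ is fully-faithful with essential image $\op{perf}(\mathsf{C})$.

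Full-faithfulness of $[a]$ on the image of $\mathsf{C}$ itself is the K\"unneth quasi-isomorphism
\begin{displaymath}
\op{RHom}_X(E, E^{\prime}) \otimes_k^{\mathbf{L}} \op{RHom}_Y(F, F^{\prime}) \overset{\sim}{\to} \op{RHom}_{X \times Y}(E \boxtimes F, E^{\prime} \boxtimes F^{\prime}),
\end{displaymath}
which holds for bounded complexes with coherent cohomology on smooth projective varieties (by reduction to the locally-free case and flat base change). By Yoneda, this gives full-faithfulness of $[a]$ on the essential image of $[\mathsf{C}]$ inside $[\mathsf{D}]$. To upgrade this to full-faithfulness on all of $[\mathsf{D}]$ and to identify the essential image of $[a]$ with $\op{perf}(\mathsf{C})$, it suffices to show that the image of $\mathsf{C}$ classically generates $[\mathsf{D}] \cong \op{D}^{\op{b}}(\op{coh}(X \times Y))$ as a thick subcategory.

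The main obstacle is precisely this generation step: one must show that external tensor products $\mathcal{E} \boxtimes \mathcal{F}$ classically generate $\op{D}^{\op{b}}(\op{coh}(X \times Y))$ when $X$ and $Y$ are smooth projective. This is where the geometric hypotheses enter. By Bondal--Van den Bergh, $\op{D}^{\op{b}}(\op{coh} X)$ and $\op{D}^{\op{b}}(\op{coh} Y)$ admit classical generators $T_X$ and $T_Y$, and a standard argument—mirroring the bimodule resolution in the proof of Lemma~\ref{lemma: Morita product of algebras} and exploiting the smoothness and projectivity of $X \times Y$—shows that $T_X \boxtimes T_Y$ is a classical generator of $\op{D}^{\op{b}}(\op{coh}(X \times Y))$. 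Concretely, one resolves the structure sheaf $\mathcal{O}_{\Delta_{X \times Y}}$ of the diagonal by a finite complex of locally free sheaves whose terms, after twisting by powers of a very ample line bundle pulled back from both factors, split as external tensor products; convolving against a given object $M$ via the Fourier--Mukai transform with kernel $\mathcal{O}_\Delta$ recovers $M$ as an iterated extension of external tensors.

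Once generation is established, $[a]$ is fully-faithful on all of $[\mathsf{D}]$ because both sides of the hom-complex computation are exact functors that agree on a classical generating set; and its essential image is then a thick subcategory of $\op{D}[\mathsf{C}^{\op{op}}\op{-Mod}]$ containing $\mathsf{C}$, hence equal to $\op{perf}(\mathsf{C})$. Applying Lemma~\ref{lemma: alternative characterization of Morita product} yields the desired quasi-equivalence $\mathsf{Inj}_{\op{coh}}(X) \circledast \mathsf{Inj}_{\op{coh}}(Y) = \widehat{\mathsf{C}}_{\op{pe}} \simeq \mathsf{D} = \mathsf{Inj}_{\op{coh}}(X \times Y)$.
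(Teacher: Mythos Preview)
Your proposal is correct and, in fact, more detailed than what the paper does: the paper simply cites \cite[Section 8]{Toe} for this result and gives no argument of its own. What you have sketched is essentially the standard argument underlying To\"en's result, recast in the language of Lemma~\ref{lemma: alternative characterization of Morita product} and paralleling the proof of Lemma~\ref{lemma: Morita product of algebras}; the K\"unneth isomorphism plays the role of full-faithfulness on the generating subcategory, and the resolution-of-the-diagonal argument for smooth projective varieties plays the role of the bimodule resolution \eqref{equation: diagonal}. One small clarification: the cleanest way to run the generation step is to resolve $\mathcal{O}_{\Delta_X}$ on $X \times X$ and $\mathcal{O}_{\Delta_Y}$ on $Y \times Y$ separately by complexes of external tensor products (using an ample line bundle on each factor), and then box these together to obtain the required resolution of $\mathcal{O}_{\Delta_{X \times Y}}$ on $(X \times Y)^2$; your phrasing leaves this slightly ambiguous but the idea is right.
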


\begin{proof}
 This is in \cite[Section 8]{Toe}.
\end{proof}

Next, we turn to factorizations.

\begin{definition}
 Let $X$ and $Y$ be smooth affine varieties and let $w \in \Gamma(X,\mathcal O_X)$ and $v \in \Gamma(Y,\mathcal O_Y)$. We set
 \begin{displaymath}
  w \boxplus v := w \otimes 1 + 1 \otimes v \in \Gamma(X,\mathcal O_X) \otimes_k \Gamma(Y,\mathcal O_Y) \cong \Gamma(X \times Y, \mathcal O_{X \times Y}).
 \end{displaymath}
\end{definition}

We will have to deal with two potentials, $w, v \in \Gamma(X,\mathcal O_X)$, that are semi-invariant with respect to different characters of different groups. The largest group for which $w \boxplus v$ is semi-invariant is as follows.

\begin{definition}
 Let $G$ and $H$ be Abelian affine algebraic groups and let $\chi: G \to \mathbb{G}_m$ and $\chi^{\prime}: H \to \mathbb{G}_m$ be characters. Define a character of $G \times H$ by
 \begin{align*}
  \chi^{\prime}-\chi : G \times H & \to \mathbb{G}_m \\
  (g,h) & \mapsto \chi(g)^{-1}\chi^{\prime}(h).
 \end{align*}
 Let $G\times_{\mathbb G_m} H$ be the kernel of $\chi^{\prime}-\chi$, or, equivalently, the fiber product of $G$ and $H$ over $\mathbb G_m$.

 Dual to this construction we have the following. Let $A$ and $B$ be finitely-generated Abelian groups. Let $a \in A$ and $b \in B$ define
 \begin{displaymath}
  A \boxminus B := A \oplus B / (a,-b).
 \end{displaymath}
\end{definition}

\begin{proposition} \label{proposition: morita product of factorizations}
 Let $X$ be a smooth affine variety, $G$ an Abelian affine algebraic group acting on $X$, $\chi: G \to \mathbb{G}_m$ a character, and $w \in \Gamma(X,\mathcal O_X(\chi))^G$. Let $Y$ be a smooth affine variety, $H$ an Abelian affine algebraic group acting on $X$, $\chi^{\prime}: H \to \mathbb{G}_m$ is a character, and $v \in \Gamma(Y,\mathcal O_Y(\chi^{\prime}))^H$.

 If we assume that $\chi - \chi^{\prime}$ is not torsion and the singular locus of $w \boxplus v$ is contained in the product of the zero loci of $w$ and $v$ in $X \times Y$, then there is a quasi-equivalence
 \begin{displaymath}
  \mathsf{vect}(X,G,w) \circledast \mathsf{vect}(Y,H,v) \simeq \overline{\mathsf{vect}}(X \times Y, G \times_{ \mathbb{G}_m} H, w \boxplus v).
 \end{displaymath}
\end{proposition}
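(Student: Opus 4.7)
\emph{Plan.} The strategy is to reduce to Theorem~\ref{theorem: morita product of equivariant factorizations}, which is the analogous statement for $\mathsf{Inj}_{\op{coh}}$, by exploiting the compatibility of the Morita product with different dg-enhancements of the same triangulated category.

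First I would verify that, since $X$ and $Y$ are smooth affine (hence so is $X \times Y$), Lemma~\ref{lemma: graded = equivariant facts} together with Proposition~\ref{prop: projective enhancement} identifies the homotopy categories $[\mathsf{vect}(X,G,w)] \simeq \op{D}^{\op{abs}}[\mathsf{fact}(X,G,w)] \simeq [\mathsf{Inj}_{\op{coh}}(X,G,w)]$, and similarly for $Y$ and for $X \times Y$. Here smoothness is used to guarantee that every finitely-generated coherent $G$-equivariant sheaf admits a finite locally free resolution, so $\mathsf{vect}$ coincides with $\mathsf{proj}$ in the notation of Definition~\ref{defn: multi facts} and Proposition~\ref{prop: projective enhancement} applies.

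Next I would invoke Lemma~\ref{lemma: Morita product and idempotent completion}. Realizing $\mathsf{vect}(X,G,w)$ (via functorial injective replacement) as a dg-subcategory of the pre-thick dg-category $\overline{\mathsf{Inj}}_{\op{coh}}(X,G,w)$ whose homotopy image generates yields
\begin{displaymath}
\widehat{\mathsf{vect}(X,G,w)}_{\op{pe}} \simeq \overline{\mathsf{Inj}}_{\op{coh}}(X,G,w),
\end{displaymath}
and analogously for $Y$ and $X \times Y$. Combining this with the formal consequence $\mathsf{C} \circledast \mathsf{D} \simeq \widehat{\mathsf{C}}_{\op{pe}} \circledast \widehat{\mathsf{D}}_{\op{pe}}$ of Lemmas~\ref{lemma: Morita product and idempotent completion} and~\ref{lemma: comm and ass of Morita product}, and invoking Theorem~\ref{theorem: morita product of equivariant factorizations} in the middle, produces the chain
\begin{align*}
\mathsf{vect}(X,G,w) \circledast \mathsf{vect}(Y,H,v)
&\simeq \overline{\mathsf{Inj}}_{\op{coh}}(X,G,w) \circledast \overline{\mathsf{Inj}}_{\op{coh}}(Y,H,v) \\
&\simeq \mathsf{Inj}_{\op{coh}}(X,G,w) \circledast \mathsf{Inj}_{\op{coh}}(Y,H,v) \\
&\simeq \overline{\mathsf{Inj}}_{\op{coh}}(X \times Y, G \times_{\mathbb{G}_m} H, w \boxplus v) \\
&\simeq \widehat{\mathsf{vect}(X \times Y, G \times_{\mathbb{G}_m} H, w \boxplus v)}_{\op{pe}} \\
&\simeq \overline{\mathsf{vect}}(X \times Y, G \times_{\mathbb{G}_m} H, w \boxplus v),
\end{align*}
the final equivalence being Lemma~\ref{lemma: Morita product and idempotent completion} applied to $\mathsf{vect}(X \times Y,\ldots) \subset \overline{\mathsf{vect}}(X \times Y,\ldots)$.

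The main technical hurdle lies in making step two precise: strictly speaking, $\mathsf{vect}(X,G,w)$ and $\mathsf{Inj}_{\op{coh}}(X,G,w)$ are not literally subcategories of one another, so one must either interpose a common ambient dg-category (for instance a larger factorization category containing both) or replace $\mathsf{vect}(X,G,w)$ by the quasi-equivalent essential image of a functorial injective-resolution dg-functor landing in $\overline{\mathsf{Inj}}_{\op{coh}}(X,G,w)$. This step is tedious but standard in the theory of dg-enhancements; once it is in place, the remaining manipulations are purely formal consequences of Lemmas~\ref{lemma: Morita product and idempotent completion} and~\ref{lemma: comm and ass of Morita product} combined with Theorem~\ref{theorem: morita product of equivariant factorizations}.
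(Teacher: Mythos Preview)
Your approach is sound, but it differs from the paper's: the paper does not re-prove the statement at all, it simply cites \cite[Corollary 5.14]{BFK11}. Both this proposition and Theorem~\ref{theorem: morita product of equivariant factorizations} are imported wholesale from the prequel, so in the paper's logic neither is derived from the other.

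That said, your reduction of the $\mathsf{vect}$ statement to the $\mathsf{Inj}_{\op{coh}}$ statement is a legitimate alternative argument. The technical hurdle you flag---that $\mathsf{vect}(X,G,w)$ and $\mathsf{Inj}_{\op{coh}}(X,G,w)$ are not literally subcategories of one another---is in fact resolved elsewhere in the paper: the proof of Theorem~\ref{thm:Orlov generalized} invokes \cite[Proposition 5.11 and Corollary 5.12]{BFK11}, which give a genuine quasi-equivalence between the projective and injective enhancements whenever a reductive group acts on an affine variety. Since Abelian affine algebraic groups are reductive, this applies here, and once you have $\mathsf{vect}(X,G,w) \simeq \mathsf{Inj}_{\op{coh}}(X,G,w)$ in $\hodgcat$ the Morita product is automatically invariant (it is defined at the level of $\hodgcat$), so your chain of equivalences goes through without needing the ambient-category gymnastics you sketch. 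What your route buys is a clean conceptual explanation of why the two formulations are equivalent; what the paper's direct citation buys is brevity and avoidance of a circularity check (one should verify that Corollary 5.14 in \cite{BFK11} is not itself proved via the $\mathsf{Inj}$ version, which would make your reduction vacuous---in fact it is not, the two are established in parallel there).
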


\begin{proof}
 This is \cite[Corollary 5.14]{BFK11}.
\end{proof}

\begin{corollary} \label{cor: Morita product of MF}
 Let $A$ and $B$ be finitely-generated Abelian groups. Let $R$ and $S$ be smooth finitely-generated commutative $k$-algebras. Assume that $R$ is $A$-graded and $S$ is $B$-graded. Let $w \in R_a$ and $v \in S_b$. Assume that $(a,-b)$ is not torsion in $A \oplus B$ and that the singular locus of $w \boxplus v$ is contained in the product of the zero loci of $w$ and $v$, then there is an isomorphism
 \begin{displaymath}
  \mathsf{proj}(R,A,w) \circledast \mathsf{proj}(S,B,v) \simeq \overline{\mathsf{proj}}(R \otimes_k S, A \boxminus B, w \boxplus v).
 \end{displaymath}
\end{corollary}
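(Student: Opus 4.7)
The plan is to reduce this to the geometric Proposition~\ref{proposition: morita product of factorizations} via the dictionary between Abelian group actions and $A$-gradings developed in Proposition~\ref{proposition: Qcoh-G = Mod-hat-G} and Lemma~\ref{lemma: graded = equivariant facts}. First, I would set $X := \op{Spec} R$ and $Y := \op{Spec} S$. Since $R$ and $S$ are smooth and finitely generated, $X$ and $Y$ are smooth affine varieties; the $A$-grading (resp. $B$-grading) corresponds to an action of $G := G(A) = \op{Spec} k[A]$ on $X$ (resp. of $H := G(B)$ on $Y$). The degrees $a \in A = \widehat{G}$ and $b \in B = \widehat{H}$ correspond to characters $\chi : G \to \mathbb{G}_m$ and $\chi^{\prime}: H \to \mathbb{G}_m$, and the semi-invariant potentials $w \in R_a$, $v \in S_b$ correspond to sections $w \in \Gamma(X, \mathcal O_X(\chi))^G$ and $v \in \Gamma(Y, \mathcal O_Y(\chi^{\prime}))^H$.

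Next I would verify the dictionary between the two ``fiber products''. Under the anti-equivalence between finitely-generated Abelian groups and diagonalizable affine algebraic groups, the kernel of the character $\chi^{\prime} - \chi : G \times H \to \mathbb{G}_m$ is computed on Hopf algebras as $\op{Spec}$ of the quotient of $k[A \oplus B]$ by the ideal generated by the class of $(a,-b)$ (equivalently $(-a,b)$), which is precisely $G(A \boxminus B)$. Hence $G \times_{\mathbb{G}_m} H = G(A \boxminus B)$, and the non-torsion hypothesis on $(a,-b) \in A \oplus B$ becomes the non-torsion hypothesis on $\chi - \chi^{\prime}$. Moreover $X \times Y = \op{Spec}(R \otimes_k S)$, and the $(A \boxminus B)$-grading on $R \otimes_k S$ induced from the tensor product $A \oplus B$-grading matches the $G(A \boxminus B)$-equivariant structure; the element $w \boxplus v$ is homogeneous of common degree $[a,0] = [0,b]$ in $A \boxminus B$. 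The singular locus condition is a property of the function $w \boxplus v$ on $X \times Y$ and so transfers verbatim.

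With this translation in hand, Lemma~\ref{lemma: graded = equivariant facts} yields quasi-equivalences
\[
\mathsf{proj}(R, A, w) \simeq \mathsf{vect}(X, G, w), \quad \mathsf{proj}(S, B, v) \simeq \mathsf{vect}(Y, H, v),
\]
and Proposition~\ref{proposition: morita product of factorizations} then gives
\[
\mathsf{vect}(X, G, w) \circledast \mathsf{vect}(Y, H, v) \simeq \overline{\mathsf{vect}}(X \times Y, G \times_{\mathbb{G}_m} H, w \boxplus v).
\]
Applying Lemma~\ref{lemma: graded = equivariant facts} once more to the right-hand side and noting that a quasi-equivalence of dg-categories induces an equivalence of homotopy categories and therefore is compatible with the operation of closing under summands identifies the right-hand side with $\overline{\mathsf{proj}}(R \otimes_k S, A \boxminus B, w \boxplus v)$, completing the argument.

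The only non-routine step is the bookkeeping in the second paragraph: verifying that the kernel on the group side matches the cokernel on the character side, so that the geometric hypotheses of Proposition~\ref{proposition: morita product of factorizations} are exactly the algebraic hypotheses stated here. Once that dictionary is confirmed, the remainder is a direct appeal to already-established quasi-equivalences.
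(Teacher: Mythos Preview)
Your proposal is correct and follows exactly the paper's approach: the paper's proof is the single sentence ``This is an immediate corollary of Proposition~\ref{proposition: morita product of factorizations} and Lemma~\ref{lemma: graded = equivariant facts},'' and you have simply unpacked the dictionary (in particular the identification $G(A)\times_{\mathbb{G}_m} G(B) \cong G(A\boxminus B)$ and the matching of the non-torsion hypotheses) that makes that sentence work.
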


\begin{proof}
 This is an immediate corollary of Proposition~\ref{proposition: morita product of factorizations} and Lemma~\ref{lemma: graded = equivariant facts}.
\end{proof}

\begin{remark}
 We will use the following observation frequently. Assume that
 \begin{align*}
  R & = k[x_1,\ldots,x_{n_1}] \\
  S & = k[x_1,\ldots,x_{n_2}].
 \end{align*}
 Further assume that there are homomorphisms
 \begin{align*}
  \phi & : A \to \Z  \\
  \phi^{\prime} & : B \to \Z
 \end{align*}
 such that the $\Z$-degrees of all $x_i$ are positive and the $\Z$-degrees of $w$ and $v$ are nonzero, hence positive.

 Then, Euler's formula applies and both $w$ and $v$ are elements of their respective Jacobian ideals: $w \in (\partial w)$ and $v \in (\partial v)$. Thus, the singular locus of $w \boxplus v$, is contained in the product of the zero loci of $w$ and $v$ i.e.
 \begin{displaymath}
  w \otimes 1, 1 \otimes v \in (\partial(w \boxplus v), w \boxplus v) = (\partial w \otimes 1, 1 \otimes \partial v, w \boxplus v).
 \end{displaymath}
\end{remark}

Next, we wish to describe the effect of a Morita product on a semi-orthogonal decomposition. First, we recall these notions in the setting of triangulated categories.

To define a semi-orthogonal decomposition we will need the following notion.

\begin{definition}
 Let $i: \mathcal S \to \mathcal T$ be the inclusion of a full triangulated subcategory of $\mathcal T$. The subcategory, $\mathcal S$, is called \newterm{right admissible} if $i$ has a right adjoint and \newterm{left admissible} if it has a left adjoint. A full triangulated subcategory is called \newterm{admissible} if it is both right and left admissible.
\end{definition}

\begin{definition}
 An object, $E$, in a $k$-linear triangulated category, $\mathcal T$, is called \newterm{exceptional} if,
 \[
  \op{Hom}_{\mathcal T}(E,E[i])  =
  \begin{cases}
   k & \text{ if } i=0  \\
   0 & \text{ if } i \neq 0. \\
  \end{cases}
 \]
\end{definition}

Following \cite{CT}, one can give an extension of the notion of admissible subcategories for pre-triangulated dg-categories by passing to their homotopy categories.

\begin{definition}
 Let $\mathsf C$ be a pre-triangulated dg-category and $\mathsf D$ another pre-triangulated dg-category.  Then $\mathsf D$ is \newterm{(right/left) admissible} in $\mathsf C$ if $\mathsf{D}$ is quasi-equivalent to a dg-subcategory $\mathsf{D}^{\prime}$ of $\mathsf{C}$ and $[\mathsf D^{\prime}]$ is (right/left) admissible in $[\mathsf C]$. An object $E \in \mathsf C$ is \newterm{exceptional} if $E \in [\mathsf C]$ is exceptional.
\end{definition}

Let $\mathcal T$ be a triangulated category and $\mathcal I$ a full subcategory. Recall that the left orthogonal, $\leftexp{\perp}{\mathcal I}$, is the full subcategory of $\mathcal T$ consisting of all objects, $T \in \mathcal T$, with $\op{Hom}_{\mathcal T}(T,I) = 0$ for any $I \in \mathcal I$. The right orthogonal, $\mathcal I^{\perp}$, is defined similarly.

\begin{definition}\label{def:SO}
 A \newterm{semi-orthogonal decomposition} of a triangulated category, $\mathcal T$, is a sequence of full triangulated subcategories, $\mathcal S_1, \dots ,\mathcal S_m$, in $\mathcal T$ such that $\mathcal S_i \subseteq \mathcal S_j^{\perp}$ for $i<j$ and, for every object $T \in \mathcal T$, there exists a diagram:
\begin{center}
\begin{tikzpicture}[description/.style={fill=white,inner sep=2pt}]
\matrix (m) [matrix of math nodes, row sep=1em, column sep=1.5em, text height=1.5ex, text depth=0.25ex]
{  0 & & T_{m-1} & \cdots & T_2 & & T_1 & & T   \\
   & & & & & & & &  \\
   & S_m & & & & S_2 & & S_1 & \\ };
\path[->,font=\scriptsize]
 (m-1-1) edge (m-1-3)
 (m-1-3) edge (m-1-4)
 (m-1-4) edge (m-1-5)
 (m-1-5) edge (m-1-7)
 (m-1-7) edge (m-1-9)

 (m-1-9) edge (m-3-8)
 (m-1-7) edge (m-3-6)
 (m-1-3) edge (m-3-2)

 (m-3-8) edge node[sloped] {$ | $} (m-1-7)
 (m-3-6) edge node[sloped] {$ | $} (m-1-5)
 (m-3-2) edge node[sloped] {$ | $} (m-1-1)
;
\end{tikzpicture}
\end{center}
 where all triangles are distinguished and $S_k \in \mathcal S_k$. We shall denote a semi-orthogonal decomposition by $\langle \mathcal S_1, \ldots, \mathcal S_m \rangle$.
\end{definition}

\begin{definition}
 Let $\mathsf C$ be a pre-triangulated dg-category and $\mathsf D_1, \ldots, \mathsf D_m$ be quasi-equivalent to dg-subcategories of $\mathsf C$, such that
 \begin{displaymath}
  [\mathsf C] = \langle [\mathsf D_1], \ldots, [\mathsf D_m] \rangle
 \end{displaymath}
 is a semi-orthogonal decomposition. Then, we say that $\mathsf D_1, \ldots, \mathsf D_m$ form a \newterm{semi-orthogonal decomposition} of $\mathsf C$ and similarly write
 \begin{displaymath}
  \mathsf C \simeq \langle \mathsf D_1, \ldots, \mathsf D_m \rangle.
 \end{displaymath}
\end{definition}

\begin{remark}
 When $E$ is either an exceptional object of a hom-finite dg-category or hom-finite triangulated category, then the inclusion of the smallest triangulated category generated by $E$ has right adjoint $\op{Hom}(E, -) \otimes_k E$ and left adjoint $\op{Hom}(-, E)^{\vee} \otimes_k E$.  Hence, the smallest triangulated category generated by $E$ is admissible. Following conventions, when some $[\mathsf D_i]$ and/or $\mathcal S_i$ as in the definitions above, is equal to the smallest triangulated category generated by $E$,  we replace $\mathsf D_i$ and/or $\mathcal S_i$ by $E$ in the notation.
\end{remark}

\begin{definition}
 A sequence of exceptional objects, $E_1, \ldots, E_m$, in a triangulated category, $\mathcal T$, is called a \newterm{full exceptional collection} if
 \[
\mathcal T =  \langle E_1, \ldots, E_m \rangle
 \]
 is a semi-orthogonal decomposition. It is called a \newterm{full strong exceptional collection} if it is a full exceptional collection and
 \[
  \op{Hom}(E_i, E_j [s]) = 0
 \]
 for all $s \neq 0$ and all $i,j$.

 A sequence of exceptional objects, $E_1, \ldots, E_m$, in a pre-triangulated dg-category $\mathsf C$ is called a \newterm{full exceptional collection} if $E_1, \ldots, E_m$ is a full exceptional collection in $[\mathsf C]$. Similarly, $E_1,\ldots,E_m$ in $\mathsf C$ is called \newterm{strong} if it is strong in $[\mathsf C]$.
\end{definition}

Now, we record how Morita products interact with semi-orthogonal decompositions.

\begin{lemma} \label{lem:decomposition of products}
 Let $\mathsf C$ and $\mathsf D$ be small dg-categories. Assume that $\mathsf C$ is pre-triangulated and there is a semi-orthogonal decomposition,
 \begin{displaymath}
  \mathsf C \simeq \langle \mathsf A, \mathsf B \rangle.
 \end{displaymath}
 Then, there exists a semi-orthogonal decomposition,
 \begin{displaymath}
  \mathsf C \circledast \mathsf D \simeq \langle \mathsf A \circledast \mathsf D, \mathsf B \circledast \mathsf D \rangle.
 \end{displaymath}
\end{lemma}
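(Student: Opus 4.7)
The plan is to realize $\mathsf{A}\circledast\mathsf{D}$ and $\mathsf{B}\circledast\mathsf{D}$ as pre-thick dg-subcategories of $\mathsf{C}\circledast\mathsf{D}$, then verify semi-orthogonality and generation. By Lemma~\ref{lemma: alternative characterization of Morita product}, I take $\mathsf{C}\circledast\mathsf{D}$ to be any pre-thick enhancement of $\op{perf}(\mathsf{C}\otimes_k\mathsf{D})$ inside $(\mathsf{C}\otimes_k\mathsf{D})^{\op{op}}\op{-Mod}$. The fully-faithful dg-inclusions $\mathsf{A}\hookrightarrow\mathsf{C}$ and $\mathsf{B}\hookrightarrow\mathsf{C}$ remain fully-faithful after tensoring with $\mathsf{D}$; extension of scalars along these carries representables to representables and hence sends perfect modules to perfect modules. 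Restriction along a fully-faithful dg-inclusion is a left inverse to extension in the derived sense, so the induced dg-functors on perfect modules are fully-faithful. Letting $\mathsf{E}_A$ and $\mathsf{E}_B$ denote their essential images inside $\mathsf{C}\circledast\mathsf{D}$, taken as pre-thick dg-subcategories, a second application of Lemma~\ref{lemma: alternative characterization of Morita product} identifies $\mathsf{E}_A\simeq\mathsf{A}\circledast\mathsf{D}$ and $\mathsf{E}_B\simeq\mathsf{B}\circledast\mathsf{D}$.

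For semi-orthogonality, it suffices by thickness of both sides to show $\op{Hom}_{[\mathsf{C}\circledast\mathsf{D}]}(h_{b\otimes d},h_{a\otimes d'})=0$ for $a\in\mathsf{A}$, $b\in\mathsf{B}$, and $d,d'\in\mathsf{D}$. By Yoneda this equals $H^0(\op{Hom}_{\mathsf{C}}(b,a)\otimes_k\op{Hom}_{\mathsf{D}}(d,d'))$. The semi-orthogonality $[\mathsf{A}]\subseteq[\mathsf{B}]^{\perp}$ together with closure of $[\mathsf{A}]$ under shifts in $[\mathsf{C}]$ gives $H^{i}(\op{Hom}_{\mathsf{C}}(b,a))=0$ for every integer $i$, so the complex $\op{Hom}_{\mathsf{C}}(b,a)$ is acyclic. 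As $k$ is a field, the K\"unneth formula yields acyclicity of the tensor product, providing the desired vanishing.

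For generation, the triangle $b\to c\to a$ furnished by the semi-orthogonal decomposition of $[\mathsf{C}]$ lifts to a strict dg-triangle using pre-triangulatedness of $\mathsf{C}$; applying the representable module functor tensored with $d\in\mathsf{D}$ produces a triangle $h_{b\otimes d}\to h_{c\otimes d}\to h_{a\otimes d}$ in $[\mathsf{C}\circledast\mathsf{D}]$ with endpoints in $[\mathsf{E}_B]$ and $[\mathsf{E}_A]$. Combined with the semi-orthogonality established above, a standard octahedral argument shows that the class of objects admitting such a decomposition triangle is closed under triangles and summands; since it contains the generating set of representables $h_{c\otimes d}$, it must exhaust $[\mathsf{C}\circledast\mathsf{D}]$, completing the semi-orthogonal decomposition.

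The mathematically substantive step will be the K\"unneth argument for semi-orthogonality, which crucially uses that $k$ is a field in order to propagate vanishing from the homotopy category $[\mathsf{C}]$ through the tensor product with $\mathsf{D}$. The main bookkeeping hurdle is carefully threading the identifications through the two applications of Lemma~\ref{lemma: alternative characterization of Morita product}, ensuring that the isomorphisms $\mathsf{E}_A\simeq\mathsf{A}\circledast\mathsf{D}$ and $\mathsf{E}_B\simeq\mathsf{B}\circledast\mathsf{D}$ are genuine quasi-equivalences in $\hodgcat$ rather than mere equivalences of homotopy categories.
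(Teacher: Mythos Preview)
Your proof is correct and follows essentially the same strategy as the paper's: realize the thick closures of $\mathsf{A}\otimes_k\mathsf{D}$ and $\mathsf{B}\otimes_k\mathsf{D}$ inside $\mathsf{C}\circledast\mathsf{D}$, verify semi-orthogonality and generation, then invoke Lemma~\ref{lemma: alternative characterization of Morita product} to identify these closures with $\mathsf{A}\circledast\mathsf{D}$ and $\mathsf{B}\circledast\mathsf{D}$. Your K\"unneth argument makes explicit what the paper's phrase ``immediate from the definition of tensor product'' leaves implicit, and your explicit decomposition triangles for representables unpack the paper's one-line generation claim; neither difference is substantive.
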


\begin{proof}
 We can assume that $\mathsf{A}$ and $\mathsf{B}$ are subcategories of $\mathsf{C}$ as this only changes $\mathsf{A} \circledast \mathsf{D}$ and $\mathsf{B} \circledast \mathsf{D}$ by quasi-equivalences.

 Let $\langle [\mathsf A \otimes_k \mathsf D] \rangle$ be the smallest thick category containing $[\mathsf A \otimes_k \mathsf D]$ in $[\mathsf C \circledast \mathsf D]$. Let $\langle \mathsf A \otimes_k \mathsf D \rangle$ be the full dg-subcategory of $\mathsf C \circledast \mathsf D$ consisting of objects from $\langle [\mathsf A \otimes_k \mathsf D] \rangle$. Define $\langle [\mathsf B \otimes_k \mathsf D] \rangle$ and $\langle \mathsf B \otimes_k \mathsf D \rangle$ similarly. Since $[\mathsf A] \subset [\mathsf B]^{\perp}$, it is immediate from the definition of tensor product that
\begin{displaymath}
 [\mathsf A \otimes_k \mathsf D] \subset [\mathsf B \otimes_k \mathsf D]^{\perp}.
\end{displaymath}
 Since this property is closed under taking triangles and summands, we also have
\begin{displaymath}
\langle [\mathsf A \otimes_k \mathsf D] \rangle \subset  \langle [\mathsf B \otimes_k \mathsf D] \rangle^{\perp} .
\end{displaymath}
 Since $[\mathsf A]$ and $[\mathsf B]$ generate $[\mathsf C]$, $[\mathsf A \otimes_k \mathsf D]$ and $[\mathsf B \otimes_k \mathsf D]$ generate $[\mathsf C \otimes_k \mathsf D]$ and hence $[\mathsf C \circledast \mathsf D]$. Thus, we have a semi-orthogonal decomposition,
\begin{displaymath}
 [\mathsf C \circledast \mathsf D] \cong \langle \langle [\mathsf A \otimes_k \mathsf D] \rangle, \langle [\mathsf B \otimes_k \mathsf D] \rangle \rangle.
\end{displaymath}

 To finish, we need to check that $\langle \mathsf A \otimes_k \mathsf D \rangle$ is quasi-equivalent to $[\mathsf A \circledast \mathsf D]$ and $\langle \mathsf B \otimes_k \mathsf D \rangle$ is quasi-equivalent to $[\mathsf  B \circledast \mathsf D]$. From the inclusion, $\mathsf A \otimes_k \mathsf D \subset \langle \mathsf A \otimes_k \mathsf D \rangle$, we get a functor,
\begin{displaymath}
 a: \langle \mathsf A \otimes_k \mathsf D \rangle \to (\mathsf A \otimes_k \mathsf D)^{\op{op}}\op{-Mod}.
\end{displaymath}
 We first check that any object in the essential image of $a$ is quasi-isomorphic to an object of $\mathsf{A} \circledast \mathsf{D}$. This is true for any object of $\mathsf{A} \otimes_k \mathsf{D}$ and the subcategory of objects it is true for is thick. Thus, by definition, it is true for all objects of $\langle \mathsf A \otimes_k \mathsf D \rangle$. Note that, by a similar argument, $[a]$ is fully-faithful.
 
 Since $[\langle \mathsf A \otimes_k \mathsf B \rangle]$ is the smallest thick triangulated subcategory containing $[\mathsf A \otimes_k \mathsf B]$, the essential image of $[a]$ is also contained in, hence equivalent to, $\op{perf} (\mathsf A \otimes_k \mathsf B)$. Consequently, we may apply Lemma~\ref{lemma: alternative characterization of Morita product} to conclude that there is a quasi-equivalence
 \begin{displaymath}
  \langle \mathsf A \otimes_k \mathsf B \rangle \simeq \mathsf A \circledast \mathsf B.
 \end{displaymath}
 A similar statement holds for $\mathsf B$. This finishes the proof.
\end{proof}

\section{A theorem of Orlov and a geometric description of the functor category} \label{sec: Orlov}

\subsection{A theorem of Orlov} \label{sec:begin Orlov}

In \cite{Orl09}, Orlov demonstrates a beautiful correspondence between derived categories of coherent sheaves and triangulated  categories of singularities involving semi-orthogonal decompositions. By work of C\u{a}ld\u{a}raru and Tu, or alternatively work of Orlov and Lunts, Orlov's equivalence also generalizes to a statement on the level of dg-categories, see \cite{CT, LO}. Some details of the dg-version are contained in \cite[Theorem 6.13]{BFK11}.

In what follows we will need a very slight generalization, from $\Z$-graded rings to $A$-graded rings, where $A$ is a finitely generated Abelian group of rank one. We first recall the following notion.

\begin{definition} \label{defn: Gorenstein}
 Let $R$ be a finitely-generated $A$-graded $k$-algebra with a unique homogeneous maximal ideal, $\mathfrak{m}$. We say that $R$ is \newterm{Gorenstein} if $R$ has finite $A$-graded injective dimension and there is an element $\eta_{R,A} \in A$ such that
 \begin{displaymath}
  \op{Ext}^i_{\op{Mod}_A R}(R/\mathfrak{m},R(\eta_{R,A})) = \begin{cases} R/\mathfrak{m} & i= \op{dim }R \\ 0 & \text{otherwise.} \end{cases}
 \end{displaymath}
 In other words, the derived dual of $k =  R/\mathfrak{m}$, $k^{\vee}$, is quasi-isomorphic to $k(\eta_{R,A})[-\op{dim} R]$. The element, $\eta_{R,A}$, is called the \newterm{Gorenstein parameter} of $R$. We will simply denote $\eta_{R,A}$ by $\eta$ when $R$ and $A$ are clear from the context.
\end{definition}

Fix
\begin{displaymath}
 S := k[x_1, \ldots, x_n]
\end{displaymath}
and correspondingly $n := \op{dim} S$.

Let $\mathbf{e}_1, \ldots, \mathbf{e}_n$ be the standard basis for the Abelian group, $\Z^n$. The polynomial ring, $S$, has a natural $\Z^n$-grading where $x_i$ has degree $\mathbf{e}_i$. Let $L$ denote a subgroup of rank $n-1$ and consider the quotient map,
\[
 \pi: \Z^n \to \Z^n/L =: A.
\]
Then $A$ is a finitely generated Abelian group of rank one generated by the cosets of the $\mathbf{e}_i$, which we denote by,
\[
 a_i := \pi(\mathbf{e}_i).
\]

Fix an isomorphism, $\phi: A/A_{\op{tors}} \tilde{\longrightarrow} \Z$.  This induces a degree map,
\begin{displaymath}
 \op{deg}: A \to \Z.
\end{displaymath}

\begin{definition}
 Assume that $A$ is a rank one finitely-generated Abelian group as above. If, with our choice of isomorphism $\phi$, $S$ is positively $\Z$-graded, i.e.~that $\op{deg }a_i > 0$ for all $i$, we say $S$ is \newterm{positively $A$-graded} or that \newterm{$A$ grades $S$ positively}.
\end{definition}

This assumption assures that $S$, and any quotient of $S$ by a homogeneous ideal, has a unique $A$-graded maximal ideal corresponding to the origin.

\begin{lemma} \label{lem:parameter}
 Assume $A$ grades $S$ positively and let $w \in S_d$ for $\op{deg} d > 0$. The $A$-graded algebra, $R := S/(w)$, is Gorenstein with Gorenstein parameter,
 \[\eta = -d + \sum_{i=1}^{n}  a_i. \]
\end{lemma}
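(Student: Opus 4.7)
The plan is to compute $\op{RHom}_R(k,R)$ by a change of rings along the surjection $S\to R$, using first that $S$ itself is Gorenstein via the Koszul resolution. This is the graded analogue of the standard fact that quotients of Gorenstein rings by regular sequences are Gorenstein, with the parameter shifting by the degree of the regular element.

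First I would verify the $S$-case. Positivity of the grading ensures $k=S/(x_1,\ldots,x_n)$ admits the graded Koszul resolution
\begin{displaymath}
 0\to S\bigl(-\textstyle\sum_i a_i\bigr)\to\cdots\to\bigoplus_{i}S(-a_i)\to S\to k\to 0.
\end{displaymath}
Applying $\op{Hom}_S(-,S)$ yields a complex whose only nonzero cohomology sits at the top and is the cokernel of $\mathfrak m$ acting on $S(\sum_i a_i)$, i.e.\ $k(\sum_i a_i)$. Thus $\op{RHom}_S(k,S)\cong k(\sum_i a_i)[-n]$, confirming $S$ is Gorenstein with parameter $\sum_i a_i$.

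Next, assuming $w\neq 0$, the element $w$ is a nonzerodivisor in the domain $S$, so
\begin{displaymath}
 0\to S(-d)\xrightarrow{\;w\;}S\to R\to 0
\end{displaymath}
is a graded free resolution of $R$ over $S$. Dualizing with $\op{Hom}_S(-,S)$ gives $S\xrightarrow{w}S(d)$ concentrated in cohomological degrees $0$ and $1$, and so $\op{RHom}_S(R,S)\cong R(d)[-1]$. The change-of-rings adjunction, namely that the right adjoint to the forgetful functor $\op{Mod}_A R\to\op{Mod}_A S$ is $\op{RHom}_S(R,-)$, then yields
\begin{displaymath}
 \op{RHom}_R\bigl(k,R(d)\bigr)[-1]\cong\op{RHom}_R\bigl(k,\op{RHom}_S(R,S)\bigr)\cong\op{RHom}_S(k,S)\cong k\bigl(\textstyle\sum_i a_i\bigr)[-n].
\end{displaymath}
Moving the twist outside via $\op{Hom}(M,N(e))=\op{Hom}(M,N)(e)$ and rearranging gives $\op{RHom}_R(k,R)\cong k\bigl(-d+\sum_i a_i\bigr)[-(n-1)]$.

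Since $\dim R=n-1$, this identifies the Gorenstein parameter as $\eta=-d+\sum_i a_i$. Finite $A$-graded injective dimension of $R$ is automatic: $R$ is a graded hypersurface over the regular graded ring $S$, and concentration of $\op{RHom}_R(k,R)$ in a single cohomological degree together with Noetherianity suffices via the graded version of Bass's characterization. The only real technical obstacle is careful bookkeeping with the twist-and-shift conventions so the signs line up; once the convention is fixed the proof reduces to the Koszul computation for $S$ combined with the two-term resolution of $R$ over $S$.
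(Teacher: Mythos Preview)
Your proof is correct and follows essentially the same approach as the paper: compute the Gorenstein parameter of $S$ via the Koszul complex, then pass to $R=S/(w)$ by a change-of-rings argument. The only cosmetic difference is that the paper reduces first and computes for $S$ second, and for the change-of-rings step simply cites Matsumura's \emph{Commutative ring theory}, Lemma~18.2(a), whereas you unpack that isomorphism explicitly via the two-term resolution $S(-d)\xrightarrow{w}S$ and the adjunction $\op{RHom}_R(k,\op{RHom}_S(R,S))\cong\op{RHom}_S(k,S)$. For finite injective dimension the paper invokes Lemma~\ref{lemma: proj equiv = equiv + proj} (an ungraded injective module with an $A$-grading is $A$-graded injective, since $G(A)$ is linearly reductive) to reduce to the well-known ungraded statement, while you appeal to a graded Bass criterion; either route works.
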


\begin{proof}
 By Lemma~\ref{lemma: proj equiv = equiv + proj} any object of $\op{Mod}_A R$ with finite injective dimension as an ungraded module has finite injective dimension as an $A$-graded module. It is well-known that $R$ has finite injective dimension as an ungraded module so $R$ has finite injective dimension as an $A$-graded module.

 Assume that $w \not = 0$. It is a simple exercise to verify that there are natural isomorphisms,
\begin{displaymath}
 \op{Ext}^{n+1}_{\op{Mod}_A R}(k,S(-d)) \cong \op{Ext}^{n}_{\op{Mod}_A R}(k,R),
\end{displaymath}
 see \cite[Lemma 18.2.a]{Mat89} for the ungraded case. Thus,
\begin{displaymath}
 \eta_{R,A} = \eta_{S,A} - d.
\end{displaymath}
 We reduce to computing the Gorenstein parameter for $S$, i.e. the case $w = 0$. Since the $x_i$ are $A$-homogeneous in $S$ by assumption, we may view the Koszul complex,
 \begin{displaymath}
  K(x_1, \ldots, x_n) = \bigotimes_{i=1}^n (S(-a_i) \overset{x_i}{\to} S),
 \end{displaymath}
 as an $A$-graded projective resolution of $k$ as an $S$-module.  Hence,
 \begin{align*}
  k^{\vee} & \cong  K(x_1, \ldots, x_n)^{\vee} \cong \bigotimes_{i=1}^n (S(-a_i) \overset{x_i}{\to} S)^{\vee}  \\
                 & \cong  \bigotimes_{i=1}^n (S \overset{x_i}{\to} S(a_i))[-1] \cong (\bigotimes_{i=1}^n (S(-a_i) \overset{x_i}{\to} S))(\sum_{i=1}^n a_i)[-n]  \\
                 & \cong  K(x_1, \ldots, x_n)(\sum_{i=1}^n a_i)[-n]) \cong k(\sum_{i=1}^n a_i)[-n]
 \end{align*}
 So, the Gorenstein parameter of $S$ is $\sum_{i=1}^n a_i$.
\end{proof}

\begin{definition} \label{defn: stack}
 Given an $A$-homogeneous element, $w \in S$, let $Y_w := \text{Spec }S/(w)$. The action of $G(A)$ restricts to an action on $Y_w$ and we define a global quotient stack by,
 \begin{equation} \label{eq: definition of Z}
  Z_{(w,A)} := [(Y_w \backslash 0 )/ G(A)].
 \end{equation}
 We shall often denote it simply by $Z$ when $w$ and $A$ are clear from the context.

 We let $\mathsf{Inj}_{\op{coh}}(Z)$ denote the dg-category of bounded below complexes of injective quasi-coherent $G(A)$-equivariant sheaves with bounded and coherent cohomology on $Y_w \backslash 0$.
\end{definition}

In what follows, we will need some distinguished objects in $\mathsf{Inj}_{\op{coh}}(Z)$ and $\mathsf{fact}(S,A,w)$ respectively. Abusing notation, let $\mathcal O_Z(a)$ be a choice of injective resolution of $\mathcal O_Z(a)$. Similarly, in $\mathsf{fact}(S,A,w)$ consider the factorization:
\begin{displaymath} \label{eq: k}
\begin{tikzpicture}[description/.style={fill=white,inner sep=2pt}]
\matrix (m) [matrix of math nodes, row sep=3em, column sep=3em, text height=1.5ex, text depth=0.25ex]
{  0 & k(a) \\ };
\path[->,font=\scriptsize]
(m-1-1) edge[out=30,in=160] node[above] {$0$} (m-1-2)
(m-1-2) edge[out=200, in=330] node[below] {$0$} (m-1-1);
\end{tikzpicture}
\end{displaymath}
where $k$ denotes the $A$-graded $S/(w)$-module, $S/(x_1, \ldots, x_n)$. Choose a factorization in $\mathsf{proj}(S,A,w)$ quasi-isomorphic to $k(a)$. Once again, abusing notation a bit, we will denote this factorization simply by $k(a)$.

\begin{theorem}[Orlov] \label{thm:Orlov generalized}
 Let $S$ be positively graded by $A$ as constructed above and let $w \in S_d$ be a nonzero homogeneous element with $\op{deg} d > 0$. Assume that $Y_w \setminus 0$ is smooth. With the notation above, let $\mu$ denote the degree of the Gorenstein parameter of $S/(w)$,
 \[
 \mu = \op{deg }\eta = \op{deg }(-d+\sum_{i=1}^n a_i).
 \]
 \renewcommand{\labelenumi}{\emph{\roman{enumi})}}
 \begin{enumerate}
  \item If $\mu > 0$, there is a semi-orthogonal decomposition,
 \begin{displaymath}
  \mathsf{Inj}_{\op{coh}}(Z) \simeq \left\langle \bigoplus_{\op{deg }a = -\mu} \O_Z(a),\ldots,\bigoplus_{\op{deg }a = -1} \O_Z(a), \proj{S}{A}{w} \right\rangle.
 \end{displaymath}
 \item If $\mu = 0$, there is a quasi-equivalence of dg-categories,
 \begin{displaymath}
  \mathsf{Inj}_{\op{coh}}(Z) \simeq \proj{S}{A}{w}.
 \end{displaymath}
 \item If $\mu < 0$, there is a semi-orthogonal decomposition,
 \begin{displaymath}
  \proj{A}{M}{w} \simeq \left\langle \bigoplus_{\op{deg }a = -\mu-1}k(a),\ldots,\bigoplus_{\op{deg }a = 0}k(a), \mathsf{Inj}_{\op{coh}}(Z) \right\rangle.
 \end{displaymath}
 \end{enumerate}
 In particular, if $\mu > 0$, $\O_Z(a) \in \mathsf{Inj}_{\op{coh}}(Z)$ is exceptional for all $a \in A$ and if $\mu < 0$, $k(a) \in \proj{S}{A}{w}$ is exceptional for all $a \in A$.
\end{theorem}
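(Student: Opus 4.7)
The plan is to follow the strategy of Orlov's original argument in [Orl09, Theorem 2.5], adapted so that the grading group $A$ is an arbitrary finitely-generated Abelian group of rank one (possibly with torsion) rather than $\mathbb{Z}$. First I would reduce the statement to a comparison of subcategories of $\op{D}^{\op{b}}(\op{mod}_A R)$ where $R := S/(w)$. The background for this reduction has already been assembled: the graded singularity category $\op{D}^A_{\op{sg}}(R) := \op{D}^{\op{b}}(\op{mod}_A R)/\op{perf}^A(R)$ is known to be equivalent to $\dabs[\op{fact}(S,A,w)]$, and a version of this equivalence on dg-enhancements is recalled as [BFK11, Theorem 6.13]. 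On the other side, there is a standard equivalence $\op{D}^{\op{b}}(\op{coh } Z) \simeq \op{D}^{\op{b}}(\op{qgr}_A R)$ where $\op{qgr}_A R$ is the Serre quotient of $\op{mod}_A R$ by the thick subcategory $\op{tors}_A R$ of modules with support at the homogeneous maximal ideal.

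Next I would introduce the two key thick subcategories of $\op{D}^{\op{b}}(\op{mod}_A R)$: the subcategory $\mathcal{P}_{\geq i}$ generated by $R(a)$ with $\op{deg } a \geq i$, and the subcategory $\mathcal{T}_{<i}$ generated by $k(a)$ with $\op{deg } a < i$. The central computation is that Koszul resolutions give $\op{Ext}^*_{\op{mod}_A R}(R(a), k(a'))$ concentrated in one cohomological degree, and, via the Gorenstein condition of Lemma~\ref{lem:parameter}, Serre duality provides the dual vanishing $\op{Ext}^*_{\op{mod}_A R}(k(a), R(a'))$ whenever $\op{deg }(a' - a) + \mu$ falls in the appropriate range. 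These two calculations, and the corresponding ones for $\op{Ext}^*(k(a), k(a'))$, are the engine of the argument.

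With these in hand I would establish two semi-orthogonal decompositions: one of the form $\op{D}^{\op{b}}(\op{mod}_A R) \simeq \langle \mathcal{T}_{<i}, \mathcal{D}_i \rangle$ with $\mathcal{D}_i$ mapping isomorphically to $\op{D}^{\op{b}}(\op{qgr}_A R) \simeq \op{D}^{\op{b}}(\op{coh } Z)$ under the Serre quotient functor, and a second of the form $\op{D}^{\op{b}}(\op{mod}_A R) \simeq \langle \mathcal{E}_i, \mathcal{P}_{\geq i} \rangle$ with $\mathcal{E}_i$ mapping isomorphically to $\op{D}^A_{\op{sg}}(R) \simeq \dabs[\op{fact}(S,A,w)]$ under the singularity quotient. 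Admissibility of the generators is checked by producing explicit truncation functors associated to the degree homomorphism $\op{deg}: A \to \mathbb{Z}$ pulled back from the classical $\mathbb{Z}$-graded truncations; this is where the rank-one hypothesis on $A$ is genuinely used. Comparing the two decompositions via the Gorenstein parameter then shows $\mathcal{D}_i$ and $\mathcal{E}_i$ coincide when $\mu = 0$, while for $\mu > 0$ (resp.\ $\mu < 0$) one contains the other with orthogonal complement generated by $\bigoplus_{\op{deg } a = -j} \O_Z(a)$ for $1 \leq j \leq \mu$ (resp.\ $\bigoplus_{\op{deg } a = -j} k(a)$ for $0 \leq j \leq -\mu-1$). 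The exceptionality of these generators falls out of the Ext computations in the course of the argument, and a final lift from the triangulated to dg-level uses standard enhancement machinery as in \cite{CT, LO}.

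The main obstacle I expect is Step three, specifically verifying admissibility and constructing the correct truncation functors for an $A$-grading with nontrivial torsion. In Orlov's $\mathbb{Z}$-graded setting the truncation functors $\tau_{\geq i}$ and $\tau_{<i}$ are transparent; here one must check that pulling the degree filtration back along $\op{deg}: A \to \mathbb{Z}$ respects the grading and gives well-defined endofunctors of $\op{mod}_A R$ with the expected adjunction properties. Once this formal framework is in place, the bookkeeping that matches the two decompositions through $\eta$ is essentially identical to Orlov's.
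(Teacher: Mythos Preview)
Your proposal is correct but takes a genuinely different route from the paper. The paper's proof is essentially a two-line citation: it invokes \cite[Theorem 6.13]{BFK11}, which already contains the full equivariant (equivalently, $A$-graded) version of Orlov's theorem, not merely the identification of the graded singularity category with factorizations that you cite it for. After translating between $G(A)$-equivariant sheaves and $A$-graded modules via Proposition~\ref{proposition: Qcoh-G = Mod-hat-G}, the only remaining work is to swap the injective-factorization enhancement appearing in \cite{BFK11} for the projective one $\proj{S}{A}{w}$, which is handled by \cite[Proposition 5.11 and Corollary 5.12]{BFK11}.

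By contrast, you propose to re-derive the result from first principles by adapting Orlov's original $\Z$-graded argument to a rank-one Abelian group $A$ with torsion: setting up the subcategories $\mathcal P_{\geq i}$ and $\mathcal T_{<i}$ via the degree map, checking the Ext computations through the Gorenstein condition of Lemma~\ref{lem:parameter}, and building the truncation functors by pulling back along $\op{deg}: A \to \Z$. This is exactly the content that \cite[Theorem 6.13]{BFK11} packages, so your outline is sound; it is simply more self-contained and makes the role of the torsion in $A$ explicit, at the cost of reproducing work already done in the prequel. The paper's approach is more economical given that \cite{BFK11} is the companion article, while yours would be the right choice in a standalone exposition.
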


\begin{proof}
 Applying Proposition~\ref{proposition: Qcoh-G = Mod-hat-G} allows us to appeal to \cite[Theorem 6.13]{BFK11} giving all desired statements except using injective equivariant factorizations. However, \cite[Proposition 5.11 and Corollary 5.12]{BFK11} give an quasi-equivalence between dg-categories of projective and injective equivariant factorizations in the setting of a reductive group acting on an affine variety.
\end{proof}

\begin{corollary} \label{corollary: thickness for Orlov components}
 Let $S$ be positively graded by $A$ as constructed above and let $w \in S_d$ be a nonzero homogeneous element with $\op{deg} d > 0$. Assume that $Y_w \setminus 0$ is smooth. Then, $\mathsf{proj}(S,A,w)$ is pre-thick and the inclusion
 \begin{displaymath}
  \mathsf{proj}(S,A,w) \to \overline{\mathsf{proj}}(S,A,w)
 \end{displaymath}
 is a quasi-equivalence.
\end{corollary}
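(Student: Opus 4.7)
The plan is to reduce both claims to the idempotent completeness of $[\mathsf{proj}(S,A,w)]$ and then exploit Theorem~\ref{thm:Orlov generalized}. By Propositions~\ref{prop: facts are triangulated} and~\ref{prop: projective enhancement}, the category $[\mathsf{proj}(S,A,w)]$ is already triangulated, and the Yoneda embedding $[\mathsf{proj}(S,A,w)] \hookrightarrow \op{D}[\mathsf{proj}(S,A,w)^{\op{op}}\op{-Mod}]$ is fully faithful and triangulated. Since idempotent-completeness is preserved under fully faithful embeddings, pre-thickness amounts to showing that $[\mathsf{proj}(S,A,w)]$ is idempotent complete. Granting this, a summand $N$ of $E \in [\mathsf{proj}(S,A,w)]$ in the larger category $[\mathsf{Proj}(S,A,w)]$ corresponds to an idempotent on $E$, which then splits within $[\mathsf{proj}(S,A,w)]$, so $N \in [\mathsf{proj}(S,A,w)]$ and the inclusion $\mathsf{proj}(S,A,w) \to \overline{\mathsf{proj}}(S,A,w)$ is a quasi-equivalence.

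To establish idempotent completeness, the ambient observation is that since $Y_w \setminus 0$ is smooth, $Z = Z_{(w,A)}$ is a smooth Noetherian stack, so $[\mathsf{Inj}_{\op{coh}}(Z)] \cong \dbcoh{Z}$ is idempotent complete, a standard fact about bounded derived categories of Noetherian abelian categories. I would then proceed by cases following Theorem~\ref{thm:Orlov generalized}. When $\mu = 0$, the quasi-equivalence $\mathsf{proj}(S,A,w) \simeq \mathsf{Inj}_{\op{coh}}(Z)$ transfers the property directly. When $\mu > 0$, the SOD exhibits $[\mathsf{proj}(S,A,w)]$ as the right orthogonal in $[\mathsf{Inj}_{\op{coh}}(Z)]$ of the exceptional objects $\bigoplus_{\op{deg }a = -i} \O_Z(a)$; right orthogonals in an idempotent-complete category are automatically idempotent complete, since the vanishing condition defining them is preserved by taking summands. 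When $\mu < 0$, the SOD expresses $[\mathsf{proj}(S,A,w)]$ in terms of components each of which is idempotent complete: the subcategory generated by a single exceptional $k(a)$ is equivalent to $\dbmod{k}$, and the last component is $[\mathsf{Inj}_{\op{coh}}(Z)]$. I then invoke the standard fact that a semi-orthogonal decomposition whose components are all idempotent complete has an idempotent-complete total category.

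The principal obstacle is the $\mu < 0$ case, where one must verify that Karoubi closedness propagates from the SOD components to the total. Given an idempotent $e: T \to T$, the functorial projections associated with the SOD yield idempotents on the images of $T$ in each component; these split by the assumed Karoubi closedness of the pieces, and one must then reassemble the splittings along the connecting morphism of the SOD triangle, with semi-orthogonality controlling the gluing ambiguity. This is a careful diagram chase rather than a deep point, and it is the only step where the internal structure of the SOD enters beyond abstract formalities.
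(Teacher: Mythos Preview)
Your proposal is correct and follows essentially the same route as the paper: reduce to idempotent completeness of $[\mathsf{proj}(S,A,w)]$, then use the trichotomy of Theorem~\ref{thm:Orlov generalized} together with the idempotent completeness of $\dbcoh{Z}$, handling the $\mu<0$ case via the standard fact that idempotent completeness propagates up from the components of a semi-orthogonal decomposition. The only difference is that the paper outsources this last lemma to \cite[Lemma 2.2.6]{BDFIK13} rather than sketching the diagram chase you outline.
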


\begin{proof}
 By Theorem~\ref{thm:Orlov generalized}, $[\mathsf{proj}(S,A,w)]$ is either a semi-orthogonal component of an idempotent-complete triangulated category or it admits a semi-orthogonal decomposition whose components are idempotent-complete. By \cite[Lemma 2.2.6]{BDFIK13}, $[\mathsf{proj}(S,A,w)]$ is idempotent-complete and, consequently, $\mathsf{proj}(S,A,w)$ is pre-thick. Since $\mathsf{proj}(S,A,w)$ is pre-thick, the inclusion
 \begin{displaymath}
  \mathsf{proj}(S,A,w) \to \overline{\mathsf{proj}}(S,A,w)
 \end{displaymath}
 must be a quasi-equivalence.
\end{proof}

We now wish to combine this result with Corollary~\ref{cor: Morita product of MF}. With the notation as before, consider a collection of $A_i$-graded polynomial rings, $S_i$, together with $A_i$-homogeneous functions $w_i \in (S_i)_{d_i}$, and the corresponding stacks, $Z_i := Z_{(w_i, A_i)}$  for $1 \leq i \leq t$. Write $S_i = k[x_{i1},\ldots,k_{in_i}]$ where $x_{ij} \in (S_i)_{a_{ij}}$ for $a_{ij} \in A_i$. 

Recall that for two Abelian groups $A,A^{\prime}$ together with a choice of non-torsion elements $d \in A, d^{\prime} \in A^{\prime}$, we set $A \boxminus A^{\prime} = A \oplus A^{\prime}/(d,-d^{\prime})$.  Notice that this forms an associative product on the set of isomorphism classes of (rank one) Abelian groups together with a chosen non-torsion element.  Let $A := A_1 \boxminus \cdots \boxminus A_t$. For each $A_i$, we have a homomorphism given by the composition of the natural maps,
\begin{equation} \label{eq: phi}
 \phi_i : A_i \to \bigoplus_i A_i \to  A
\end{equation}
Now consider two rank one Abelian groups, $A,A^{\prime}$, together with degree maps,  $\op{deg}_{A}: A \to \Z$ and $\op{deg}_{A^{\prime}}: A^{\prime} \to \Z$ coming from isomorphisms $A/ A_{\op{tors}} \overset{\sim}{\to} \Z$ and $A^{\prime}/ A^{\prime}_{\op{tors}} \overset{\sim}{\to} \Z$ respectively.  In addition, suppose we have a choice of non-torsion elements, elements $d \in A, d^{\prime} \in A$. We define a degree map, $\op{deg}_{\boxminus}: A \boxminus A^{\prime} \to \Z$ as follows,
\begin{equation} \label{degree formula}
 \op{deg}_{\boxminus}((a,a^{\prime})) =  \frac{\op{deg}_{A^{\prime}}(d^{\prime})\op{deg}_A(a) + \op{deg}_A(d)\op{deg}_{A^{\prime}}(a^{\prime})}{\op{gcd}(\op{deg}_A(d),\op{deg}_{A^{\prime}}(d^{\prime}))}.
 \end{equation}
where $\op{gcd}(\op{deg}_A(d),\op{deg}_{A^{\prime}}(d^{\prime}))$ is the greatest common divisor of $\op{deg}_A(d)$ and $\op{deg}_{A^{\prime}}(d^{\prime})$.

\begin{remark}
 When the context is clear, we remove the subscripts, $A$, $A^{\prime}$, and $\boxminus$, from the notation above.
\end{remark}

\begin{lemma} \label{lem: positively graded}
 Assume that $S_i$ is an $A_i$-positively graded polynomial ring for $1 \leq i \leq t$. For each $i$, let $d_i \in A_i$ be a non-torsion element of positive degree. Set
 \[
  A := A_1 \boxminus \cdots \boxminus A_t.
 \]
 As an $A$-graded ring, $S_1 \otimes \cdots \otimes S_t$ is positively graded.
\end{lemma}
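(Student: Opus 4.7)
The plan is to induct on $t$, reducing to the case $t = 2$ using the associativity of $\boxminus$ recorded in the excerpt. The case $t = 1$ is trivial, so assume $t \ge 2$.

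For $t = 2$, set $A := A_1 \boxminus A_2$. Since $(d_1, -d_2)$ is non-torsion in $A_1 \oplus A_2$ and each $A_i$ has rank one, $A$ has rank one. The homomorphism
\[
 A_1 \oplus A_2 \to \Z, \qquad (a_1, a_2) \mapsto \frac{\op{deg}_{A_2}(d_2)\op{deg}_{A_1}(a_1) + \op{deg}_{A_1}(d_1)\op{deg}_{A_2}(a_2)}{\op{gcd}(\op{deg}_{A_1}(d_1), \op{deg}_{A_2}(d_2))}
\]
sends $(d_1, -d_2)$ to $0$, so it descends to a homomorphism $\op{deg}_{\boxminus}: A \to \Z$. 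Dividing out the gcd makes the two coefficients in the numerator coprime, so $\op{deg}_{\boxminus}$ is surjective; since $A$ has rank one, $\op{deg}_{\boxminus}$ agrees with the degree map induced from $A/A_{\op{tors}} \cong \Z$, fixing the orientation accordingly. The generators of $S_1 \otimes_k S_2$ are $x_{1j} \otimes 1$ and $1 \otimes x_{2k}$ with $A$-degrees $\phi_1(a_{1j})$ and $\phi_2(a_{2k})$. Plugging into the formula,
\[
 \op{deg}_{\boxminus}(\phi_1(a_{1j})) = \frac{\op{deg}_{A_2}(d_2)\op{deg}_{A_1}(a_{1j})}{\op{gcd}(\op{deg}_{A_1}(d_1), \op{deg}_{A_2}(d_2))} > 0,
\]
because $d_2$ has positive $A_2$-degree by hypothesis and $a_{1j}$ has positive $A_1$-degree because $S_1$ is $A_1$-positively graded; a symmetric computation handles $\phi_2(a_{2k})$.

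For the inductive step with $t \ge 3$, set $A^{\prime} := A_1 \boxminus \cdots \boxminus A_{t-1}$ so that $A \cong A^{\prime} \boxminus A_t$ by associativity. Take $\phi_1(d_1) \in A^{\prime}$ as the distinguished non-torsion element for the remaining $\boxminus$ with $A_t$; the same formula applied at the previous stage (together with the inductive hypothesis that $S_1 \otimes_k \cdots \otimes_k S_{t-1}$ is $A^{\prime}$-positively graded) guarantees $\op{deg}_{A^{\prime}}(\phi_1(d_1)) > 0$. Applying the $t = 2$ case to $(A^{\prime}, A_t)$ with the polynomial rings $S_1 \otimes_k \cdots \otimes_k S_{t-1}$ and $S_t$ completes the induction. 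The main obstacle is not mathematical depth but bookkeeping: one must verify once that the explicit formula descends, is surjective, and coincides with the canonical degree map of the lemma's setup, and then keep orientations consistent throughout the induction. With that recorded, positivity of each generator follows from the fact that the numerator of the formula is a product of positive integers.
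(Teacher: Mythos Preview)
Your proposal is correct and follows essentially the same approach as the paper: reduce to $t=2$ by induction/associativity, then plug the generators into the explicit formula for $\op{deg}_{\boxminus}$ and observe that the result is a product of positive quantities. The paper is terser---it simply invokes the already-defined $\op{deg}_{\boxminus}$ rather than re-verifying that the formula descends and gives the canonical degree map---but the substance of the argument is identical.
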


\begin{proof}
 It suffices to prove the case where $t=2$.  Let $x_1, \ldots, x_{n_1}$ be $A_1$-homogeneous generators of $S_1$ and $y_1, \ldots, y_{n_2}$ be $A_2$-homogeneous generators of $S_2$.  For each $i$, by Equation~\eqref{degree formula} we have,
 \[
  \op{deg}_{\boxminus}(x_i) = \frac{\op{deg}(d_1)\op{deg}(x_i)}{\op{gcd}(d_1,d_2)}.
 \]
 This is a product of positive numbers, hence $\op{deg}_{\boxminus}(x_i)$ is positive for all $i$.  Similarly, this holds for the $y_j$ for all $j$.
\end{proof}

Recall that for two small dg-categories, $\mathsf C$ and $\mathsf D$, over $k$, we have the Morita product, $\mathsf C \circledast \mathsf D$. This product fits perfectly with the usual notion of Cartesian product for schemes.  Indeed, iterating Theorem~\ref{theorem: Morita product of varieties}, we have
\begin{equation} \label {eq:geometric product}
 \mathsf{Inj}_{\op{coh}}(Z_1) \circledast \cdots \circledast \mathsf{Inj}_{\op{coh}}(Z_t) \simeq \mathsf{Inj}_{\op{coh}}(Z_1 \times \cdots \times Z_t).
\end{equation}
We have also analyzed how the operation $\circledast$ behaves for categories of graded factorizations. Assuming that each $A_i$ grades $S_i$ positively, iterating Corollary~\ref{cor: Morita product of MF} and using Lemma~\ref{lemma: Morita product and idempotent completion} and Corollary~\ref{corollary: thickness for Orlov components}, we obtain a quasi-equivalence,
\begin{equation} \label{eq:matrix product}
 \proj{S_1 \otimes_k \cdots \otimes_k S_t}{A}{w_1\boxplus \cdots  \boxplus w_t} \simeq \proj{S_1}{A_1}{w_1} \circledast \cdots  \circledast \proj{S_t}{A_t}{w_t},
\end{equation}
where $A = A_1 \boxminus \cdots \boxminus A_t$.
Applying Theorem~\ref{thm:Orlov generalized} on the left and right, yields new comparisons.

First let us gather some notation. Writing $c = \sum \phi_i(c_i)$, for some $c_i \in A_i$,  let,
 \begin{equation} \label{eq: products of k}
 k^{\otimes t}(c) := k(\phi_1(c_1)) \otimes \cdots \otimes k (\phi_s(c_s)) \in \proj{S_1}{A_1}{w_1} \circledast \cdots  \circledast \proj{S_t}{A_t}{w_t}.
 \end{equation}
To simplify notation in what follows let,
\[
 Z := Z_{(w_1 \boxplus \cdots\boxplus w_t, A)}.
\]

\begin{corollary} \label{cor:MF products}
 Assume that $A$ positively grades $S_1 \otimes \cdots \otimes S_t$. With the notation above, let
 \[
 \mu := \op{deg}(-d_1 + \sum_{i=1}^t\sum_{j=1}^{\op{dim} S_i} \phi_i(a_{ij})).
 \]

 \renewcommand{\labelenumi}{\emph{\roman{enumi})}}
 \begin{enumerate}
  \item If $\mu > 0$, there is a semi-orthogonal decomposition,
  \begin{displaymath}
  \mathsf{Inj}_{\op{coh}} (Z) \simeq \left \langle \bigoplus_{\op{deg }a = -\mu} \O_Z(a), \ldots , \bigoplus_{\op{deg }a = -1} \O_Z(a), \proj{S_1}{A_1}{w_1} \circledast \cdots \circledast \proj{S_t}{A_t}{w_t} \right \rangle.
  \end{displaymath}
  \item If $\mu = 0$, there is a quasi equivalence of dg-categories,
  \begin{displaymath}
   \mathsf{Inj}_{\op{coh}} (Z) \simeq \proj{S_1}{A_1}{w_1} \circledast \cdots \circledast \proj{S_t}{A_t}{w_t}.
  \end{displaymath}
  \item If $\mu < 0$, there is a semi-orthogonal decomposition,
  \begin{displaymath}
   \proj{S_1}{A_1}{w_1} \circledast \cdots \circledast \proj{S_t}{A_t}{w_t} \simeq \left\langle \bigoplus_{\op{deg }c = -\mu -1}k^{\otimes t}(c),\ldots,\bigoplus_{\op{deg }c = 0}k^{\otimes t}(c),\mathsf{Inj}_{\op{coh}}(Z) \right\rangle.
  \end{displaymath}
 \end{enumerate}
\end{corollary}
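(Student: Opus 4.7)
The plan is to reduce this statement directly to Theorem~\ref{thm:Orlov generalized} applied to the tensor product ring $S := S_1 \otimes_k \cdots \otimes_k S_t$ with the potential $w := w_1 \boxplus \cdots \boxplus w_t$, after first identifying the iterated Morita product of factorization categories with a single factorization category over $S$.

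\textbf{Step 1: Identify the Morita product.} First I would verify that the hypotheses of Corollary~\ref{cor: Morita product of MF} hold at each stage of the iterated product. Non-torsion of the successive gluing elements follows because each $d_i$ has strictly positive $A_i$-degree and the boxminus construction, equipped with the degree formula \eqref{degree formula}, produces a rank one group on which all $\phi_i(d_i)$ agree and have positive degree. The containment of the singular locus of a partial sum $w_{i_1}\boxplus\cdots\boxplus w_{i_k}$ inside the product of zero loci follows from the Euler-formula remark immediately preceding this corollary, since each $S_i$ is positively graded and each $w_i$ has strictly positive degree so lies in its Jacobian ideal. Iterating Corollary~\ref{cor: Morita product of MF} and invoking \eqref{eq:matrix product} together with Lemma~\ref{lemma: Morita product and idempotent completion} and Corollary~\ref{corollary: thickness for Orlov components} (to absorb the idempotent completion), I obtain a quasi-equivalence
\begin{displaymath}
 \mathsf{proj}(S_1,A_1,w_1) \circledast \cdots \circledast \mathsf{proj}(S_t,A_t,w_t) \simeq \mathsf{proj}(S,A,w),
\end{displaymath}
where $A = A_1 \boxminus \cdots \boxminus A_t$.

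\textbf{Step 2: Apply Orlov's theorem.} By Lemma~\ref{lem: positively graded}, $S$ is positively $A$-graded. Moreover, $w$ is $A$-homogeneous of degree $d := \phi_1(d_1) = \cdots = \phi_t(d_t)$, and $Y_w \setminus 0$ is smooth because the partial Euler-formula argument shows the singular locus of $w$ in $\op{Spec} S$ is contained in the union of origins coming from each factor, which is just the origin. So Theorem~\ref{thm:Orlov generalized} applies to $(S,A,w)$ and produces a semi-orthogonal decomposition (or equivalence) relating $\mathsf{Inj}_{\op{coh}}(Z)$ and $\mathsf{proj}(S,A,w)$ governed by the sign of the degree of the Gorenstein parameter.

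\textbf{Step 3: Compute the Gorenstein parameter.} By Lemma~\ref{lem:parameter}, the Gorenstein parameter of $S/(w)$ is
\begin{displaymath}
 \eta = -d + \sum_{i=1}^{t}\sum_{j=1}^{\op{dim} S_i} \phi_i(a_{ij}).
\end{displaymath}
Since $d = \phi_1(d_1)$ in $A$, applying the degree map yields precisely the quantity $\mu$ defined in the statement. Thus the three cases in Theorem~\ref{thm:Orlov generalized} line up with the three cases of the corollary.

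\textbf{Step 4: Match the exceptional objects.} Under the equivalence of Step 1, the factorization $k(\phi_1(c_1))\otimes\cdots\otimes k(\phi_t(c_t))$, viewed on the left-hand side of that quasi-equivalence, corresponds (via the tensor structure used to define $\circledast$) to the single-factor object $k(c)$ with $c = \sum_i \phi_i(c_i)$ in $\mathsf{proj}(S,A,w)$. This is exactly the notation $k^{\otimes t}(c)$ fixed in \eqref{eq: products of k}. Similarly, the objects $\mathcal{O}_Z(a)$ appearing in the first case are the ones already isolated by Orlov's theorem applied to $(S,A,w)$, and no further translation is needed. The enumerations in cases (i) and (iii) over $a$ (resp.\ $c$) of a given degree then transport verbatim to give the displayed semi-orthogonal decompositions.

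The only genuinely non-mechanical part of this argument is Step 1, and within it the bookkeeping that guarantees the hypotheses of Corollary~\ref{cor: Morita product of MF} propagate under iteration; everything else is a direct application of results already established.
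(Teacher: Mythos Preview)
Your proof is correct and follows essentially the same route as the paper: identify the iterated Morita product with $\mathsf{proj}(S,A,w)$ via Corollary~\ref{cor: Morita product of MF} and Corollary~\ref{corollary: thickness for Orlov components}, compute the Gorenstein parameter via Lemma~\ref{lem:parameter}, apply Theorem~\ref{thm:Orlov generalized}, and match the distinguished objects $k(c)\leftrightarrow k^{\otimes t}(c)$. One small caution: your Step~2 claim that $Y_w\setminus 0$ is smooth does not follow from the Euler-formula argument alone (that argument shows each $w_i$ lies in its Jacobian ideal, not that the critical locus of $w_i$ is the origin); the paper's proof simply treats this as a standing hypothesis inherited from the setup, and you should do the same rather than assert it as derived.
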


\begin{proof}
 We have an isomorphism
 \begin{displaymath}
  S_1 \otimes_k \cdots \otimes_k S_t \cong k[x_{11},x_{12},\ldots,x_{1 n_1},\ldots,x_{t 1},\ldots,x_{t n_t}]
 \end{displaymath}
 which is compatible with the $A$-grading if we equip $x_{ij}$ with $A$-degree $\phi_i(a_{ij})$. Applying Lemma~\ref{lem:parameter}, one sees that the Gorenstein parameter of $S_1 \otimes_k \cdots  \otimes_k S_t/ (w_1 \boxplus \cdots \boxplus w_t)$ in $A$ is $-d_1 + \sum_{i=1}^t \sum_{j=1}^{n_i} \phi_i(a_{ij})$.

 We may then appeal to Theorem~\ref{thm:Orlov generalized}. Note the inclusion
 \begin{displaymath}
  \mathsf{proj}(S_1 \otimes_k \cdots \otimes_k S_t,A,w_1 \boxplus \cdots \boxplus w_t) \to \overline{\mathsf{proj}}(S_1 \otimes_k \cdots \otimes_k S_t,A,w_1 \boxplus \cdots \boxplus w_t)
 \end{displaymath}
 is a quasi-equivalence by Corollary~\ref{corollary: thickness for Orlov components}.

 As in the discussion above, $A := A_1 \boxminus \cdots \boxminus A_t$ was constructed so that it is precisely the same group obtained by iterated application of Corollary~\ref{cor: Morita product of MF}, yielding an equivalence of dg-categories,
 \begin{displaymath}
 \barproj{S_1 \otimes_k \cdots  \otimes_k S_t}{A}{w_1 \boxplus \cdots \boxplus w_t} \cong \proj{S_1}{A_1}{w_1} \circledast \cdots  \circledast \proj{S_t}{A_t}{w_t}.
 \end{displaymath}
 One easily verifies that this equivalence takes $k(c)$ to $k^{\otimes t}(c)$.
\end{proof}

Many new semi-orthogonal decompositions can be prepared following the recipe below:
\begin{enumerate}

\item Take a semi-orthogonal decomposition of $\dbcoh{Z}$ with $\mathsf{proj}(S,A,w)$ as a component for $\mu \geq 0$.

\item If $(S,A,w)$ splits, realize $\mathsf{proj}(S,A,w)$ as a $\circledast$-product of several $\mathsf{proj}(S_i,A_i,w_i)$ with $\mu_i < 0$.

\item Take the semi-orthogonal decomposition of $\mathsf{proj}(S_i,A_i,w_i)$.

\item Induce a semi-orthogonal decomposition of the $\circledast$-product.
\end{enumerate}

Let us provide a first simple example with a promise of more to come in later sections.

\begin{example}
Let $d,e,m,n$ be integers such that:
\begin{enumerate}
\item $d  \geq n$,
\item $e  \geq m$,
\item $\op{gcd}(d,e)=1$,
\item $de \leq dm + en$.
\end{enumerate}
Let $w$ define a smooth (Calabi-Yau or general type) hypersurface, $Z_w \subseteq \P^{n-1}$, of degree $d$ and $v$ define a smooth  (Calabi-Yau or general type) hypersurface, $Z_v \subseteq \P^{m-1}$, of degree $e$.  Then $w+v$ defines a smooth hypersurface  (Fano or Calabi-Yau) of degree $de$,
\[
Z_{w+v} \subseteq \P(\underbrace{d: \cdots :d}_{m-\op{times}}:\underbrace{e: \cdots : e}_{n-\op{times}}).
\]
Let $S$ be the coordinate ring of $\P^{n-1}$ and $R$ be the coordinate ring of $\P^{m-1}$.
By Theorem~\ref{thm:Orlov generalized}, we have semi-orthogonal decompositions,
\begin{equation} \label{eq: Zf}
\proj{S}{\Z}{w} \simeq \langle k(-n+d-1), \ldots , k, \inj{Z_w}  \rangle ,
\end{equation}
and
\begin{equation} \label{eq: Zg}
\proj{R}{\Z}{v} \simeq \langle k(-m+e-1), \ldots, k, \inj{Z_v} \rangle,
\end{equation}
and
\begin{equation} \label{eq: Zfg}
\inj{Z_{w+v}} \simeq \langle \O_{Z_{w+v}}(de-dm - en), \ldots, \O_{Z_{w+v}}(-1), \proj{S \otimes_k R}{\Z}{w \boxplus v}   \rangle.
\end{equation}
On the other hand, applying Lemma~\ref{lem:decomposition of products} to Equations~\eqref{eq: Zf} and \eqref{eq: Zg} gives a semi-orthogonal decomposition:
\begin{gather*}
\proj{S}{\Z}{w} \circledast \proj{R}{\Z}{v} \simeq \langle k(-n+d-1) \otimes_k k(-m+e-1), \ldots , k \otimes_k k, \\
k(-n+d-1) \otimes_k \inj{Z_v}, \ldots, \inj{Z_w} \otimes_k k, \inj{Z_w} \circledast \inj{Z_v} \rangle.
\end{gather*}
Applying Equation \eqref{eq:matrix product} to the left hand side and Equation \eqref{eq:geometric product} to the right hand side we get,
\begin{gather*}
\proj{S\otimes_k R}{\Z}{w} \simeq \langle k(-n+d-1, -m+e-1), \ldots, k, \\
k(-n+d-1) \otimes_k \inj{Z_v}, \ldots, \inj{Z_w} \otimes_k k, \inj{Z_w \times Z_v} \rangle.
\end{gather*}
Substituting from Equation~\eqref{eq: Zfg} yields,
\begin{gather*}
\inj{Z_{w+v}} \simeq \langle \O_{Z_{w+v}}(de-dm - en), \ldots, \O_{Z_{w+v}}(-1), k(-n+d-1, -m+e-1), \ldots, k, \\
k(-n+d-1) \otimes_k \inj{Z_v}, \ldots, \inj{Z_w} \otimes_k k, \inj{Z_w \times Z_v} \rangle
\end{gather*}
where $k(a,b)$ is identified with an object of $\inj{Z_{w+v}}$ under the functor,
\[
\proj{S \otimes_k R}{\Z}{w \boxplus v} \to \inj{Z_{w+v}},
\]
as are $k(-n+d-1) \otimes_k \inj{Z_v}, \ldots, \inj{Z_w} \otimes_k k, \inj{Z_w \times Z_v}$. In conclusion, $\inj{Z_w \times Z_v}$ is an admissible subcategory of $\inj{Z_{w+v}}$ whose complement consists of exceptional objects, copies of $\inj{Z_w}$, and copies of $\inj{Z_v}$.
\end{example}

\subsection{Weighted Fermat hypersurfaces, weighted projective lines, and ADE quivers}

Perhaps the simplest class of factorizations to study are those associated to hypersurface weighted projective lines, in the sense of \cite{GL}. These examples are very similar to the ones found in \cite{KST}, especially Appendix A1 due to K. Ueda.  The main difference is that we do not reduce the  grading group to $\Z$ and look only at categories of factorizations as opposed to more general categories of singularities.

Rather than address the topic of weighted projective spaces alone, we also include in this section a discussion on weighted Fermat hypersurfaces which will prepare us for our computations of Rouquier dimension of the derived category of coherent sheaves on a weighted Fermat hypersurface in Section~\ref{subsec: Fermat}.

\begin{definition} \label{defn: weight sequence}
 A \newterm{weight sequence} is an $(n+1)$-tuple of $\mathbf{d} = (d_0, \ldots , d_n)$ of integers with $d_i \geq 1$.  We denote the concatenation of two weight sequences by
 \[
  \mathbf{d} \coprod \mathbf{d}^\prime := (d_0, \ldots, d_n, d_0^\prime, \ldots, d_{n^\prime}^\prime).
 \]
\end{definition}

To each weight sequence, $(d_0, \ldots ,d_n)$ we attach a finitely-generated Abelian group,
\begin{equation} \label{eq: weight group}
 B_{\mathbf{d}} := \Z \boxminus \cdots \boxminus \Z = \Z^{\oplus n+1} / ( d_i \mathbf{e}_i - d_j \mathbf{e}_j ).
\end{equation}
The Abelian affine algebraic group,
\[
 G(B_{\mathbf{d}}):=\{(\lambda_0, \ldots, \lambda_n ) \mid \lambda_i^{d_i} = \lambda_j^{d_j} \} \subseteq \mathbb{G}_{m}^{n+1},
\]
acts on $\mathbb A^{n+1} = \text{Spec }k[x_0, \ldots ,x_n]$ via multiplication by $\lambda_i$ on $x_i$. Consider the following subgroup of $G(B_{\mathbf{d}})$,
\begin{equation} \label{eq: minimal dual group}
 \smallfermatgroup := \{(\lambda_0, \ldots, \lambda_n ) \mid \lambda_i^{\frac{d_i}{\op{gcd}(d_i,d_j)}} = \lambda_j^{\frac{d_j}{\op{gcd}(d_i,d_j)}} \} \subseteq \mathbb{G}_{m}^{n+1},
\end{equation}

\begin{definition} \label{def: Fermat}
 Let $Y_{\sum x_i^{d_i}} \subset \mathbb{A}^{n+1}$ be the zero locus of $\sum x_i^{d_i}$. For any group,
 \[
  \smallfermatgroup \subseteq \intermediatefermatgroup \subseteq G(B_{\mathbf{d}}),
 \]
  the \newterm{weighted Fermat hypersurface} associated to a weight sequence, $\mathbf{d}$, and the group, $\intermediatefermatgroup$, is the global quotient stack,
 \[
  Z_{({\mathbf{d}}, \intermediatefermatgroup)} :=  [(Y_{\sum x_i^{d_i}} \backslash 0) / \intermediatefermatgroup].
 \]
  The \newterm{minimally weighted Fermat hypersurface} associated to a weight sequence, $\mathbf{d}$, is the global quotient stack,
 \[
  Z_{({\mathbf{d}}, \op{min})} := Z_{({\mathbf{d}}, \smallfermatgroup)}  :=  [(Y_{\sum x_i^{d_i}} \backslash 0) / \smallfermatgroup].
 \]
 The \newterm{maximally weighted Fermat hypersurface} associated to a weight sequence, $\mathbf{d}$, is the global quotient stack,
 \[
  Z_{\mathbf{d}} :=  [(Y_{\sum x_i^{d_i}} \backslash 0) / G(B_{\mathbf{d}})].
 \]
\end{definition}

\begin{remark}
 We will focus primarily on the maximally weighted Fermat hypersurfaces. This is why we simplify the notation for the maximally weighted Fermat hypersurface by writing, $Z_{\mathbf{d}}$, as opposed to $Z_{(\mathbf{d}, \op{max})}$ or $Z_{(\mathbf{d}, G(B_{\mathbf{d}}))}$.
\end{remark}

\renewcommand{\thefigure}{\thesection.\arabic{figure}}

\begin{figure}
 \begin{tikzpicture}
  \node[circle,fill=black,inner sep=0.1em,minimum size=5pt,label={[label distance=0.1em]90:$1$}] (a) at (-6,0) {};
  \node[circle,fill=black,inner sep=0.1em,minimum size=5pt,label={[label distance=0.1em]90:$2$}] (b) at (-4,0) {};
  \node[circle,fill=black,inner sep=0.1em,minimum size=5pt,label={[label distance=0.1em]90:$3$}] (c) at (-2,0) {};
  \node (d) at (0,0) {$\cdots$};
  \node[circle,fill=black,inner sep=0.1em,minimum size=5pt,label={[label distance=0.1em]90:$n-2$}] (e) at (2,0) {};
  \node[circle,fill=black,inner sep=0.1em,minimum size=5pt,label={[label distance=0.1em]90:$n-1$}] (f) at (4,0) {};
  \node[circle,fill=black,inner sep=0.1em,minimum size=5pt,label={[label distance=0.1em]90:$n$}] (g) at (6,0) {};
  \draw[->,thick] (a) -- (b);
  \draw[->,thick] (b) -- (c);
  \draw[->,thick] (c) -- (d);
  \draw[->,thick] (d) -- (e);
  \draw[->,thick] (e) -- (f);
  \draw[->,thick] (f) -- (g);
 \end{tikzpicture}

 \begin{tikzpicture}
  \node[circle,fill=black,inner sep=0.1em,minimum size=5pt,label={[label distance=0.1em]90:$1$}] (a) at (-6,0) {};
  \node[circle,fill=black,inner sep=0.1em,minimum size=5pt,label={[label distance=0.1em]90:$2$}] (b) at (-4,0) {};
  \node[circle,fill=black,inner sep=0.1em,minimum size=5pt,label={[label distance=0.1em]90:$3$}] (c) at (-2,0) {};
  \node (d) at (0,0) {$\cdots$};
  \node[circle,fill=black,inner sep=0.1em,minimum size=5pt,label={[label distance=0.1em]90:$n-3$}] (e) at (2,0) {};
  \node[circle,fill=black,inner sep=0.1em,minimum size=5pt,label={[label distance=0.1em]90:$n-2$}] (f) at (4,0) {};
  \node[circle,fill=black,inner sep=0.1em,minimum size=5pt,label={[label distance=0.1em]90:$n-1$}] (g) at (6,1) {};
  \node[circle,fill=black,inner sep=0.1em,minimum size=5pt,label={[label distance=0.1em]270:$n$}] (h) at (6,-1) {};
  \draw[->,thick] (a) -- (b);
  \draw[->,thick] (b) -- (c);
  \draw[->,thick] (c) -- (d);
  \draw[->,thick] (d) -- (e);
  \draw[->,thick] (e) -- (f);
  \draw[->,thick] (f) -- (g);
  \draw[->,thick] (f) -- (h);
 \end{tikzpicture}

 \begin{tikzpicture}
  \node[circle,fill=black,inner sep=0.1em,minimum size=5pt,label={[label distance=0.1em]90:$1$}] (a) at (-6,0) {};
  \node[circle,fill=black,inner sep=0.1em,minimum size=5pt,label={[label distance=0.1em]90:$2$}] (b) at (-4,0) {};
  \node[circle,fill=black,inner sep=0.1em,minimum size=5pt,label={[label distance=0.1em]270:$3$}] (c) at (-2,0) {};
  \node[circle,fill=black,inner sep=0.1em,minimum size=5pt,label={[label distance=0.1em]90:$4$}] (c') at (-2,2) {};
  \node (d) at (0,0) {$\cdots$};
  \node[circle,fill=black,inner sep=0.1em,minimum size=5pt,label={[label distance=0.1em]90:$n-2$}] (e) at (2,0) {};
  \node[circle,fill=black,inner sep=0.1em,minimum size=5pt,label={[label distance=0.1em]90:$n-1$}] (f) at (4,0) {};
  \node[circle,fill=black,inner sep=0.1em,minimum size=5pt,label={[label distance=0.1em]90:$n$}] (g) at (6,0) {};
  \draw[->,thick] (a) -- (b);
  \draw[->,thick] (b) -- (c);
  \draw[->,thick] (c) -- (d);
  \draw[->,thick] (c) -- (c');
  \draw[->,thick] (d) -- (e);
  \draw[->,thick] (e) -- (f);
  \draw[->,thick] (f) -- (g);
 \end{tikzpicture}
\caption{$A_n,D_n,E_n$ quivers, respectively}
\label{fig: ADE}
\end{figure}
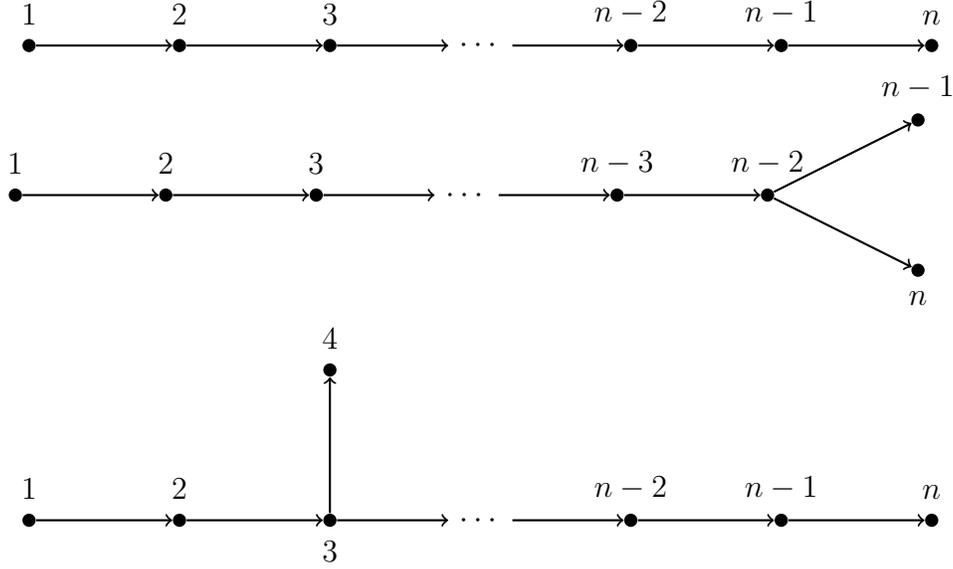

For a Dynkin diagram of type ADE, we can consider the quivers pictured in Figure~\ref{fig: ADE}. By abuse of notation, we denote the path algebra of these quivers by $A_n$, $D_n$, and $E_n$ as well.  For example, we write $\op{mod }A_{n}$ as the category of left modules over the path algebra of the quiver whose underlying graph is $A_{n}$.  Similarly, we write $\mathsf{proj}(A_{n})$ to denote the dg-category of bounded complexes of projective left modules over the path algebra of this particular quiver whose underlying graph is the Dynkin diagram, $A_{n}$.

\begin{remark} \label{rem: all ADE same}
 The choices of quivers above are a bit arbitrary. However, the derived category of modules over the path algebra of each of quivers does not depend on the choice of orientation of arrows. More precisely, the path algebra of any quiver whose underlying graph is an $A$, $D$, or $E$ Dynkin diagram, respectively, is derived equivalent, respectively, to our $A$, $D$, or $E$ choice by \cite[Theorem 5.12]{Hap87}.
\end{remark}

By \cite[Corollary 2.9]{Orl09} and its proof, we have a full strong exceptional collection
\begin{displaymath}
 [\proj{k[x]}{\Z}{x^d}] \cong \langle E_{d-2}, \ldots, E_{0} \rangle
\end{displaymath}
where $R = k[x]/(x^d)$ and $E_i$ is a projective resolution of the factorization
\begin{displaymath}
 (0,R/(R_{\geq i+1})(d-1),0,0)
\end{displaymath}
and
\begin{displaymath}
 \op{Hom}(\oplus_i E_i, \oplus_i E_i) \cong A_{d-1}.
\end{displaymath}
This induces a quasi-equivalence
\begin{displaymath}
 \mathsf{proj}(k[x],\Z,x^d) \simeq \mathsf{proj}(A_{d-1}).
\end{displaymath}
Note that under this quasi-equivalence the simple modules are exactly the projective resolutions of the factorizations corresponding to the $\Z$-graded $R$-modules
\begin{displaymath}
 k(1),\ldots,k(d-1).
\end{displaymath}

By Corollary~\ref{corollary: thickness for Orlov components}, Corollary~\ref{cor: Morita product of MF} and Lemma~\ref{lemma: Morita product of algebras}, we have quasi-equivalences,
\begin{align} \label{eq: type A}
 \mathsf{proj}(k[x_0,\ldots,x_n],B_{\mathbf{d}},\sum x^{d_i})& \simeq \barproj{k[x_0, \ldots , x_n]}{B_{\mathbf{d}}}{\sum x^{d_i}} \notag \\
 & \simeq \circledast_{i=0}^{n} \proj{k[x_i]}{\Z}{x^{d_i}} \notag \\
 & \simeq \circledast_{i=0}^{n} \mathsf{proj}(A_{d_i-1}) \notag \\
 & \simeq \mathsf{proj}(\bigotimes_{i=0}^{n} A_{d_i-1}).
\end{align}

\begin{definition} \label{definition: sign of weight sequence}
 Let $(d_0, \ldots , d_n)$ be a weight sequence.  We say that $(d_0, \ldots , d_n)$ is \newterm{negative, zero, positive, nonnegative,} or \newterm{nonpositive}  if
 \begin{displaymath}
  \bar{\mu}(d_0, ..., d_n):= -1 + \sum_{i=0}^n \frac{1}{d_i}
 \end{displaymath}
 is negative, zero, positive, nonnegative, or nonpositive, respectively.
\end{definition}

\begin{definition} \label{defn: certain modules}
 For the $A_n$ quiver, as above, let $S_i$ be the simple module corresponding to vertex $i$.
 Given a weight sequence, $(d_0, \ldots, d_n)$, and an element $b = \sum \phi_i(b_i) \in  B_{\mathbf{d}}$. Let
 \[
  S_b^{\prime} := S^{\prime}_{b_0} \ok \cdots \ok S^{\prime}_{b_n},
 \]
 where
 \begin{displaymath}
  S^{\prime}_{b_i} = \begin{cases}
                   S_{b_i} & b_i \in \{1,\ldots,d_i-1\} \subset \Z \\
                   (L_{S^{\prime}_{b_i+1}} \circ \cdots \circ L_{S^{\prime}_{b_i+d_i-2}}) S^{\prime}_{b_i+d_i-1} & b_i < 1 \\
                   (R_{S^{\prime}_{b_i-1}} \circ \cdots \circ R_{S^{\prime}_{b_i - d_i +2}} )S^{\prime}_{b_i - d_i+1} & b_i > d_i-1. \\
                  \end{cases}
 \end{displaymath}
 where $L$ denotes left mutation and $R$ denotes right mutation, \cite{Bon}.
\end{definition}

\begin{remark} \label{remark: k and simple modules}
 Under the quasi-equivalences of Equation~\eqref{eq: type A}, the factorization $k^{\otimes t}(b)$ and the module $S_b^{\prime}$ correspond. The exterior product of resolutions of $k(i)$'s corresponds to their tensor product over $k$ as objects of the singularity category by \cite[Lemma 3.66]{BFK11}. The equivalence of Lemma~\ref{lemma: Morita product of algebras} is induced by taking tensor products over $k$ of modules. Thus, the correspondence between $S_b^{\prime}$ and the factorization $k(b)$ in a single variable provides the correspondence in $n+1$-variables.
\end{remark}

We now have the following special case of Corollary~\ref{cor:MF products},
\begin{corollary}  \label{cor: fermat comparison}
Let $\mathbf{d} = (d_0, \ldots, d_n)$ be a weight sequence and set,
\[
\mu = \op{lcm}(d_0, \ldots, d_n)\left(-1 + \sum_{i=0}^n \frac{1}{d_i}\right).
\]

\renewcommand{\labelenumi}{\emph{\roman{enumi})}}
\begin{enumerate}
 \item If $\mu > 0$ (or equivalently $\mathbf{d}$ is positive), there is a semi-orthogonal decomposition,
  \begin{displaymath}
   \mathsf{Inj}_{\op{coh}}(Z_{\mathbf{d}}) \simeq \left \langle \bigoplus_{\op{deg }b= -\mu} \O_Z(b), \ldots , \bigoplus_{\op{deg }b = -1} \O_Z(b), \mathsf{proj}(\bigotimes_{i=0}^{n} A_{d_i-1}) \right \rangle.
  \end{displaymath}
 \item If $\mu = 0$ (or equivalently $\mathbf{d}$ is zero), there is a quasi equivalence of dg-categories,
  \begin{displaymath}\
   \mathsf{Inj}_{\op{coh}}(Z_{\mathbf{d}}) \simeq \mathsf{proj}(\bigotimes_{i=0}^{n} A_{d_i-1}).
  \end{displaymath}
 \item If $\mu < 0$ (or equivalently $\mathbf{d}$ is negative), there is a semi-orthogonal decomposition,
  \begin{displaymath}
   \mathsf{proj}(\bigotimes_{i=0}^{n} A_{d_i-1}) \simeq \left \langle \bigoplus_{\op{deg }b=-\mu-1}S_b,\ldots,\bigoplus_{\op{deg }b = 0}S_b, \mathsf{Inj}_{\op{coh}}(Z_{\mathbf{d}}) \right\rangle.
  \end{displaymath}
 \end{enumerate}
\end{corollary}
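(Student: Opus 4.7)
The plan is to recognize that Corollary~\ref{cor: fermat comparison} is the specialization of Corollary~\ref{cor:MF products} to the data $S_i = k[x_i]$, $A_i = \Z$, $w_i = x_i^{d_i}$ (for $i = 0, \ldots, n$), combined with the chain of quasi-equivalences culminating in Equation~\eqref{eq: type A} that replaces the iterated Morita product of factorization categories by $\mathsf{proj}(\bigotimes_{i=0}^n A_{d_i-1})$. First I would verify that the hypotheses of Corollary~\ref{cor:MF products} are in force: each $k[x_i]$ is a positively graded polynomial ring with $w_i = x_i^{d_i}$ nonzero and homogeneous of positive degree, and Lemma~\ref{lem: positively graded} guarantees that the resulting $A := B_{\mathbf{d}} = \Z \boxminus \cdots \boxminus \Z$ grades $k[x_0,\ldots,x_n]$ positively, so $Y_{\sum x_i^{d_i}}\setminus 0$ is smooth (the partials $d_i x_i^{d_i-1}$ vanish only at the origin).

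Next I would compute $\mu$. By Lemma~\ref{lem:parameter} the Gorenstein parameter of $k[x_0,\ldots,x_n]/(\sum x_i^{d_i})$ in $A$ equals $-d_0 + \sum_{i=0}^n \phi_i(1)$, where $\phi_i:\Z \to A$ is the natural inclusion. Iterating the degree formula~\eqref{degree formula} shows inductively that
\[
\op{deg}(\phi_i(1)) = \frac{\op{lcm}(d_0,\ldots,d_n)}{d_i}, \qquad \op{deg}(\phi_0(d_0)) = \op{lcm}(d_0,\ldots,d_n),
\]
the key point being that the element $d_0 \in \Z$ in the first factor is identified with each $d_j \in \Z$ across the $\boxminus$-operations. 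Substituting gives
\[
\mu = -\op{lcm}(d_0,\ldots,d_n) + \sum_{i=0}^n \frac{\op{lcm}(d_0,\ldots,d_n)}{d_i} = \op{lcm}(d_0,\ldots,d_n)\left(-1 + \sum_{i=0}^n \frac{1}{d_i}\right),
\]
which matches the stated formula and has the same sign as $\bar{\mu}(d_0,\ldots,d_n)$, justifying the equivalences in parentheses.

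With $\mu$ identified, Corollary~\ref{cor:MF products} produces the three semi-orthogonal decompositions (or quasi-equivalence) verbatim, except that the central component is written as the iterated Morita product $\circledast_{i=0}^n \mathsf{proj}(k[x_i],\Z,x_i^{d_i})$. Equation~\eqref{eq: type A} then identifies this Morita product with $\mathsf{proj}(\bigotimes_{i=0}^n A_{d_i-1})$, giving the form appearing in the statement. For the exceptional objects arising when $\mu < 0$, Remark~\ref{remark: k and simple modules} asserts precisely that under this chain of equivalences the factorization $k^{\otimes t}(b)$ corresponds to $S_b^{\prime}$ (the $S_b$ of the statement), so the generators in Corollary~\ref{cor:MF products}\emph{iii)} translate to the desired modules.

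The only step with any subtlety is the inductive computation of degrees in $A = B_{\mathbf{d}}$, since the $\boxminus$-operation mixes the degree maps in a slightly non-obvious way via~\eqref{degree formula}. Everything else is bookkeeping: the three cases of the corollary follow directly by reading off Corollary~\ref{cor:MF products} once $\mu$, the Morita product, and the distinguished objects have been identified.
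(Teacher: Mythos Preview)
Your proposal is correct and follows essentially the same route as the paper: specialize Corollary~\ref{cor:MF products} to $S_i=k[x_i]$, $A_i=\Z$, $w_i=x_i^{d_i}$, compute $\mu$ via Lemma~\ref{lem:parameter} and the degree map on $B_{\mathbf{d}}$, then invoke Equation~\eqref{eq: type A} and Remark~\ref{remark: k and simple modules} to translate the Morita product and the objects $k^{\otimes t}(b)$ into $\mathsf{proj}(\bigotimes_i A_{d_i-1})$ and the $S_b$. The paper's only difference is cosmetic: it asserts the degree map $\mathbf{e}_i \mapsto \op{lcm}(d_0,\ldots,d_n)/d_i$ directly rather than deriving it by iterating~\eqref{degree formula}.
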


\begin{proof}
 We need to check that the statement of Corollary~\ref{cor:MF products} can be translated into the description above.  Recall that
 \[
  B_{\mathbf{d}}:= \Z \boxminus \cdots \boxminus \Z = \Z^{\oplus n+1} / ( d_i \mathbf{e}_i - d_j \mathbf{e}_j ),
 \]

 By Lemma~\ref{lem:parameter}, the Gorenstein parameter for $B_{\mathbf{d}}$ is
 \[
  \eta = -d_0 \mathbf{e}_0 + \sum_{i=0}^n \mathbf{e}_i.
 \]
 In this setting, the degree map, $\text{deg}: B_{\mathbf{d}} \to \Z$, takes the basis element for $\Z^{n+1}$, $\mathbf{e}_i$, to $\frac{\op{lcm}(d_0, ..., d_n)}{d_i}$.  Hence the degree of the Gorenstein parameter is,
 \begin{equation} \label{eq: hypersurface parameter}
  \mu = -d_0 \cdot \frac{\op{lcm}(d_0, ..., d_n)}{d_0} + \sum_{i=0}^n \frac{\op{lcm}(d_0, \ldots, d_n)}{d_i} = \op{lcm}(d_0, \ldots , d_n)\left(-1 + \sum_{i=0}^n \frac{1}{d_i}\right).
 \end{equation}

 We apply now Corollary~\ref{cor:MF products} and replace $ \circledast_{i=0}^{n} \proj{k[x_i]}{\Z}{x_i^{d_i}}$ by $\mathsf{proj}(\bigotimes_{i=0}^{n} A_{d_i-1})$ using the equivalences of Equation~\eqref{eq: type A}.

 It only remains to verify that the complementary pieces in the case $\mathbf{d}$ is negative are as claimed. But, this is Remark~\ref{remark: k and simple modules}.
\end{proof}

\begin{remark} \label{rem: number of objects}
A basic algebra computation reveals that,
\[
| (B_{\mathbf{d}})_{\op{tors}} | = \frac{d_0 \cdots d_n}{\op{lcm}(d_0, \ldots, d_n)}.
\]
As the degree map is, by definition, the map,
\[
B \to B/B_{\op{tors}},
\]
the number of elements of $A$ of a given degree is just $ \frac{d_0 \cdots d_n}{\op{lcm}(d_0, \ldots, d_n)}$.  Hence, the orthogonal to either $\mathsf{Inj}_{\op{coh}}(Z_{\mathbf{d}}) $ in case i) or $\mathsf{proj}(\bigotimes_{i=0}^{n} A_{d_i-1})$ in case iii) has exactly
\[
| \mu | \cdot  \frac{d_0 \cdots d_n}{\op{lcm}(d_0, \ldots, d_n)} =  \left(\prod_{i=0}^n d_i\right) \Biggl| -1 + \sum_{i=0}^n \frac{1}{d_i} \Biggr|
\]
exceptional objects.
\end{remark}

\begin{proposition} \label{prop: Fermat exceptional}
 For any nonnegative weight sequence, $\mathbf{d} = (d_0, \ldots , d_n)$, the category, $\mathsf{Inj}_{\op{coh}}(Z_{\mathbf{d}})$ has a full exceptional collection consisting of
 \[
  \prod_{i=0}^n (d_i - 1)  + \left(\prod_{i=0}^n d_i\right)\left(-1 + \sum_{i=0}^n \frac{1}{d_i}\right)
 \]
 exceptional objects.
\end{proposition}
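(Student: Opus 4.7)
The plan is to apply Corollary~\ref{cor: fermat comparison} to reduce the problem to producing full exceptional collections for the two kinds of pieces that appear: the quiver algebra piece $\mathsf{proj}(\bigotimes_{i=0}^n A_{d_i-1})$, and, in the $\mu>0$ case, the ``block'' pieces $\bigoplus_{\op{deg} b = -j} \O_Z(b)$.

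First I would handle the quiver piece. The path algebra $A_{d-1}$ is hereditary of finite representation type, and its $d-1$ indecomposable projective modules form a full strong exceptional collection of $\mathsf{proj}(A_{d-1})$. Iterating Lemma~\ref{lem:decomposition of products} (or using Lemma~\ref{lemma: Morita product of algebras} directly on the tensor product of algebras) yields a full exceptional collection of $\mathsf{proj}(\bigotimes_{i=0}^n A_{d_i-1})$ whose objects are the external tensor products of the chosen indecomposable projectives. The size of this collection is exactly $\prod_{i=0}^n (d_i - 1)$.

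If $\mathbf{d}$ is zero, Corollary~\ref{cor: fermat comparison}(ii) gives a quasi-equivalence $\mathsf{Inj}_{\op{coh}}(Z_{\mathbf{d}}) \simeq \mathsf{proj}(\bigotimes A_{d_i-1})$, the second term of the claimed formula vanishes, and we are done. If $\mathbf{d}$ is positive, Corollary~\ref{cor: fermat comparison}(i) gives a semi-orthogonal decomposition with rightmost piece $\mathsf{proj}(\bigotimes A_{d_i-1})$ and $\mu$ additional ``block'' pieces indexed by $j = 1,\ldots,\mu$, each of the form $\bigoplus_{\op{deg} b = -j} \O_Z(b)$. To get a full exceptional collection, I need to refine each block into an exceptional collection and then concatenate with the collection from Step 1.

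The main obstacle is showing that each block admits an exceptional collection consisting of its constituent line bundles. The last clause of Theorem~\ref{thm:Orlov generalized} already tells us each $\O_Z(b)$ is exceptional individually, so what remains is pairwise orthogonality: for distinct $b,b' \in B_{\mathbf{d}}$ with $\op{deg} b = \op{deg} b'$ we need $\op{Hom}^{\bullet}_{Z}(\O_Z(b), \O_Z(b')) = 0$. The difference $c := b' - b$ is a \emph{nonzero} element of $(B_{\mathbf{d}})_{\op{tors}}$. Choosing a splitting of the degree map exhibits $G(B_{\mathbf{d}}) \cong \mathbb{G}_m \times \Gamma$ with $\Gamma = G((B_{\mathbf{d}})_{\op{tors}})$ a finite Abelian group, so $\op{Qcoh} Z$ decomposes as a direct sum over characters of $\Gamma$, and $\O_Z(b)$ and $\O_Z(b')$ sit in different summands of this decomposition because $c$ is a nontrivial torsion character. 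Concretely, this follows from the character decomposition in Proposition~\ref{proposition: Qcoh-G = Mod-hat-G}: restricting to the $\Gamma$-action, $\op{Hom}_Z(\O_Z(b),\O_Z(b'))$ is the $c$-isotypic component of global sections of $\O_Z$, which is zero since $\O_Z$ is $\Gamma$-trivially graded and $c \neq 0$; the same argument applies to higher Ext's by replacing $\O_Z$ with an injective resolution built equivariantly.

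Given this orthogonality, each block contributes an exceptional collection of length equal to the number of elements of degree $-j$ in $B_{\mathbf{d}}$, which by Remark~\ref{rem: number of objects} is $\frac{\prod d_i}{\op{lcm}(d_i)}$, independent of $j$. Summing over $j=1,\ldots,\mu$ and using the formula for $\mu$ from Corollary~\ref{cor: fermat comparison} (or Equation~\eqref{eq: hypersurface parameter}) gives
\[
\mu \cdot \frac{\prod d_i}{\op{lcm}(d_i)} = \left(\prod_{i=0}^n d_i\right)\left(-1 + \sum_{i=0}^n \frac{1}{d_i}\right)
\]
exceptional line bundles. Concatenating the block collections (in any fixed order compatible with the SOD) with the quiver-piece collection from Step 1 yields a full exceptional collection of $\mathsf{Inj}_{\op{coh}}(Z_{\mathbf{d}})$ with the claimed total length.
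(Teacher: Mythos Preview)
Your proof is correct and follows essentially the same route as the paper: decompose the quiver piece via Lemma~\ref{lem:decomposition of products}, then count the line-bundle pieces using Corollary~\ref{cor: fermat comparison} and Remark~\ref{rem: number of objects}. The only difference is that you spell out the mutual orthogonality of the $\O_Z(b)$ within a fixed degree block (via the torsion-character decomposition of $\op{Qcoh}\,Z$), a point the paper leaves implicit in its citation of Corollary~\ref{cor: fermat comparison} and Remark~\ref{rem: number of objects}.
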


\begin{proof}
 Each $\mathsf{proj}(A_{d_i-1})$ has a full (strong) exceptional collection of length $d_i-1$. By Lemma~\ref{lem:decomposition of products} applied iteratively, $\circledast_{i=0}^n \mathsf{proj}(A_{d_i-1})$ has a full (strong) exceptional collection of length
 \[
  \prod_{i=0}^n (d_i - 1)
 \]
 objects.
 By Corollary~\ref{cor: fermat comparison} and Remark~\ref{rem: number of objects} it follows that $\mathsf{Inj}_{\op{coh}}(Z_{\mathbf{d}})$ has a full exceptional collection consisting of
 \[
  \prod_{i=0}^n (d_i - 1) + \left(\prod_{i=0}^n d_i \right) \left( -1 + \sum_{i=0}^n \frac{1}{d_i}\right)
 \]
 objects.
\end{proof}

\begin{example}
 For the weight sequence, $\mathbf{d} = (3,3)$, the associated maximally weighted Fermat hypersurface is a point, $\op{Spec}k$ (the quotient of three points by a free and transitive $\Z/3\Z$-action).  Here one obtains a semi-orthogonal decomposition,
 \[
  \mathsf{proj}(A_2 \otimes_k A_2) \simeq \langle \bigoplus_{\op{deg }b =0} E(b),  \dbcoh{\op{Spec}k}  \rangle.
 \]
 There are $3$ elements of degree zero yielding four exceptional objects. Computing the endomorphism algebra of the sum of these four objects, one obtains the path algebra of a quiver whose underlying graph is $D_4$.  Therefore,
 \[
  \mathsf{proj}(A_2 \otimes_k A_2) \simeq \mathsf{proj}(D_4).
 \]
 See also Equation~\eqref{eq: D_4} below.
\end{example}

In \cite{MT}, M. Marcolli and G. Tabuada prove the following theorem.

\begin{theorem} \label{thm: motives}
 Let $X$ be a smooth projective Deligne-Mumford stack and suppose that $\dbcoh{X}$ has a full exceptional collection with $N$ objects.  Then, the rational Chow motive of $X$ is a direct sum of $N$-copies of various tensor powers of the Lefschetz motive.
\end{theorem}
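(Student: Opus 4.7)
The plan is to prove the theorem by passing through the formalism of noncommutative motives developed by Kontsevich and Tabuada. First, I would enhance $\dbcoh{X}$ to a smooth proper dg-category, using $\mathsf{Inj}_{\op{coh}}(X)$ as in Theorem~\ref{theorem: Morita product of varieties} (which requires knowing that for a smooth and projective tame DM stack over a characteristic zero field, the enhancement is indeed smooth and proper). Denote this dg-enhancement by $\mathsf{C}_X$ and write $U(\mathsf{C}_X) \in \op{NChow}(k)_{\Q}$ for its class in the category of noncommutative Chow motives.

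Next I would use the full exceptional collection to produce a decomposition in noncommutative motives. The hypothesis gives a semi-orthogonal decomposition
\[
\mathsf{C}_X \simeq \langle \op{perf}(k), \ldots, \op{perf}(k) \rangle
\]
with $N$ pieces, by the remark after Definition~\ref{def:SO}. By a theorem of Tabuada, semi-orthogonal decompositions of smooth proper dg-categories become direct sum decompositions in $\op{NChow}(k)_{\Q}$, so we obtain
\[
U(\mathsf{C}_X) \simeq U(k)^{\oplus N}.
\]

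The third step is to invoke the bridge theorem of Marcolli--Tabuada: there is a fully faithful, $\Q$-linear, symmetric monoidal functor
\[
\Phi: \op{Chow}(k)_{\Q}/{-\otimes \Q(1)} \longrightarrow \op{NChow}(k)_{\Q}
\]
from the orbit category of classical Chow motives under Tate twist, which sends the Chow motive $h(X)$ to $U(\mathsf{C}_X)$ and sends $\Q$ to $U(k)$. The relation $U(\mathsf{C}_X) \simeq U(k)^{\oplus N}$ combined with full faithfulness of $\Phi$ translates into an isomorphism $h(X) \simeq \Q^{\oplus N}$ in the orbit category. Unpacking the definition of morphisms in the orbit category, this isomorphism is represented by a genuine isomorphism
\[
h(X) \simeq \bigoplus_{i=1}^{N} \Q(a_i)
\]
in $\op{Chow}(k)_{\Q}$ for some integers $a_i$, i.e.\ a direct sum of $N$ Tate twists (tensor powers of the Lefschetz motive), as claimed.

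The main obstacle is of course the bridge theorem, which requires a subtle comparison between Chow motives and noncommutative motives and relies on the existence of well-behaved enhancements of $\dbcoh{X}$. For a smooth projective DM stack one has to verify that the enhancement is smooth and proper over $k$, which in characteristic zero follows from the tameness/smoothness hypotheses but is not automatic. A secondary subtlety is matching the count: one needs to confirm that each exceptional object contributes exactly one Tate summand, which follows from the fact that $U(k)$ is the $\otimes$-unit in noncommutative motives and that $\Phi$ is monoidal. All other verifications are formal consequences of the semi-orthogonal decomposition/direct sum dictionary and the fully faithfulness of $\Phi$.
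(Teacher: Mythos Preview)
The paper's own proof is a one-line citation: ``This is \cite[Theorem 1.3]{MT}.'' Your proposal correctly outlines the actual argument from Marcolli--Tabuada (pass to noncommutative Chow motives via the universal functor $U$, use that a semi-orthogonal decomposition into exceptional objects gives $U(\mathsf{C}_X)\simeq U(k)^{\oplus N}$, then apply the fully faithful bridge functor from the Tate-orbit category of Chow motives), so you are giving the content behind the citation rather than a different approach.
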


\begin{proof}
 This is \cite[Theorem 1.3]{MT}.
\end{proof}

\begin{corollary}
 For a nonnegative weight sequence, $\mathbf{d}=(d_0, \ldots, d_n)$, the rational Chow motive of the associated maximally weighted Fermat hypersurface, $Z_{\mathbf{d}}$, is a direct sum of  $\prod_{i=0}^n (d_i - 1)  + (\prod_{i=0}^n d_i) (-1 + \sum_{i=0}^n \frac{1}{d_i})$ copies of various tensor powers of the Lefschetz motive.
\end{corollary}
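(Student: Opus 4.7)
The plan is to combine the two results immediately preceding the statement, namely Proposition~\ref{prop: Fermat exceptional} giving a full exceptional collection for $\dbcoh{Z_{\mathbf{d}}} \cong [\mathsf{Inj}_{\op{coh}}(Z_{\mathbf{d}})]$ and Theorem~\ref{thm: motives} of Marcolli--Tabuada converting such a collection into a motivic decomposition. The only nontrivial content of the corollary, then, is verifying that the hypotheses of Theorem~\ref{thm: motives} apply to $Z_{\mathbf{d}}$, i.e.\ that $Z_{\mathbf{d}}$ is in fact a smooth projective Deligne--Mumford stack.

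First I would check smoothness and the Deligne--Mumford property. The Fermat polynomial $w = \sum x_i^{d_i}$ has isolated singularity at the origin, so $Y_w \setminus 0$ is smooth, and the action of $G(B_{\mathbf{d}}) \subset \mathbb{G}_m^{n+1}$ on $\mathbb{A}^{n+1} \setminus 0$ has finite stabilizers (since each $\mathbb{G}_m$-factor acts with positive weight on at least one coordinate). Hence the quotient stack $Z_{\mathbf{d}} = [(Y_w \setminus 0)/G(B_{\mathbf{d}})]$ is a smooth Deligne--Mumford stack. For properness, observe that $[(\mathbb{A}^{n+1}\setminus 0)/G(B_{\mathbf{d}})]$ is a weighted projective stack (the quotient by the diagonal $\mathbb{G}_m \subset G(B_{\mathbf{d}})$ is a weighted projective space, and the remaining finite quotient by $G(B_{\mathbf{d}})/\mathbb{G}_m$ is proper), and $Z_{\mathbf{d}}$ is a closed substack cut out by the single homogeneous equation $w$.

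Next, I invoke Proposition~\ref{prop: Fermat exceptional}: since $\mathbf{d}$ is nonnegative, $\mathsf{Inj}_{\op{coh}}(Z_{\mathbf{d}})$ admits a full exceptional collection of length
\[
N := \prod_{i=0}^n (d_i - 1) + \Bigl(\prod_{i=0}^n d_i\Bigr)\Bigl(-1 + \sum_{i=0}^n \frac{1}{d_i}\Bigr).
\]
Passing to homotopy categories, $\dbcoh{Z_{\mathbf{d}}}$ has a full exceptional collection of length $N$. Finally I apply Theorem~\ref{thm: motives} to conclude that the rational Chow motive of $Z_{\mathbf{d}}$ decomposes as a direct sum of $N$ tensor powers of the Lefschetz motive, which is exactly the claim.

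The argument is essentially a citation; the only real step is the verification that $Z_{\mathbf{d}}$ is a smooth proper Deligne--Mumford stack (so that Marcolli--Tabuada applies), and the combinatorial count of exceptional objects has already been packaged into Proposition~\ref{prop: Fermat exceptional}. There is no significant obstacle to expect.
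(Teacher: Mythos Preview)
Your proposal is correct and matches the paper's own proof, which simply cites Proposition~\ref{prop: Fermat exceptional} and Theorem~\ref{thm: motives}. Your additional verification that $Z_{\mathbf{d}}$ is a smooth projective Deligne--Mumford stack is a welcome detail that the paper leaves implicit.
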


\begin{proof}
 This follows directly from Proposition~\ref{prop: Fermat exceptional} and Theorem~\ref{thm: motives}.
\end{proof}

\begin{definition}
 A \newterm{hypersurface weighted projective line}, as introduced in \cite{GL}, is nothing more than a maximally weighted Fermat hypersurface with $n=3$. In other words, they are the maximally weighted Fermat hypersurfaces, $Z_{(d_0,d_1,d_2)}$, associated to triples, $(d_0,d_1,d_2)$.
\end{definition}

In \cite{GL}, W. Geigle and H. Lenzing show that $\dbcoh{Z_{(d_0,d_1,d_2)}}$ has a full strong exceptional collection consisting of the line bundles, $\O(\phi_i(b))$ with $ 1 \leq b \leq d_i$ for each $0 \leq i \leq 2$ together with $\O$ and the pullback of $\O(1)$ (which can be written $\O(d_0,0,0) = \O(0,d_1,0) = \O(0,0,d_2)$). Here $\phi_i$ is as in  \eqref{eq: phi}.  The endomorphism algebra of the sum of these line bundles is the path algebra of the quiver in Figure~\ref{fig: canonical quiver}.  Removing the left-most vertex, which corresponds to $\O$, and right-most vertex, which corresponds to $\O(1)$, the lengths of the chains of arrows in the three rows are $d_0-1, d_1-1$, and $d_2-1$.

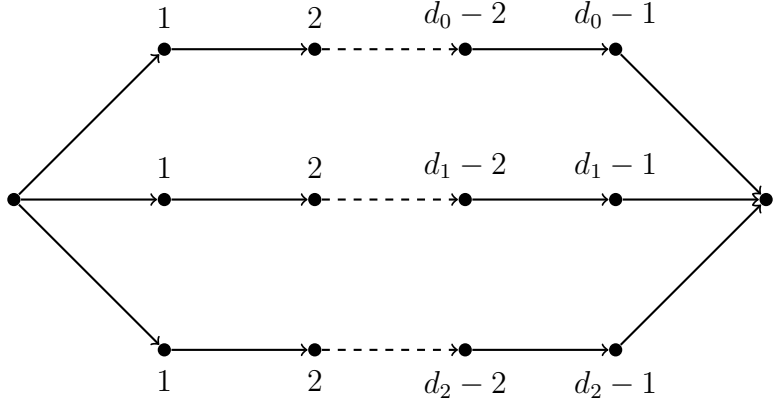
\begin{figure}
 \begin{tikzpicture}
  \node[circle,fill=black,inner sep=0pt,minimum size=5pt] (begin) at (-5,0) {};
  \node[circle,fill=black,inner sep=0pt,minimum size=5pt,label={[label distance=0.1em]90:$1$}] (a1) at (-3,2) {};
  \node[circle,fill=black,inner sep=0pt,minimum size=5pt,label={[label distance=0.1em]90:$2$}] (a2) at (-1,2) {};
  \node[circle,fill=black,inner sep=0pt,minimum size=5pt,label={[label distance=0.1em]90:$d_0-2$}] (a3) at (1,2) {};
  \node[circle,fill=black,inner sep=0pt,minimum size=5pt,label={[label distance=0.1em]90:$d_0-1$}] (a4) at (3,2) {};
  \node[circle,fill=black,inner sep=0pt,minimum size=5pt,label={[label distance=0.1em]90:$1$}] (b1) at (-3,0) {};
  \node[circle,fill=black,inner sep=0pt,minimum size=5pt,label={[label distance=0.1em]90:$2$}] (b2) at (-1,0) {};
  \node[circle,fill=black,inner sep=0pt,minimum size=5pt,label={[label distance=0.1em]90:$d_1-2$}] (b3) at (1,0) {};
  \node[circle,fill=black,inner sep=0pt,minimum size=5pt,label={[label distance=0.1em]90:$d_1-1$}] (b4) at (3,0) {};
  \node[circle,fill=black,inner sep=0pt,minimum size=5pt,label={[label distance=0.1em]270:$1$}] (c1) at (-3,-2) {};
  \node[circle,fill=black,inner sep=0pt,minimum size=5pt,label={[label distance=0.1em]270:$2$}] (c2) at (-1,-2) {};
  \node[circle,fill=black,inner sep=0pt,minimum size=5pt,label={[label distance=0.1em]270:$d_2-2$}] (c3) at (1,-2) {};
  \node[circle,fill=black,inner sep=0pt,minimum size=5pt,label={[label distance=0.1em]270:$d_2-1$}] (c4) at (3,-2) {};
  \node[circle,fill=black,inner sep=0pt,minimum size=5pt] (end) at (5,0) {};
  \draw[->,thick] (begin) -- (a1);
  \draw[->,thick] (begin) -- (b1);
  \draw[->,thick] (begin) -- (c1);
  \draw[->,thick] (a1) -- (a2);
  \draw[->,dashed,thick] (a2) -- (a3);
  \draw[->,thick] (a3) -- (a4);
  \draw[->,thick] (b1) -- (b2);
  \draw[->,dashed,thick] (b2) -- (b3);
  \draw[->,thick] (b3) -- (b4);
  \draw[->,thick] (c1) -- (c2);
  \draw[->,dashed,thick] (c2) -- (c3);
  \draw[->,thick] (c3) -- (c4);
  \draw[->,thick] (a4) -- (end);
  \draw[->,thick] (b4) -- (end);
  \draw[->,thick] (c4) -- (end);
 \end{tikzpicture}
\caption{Canonical quiver for $(d_0,d_1,d_2)$}
\label{fig: canonical quiver}
\end{figure}

\begin{remark}
The full strong exceptional collection mentioned above due to Geigle and Lenzing is not the same as the one chosen in the proof of Proposition~\ref{prop: Fermat exceptional}.
\end{remark}

For hypersurface weighted projective lines, we are able to compare the categories, $\mathsf{proj}(A_{d_i})$ and $\mathsf{Inj}_{\op{coh}}(Z_{(d_0,d_1,d_2)})$.  The degree of the Gorenstein parameter for the coordinate ring of $Z_{(d_0,d_1,d_2)}$ is
\begin{displaymath}
 \mu =\op{lcm}(d_0,d_1,d_2)\left(\frac{1}{d_0}+\frac{1}{d_1} + \frac{1}{d_2} -1\right).
\end{displaymath}
As we will need it for further use, we treat the case where $\mu >0$.  In this case, the weight sequence $(d_0,d_1,d_2)$ is said to be of \newterm{Dynkin type} and we have the following semi-orthogonal decomposition of $\dbcoh{Z_{(d_0,d_1,d_2)}}$,
\begin{equation} \label{eq: weighted projective line decomp}
 \mathsf{Inj}_{\op{coh}}(Z_{(d_0,d_1,d_2)}) \simeq \langle \bigoplus_{\op{deg }b = -\mu} \O_Z(b), \ldots , \bigoplus_{\op{deg }b=-1} \O_Z(b), \mathsf{proj}(A_{d_0-1} \ok A_{d_1-1} \ok A_{d_2-1}) \rangle.
\end{equation}

The possible weight sequences of Dynkin type, match up precisely with the simple singularities.  One has, $A_{p+q}$ for  $(1, p, q)$, $D_{l+2}$ for $(2, 2, l)$, $E_6$ for $(2, 3, 3)$, $E_7$ for $(2, 3, 4)$, and $E_8$ for $(2, 3, 5)$.

\begin{remark}
 In \cite{KST}, they show that for every weight sequence $(p,q,r)$ of Dynkin type, there exists a $\Z$-graded subring
 \[
  R \subseteq k[x,y,z] / (x^p+y^q+z^r)
 \]
 such that the $\Z$-graded category of singularities of $R$ is equivalent to the derived category of representations of the corresponding ADE quiver.
\end{remark}

In our setup, given a weight sequence of Dynkin type, $(d_0,d_1,d_2)$, the category,
\begin{displaymath}
\op{D}^{\op{b}}(\op{mod }A_{d_0-1} \otimes_k A_{d_1-1} \otimes_k A_{d_2-1} ),
\end{displaymath}
is an admissible subcategory of the category of representations of the quiver corresponding to the type of simple singularity.  In the case, $(1, p, q)$, the variety, $k[x,y,z]/ (x+ y^p + z^q)$, is smooth and we get the zero category as our category of matrix factorizations.  In the case, $(2, 2, l)$, the category of matrix factorizations is equivalent to:
\begin{displaymath}
\op{D}^{\op{b}}(\op{mod }A_{1} \otimes_k A_{1} \otimes_k A_{l-1} ) \cong \op{D}^{\op{b}}(\op{mod }A_{l-1}).
\end{displaymath}
  In the case, $(2, 3, 3)$, the category of matrix factorizations is equivalent to:
\begin{equation} \label{eq: D_4}
\op{D}^{\op{b}}(\op{mod } A_{1} \otimes_k A_{2} \otimes_k A_{2} )\cong \op{D}^{\op{b}}(\op{mod } A_{2} \otimes_k A_{2}) \cong \op{D}^{\op{b}}(\op{mod } D_4).
\end{equation}
  In the case, $(2, 3, 4)$, the category of matrix factorizations is equivalent to:
\begin{equation} \label{eq: E_6}
\op{D}^{\op{b}}(\op{mod }A_{1} \otimes_k A_{2} \otimes_k A_{3} ) \cong \op{D}^{\op{b}}(\op{mod }A_{2} \otimes_k A_{3} )\cong \op{D}^{\op{b}}(\op{mod }E_6).
\end{equation}
  In the case, $(2, 3, 5)$, the category of matrix factorizations is equivalent to:
\begin{equation} \label{eq: E_8}
\op{D}^{\op{b}}(\op{mod }A_{1} \otimes_k A_{2} \otimes_k A_{4}) \cong \op{D}^{\op{b}}(\op{mod }A_{2} \otimes_k A_{4}) \cong \op{D}^{\op{b}}(\op{mod }E_8).
\end{equation}
These exceptional equivalences seem to be well known, at least the $D_4$ and $E_6$ cases appear in  \cite{Hap}.  For completeness, let us quickly prove Equation~\eqref{eq: E_8}.  Consider the weight sequence $(2,3,5)$.  In this case, Equation~\eqref{eq: weighted projective line decomp} reads:
\[
 \dbcoh{Z_{2,3,5}} = \langle \O, \op{D}^{\op{b}}(\op{mod }A_{2} \otimes_k A_{4}) \rangle.
\]

\begin{figure}
 \begin{tikzpicture}
  \node[circle,fill=black,inner sep=0pt,minimum size=5pt] (begin) at (-5,0) {};
  \node[circle,fill=black,inner sep=0pt,minimum size=5pt] (a1) at (0,2) {};
  \node[circle,fill=black,inner sep=0pt,minimum size=5pt] (b1) at (-1,0) {};
  \node[circle,fill=black,inner sep=0pt,minimum size=5pt] (b2) at (1,0) {};
  \node[circle,fill=black,inner sep=0pt,minimum size=5pt] (c1) at (-3,-2) {};
  \node[circle,fill=black,inner sep=0pt,minimum size=5pt] (c2) at (-1,-2) {};
  \node[circle,fill=black,inner sep=0pt,minimum size=5pt] (c3) at (1,-2) {};
  \node[circle,fill=black,inner sep=0pt,minimum size=5pt] (c4) at (3,-2) {};
  \node[circle,fill=black,inner sep=0pt,minimum size=5pt] (end) at (5,0) {};
  \draw[->,thick] (begin) -- (a1);
  \draw[->,thick] (begin) -- (b1);
  \draw[->,thick] (begin) -- (c1);
  \draw[->,thick] (b1) -- (b2);
  \draw[->,thick] (c1) -- (c2);
  \draw[->,thick] (c2) -- (c3);
  \draw[->,thick] (c3) -- (c4);
  \draw[->,thick] (a1) -- (end);
  \draw[->,thick] (b2) -- (end);
  \draw[->,thick] (c4) -- (end);
 \end{tikzpicture}
\caption{Canonical quiver for $(2,3,5)$}
\label{fig: 235 quiver}
\end{figure}
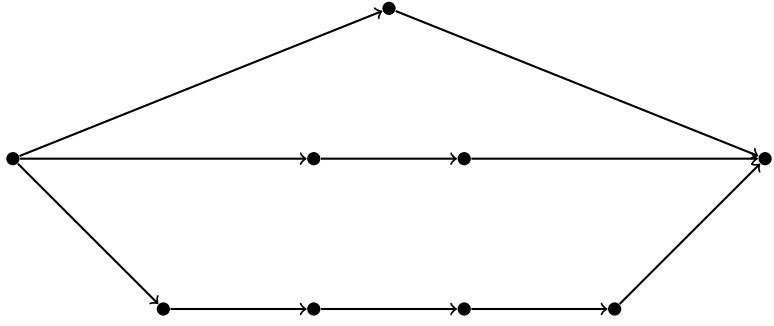

Removing the left-most vertex from Figure~\ref{fig: 235 quiver}, we see that we get a quiver whose underlying graph is of type $E_8$.  As noted in Remark~\ref{rem: all ADE same}, by \cite{Hap87} the path algebra of such a quiver has the same derived category as the path algebra for the quiver of our chosen $E_8$.  This proves Equation~\eqref{eq: E_8}.

\begin{definition} \label{definition: ADE for weight sequence}
We say that a weight sequence is \newterm{ADE} if it coincides with any of the following four sequences up to permutation
\[
 (2,\ldots,2,a), (2,\ldots,2,3,3), (2,\ldots,2,3,4), \tor (2,\ldots,2,3,5)
 \]
  where $2$ can occur with any multiplicity including zero.
\end{definition}

\begin{proposition} \label{prop: is ADE}
Consider a decomposition of a weight sequence, $(d_0, \ldots, d_n)$, into a disjoint union of ADE weight sequences,
\[
\mathbf{d} = \coprod_{i=1}^s \mathbf{d}_i.
\]
Then,
\[
\op{D}^{\op{b}}(\op{mod } \bigotimes_{j=0}^n A_{d_j-1}) \cong \op{D}^{\op{b}}(\op{mod } \bigotimes_{i=1}^{s} kQ_i),
\]
where $Q_i$ is a quiver whose underlying graph is of type ADE corresponding to the ADE type of $\mathbf{d}_i$ for $1 \leq l \leq s$.
\end{proposition}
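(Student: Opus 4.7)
The plan is to bootstrap the four exceptional ADE derived equivalences established earlier in this section into the general statement, using the Morita product formalism from Section~\ref{sec: background}.

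First I would apply Lemma~\ref{lemma: Morita product of algebras} in both directions. Since every $A_{d-1}$ and every $kQ_i$ is finite-dimensional and hereditary, the enveloping algebras involved have finite global dimension, so the lemma yields quasi-equivalences
\[
 \mathsf{proj}\Bigl(\bigotimes_{j=0}^n A_{d_j-1}\Bigr) \simeq \circledast_{j=0}^n \mathsf{proj}(A_{d_j-1}), \quad \mathsf{proj}\Bigl(\bigotimes_{i=1}^s kQ_i\Bigr) \simeq \circledast_{i=1}^s \mathsf{proj}(kQ_i).
\]
Applying Lemma~\ref{lemma: comm and ass of Morita product} to regroup the left-hand product according to $\mathbf{d} = \coprod_{i} \mathbf{d}_i$, and using that $A_1 \cong k$ so that $\mathsf{proj}(A_1)$ is a unit for $\circledast$ (Lemma~\ref{lemma: Morita product and idempotent completion}), the proposition reduces to producing, for each ADE piece $\mathbf{d}_i$, a quasi-equivalence between the Morita product of the $\mathsf{proj}(A_{d-1})$ over the non-trivial entries $d \neq 2$ of $\mathbf{d}_i$ and $\mathsf{proj}(kQ_i)$.

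Next I would dispose of the four cases individually. The case $\mathbf{d}_i = (2,\ldots,2,a)$ reduces to $\mathsf{proj}(A_{a-1})$, with $Q_i$ the linear $A_{a-1}$ quiver. The case $(2,\ldots,2,3,3)$ reduces to $\mathsf{proj}(A_2) \circledast \mathsf{proj}(A_2) \simeq \mathsf{proj}(A_2 \otimes_k A_2) \simeq \mathsf{proj}(D_4)$, the last equivalence being the example following Proposition~\ref{prop: Fermat exceptional}. Similarly $(2,\ldots,2,3,4)$ reduces to $\mathsf{proj}(A_2 \otimes_k A_3) \simeq \mathsf{proj}(E_6)$ by Equation~\eqref{eq: E_6}, and $(2,\ldots,2,3,5)$ reduces to $\mathsf{proj}(A_2 \otimes_k A_4) \simeq \mathsf{proj}(E_8)$ by Equation~\eqref{eq: E_8}.

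Combining these per-piece quasi-equivalences via the naturality of the Morita product in each slot produces $\mathsf{proj}(\bigotimes_j A_{d_j-1}) \simeq \mathsf{proj}(\bigotimes_i kQ_i)$, and passing to homotopy categories yields the advertised derived equivalence of module categories — the key point being that $[\mathsf{proj}(A)] \cong \op{D}^{\op{b}}(\op{mod}\,A)$ whenever $A$ has finite global dimension, which holds for every algebra above. The argument is essentially bookkeeping once the four exceptional ADE base equivalences are in hand, so I do not foresee a substantive obstacle.
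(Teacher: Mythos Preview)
Your proposal is correct and follows essentially the same route as the paper: split $\mathsf{proj}(\bigotimes_j A_{d_j-1})$ into a Morita product via Lemma~\ref{lemma: Morita product of algebras}, regroup according to the ADE decomposition, invoke the exceptional $D_4$, $E_6$, $E_8$ equivalences (together with the tautological $A$ case) on each piece, and reassemble. The paper's write-up is slightly terser but the logical skeleton is identical.
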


\begin{proof}
As each weight sequence, $\mathbf{d}_i :=(d_0^i, \ldots , d_{n_i}^i)$ is ADE,  using Equations~\eqref{eq: D_4}, \eqref{eq: E_6}, and \eqref{eq: E_8}, for each $i$ we have
\begin{equation} \label{eq: ADE sequence}
\circledast_{j=0}^{n_i} \mathsf{proj}(A_{d_j^i-1}) \simeq \mathsf{proj}(kQ_i),
\end{equation}
for some $S_i$ which is the path algebra of a quiver whose underlying graph is of type ADE.
Therefore,
\begin{align*}
\circledast_{i=0}^{n} \mathsf{proj}(A_{d_i-1}) \simeq & \circledast_{i=1}^s \circledast_{j=0}^{n_i} \mathsf{proj}(A_{d_j^i-1}) \\
\simeq & \circledast_{i=1}^s  \mathsf{proj}(kQ_i) \\
\simeq & \mathsf{proj}(\bigotimes_{i=1}^s kQ_i),
\end{align*}
where the first equivalence comes from Lemma~\ref{lemma: Morita product of algebras}, the second equivalence comes from Equation~\eqref{eq: ADE sequence}, and the third equivalence again comes from Lemma~\ref{lemma: Morita product of algebras}.
\end{proof}

\begin{proposition} \label{prop: sitting inside ADE}
Let $\mathbf{d} = (d_0, \ldots , d_n)$ be a weight sequence which decomposes into a disjoint union of nonpositive weight sequences,
\[
\mathbf{d} = \coprod_{i=1}^s \mathbf{d}_i.
\]
 Then $\mathsf{Inj}_{\op{coh}}(\prod_{i=1}^s Z_{\mathbf{d}_i})$ is an admissible subcategory of $\mathsf{proj}(\bigotimes_{i=0}^n A_{d_i-1})$. In particular the category, $\dbcoh{\prod_{i=1}^s Z_{\mathbf{d}_i}}$, is an admissible subcategory of
$\op{D}^{\op{b}}(\op{mod }\bigotimes_{i=0}^n A_{d_i-1})$.
\end{proposition}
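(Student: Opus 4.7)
The plan is to combine Corollary~\ref{cor: fermat comparison} applied to each piece $\mathbf{d}_i$ with Lemma~\ref{lem:decomposition of products} (behavior of semi-orthogonal decompositions under Morita products), and then identify the resulting Morita products on the algebraic side via Lemma~\ref{lemma: Morita product of algebras} and on the geometric side via the iterated Theorem~\ref{theorem: Morita product of varieties}, as recorded in Equation~\eqref{eq:geometric product}.

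First I would invoke Corollary~\ref{cor: fermat comparison} for each nonpositive weight sequence $\mathbf{d}_i = (d^i_0, \ldots, d^i_{n_i})$. In case (ii), when $\bar\mu(\mathbf{d}_i) = 0$, one has the quasi-equivalence $\mathsf{Inj}_{\op{coh}}(Z_{\mathbf{d}_i}) \simeq \mathsf{proj}(A_{d^i_0-1} \otimes_k \cdots \otimes_k A_{d^i_{n_i}-1})$; in case (iii), when $\bar\mu(\mathbf{d}_i) < 0$, the category $\mathsf{Inj}_{\op{coh}}(Z_{\mathbf{d}_i})$ sits as the rightmost, and hence admissible, component of a semi-orthogonal decomposition of $\mathsf{proj}(A_{d^i_0-1} \otimes_k \cdots \otimes_k A_{d^i_{n_i}-1})$ whose other components are generated by the exceptional objects $S_b^{\prime}$. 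In either case $\mathsf{Inj}_{\op{coh}}(Z_{\mathbf{d}_i})$ is admissible inside $\mathsf{proj}(\bigotimes_j A_{d^i_j-1})$.

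Next I would take the Morita product over $i$ and apply Lemma~\ref{lem:decomposition of products} inductively, one factor at a time. At each step, $\circledast$-ing with the next factor preserves the semi-orthogonal decomposition structure and keeps the ``non-exceptional'' piece in the rightmost position. After $s$ iterations one obtains a semi-orthogonal decomposition of $\mathsf{proj}(\bigotimes_j A_{d^1_j-1}) \circledast \cdots \circledast \mathsf{proj}(\bigotimes_j A_{d^s_j-1})$ whose final component is
\begin{equation*}
\mathsf{Inj}_{\op{coh}}(Z_{\mathbf{d}_1}) \circledast \cdots \circledast \mathsf{Inj}_{\op{coh}}(Z_{\mathbf{d}_s}).
\end{equation*}

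To finish, identify both sides. On the algebraic side, iterating Lemma~\ref{lemma: Morita product of algebras} (each $\bigotimes_j A_{d^i_j-1}$ is finite-dimensional over $k$, hence coherent with finite global dimension, so the hypotheses hold at every stage) yields
\begin{equation*}
\mathsf{proj}(\textstyle\bigotimes_j A_{d^1_j-1}) \circledast \cdots \circledast \mathsf{proj}(\textstyle\bigotimes_j A_{d^s_j-1}) \simeq \mathsf{proj}(\textstyle\bigotimes_{i=0}^n A_{d_i-1}).
\end{equation*}
On the geometric side, iterating Equation~\eqref{eq:geometric product} applied to the smooth proper DM stacks $Z_{\mathbf{d}_i}$ yields
\begin{equation*}
\mathsf{Inj}_{\op{coh}}(Z_{\mathbf{d}_1}) \circledast \cdots \circledast \mathsf{Inj}_{\op{coh}}(Z_{\mathbf{d}_s}) \simeq \mathsf{Inj}_{\op{coh}}(\textstyle\prod_{i=1}^s Z_{\mathbf{d}_i}).
\end{equation*}
Combining these two identifications with the semi-orthogonal decomposition from the previous step gives the admissible embedding of $\mathsf{Inj}_{\op{coh}}(\prod_i Z_{\mathbf{d}_i})$ inside $\mathsf{proj}(\bigotimes_{i=0}^n A_{d_i-1})$, and passage to homotopy categories via $[\mathsf{Inj}_{\op{coh}}(X)] \cong \dbcoh{X}$ and $[\mathsf{proj}(\Lambda)] \cong \op{D}^{\op{b}}(\op{mod }\Lambda)$ gives the ``in particular'' statement. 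The main obstacle is really just bookkeeping: tracking through the iteration that the required hypotheses remain in force (smoothness of each $Y_{\sum_j x_j^{d^i_j}} \setminus 0$ for Corollary~\ref{cor: fermat comparison}, coherence and finite global dimension for Lemma~\ref{lemma: Morita product of algebras}, and pre-thickness from Corollary~\ref{corollary: thickness for Orlov components} so that the various Morita products agree with the expected ambient categories).
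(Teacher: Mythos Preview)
Your proof is correct and follows essentially the same route as the paper: apply Corollary~\ref{cor: fermat comparison} to each $\mathbf{d}_i$ to get admissibility of $\mathsf{Inj}_{\op{coh}}(Z_{\mathbf{d}_i})$ in $\mathsf{proj}(\bigotimes_j A_{d^i_j-1})$, propagate this through the Morita product via Lemma~\ref{lem:decomposition of products}, and then identify the two sides using Lemma~\ref{lemma: Morita product of algebras} and Equation~\eqref{eq:geometric product}. Your write-up is a bit more explicit about the bookkeeping, but the argument is the same.
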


\begin{proof}
 By Corollary~\ref{cor: fermat comparison} and Equation~\eqref{eq: type A}, $\mathsf{Inj}_{\op{coh}}(Z_{\mathbf{d}_i})$ is an admissible subcategory of  $\mathsf{proj}(\bigotimes_{d_j \in \mathbf{d}_i} A_{d_j-1})$. By Lemma~\ref{lem:decomposition of products}, it follows that
\[
\circledast_{i=1}^s \mathsf{Inj}_{\op{coh}}(Z_{\mathbf{d}_i})
\]
is an admissible subcategory of
\[
\circledast_{i=1}^s \mathsf{proj}(\bigotimes_{d_j \in \mathbf{d}_i} A_{d_j-1})
\]
Now,
\[
\circledast_{i=1}^s \inj{Z_{\mathbf{d}_i}} \simeq \inj{\prod_{i=1}^s Z_{\mathbf{d}_i}}
\]
by Equation~\eqref{eq:geometric product} and
\begin{align*}
 \circledast_{i=1}^s \mathsf{proj}(\bigotimes_{d_j \in \mathbf{d}_i} A_{d_j-1}) \simeq \circledast_{i=0}^n  \mathsf{proj}(A_{d_i-1}) \simeq \mathsf{proj}(\bigotimes_{i=0}^n A_{d_i-1}).
\end{align*}
where both of the quasi-equivalences in the above display come from Lemma~\ref{lemma: Morita product of algebras}.
\end{proof}

\begin{remark}
 We will use the above propositions later to examine the Rouquier dimension of the derived category of modules over tensor products of path algebras of quivers whose underlying graph is of type ADE and the derived category of coherent sheaves on various weighted Fermat hypersurfaces.
\end{remark}

\subsection{Graded Kn\"orrer periodicity and mirrors to products of hypersurfaces} \label{sec: graded knorrer}

We now wish to use the observations of Section~\ref{sec:begin Orlov} to discuss a general picture for creating Landau-Ginzburg mirrors to products of hypersurfaces, $Z_i \subseteq \P^{n_i-1}$, defined by polynomials, $w_i$, of degree $d_i$ for $1 \leq i \leq t$.

For simplicity, we will describe the case where all of the $Z_i$ are either Calabi-Yau or general type.  However, we want to first ensure that when we product these varieties the resulting Gorenstein parameter is positive, as the mirror symmetry constructions in this case have been explored in much greater depth (see, for example, \cite{Bat, Bor, Giv, HV, Cla} and especially \cite{Aur}).  To do so, we simply employ a graded version of Kn\"orrer periodicity.  Namely, it was proven by Kn\"orrer in \cite{Kno} that for a regular ring, $R$, there is an equivalence of categories,
\[
 [\mathsf{proj}(R,w)] \cong [\mathsf{proj}(R[x,y],w+x^2+y^2)].
\]
In the graded case, this manifests similarly by first adding the potential $x^2$ alone.
\begin{proposition}
Let $S = k[x_1, \ldots, x_n]$ be a positively $A$-graded polynomial ring with $x_i \in S_{a_i}$ and $\op{deg }a_i > 0$.  Given $w \in S_d$, we have:
\begin{equation}
\proj{S}{A}{w}  \simeq \proj{S[x]}{A \boxminus \Z}{w+x^2} \label{+x^2}
\end{equation}
and
\begin{equation}\label{+x^2+y^2}
\proj{S}{A}{w} \simeq \proj{S[x,y]}{A \boxminus \Z \boxminus \Z}{w+x^2+y^2},
\end{equation}
\end{proposition}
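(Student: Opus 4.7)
The plan is to realize both equivalences as instances of the Morita product picture developed in Section~\ref{sec:begin Orlov}, together with Orlov's theorem applied to the trivial factor $(k[x],\Z,x^2)$.

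For \eqref{+x^2}, I would apply Corollary~\ref{cor: Morita product of MF} to the pair $(S,A,w)$ and $(k[x],\Z,x^2)$. The two hypotheses are easy to verify: $(d,-2) \in A \oplus \Z$ is non-torsion because $\op{deg}(d)>0$ forces $d$ to be non-torsion in $A$, and the Euler-formula remark following Corollary~\ref{cor: Morita product of MF} guarantees that the singular locus of $w+x^2$ sits inside the product of the zero loci of $w$ and $x^2$. This yields
\[
\mathsf{proj}(S,A,w) \circledast \mathsf{proj}(k[x],\Z,x^2) \simeq \overline{\mathsf{proj}}(S[x], A \boxminus \Z, w+x^2).
\]
Since $A \boxminus \Z$ grades $S[x]$ positively by Lemma~\ref{lem: positively graded}, Corollary~\ref{corollary: thickness for Orlov components} identifies $\overline{\mathsf{proj}}$ with $\mathsf{proj}$ on the right-hand side.

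Next I would trivialize the second factor. By the discussion preceding Equation~\eqref{eq: type A}, $\mathsf{proj}(k[x],\Z,x^2) \simeq \mathsf{proj}(A_1) \simeq \mathsf{C}(k)_{\op{pe}}$; equivalently, Theorem~\ref{thm:Orlov generalized} applied to this triple with $\mu = -1$ and $Y_{x^2} \setminus 0 = \emptyset$ shows this category is generated by the single exceptional object $k(0)$. Invoking Lemma~\ref{lemma: Morita product and idempotent completion}, and using that $\mathsf{proj}(S,A,w)$ is pre-thick by Corollary~\ref{corollary: thickness for Orlov components}, I obtain $\mathsf{proj}(S,A,w) \circledast \mathsf{C}(k)_{\op{pe}} \simeq \mathsf{proj}(S,A,w)$. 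Chaining the three quasi-equivalences yields \eqref{+x^2}.

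For \eqref{+x^2+y^2}, I would simply iterate. The image $\overline{d}$ of $d$ in $A \boxminus \Z$ remains non-torsion of positive degree (it maps to $2\op{deg}(d)/\gcd(\op{deg}(d),2) > 0$ under the degree map), and $S[x]$ is positively $A \boxminus \Z$-graded by Lemma~\ref{lem: positively graded}, so applying \eqref{+x^2} once more to the triple $(S[x], A \boxminus \Z, w+x^2)$ adjoins a second square $y^2$ and extends the grading group to $(A \boxminus \Z) \boxminus \Z$. Composing with the first instance of \eqref{+x^2} produces the desired quasi-equivalence. The principal obstacle I anticipate is the smoothness hypothesis on $Y_w \setminus 0$ required to invoke Corollary~\ref{corollary: thickness for Orlov components}; the proposition does not state this explicitly. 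Without smoothness, the Morita product route only delivers the equivalence after idempotent completion, so recovering the statement as written would require either folding smoothness into the hypotheses or constructing the Knörrer functor directly as tensor product with an explicit factorization of $x^2$ and verifying quasi-fully-faithfulness and essential surjectivity at the level of $\mathsf{proj}$ by hand.
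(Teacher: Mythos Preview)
Your argument is essentially identical to the paper's: both trivialize $\mathsf{proj}(k[x],\Z,x^2)\simeq\mathsf{C}(k)_{\op{pe}}$, invoke Corollary~\ref{cor: Morita product of MF} for the Morita product, and use Corollary~\ref{corollary: thickness for Orlov components} together with Lemma~\ref{lemma: Morita product and idempotent completion} to strip the idempotent completion on each side; the paper then derives \eqref{+x^2+y^2} from \eqref{+x^2} exactly as you do. Your caveat about the unstated smoothness hypothesis on $Y_w\setminus 0$ (needed for Corollary~\ref{corollary: thickness for Orlov components}) is well taken---the paper's proof silently relies on it as well.
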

\begin{proof}
 One can see by inspection that $\proj{k[x]}{\Z}{x^2}$ is quasi-equivalent to the dg-category of chain complexes of vector spaces (see also \cite{Orl09}),
\begin{equation} \label{eq: Knorrer}
\proj{k[x]}{\Z}{x^2} \simeq \mathsf{C}(k)_{\op{pe}}.
\end{equation}

Now, we have:
\begin{align*}
\proj{S}{A}{w} & \simeq \barproj{S}{A}{w}  \\
 & \simeq \proj{k[x]}{\Z}{x^2} \circledast \proj{S}{A}{w}  \\
 & \simeq \proj{S[x]}{A \boxminus \Z)}{w+x^2}.
\end{align*}
The first line is Corollary~\ref{corollary: thickness for Orlov components}.  The second line follows from Equation~\eqref{eq: Knorrer}, Lemma~\ref{lemma: Morita product and idempotent completion} and Corollary~\ref{corollary: thickness for Orlov components} again.  The third line is Corollary~\ref{cor: Morita product of MF} and  Corollary~\ref{corollary: thickness for Orlov components} one final time for good measure.

The aligned display gives Equation~\eqref{+x^2}.  Equation~\eqref{+x^2+y^2} follows from Equation~\eqref{+x^2}.
\end{proof}

\begin{lemma} \label{lem: always positive}
 Assume that $w$ is nonzero. The Gorenstein parameter for the $A \boxminus \Z \boxminus \Z$-graded algebra, $S[x,y]/(w+x^2+y^2)$, always has positive degree.
\end{lemma}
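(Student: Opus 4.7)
The strategy is a direct computation: apply Lemma \ref{lem:parameter} to write down the Gorenstein parameter of $S[x,y]/(w+x^2+y^2)$ symbolically, then evaluate its degree by iterating the formula \eqref{degree formula} for the degree map on a $\boxminus$-group and observe a clean cancellation.

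Let $\delta := \deg_A(d)$, which is positive by the running assumptions (the construction implicitly requires $w$ to have positive $A$-degree, which is forced by $w\neq 0$ in the positively graded setting), and set $g := \gcd(\delta, 2)$. First I would unpack the degree map on $A \boxminus \Z \boxminus \Z$. The potential element of $A \boxminus \Z$ has degree $2\delta/g$, and a short case check on the parity of $\delta$ gives $\gcd(2\delta/g, 2) = 2$ uniformly. Iterating \eqref{degree formula} then collapses to the clean formula
\[
\deg_{\boxminus\boxminus}(a, b, c) \;=\; \frac{2\,\deg_A(a) + \delta(b+c)}{g}
\]
on $A \boxminus \Z \boxminus \Z$.

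Next I would apply Lemma \ref{lem:parameter} to the polynomial ring $S[x,y] = k[x_1,\ldots,x_n,x,y]$ with potential $w + x^2 + y^2$: the Gorenstein parameter is
\[
\eta \;=\; -\bar d \;+\; \sum_{i=1}^n \bar a_i \;+\; \bar e_x \;+\; \bar e_y,
\]
where $\bar d$ is the image of the common potential degree and $\bar a_i, \bar e_x, \bar e_y$ are the images of the degrees of $x_i, x, y$ in $A \boxminus \Z \boxminus \Z$. Plugging into the explicit formula, the two new variables $x,y$ each contribute $\delta/g$, so $\bar e_x + \bar e_y = 2\delta/g$, which exactly cancels $-\bar d = -2\delta/g$. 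What remains is
\[
\deg \eta \;=\; \frac{2 \sum_{i=1}^n \deg_A(a_i)}{g},
\]
which is strictly positive because $S$ is positively $A$-graded with at least one generator (the hypothesis $w\neq 0$ together with $\delta > 0$ forces $n \geq 1$).

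The routine work is essentially the $\gcd$ bookkeeping in the iterated boxminus; the only mildly delicate point is verifying the uniform identity $\gcd(2\delta/g, 2) = 2$, which erases the parity distinction between even and odd $\delta$ and is what makes the final cancellation so clean. Beyond that, the proof is a one-line computation.
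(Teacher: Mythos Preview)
Your proof is correct and follows essentially the same route as the paper: compute the Gorenstein parameter via Lemma~\ref{lem:parameter}, evaluate its degree by iterating the formula~\eqref{degree formula}, and observe that the contributions from $-d$ and from the two quadratic variables cancel, leaving a positive sum over the original generators. The only cosmetic difference is that the paper phrases the cancellation by noting that $(0,-1,1)$ is $2$-torsion in $A\boxminus\Z\boxminus\Z$ (hence degree zero), whereas you derive the explicit closed form $\deg_{\boxminus\boxminus}(a,b,c)=\tfrac{2\deg_A(a)+\delta(b+c)}{g}$ and read off the cancellation numerically; these are the same observation in different clothing.
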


\begin{proof}
 Let $a_{i}$ denote the degree of $x_i$ in $S$. The Gorenstein parameter of $A \boxminus \Z \boxminus \Z$ was calculated in Corollary~\ref{cor:MF products}, with
\[
 \eta =  -d + \sum_{i=1}^{n} (a_i, 0, 0) + (0,1,0) + (0,0,1) = \sum_{i=1}^{n} (a_i, 0, 0) +(0,-1,1) .
\]
The first equality is just writing out the definitions. The second equality comes from the fact that $-d = (0,-2,0)$ in $A \boxminus \Z \boxminus \Z$.  Notice that $(0,-1,1)$ is torsion (of order 2) in $A \boxminus \Z \boxminus \Z$.  Therefore, $(0,-1,1)$ has degree zero.  Hence, the degree of the Gorenstein parameter is equal to
\begin{equation} \label{this parameter}
\sum_{i=1}^{n} \op{deg}((a_i, 0, 0)).
\end{equation}
Now, iterating Equation~\eqref{degree formula} we obtain,
\[
\op{deg}((a_i, 0, 0)) = \frac{4\op{deg}(d)\op{deg}(a_i)}{(\op{gcd}(2,\op{deg}(d))^2}.
\]
As $\op{deg}(d)$ and $\op{deg}(a_i)$ are positive for all $i$ by assumption,
$\op{deg}((a_i, 0, 0))$ are positive as well.  Hence, the sum in \eqref{this parameter} is positive.
\end{proof}

Due to Lemma~\ref{lem: always positive}, we see that by adding a quadratic term as in Equation~\eqref{+x^2+y^2}, we can always ensure that we are in case 1) of Corollary~\ref{cor:MF products}. Therefore, while we may start with some hypersurface of general type, we can nevertheless embed $\mathsf{proj}(S,A,w)$ as an admissible subcategory of the derived category of coherent sheaves on a certain stack.  This should be viewed as a generalization of the conic bundle construction for varieties of general type observed by the third named author and carried to fruition in \cite{Sei08, Efi09, KKOY}.

Specifically, denote by $x_{ij}$ for $1 \leq i \leq n_j$, coordinates on $\P^{n_j-1}$ for $1 \leq j \leq t$.  Suppose $Z_j \subseteq \P^{n_j-1}$ are smooth varieties which are either Calabi-Yau or general type defined respectively by polynomials $w_j$ of degree $d_j$ (the Fano case is not much different but a bit more involved to discuss) in the variables $x_{ij}$.

We can construct a group,
\[
A := \Z \boxminus \cdots \boxminus \Z,
\]
where the product occurs $t+2$ times and the chosen elements are $(d_1, \ldots, d_s, 2, 2)$ and a function,
\[
w := \sum_{j=1}^t w_j(x_{i1}, \ldots , x_{jn_j}) + u^2+v^2.
\]
From Equation~\eqref{eq: definition of Z}, we obtain a stack, $Z_{(w,A)}$.

\begin{proposition}
$\mathsf{Inj}_{\op{coh}}(Z_1 \times \cdots  \times Z_t)$ is an admissible subcategory of $\mathsf{Inj}_{\op{coh}}(Z_{(w,A)})$.
\end{proposition}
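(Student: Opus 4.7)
The plan is to combine graded Knörrer periodicity (Equation~\eqref{+x^2+y^2}) with the Orlov-type semi-orthogonal decompositions from Theorem~\ref{thm:Orlov generalized}/Corollary~\ref{cor:MF products} and the product compatibility encoded in Lemma~\ref{lem:decomposition of products}, Corollary~\ref{cor: Morita product of MF}, and Equation~\eqref{eq:geometric product}. Concretely, I would chain three admissible inclusions: first pass from each hypersurface $Z_j$ to the associated dg-category of graded factorizations, then take Morita products to land in a factorization category for the sum $\sum w_j$, and finally stabilize by adding the quadratic term $u^2+v^2$ so that the Gorenstein parameter becomes positive and Orlov's theorem produces an embedding into the target stack.

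First, since each $Z_j \subset \mathbb{P}^{n_j-1}$ is either Calabi-Yau or of general type, its Gorenstein parameter $\mu_j = -d_j + n_j$ is nonpositive. Letting $S_j := k[x_{1j},\dots,x_{n_j j}]$, Theorem~\ref{thm:Orlov generalized} (cases ii or iii) exhibits $\mathsf{Inj}_{\op{coh}}(Z_j)$ as an admissible subcategory of $\mathsf{proj}(S_j,\mathbb{Z},w_j)$. Applying Lemma~\ref{lem:decomposition of products} iteratively to the left and right semi-orthogonal complements (using that admissibility is the conjunction of left and right admissibility) yields that
\[
\mathsf{Inj}_{\op{coh}}(Z_1)\circledast\cdots\circledast\mathsf{Inj}_{\op{coh}}(Z_t)
\]
is admissible inside $\mathsf{proj}(S_1,\mathbb{Z},w_1)\circledast\cdots\circledast\mathsf{proj}(S_t,\mathbb{Z},w_t)$. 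By Theorem~\ref{theorem: Morita product of varieties} (iterated), the left side is $\mathsf{Inj}_{\op{coh}}(Z_1\times\cdots\times Z_t)$; by Corollary~\ref{cor: Morita product of MF} together with Corollary~\ref{corollary: thickness for Orlov components}, the right side is quasi-equivalent to $\mathsf{proj}(S_1\otimes_k\cdots\otimes_k S_t,\,A',\,\sum w_j)$ where $A' := \mathbb{Z}\boxminus\cdots\boxminus\mathbb{Z}$ ($t$ copies, with chosen non-torsion elements $d_1,\dots,d_t$).

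Next, apply graded Knörrer periodicity (Equation~\eqref{+x^2+y^2}) to get a quasi-equivalence
\[
\mathsf{proj}\bigl(S_1\otimes_k\cdots\otimes_k S_t,\,A',\,\textstyle\sum w_j\bigr) \simeq \mathsf{proj}\bigl(S_1\otimes_k\cdots\otimes_k S_t[u,v],\,A,\,w\bigr),
\]
where $A = A'\boxminus\mathbb{Z}\boxminus\mathbb{Z}$ and $w = \sum w_j + u^2 + v^2$; this matches the group and potential used to build $Z_{(w,A)}$. Now Lemma~\ref{lem: always positive} guarantees that the Gorenstein parameter of the new graded algebra has strictly positive degree, so Corollary~\ref{cor:MF products} case i) (equivalently Theorem~\ref{thm:Orlov generalized} case i)) places $\mathsf{proj}(S_1\otimes_k\cdots\otimes_k S_t[u,v],A,w)$ as an admissible subcategory of $\mathsf{Inj}_{\op{coh}}(Z_{(w,A)})$. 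Composing the three admissible inclusions produces the desired embedding.

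The main subtlety I expect is purely bookkeeping: the inclusions from Theorem~\ref{thm:Orlov generalized} are stated as semi-orthogonal decompositions with a component being $\mathsf{proj}(\cdots)$ or $\mathsf{Inj}_{\op{coh}}(\cdots)$, so I must argue both left and right admissibility at each stage in order for Lemma~\ref{lem:decomposition of products} to deliver a product decomposition in which the desired factor is again admissible (rather than merely a semi-orthogonal summand on one side). This is automatic because each semi-orthogonal complement in Theorem~\ref{thm:Orlov generalized} is generated by exceptional objects, hence itself admissible, so the component of interest is admissible as well, and admissibility is stable under Morita products and quasi-equivalences. Beyond this, every step is a direct invocation of the results assembled earlier in the paper.
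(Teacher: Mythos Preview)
Your proposal is correct and follows essentially the same route as the paper: embed each $\mathsf{Inj}_{\op{coh}}(Z_j)$ into $\mathsf{proj}(S_j,\Z,w_j)$ via Theorem~\ref{thm:Orlov generalized}, take Morita products and identify both sides via Equation~\eqref{eq:geometric product} and Corollary~\ref{cor: Morita product of MF}, then apply graded Kn\"orrer periodicity \eqref{+x^2+y^2} and Lemma~\ref{lem: always positive} so that Theorem~\ref{thm:Orlov generalized} case~i) yields the final embedding into $\mathsf{Inj}_{\op{coh}}(Z_{(w,A)})$. Your closing remark about two-sided admissibility (the complements being generated by exceptional objects) is a useful clarification that the paper leaves implicit.
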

\begin{proof}
As each hypersurface, $Z_i$, is of general type, by Theorem~\ref{thm:Orlov generalized}, $\mathsf{Inj}_{\op{coh}}(Z_i)$ is an admissible subcategory of $\mathsf{proj}(S_i, \Z, w_i)$.
Therefore by Lemma~\ref{lem:decomposition of products} combined with Equation \eqref{eq:geometric product}, $\mathsf{Inj}_{\op{coh}}(Z_1 \times \cdots  \times Z_t)$ is a semi-orthogonal component of $\circledast_{i=1}^t \mathsf{proj}(S_i, \Z, w_i)$. By Lemma~\ref{lem: always positive}, Theorem~\ref{thm:Orlov generalized}, and Equation~\eqref{+x^2+y^2},
\[
\circledast_{j=1}^t \mathsf{proj}(S_j, \Z, w_j) \simeq \circledast_{j=1}^s \mathsf{proj}(S_j, \Z, w_j) \circledast \mathsf{proj}(k[u], \Z, u^2) \circledast \mathsf{proj}(k[v], \Z, v^2) \simeq \mathsf{proj}(S,A,w)
\]
is an admissible subcategory of $\mathsf{Inj}_{\op{coh}}(Z_{(w,A)})$.
\end{proof}

\begin{definition} \label{defn: LG}
 A \newterm{Landau-Ginzburg model}, $(X, f)$ is a K\"ahler manifold, $X$, together with a nonconstant holomorphic function,
\[
f: X \to \mathbb A^1_{\C}.
\]
\end{definition}

\begin{conjecture}
 A mirror, in the sense of \cite{Aur}, to the product of general-type hypersurfaces $Z_1 \times \cdots \times Z_t$ can be realized as the Landau-Ginzburg mirror to the Fano DM-stack $Z_{(w,A)}$ with all singular fibers away from the origin removed.
\end{conjecture}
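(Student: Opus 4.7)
The plan is to leverage Auroux's formulation of homological mirror symmetry for Fano varieties together with the semi-orthogonal decomposition just established. Since the conjecture concerns the identification of a ``mirror'' in the sense of \cite{Aur}, a proof would really amount to producing an explicit Landau-Ginzburg model whose Fukaya-Seidel category is equivalent to $\inj{Z_1 \times \cdots \times Z_t}$, and then showing that this model coincides with the indicated restriction of the LG mirror of $Z_{(w,A)}$.

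First I would invoke HMS for the Fano DM-stack $Z_{(w,A)}$, producing a Landau-Ginzburg mirror $(X,f)$ whose Fukaya-Seidel category is quasi-equivalent to $\inj{Z_{(w,A)}}$. Such a model is expected to come from Auroux-style SYZ T-duality in the complement of an anticanonical divisor in the toric ambient stack of $Z_{(w,A)}$; thanks to the quadratic Kn\"orrer extension built into $w$, Lemma~\ref{lem: always positive} ensures that the Gorenstein parameter is positive, so $Z_{(w,A)}$ is genuinely Fano and this construction applies.

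Next, I would combine the preceding Proposition with Theorem~\ref{thm:Orlov generalized} and Corollary~\ref{cor:MF products} to produce an explicit semi-orthogonal decomposition
\[
 \inj{Z_{(w,A)}} \simeq \langle \mathcal{O}_Z(a_1), \ldots, \mathcal{O}_Z(a_N), \inj{Z_1 \times \cdots \times Z_t} \rangle,
\]
where the $\mathcal{O}_Z(a_i)$ are the exceptional line bundles of negative Gorenstein-degree. Under HMS, each $\mathcal{O}_Z(a_i)$ must correspond to a Lefschetz thimble $\Delta_i$ over a critical value $c_i \neq 0$ of $f$, and the complement $\inj{Z_1 \times \cdots \times Z_t}$ must match the subcategory of the Fukaya-Seidel category right-orthogonal to $\{\Delta_i\}$. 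Excising the singular fibers $f^{-1}(c_i)$ from $X$ should destroy precisely these thimbles, so the Fukaya-Seidel category of the restricted LG model coincides, via HMS, with $\inj{Z_1 \times \cdots \times Z_t}$, which is the desired mirror.

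The central obstacle is twofold. Symplectically, one must identify the critical values $c_i$ attached to the exceptional line bundles in Orlov's decomposition and verify that each lies away from the origin, so that the general-type content concentrated in $f^{-1}(0)$ survives the restriction. Categorically, one must make rigorous the compatibility of semi-orthogonal decompositions on the B-side with the removal of singular fibers on the A-side of the Fukaya-Seidel package. Neither of these facts is currently established in the required generality for arbitrary products of general-type hypersurfaces, which is precisely why the statement appears here as a conjecture rather than as a theorem.
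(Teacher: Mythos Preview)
The statement is a \emph{conjecture}, and the paper does not prove it; immediately after stating it, the paper offers only a brief motivational remark invoking Kontsevich's HMS and the heuristic that each singular fiber of the Landau-Ginzburg mirror corresponds to a semi-orthogonal component of $\inj{Z_{(w,A)}}$, with the general-type content concentrated at the origin. Your proposal is not a proof either, and you acknowledge as much in your final paragraph: the two obstacles you name (locating the critical values attached to the exceptional line bundles away from the origin, and making the A-side/B-side compatibility of semi-orthogonal decompositions with fiber removal rigorous) are exactly the missing ingredients, and neither is supplied in the paper or in your sketch.

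That said, your heuristic outline is entirely consonant with the paper's own motivation. You flesh out the remark following the conjecture with more detail: invoking Auroux-style HMS for the Fano stack (justified via Lemma~\ref{lem: always positive}), matching exceptional line bundles to Lefschetz thimbles over nonzero critical values, and interpreting fiber excision as killing those thimbles. This is a reasonable elaboration of the paper's one-paragraph rationale, but it remains a program rather than an argument. So there is no discrepancy to report; both you and the paper treat the statement as open, and your sketch is a faithful expansion of the paper's stated motivation.
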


\begin{remark}
The motivation for the above conjecture comes from Homological Mirror Symmetry.
Roughly, Kontsevich's Homological Mirror Symmetry Conjecture suggests that each singular fiber  of $(X, f)$ corresponds to a semi-orthogonal component of $\inj{Z_{(w,A)}}$.  We propose that $\inj{Z_1 \times \cdots  \times Z_t}$ corresponds to a single singular fiber, which can be translated to the origin.
\end{remark}
%
%
%

\begin{remark}
 For examples of constructions of Landau-Ginzburg mirrors to varieties of general type see \cite{KKOY} and \cite{AAK}.
\end{remark}

\begin{remark}
 In physics, the above process of adding a quadratic term is often referred to as adding bosonic mass terms.  A proposal of S. Sethi, \cite{Sethi}, suggests adding fermionic terms as well. This super commutative case can be handled with some slight adjustments and allows one to subtract from the Gorenstein parameter as well. We leave this to future work.
\end{remark}

\section{Different gradings and orbit categories} \label{sec:orbit}

We would now like to establish a geometric picture by varying the grading group for a fixed ring with potential.  We use the notion of an orbit category.

\begin{definition} \label{defn: orbit category}
 Let $\Gamma$ be a finitely generated Abelian subgroup of the autoequivalence group of a $k$-linear category $\mathcal T$, usually triangulated. The \newterm{orbit category} of $\mathcal T$ by $\Gamma$, denoted $\mathcal T/\Gamma$ has the same objects as $\mathcal T$ with morphisms from $t$ to $t^{\prime}$ given by
\begin{displaymath}
 \op{Hom}_{\mathcal T/\Gamma}(t, t^{\prime}) = \bigoplus_{g \in \Gamma}\op{Hom}_{\mathcal T}(t, g(t^{\prime})).
\end{displaymath}
 To compose $f \in \op{Hom}_{\mathcal T}(t,\gamma(t^{\prime}))$ and $f^{\prime} \in \op{Hom}_{\mathcal T}(t^{\prime},\gamma^{\prime}(t^{\prime \prime}))$ one sets
 \begin{displaymath}
  f^{\prime} \circ f := \gamma(f^{\prime}) \circ f.
 \end{displaymath}

 Note that we have a natural functor
 \begin{align*}
  \mathcal T & \to  \mathcal T / \Gamma \\
  t & \mapsto t \\
  (f: t \to t^{\prime}) & \mapsto f \in \op{Hom}_{\mathcal T}(t,t^{\prime}) \subset \op{Hom}_{\mathcal T/\Gamma}(t,t^{\prime}).
 \end{align*}
\end{definition}

\begin{remark}
 In general, the orbit category of a triangulated category $\mathcal T$ is not triangulated. However, in \cite{Kel2}, Keller provides a way of rectifying this when $\mathcal T$ is the homotopy category of a dg-category, $\mathsf{C}$, and $\Gamma$ lifts to an action on $\mathsf{C}$.  In this case, one can take the homotopy category of perfect modules over the dg orbit category to get a triangulated category.
\end{remark}

\begin{definition}
 Let $\mathcal T$ and $\mathcal S$ be triangulated categories and $\Gamma$ be a finitely-generated Abelian group of exact autoequivalences of $\mathcal T$. We say that $\mathcal T$ is a $\Gamma$\newterm{-cover} of $\mathcal S$ if $\mathcal T/\Gamma$ is a full subcategory of $\mathcal S$ such that the composition with the natural functor,
 \begin{displaymath}
  F: \mathcal T \to \mathcal T/\Gamma \to \mathcal S,
 \end{displaymath}
 is exact and every object in $\mathcal S$ is a summand of an object in the essential image of $F$.
\end{definition}

\begin{proposition} \label{prop: spaces as covers}
 Let $G$ be a finite Abelian group acting on a smooth Deligne-Mumford stack, $\mathcal X$, over $k$. Let $\widehat{G}$ act on $\dbcoh{\mathcal X/G}$ by twisting via characters. Then $\dbcoh{\mathcal X}$ is a $\widehat{G}$-cover of $\dbcoh{[\mathcal X/G]}$.
\end{proposition}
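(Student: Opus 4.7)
The natural candidate for the cover functor is the pushforward along the quotient morphism
\[
 \pi : \cX \to [\cX/G].
\]
Since $G$ is finite and $\cX$ is a Deligne--Mumford stack, $\pi$ is a $G$-torsor and in particular a representable affine morphism, so $\pi_*$ is exact on quasi-coherent sheaves and preserves coherence. I would take $F := \pi_* : \dbcoh{\cX} \to \dbcoh{[\cX/G]}$ as the putative covering functor; exactness is then immediate.

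For the orbit-category structure, $G$ acts on $\dbcoh{\cX}$ via pullbacks $g \mapsto g^*$, and because $\pi \circ g = \pi$ there are canonical isomorphisms $\pi_*(g^*\cE) \cong \pi_*\cE$, providing the factorization $\dbcoh{\cX} \to \dbcoh{\cX}/G \to \dbcoh{[\cX/G]}$. For finite abelian $G$, Pontryagin duality supplies an (abstract) identification $G \cong \widehat{G}$, and the $\widehat G$-action on $\dbcoh{[\cX/G]}$ by character twists in the statement is precisely the Koszul dual of this pullback action on $\dbcoh{\cX}$; this accounts for the $\widehat G$-phrasing of the result.

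Full-faithfulness of $\dbcoh{\cX}/\widehat{G} \hookrightarrow \dbcoh{[\cX/G]}$ I would reduce to two standard computations. First, the adjunction $\pi^* \dashv \pi_*$ gives
\[
 \op{Hom}_{[\cX/G]}(\pi_*\cE,\pi_*\cF) \cong \op{Hom}_{\cX}(\cE,\pi^*\pi_*\cF).
\]
Second, the $G$-torsor identification $\cX \times_{[\cX/G]} \cX \cong G \times \cX$ together with flat base change along the étale map $\pi$ produces the formula
\[
 \pi^*\pi_*\cF \cong \bigoplus_{g \in G} g^*\cF.
\]
Combining these yields
\[
 \op{Hom}_{[\cX/G]}(\pi_*\cE,\pi_*\cF) \cong \bigoplus_{g \in G} \op{Hom}_{\cX}(\cE,g^*\cF),
\]
which is exactly the morphism space in the orbit category. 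For the summand-generation condition, given $\cF \in \dbcoh{[\cX/G]}$ I would apply the projection formula and the character decomposition of the regular representation (available since $\op{char} k = 0$ and $G$ is linearly reductive, as in Lemma~\ref{lemma: proj equiv = equiv + proj}) to obtain
\[
 \pi_*\pi^*\cF \cong \cF \otimes \pi_*\cO_{\cX} \cong \bigoplus_{\chi \in \widehat{G}} \cF(\chi),
\]
so that $\cF$ is the $\chi = 0$ summand of $\pi_*(\pi^*\cF)$, which lies in the essential image of $F$.

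The main technical point is justifying the base-change identity $\pi^*\pi_*\cE \cong \bigoplus_{g} g^*\cE$ cleanly in the DM stack setting; once that is in hand, everything else is formal from adjunction and the projection formula. A secondary subtlety is giving a careful account of the $G$ versus $\widehat G$ dichotomy, which I would handle by explicitly writing down the Fourier-type identification of the $G$-action on $\dbcoh{\cX}$ with the $\widehat G$-twist action on $\dbcoh{[\cX/G]}$ using the two decompositions of $\pi_*\pi^*$ and $\pi^*\pi_*$ above.
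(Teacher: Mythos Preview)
You have the direction of the cover functor reversed relative to the paper. Note that the statement specifies the $\widehat{G}$-action as the character-twist action on $\dbcoh{[\cX/G]}$; by the paper's definition of $\Gamma$-cover, the group must act on the \emph{covering} category, so it is $\dbcoh{[\cX/G]}$ that covers $\dbcoh{\cX}$, not the other way around (the wording of the conclusion in the printed statement is transposed, but the setup and the proof make the intended direction unambiguous, and it matches the analogous Proposition~\ref{prop: MF grading change}). Accordingly, the paper takes the forgetful functor $\op{Res}^G_1 = \pi^* : \dbcoh{[\cX/G]} \to \dbcoh{\cX}$ as the cover map, and applies the adjunction $\op{Res}^G_1 \dashv \op{Ind}^G_1$ together with $(\op{Ind}^G_1 \circ \op{Res}^G_1)\,\cE \cong \bigoplus_{\chi \in \widehat{G}} \cE(\chi)$ to obtain, for $\cE,\cF$ on $[\cX/G]$,
\[
 \op{Hom}_{\cX}(\op{Res}\,\cE,\op{Res}\,\cF)\;\cong\;\bigoplus_{\chi \in \widehat{G}} \op{Hom}_{[\cX/G]}(\cE,\cF(\chi)),
\]
which is exactly the morphism space in the orbit category for the $\widehat{G}$-twist action; density of the image in $\dbcoh{\cX}$ follows from the companion formula $(\op{Res}\circ\op{Ind})\,\cF \cong \bigoplus_{g\in G}\sigma_g^*\cF$.

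What your argument correctly establishes is the dual statement: via $\pi_* = \op{Ind}^G_1$, the category $\dbcoh{\cX}$ is a $G$-cover of $\dbcoh{[\cX/G]}$ for the pullback action $g\mapsto g^*$ on $\dbcoh{\cX}$. The ingredients you list are the right ones for that, but invoking an abstract Pontryagin isomorphism $G\cong\widehat{G}$ does not convert this into the assertion about the $\widehat{G}$-twist action on $\dbcoh{[\cX/G]}$: the two orbit categories are quotients of \emph{different} ambient categories by actions on opposite sides, and identifying them is essentially the content of the proposition rather than a formal consequence of it. A smaller point: the displayed identity you use comes from the adjunction $\pi_* \dashv \pi^*$, not the $\pi^* \dashv \pi_*$ you cite; both hold here since $G$ is finite and linearly reductive in characteristic zero, but the distinction matters for the bookkeeping.
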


\begin{proof}
 From \cite[Section 2]{BFK11}, we have an adjoint pair of exact functors
 \begin{align*}
  \op{Res}^{G}_{1} & : \op{Qcoh}_{G} \mathcal X \to \op{Qcoh} \mathcal X \\
  \op{Ind}^{G}_{1} & : \op{Qcoh} \mathcal X \to \op{Qcoh}_{G} \mathcal X
 \end{align*}
 given by restriction and induction along the inclusion of the trivial subgroup $1 \to G$.
 Furthermore, by \cite[Lemma 2.18]{BFK11}, we have
 \begin{equation}  \label{eq: cover1}
  (\op{Ind}^{G}_{1} \circ \op{Res}^{G}_{1}) \mathcal E \cong \bigoplus_{\chi \in \widehat{G}} \mathcal E(\chi).
 \end{equation}

 Let $\sigma: G \times \mathcal X \to \mathcal X$ denote the action and set $\sigma(g,\bullet) =: \sigma_g$. It is straightforward to see that
 \begin{displaymath}
  \op{Ind}^{G}_{1} \mathcal E \cong \bigoplus_{g \in G} \sigma_g^*\mathcal E.
 \end{displaymath}
 Thus
 \begin{equation} \label{eq: cover2}
  (\op{Res}^{G}_{1} \circ \op{Ind}^{G}_{1}) \mathcal E \cong \bigoplus_{g \in G} \sigma_g^*\mathcal E.
 \end{equation}

 Consider the functor
 \begin{displaymath}
  \op{Res}^G_1 : \dbcoh{[\mathcal X/G]} \to \dbcoh{\mathcal X}.
 \end{displaymath}
 From Equation~\eqref{eq: cover2}, the essential image of $\dbcoh{[\mathcal X/G]}$ is dense.

 Now, for $\mathcal E, \mathcal F \in \dbcoh{\mathcal X}$,
 \begin{align*}
  \op{Hom}_{\dbcoh{[\mathcal X / G]}}(\op{Res}^{G}_{1} \mathcal E, \op{Res}^{G}_{1} \mathcal F))  = & \op{Hom}_{\dbcoh{\mathcal X}}(\mathcal E, (\op{Ind}^{G}_{1} \circ \op{Res}^{G}_{1})\mathcal F) \\
						      = & \op{Hom}_{\dbcoh{\mathcal X}}(\mathcal E, \bigoplus_{\gamma \in \widehat{G}} \mathcal F(\gamma)) \\
						      = & \bigoplus_{\gamma \in \widehat{G}} \op{Hom}_{\dbcoh{\mathcal X}}(E, \mathcal F(\gamma)),
 \end{align*}
 where the first equality comes from adjunction and the second comes from Equation~\eqref{eq: cover1}, and the third uses the finiteness of the coproduct. Thus, the image of $\op{Res}^{G}_{1}$ is the orbit category $\dbcoh{\mathcal X}/\widehat{G}$.
\end{proof}

%


\begin{proposition}\label{prop: MF grading change}
 Let $A$ be a finitely generated Abelian group and $\Gamma$ be a finite subgroup of $A$ and $\pi: A \to A/\Gamma$ be the quotient map. Let $R$ be an $A$-graded polynomial algebra.  Let $w \in R$ be an element which is $A$-homogeneous. The functor,
\[
 \op{R}^A_{A / \Gamma} : \dabs[\mathsf{fact}(R,A,w)] \to \dabs[\mathsf{fact}(R,A/\Gamma,w)],
\]
 is a $\Gamma$-cover.
\end{proposition}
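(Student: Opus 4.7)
The strategy parallels the proof of Proposition~\ref{prop: spaces as covers}, with $\op{R}^A_{A/\Gamma}$ and $\op{I}^A_{A/\Gamma}$ playing the roles of $\op{Res}^G_1$ and $\op{Ind}^G_1$. The finite group $\Gamma$ acts on $\mathcal{T} := \dabs[\mathsf{fact}(R,A,w)]$ via the twist autoequivalences $(\gamma): M \mapsto M(\gamma)$ for $\gamma \in \Gamma \subseteq A$. By Proposition~\ref{proposition: restriction and induction properties}, both $\op{R}^A_{A/\Gamma}$ and $\op{I}^A_{A/\Gamma}$ are exact on graded modules; they lift componentwise to factorization categories, commute with totalization, preserve acyclic factorizations, and hence descend to exact functors on the corresponding absolute derived categories. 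The adjunction $\op{R}^A_{A/\Gamma} \dashv \op{I}^A_{A/\Gamma}$ and the identity $\op{I}^A_{A/\Gamma} \circ \op{R}^A_{A/\Gamma} \cong \bigoplus_{\gamma \in \Gamma} (\gamma)$ also pass to the derived level. Since $\Gamma = \ker \pi$, for $\gamma \in \Gamma$ we have $\op{R}^A_{A/\Gamma}(M(\gamma)) = \op{R}^A_{A/\Gamma}(M)$, so $\op{R}^A_{A/\Gamma}$ is $\Gamma$-equivariant and descends through the orbit category $\mathcal{T}/\Gamma$.

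To see that the induced functor $\mathcal{T}/\Gamma \to \dabs[\mathsf{fact}(R,A/\Gamma,w)]$ is fully faithful, I would combine the adjunction with the identity above to compute, for $M, N \in \mathcal{T}$,
\begin{align*}
 \op{Hom}(\op{R}^A_{A/\Gamma} M, \op{R}^A_{A/\Gamma} N)
 & \cong \op{Hom}(M, \op{I}^A_{A/\Gamma} \op{R}^A_{A/\Gamma} N) \\
 & \cong \op{Hom}\Bigl(M, \bigoplus_{\gamma \in \Gamma} N(\gamma)\Bigr) \\
 & \cong \bigoplus_{\gamma \in \Gamma} \op{Hom}(M, N(\gamma)),
\end{align*}
where the last step uses finiteness of $\Gamma$. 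The right-hand side is exactly $\op{Hom}_{\mathcal{T}/\Gamma}(M,N)$, so the induced functor identifies $\mathcal{T}/\Gamma$ with a full subcategory of $\dabs[\mathsf{fact}(R,A/\Gamma,w)]$.

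For essential density up to summands, evaluate the opposite composition. Using the first two bullets of Proposition~\ref{proposition: restriction and induction properties}, for any $N \in \dabs[\mathsf{fact}(R,A/\Gamma,w)]$,
\[
 (\op{R}^A_{A/\Gamma} \op{I}^A_{A/\Gamma} N)_b = \bigoplus_{\pi(a)=b} (\op{I}^A_{A/\Gamma} N)_a = \bigoplus_{\pi(a)=b} N_b \cong N_b^{\oplus |\Gamma|},
\]
so $\op{R}^A_{A/\Gamma} \op{I}^A_{A/\Gamma} N \cong N^{\oplus |\Gamma|}$ and $N$ is a direct summand of an object in the essential image of $\op{R}^A_{A/\Gamma}$. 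Together these verify every clause in the definition of a $\Gamma$-cover.

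The main point to verify carefully is that the module-theoretic adjunction and the identity $\op{I}^A_{A/\Gamma} \circ \op{R}^A_{A/\Gamma} \cong \bigoplus_{\gamma} (\gamma)$ transfer cleanly to the absolute derived category of factorizations. This should be routine: both functors act only on the grading, not on the underlying complex, hence commute with the totalization construction defining acyclic factorizations, and the unit and counit of the adjunction are componentwise natural transformations that survive Verdier localization at acyclics.
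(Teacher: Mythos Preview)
Your overall strategy matches the paper's proof exactly: use the adjunction $\op{R}^A_{A/\Gamma} \dashv \op{I}^A_{A/\Gamma}$ together with $\op{I}^A_{A/\Gamma}\op{R}^A_{A/\Gamma} \cong \bigoplus_{\gamma\in\Gamma}(\gamma)$ for full faithfulness, and analyze $\op{R}^A_{A/\Gamma}\op{I}^A_{A/\Gamma}$ for density up to summands. The full-faithfulness computation is correct and verbatim what the paper does.

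There is, however, a genuine error in your density argument. From the first two bullets of Proposition~\ref{proposition: restriction and induction properties} you correctly obtain the graded pieces
\[
 (\op{R}^A_{A/\Gamma}\op{I}^A_{A/\Gamma} N)_b \;=\; \bigoplus_{\pi(a)=b} N_b,
\]
but the conclusion $\op{R}^A_{A/\Gamma}\op{I}^A_{A/\Gamma} N \cong N^{\oplus |\Gamma|}$ as $A/\Gamma$-graded $R$-modules does not follow and is false in general. The $R$-action permutes the $|\Gamma|$ summands: a homogeneous $r\in R_{\bar a}$ sends the copy indexed by $a'\in\pi^{-1}(b)$ to the copy indexed by $a'+\bar a\in\pi^{-1}(b+\pi(\bar a))$, and there is typically no section $A/\Gamma\to A$ making this diagonal. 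Already for $A=\Z/2$, $\Gamma=A$, $R=k[x]$ with $\deg x=1$, one finds $\op{R}\op{I}N\cong N\oplus N^{-}$ where $N^{-}$ has $x$ acting by $-x$; this is not $N\oplus N$ when, say, $N=k[x]/(x-1)$.

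The paper fixes exactly this point by identifying $\op{I}^A_{A/\Gamma} N \cong \bigoplus_{g\in G} g^{*}N$ for $G=\op{Spec}k[\Gamma]$, whence $\op{R}^A_{A/\Gamma}\op{I}^A_{A/\Gamma} N \cong \bigoplus_{g\in G} g^{*}N$ as well; the summand for $g=e$ is $N$ itself, so $N$ is a direct summand. Your conclusion survives, but you should replace the claimed isomorphism $N^{\oplus|\Gamma|}$ by this decomposition (or, equivalently, exhibit directly an $R$-linear splitting of the counit $\op{R}\op{I}N\to N$, e.g.\ the diagonal inclusion $n\mapsto (n)_{a\in\pi^{-1}(b)}$, which one checks is $R$-linear).
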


\begin{proof}
 First, we must show that every object $G \in  \dabs[\mathsf{fact}(R,A,w)]$ is a summand of an object in the essential image of $\op{R}^A_{A / \Gamma}$.  Let $G = \op{Spec} k[\Gamma]$. Since $\Gamma$ is finite, it is straightforward to verify that $\op{I}^A_{A / \Gamma} F = \bigoplus_{g \in G} g^* F$. Thus,
 \begin{displaymath}
  (\op{R}^A_{A / \Gamma} \circ \op{I}^A_{A / \Gamma})F \cong \bigoplus_{g \in G} g^* F,
 \end{displaymath}
 and we see that $F$ is a summand of $(\op{R}^A_{A / \Gamma} \circ \op{I}^A_{A / \Gamma})F$.

Now, by Proposition~\ref{proposition: restriction and induction properties}, we have an adjoint pair of exact functors,
 \begin{align*}
  \op{R}^A_{A / \Gamma} & : \op{Mod}_A R \to \op{Mod}_{A/\Gamma} R \\
  \op{I}^A_{A / \Gamma} & : \op{Mod}_{A/\Gamma} R \to \op{Mod}_A R
 \end{align*}
 such that
 \begin{displaymath}
  (\op{I}^A_{A / \Gamma} \circ \op{R}^A_{A / \Gamma}) M \cong \bigoplus_{\gamma \in \Gamma} M(\gamma).
 \end{displaymath}
 The functor, $\op{R}^A_{A / \Gamma}$, preserves finite-generation of graded modules while $\op{I}^A_{A / \Gamma}$ preserves finite-generation since $\Gamma$ is finite. Thus, we also have an adjoint pair of functors on categories of finitely-generated graded modules. Applying the functors component-wise, gives an adjoint pair of dg-functors
 \begin{align*}
  \op{R}^A_{A / \Gamma} & : \mathsf{fact}(R,A,w) \to \mathsf{fact}(R,A/\Gamma,w) \\
  \op{I}^A_{A / \Gamma} & : \mathsf{fact}(R,A/\Gamma,w) \to \mathsf{fact}(R,A,w).
 \end{align*}
 Furthermore, since $\op{R}^A_{A / \Gamma}$ and $\op{I}^A_{A / \Gamma}$ are exact, they preserve acyclic factorizations and descend to an adjoint pair
 \begin{align*}
  \op{R}^A_{A / \Gamma} & : \dabs[\mathsf{fact}(R,A,w)] \to \dabs[\mathsf{fact}(R,A/\Gamma,w)] \\
  \op{I}^A_{A / \Gamma} & : \dabs[\mathsf{fact}(R,A/\Gamma,w)] \to \dabs[\mathsf{fact}(R,A,w)]
 \end{align*}
 such that
 \begin{equation} \label{eq: MF cover}
  (\op{I}^A_{A / \Gamma} \circ \op{R}^A_{A / \Gamma}) F \cong \bigoplus_{\gamma \in \Gamma} F(\gamma)
 \end{equation}
 for any $F \in \dabs[\mathsf{fact}(R,A,w)]$.

 Now, for $E, F \in \dabs[\mathsf{fact}(R,A,w)]$,
\begin{align*}
 \op{Hom}_{ \dabs[\mathsf{fact}(R,A/\Gamma,w)]}(\op{R}^A_{A / \Gamma} E,\op{R}^A_{A / \Gamma} F) = & \op{Hom}_{ \dabs[\mathsf{fact}(R,A,w)]}( E,(\op{I}^A_{A / \Gamma} \circ \op{R}^A_{A / \Gamma}) F) \\
 = & \op{Hom}_{ \dabs[\mathsf{fact}(R,A,w)]}(E, \bigoplus_{\gamma \in \Gamma} F(\gamma)) \\
 =  &\bigoplus_{\gamma \in \Gamma} \op{Hom}_{ \dabs[\mathsf{fact}(R,A,w)]}(E, F(\gamma)).
\end{align*}
 where the first equality comes from adjunction and the second comes from Equation~\eqref{eq: MF cover}, and the third is by finiteness of coproduct.
 Hence, the image of $\op{R}^A_{A / \Gamma}$ is the orbit category.

\end{proof}

\begin{corollary} \label{corollary: gcd cover}
 Let $R$ and $S$ be positively $\Z$-graded polynomial algebras with chosen nonzero elements, $w \in R_d$ and $v \in S_e$ with $\op{deg} d, \op{deg} e > 0$. Equip $R \otimes_k S$ with the diagonal $\Z$-grading. The category,
 \begin{displaymath}
  [\proj{R}{\Z}{w} \circledast \proj{S}{\Z}{v}],
 \end{displaymath}
 is a $\Z/\op{gcd}(d,e)\Z$-cover of
 \[
 \dabs[\mathsf{fact}(R \otimes_k S, \Z, w\boxplus v)].
 \]
\end{corollary}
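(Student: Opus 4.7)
The strategy is to route through the category of $\Z \boxminus \Z$-graded factorizations, applying Corollary~\ref{cor: Morita product of MF} to rewrite the Morita product as a graded factorization category, and then applying Proposition~\ref{prop: MF grading change} to obtain the cover. First I would verify the hypotheses of Corollary~\ref{cor: Morita product of MF} for $(R,\Z,w)$ and $(S,\Z,v)$: the non-torsion requirement on $(d,-e)\in\Z\oplus\Z$ is immediate from $d,e>0$, and the singular-locus containment follows from Euler's formula in the positively graded polynomial setting, as noted in the remark following that corollary. This gives a quasi-equivalence
\[
 \mathsf{proj}(R,\Z,w) \circledast \mathsf{proj}(S,\Z,v) \simeq \overline{\mathsf{proj}}(R \otimes_k S,\ \Z \boxminus \Z,\ w \boxplus v),
\]
where $\Z \boxminus \Z = (\Z \oplus \Z)/\langle (d,-e)\rangle$.

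Next, I would analyze the grading change. A Smith-normal-form computation on the relation $(d,-e)$ shows that the torsion subgroup of $\Z\boxminus\Z$ is cyclic of order $de/\op{lcm}(d,e)=\op{gcd}(d,e)$, and that the quotient by this torsion subgroup is $\Z$, with quotient map $(a,b)\mapsto (ea+db)/\op{gcd}(d,e)$. This is precisely the degree map of Equation~\eqref{degree formula} and is the diagonal $\Z$-grading referred to in the statement (under which $w\boxplus v$ is homogeneous of degree $\op{lcm}(d,e)$). Proposition~\ref{prop: MF grading change}, applied to $A=\Z\boxminus\Z$ and $\Gamma=(\Z\boxminus\Z)_{\op{tors}}\cong\Z/\op{gcd}(d,e)\Z$, then yields a $\Z/\op{gcd}(d,e)\Z$-cover
\[
 \op{R}^{\Z \boxminus \Z}_{\Z} : \dabs[\mathsf{fact}(R \otimes_k S,\ \Z \boxminus \Z,\ w \boxplus v)] \longrightarrow \dabs[\mathsf{fact}(R \otimes_k S,\ \Z,\ w \boxplus v)].
\]

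To transport this to a cover from the Morita product, I compose with the natural embedding $[\overline{\mathsf{proj}}(R \otimes_k S, \Z \boxminus \Z, w \boxplus v)] \hookrightarrow \dabs[\mathsf{fact}(R \otimes_k S, \Z \boxminus \Z, w \boxplus v)]$, which is fully faithful with image closed under summands by the finitely generated analogue of Proposition~\ref{prop: projective enhancement} (available because $R\otimes_k S$ is a polynomial algebra, hence Noetherian of finite graded global dimension by Hilbert's syzygy theorem). Full faithfulness of the induced functor on the orbit category is immediate: the morphism-space formula of Proposition~\ref{prop: MF grading change} supplies the direct-sum description over $\Gamma$, and the embedding above is fully faithful.

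The main technical point I anticipate is verifying essential density along summands and identifying the $\Z/\op{gcd}(d,e)\Z$-action on the Morita product. For essential density, given any $F\in\dabs[\mathsf{fact}(R\otimes_k S,\Z,w\boxplus v)]$, Proposition~\ref{prop: MF grading change} realizes $F$ as a summand of $\op{R}^{\Z\boxminus\Z}_{\Z}\op{I}^{\Z\boxminus\Z}_{\Z}F$; since $\op{I}^{\Z\boxminus\Z}_{\Z}F$ is isomorphic, in the absolute derived category, to a finitely generated projective factorization, it lies in the image of the Morita product under the identification of Step~1. For the action, the generator of $\Z/\op{gcd}(d,e)\Z$ acts on $\overline{\mathsf{proj}}(R\otimes_k S,\Z\boxminus\Z,w\boxplus v)$ by twisting by a torsion generator $\tau\in\Z\boxminus\Z$; tracing through the Morita-product identification, this corresponds to $E\boxtimes F\mapsto E(a)\boxtimes F(b)$ for any lift $(a,b)\in\Z\oplus\Z$ of $\tau$, which recovers precisely the autoequivalence group realized on the source.
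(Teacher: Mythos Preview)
Your approach is essentially the same as the paper's: apply Corollary~\ref{cor: Morita product of MF} to identify the Morita product with factorizations graded by $\Z\boxminus\Z$, identify the torsion subgroup as cyclic of order $\gcd(d,e)$, and invoke Proposition~\ref{prop: MF grading change}. The paper streamlines your last two paragraphs by citing Corollary~\ref{corollary: thickness for Orlov components} to replace $\overline{\mathsf{proj}}$ by $\mathsf{proj}$ outright, after which $[\mathsf{proj}(R\otimes_k S,\Z\boxminus\Z,w\boxplus v)]\cong\dabs[\mathsf{fact}(R\otimes_k S,\Z\boxminus\Z,w\boxplus v)]$ and Proposition~\ref{prop: MF grading change} applies directly with no further transport or density check needed.
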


\begin{proof}
 Let $r = \op{gcd}(d,e)$. Since $R$ and $S$ are positively $\Z$-graded, by Corollary~\ref{cor: Morita product of MF} and Corollary~\ref{corollary: thickness for Orlov components}, $\proj{R}{\Z}{w} \circledast \proj{S}{\Z}{v}$ is quasi-equivalent to $\proj{R \otimes_k S}{\Z \oplus \Z/(d,-e)}{w \boxplus v}$.  The element, $(\frac{d}{r},-\frac{e}{r})$, generates a cyclic subgroup of $\Z \oplus \Z/(d,-e)$ of order $r$.  When one quotients by this cyclic subgroup, it induces the diagonal $\Z$-grading.  Hence by Proposition~\ref{prop: MF grading change} we obtain the result.
\end{proof}

\begin{example} \label{ex: product of elliptic}
 Let $w(x,y,z) = x(x-z)(x - \lambda z)-zy^2$ and $v(a,b,c) =a(a-c)(a-\rho c)-cb^2$ define two smooth elliptic curves, $\Sigma$ and $\Sigma^{\prime}$ respectively.  Then $w \boxplus v$ defines a smooth cubic fourfold containing at least three planes, $P,Q,R$, with $P+Q+R = H^2$ by setting $z=c=0$.

 Hence combining Theorem~\ref{thm:Orlov generalized} with the results in \cite{Kuz09b} on cubics which contain planes, the category $\proj{k[x,y,z,a,b,c]}{\Z}{w \boxplus v}$ is equivalent to the derived category of a gerbe on certain $K3$ surface, $(Y,\beta)$, see \cite{Kuz09b} for details (the gerbe is trivial if and only if there exists a 2-dimensional cycle, $T$, such that $T \cdot H^2 - T \cdot P$ is odd).  On the other hand, letting $A = \Z \oplus \Z/(3,-3)$ with $x,y,z$ in degree $(1,0)$ and $a,b,c$ in degree $(0,1)$, by Corollary~\ref{cor: Morita product of MF} and Corollary~\ref{corollary: thickness for Orlov components}, we have
 \[
  \proj{k[x,y,z,a,b,c]}{A}{w \boxplus v} \simeq \proj{k[x,y,z]}{\Z}{w} \circledast \proj{k[a,b,c]}{\Z}{v}
 \]
 and by Equation~\eqref{eq: Toen product}
 \[
\inj{\Sigma} \circledast \inj{\Sigma'} \simeq \inj{\Sigma \times \Sigma^{\prime}}.
 \]
 From Theorem~\ref{thm:Orlov generalized} we have,
\begin{equation} \label{elliptic equivalence}
 \mathsf{proj}(k[x,y,z],\Z,w) \simeq \inj{\Sigma} \tand \mathsf{proj}(k[a,b,c],\Z,v) \simeq \inj{\Sigma^{\prime}}.
\end{equation}
Hence,
\begin{equation} \label{product of elliptic curves}
\proj{k[x,y,z,a,b,c]}{\Z \boxminus \Z}{w\boxplus v} \simeq \inj{\Sigma \times \Sigma^{\prime}}.
\end{equation}
From Corollary~\ref{corollary: gcd cover}, $\dbcoh{\Sigma \times \Sigma^{\prime}}$ is a $\Z/3\Z$-cover of $\dbcoh{(Y, \beta)}$.
\end{example}

\begin{definition}
 Let $\mathcal T$ be a $k$-linear triangulated category with finite dimensional morphism spaces which possesses a Serre functor, $S$, see \cite{BK}. An object $E \in \mathcal T$ is called \newterm{spherical} \cite{ST} if,
 \begin{itemize}
  \item $S(E) \cong E[n]$
  \item $\op{Hom}_{\mathcal T}(E,E[i]) \cong
   \begin{cases} k &  i=0,n \\
    0 & \text{otherwise.}
   \end{cases}$
 \end{itemize}
 \label{def:spherical object}
\end{definition}

\begin{definition}\label{def:left twist}
 Let $\mathcal T$ be a triangulated category with finite dimensional morphism spaces. Assume there is dg-category $\mathsf{C}$ enhancing $\mathcal T$, $\mathcal T \cong [\mathsf{C}]$. For any pair of objects, $E$ and $F$, of $\mathcal T$, we have a natural evaluation map,
 \begin{displaymath}
  \bigoplus \op{Hom}_{\mathcal T}(E[i],F) \otimes_k E[i] \overset{\op{ev}}{\to} F.
 \end{displaymath}
 Define $L_{E}: \mathcal T \to \mathcal T$ as the functor which takes $F$ to the cone over $\op{ev}$. The functor $L_{E}$ is called the \newterm{left twist} by $E$.  If $E$ is spherical, then $L_{E}$ is called the \newterm{spherical twist} by $E$.
\end{definition}

\begin{remark}
 The assumption that $\mathcal T$ is enhanceable ensures that the description above can be made functorial, see \cite{ST}.
\end{remark}

\begin{theorem}
 Let $\mathcal T$ be a triangulated category with finite dimensional morphism spaces. Assume there is dg-category $\mathsf{C}$ enhancing $\mathcal T$, $\mathcal T \cong [\mathsf{C}]$. For any spherical object, $E \in \mathcal T$, the spherical twist, $L_{E}$, is an exact autoequivalence of $\mathcal T$.
\end{theorem}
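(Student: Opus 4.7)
The plan is to follow the strategy of Seidel--Thomas \cite{ST}, with the dg-enhancement hypothesis playing the role it always plays: making the cone construction in Definition~\ref{def:left twist} functorial. First I would lift the evaluation map to a morphism of $\mathsf C$-bimodules $\op{Hom}_{\mathsf C}(E, -) \otimes_k E \to \op{Id}$ and take its cone in the dg-category of bimodules; this represents $L_E$ as an honest exact triangulated endofunctor of $\mathcal T = [\mathsf C]$.

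The second step is to introduce a candidate inverse, the \emph{right twist} $R_E$, defined analogously as the shifted cone of the coevaluation
\begin{displaymath}
 \op{Id} \to \bigoplus_i \op{Hom}(-, E[i])^{\vee} \otimes_k E[i],
\end{displaymath}
again exact by the same enhancement argument. The goal is then natural isomorphisms $L_E \circ R_E \cong \op{Id} \cong R_E \circ L_E$, and I would assemble the two natural transformations from the defining triangles of $L_E$ and $R_E$.

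The third step is to verify these transformations are isomorphisms on a spanning class. The key computations are: (a) for $F \in E^{\perp}$ the evaluation and coevaluation are both zero, so $L_E(F) \cong F \cong R_E(F)$; (b) for $F = E$, sphericality gives $\bigoplus_i \op{Hom}(E[i], E) \otimes_k E[i] \cong E \oplus E[-n]$ with evaluation equal to the projection, so the cone is $L_E(E) \cong E[1-n]$, and dually $R_E(E) \cong E[n-1]$; and (c) for arbitrary $F$, the defining triangle for $L_E(F)$ has left-hand term in $\langle E \rangle$ and right-hand term in $E^{\perp}$, which makes $\{E\} \cup E^{\perp}$ a spanning class. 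The inclusion $L_E(F) \in E^{\perp}$ is verified by applying $\op{Hom}(E, -[j])$ to the defining triangle, where the perfect symmetry of the two sphericality conditions forces the relevant connecting maps in the long exact sequence to be isomorphisms.

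The main obstacle sits in two places. The first is the functoriality of the cone, which is precisely what the dg-enhancement hypothesis $\mathcal T \cong [\mathsf C]$ buys us -- without it the construction is only well-defined up to non-canonical isomorphism and $L_E$ is not literally a functor. The second, purely triangulated, obstacle is the computation that $L_E$ sends everything to $E^{\perp}$; this is the genuine technical heart of the argument, and it is exactly here that one uses \emph{both} sphericality conditions in Definition~\ref{def:spherical object} rather than just the Hom-vanishing. Once this is in hand, the triangulated five-lemma applied to the two natural transformations propagates the isomorphism from the spanning class to all of $\mathcal T$, giving that $L_E$ is an autoequivalence.
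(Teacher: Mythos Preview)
Your proposal is correct and follows the Seidel--Thomas argument, which is exactly what the paper invokes: its entire proof is the single sentence ``This is due to P.\ Seidel and R.\ Thomas \cite{ST}.'' You have sketched the content of that citation in reasonable detail, so there is nothing to compare.
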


\begin{proof}
 This is due to P. Seidel and R. Thomas \cite{ST}.
\end{proof}

\begin{example} \label{example: autoequivalence identification}
Continuing notation from Example~\ref{ex: product of elliptic}, let $\Sigma$ be an elliptic curve viewed as a plane cubic defined by $w$. The objects, $\O$ and $\O_p$, for any $p \in \Sigma$, are spherical in $\dbcoh{\Sigma}$. In fact, there is an isomorphism of autoequivalences,
\begin{equation} \label{twisting by a point}
L_{\O_p} \cong (\O(p) \otimes - ).
\end{equation}
Equation~\eqref{elliptic equivalence} allows us to view the grading shift autoequivalence of $\dabs[\mathsf{fact}(k[x,y,z],\Z,w)] $, $(1)$, as an autoequivalence of $\dbcoh{\Sigma}$ by conjugation.  In fact, as we have seen in \cite[Theorem 6.13]{BFK11},
\begin{equation} \label{grading shift}
\Phi \circ (1) \circ \Phi^{-1} \cong L_{\mathcal O} \circ (\O(1) \otimes - ).
\end{equation}
Intersecting  a generic hyperplane of $\mathbb{P}^2$ with $\Sigma$ we obtain three points, $p,q,r$,and an isomorphism,
\begin{equation} \label{H isomorphism}
\O(1) \cong \O(p) \otimes \O(q) \otimes \O(r).
\end{equation}
Combining equations, \eqref{twisting by a point}, \eqref{grading shift}, and \eqref{H isomorphism}, we obtain,
\begin{align*}
\Phi \circ (1) \circ \Phi^{-1} & = L_{\mathcal O} \circ (\O(1) \otimes - ) \\
& = L_{\O} \circ (\O(p) \otimes \O(q) \otimes \O(r) \otimes - ) \\
& = L_{\O} \circ L_{\O_p} \circ L_{\O_q} \circ L_{\O_r}.
\end{align*}
In conclusion, the autoequivalence, $(1)$, viewed as an autoequivalence of $\dbcoh{\Sigma}$, is a composition of spherical twists.

Assume that $\Sigma$ and $\Sigma^{\prime}$ are two smooth planar cubics defined by $w$ and $v$. On the category $\dbcoh{\Sigma \times \Sigma^{\prime}}$, we have an autoequivalence corresponding to the difference of the two $(1)$. Let $Q^{-1} \in \dbcoh{\Sigma \times \Sigma}$ be the kernel of $\Phi \circ (-1) \circ \Phi^{-1}$ and let $Q^{\prime} \in \dbcoh{\Sigma^{\prime} \times \Sigma^{\prime}}$ be the kernel of $\Phi \circ (1) \circ \Phi^{-1}$ \cite{Orl97}. Set
\begin{displaymath}
 P := \pi_1^* Q^{-1} \otimes \pi_2^* Q
\end{displaymath}
where
\begin{align*}
 \pi_1 & : \Sigma \times \Sigma \times \Sigma^{\prime} \times \Sigma^{\prime} \to \Sigma \times \Sigma \\
 \pi_2 & : \Sigma \times \Sigma \times \Sigma^{\prime} \times \Sigma^{\prime} \to \Sigma^{\prime} \times \Sigma^{\prime}
\end{align*}
are the projections. Then, the integral transform associated to $P$
\begin{displaymath}
 \Phi_P : \dbcoh{\Sigma \times \Sigma^{\prime}} \to \dbcoh{\Sigma \times \Sigma^{\prime}}
\end{displaymath}
satisfies
\begin{displaymath}
 \Phi_P^3 \cong \op{Id}
\end{displaymath}
as $(-3) = [-2]$ and $(3) = [2]$. This gives a $\Z/3\Z$-action on $\dbcoh{\Sigma \times \Sigma^{\prime}}$ that corresponds to the action of $(-1,1)$ on $\dabs[\mathsf{fact}(k[x_1,y_1,z_1,x_2,y_2,z_2], \Z \boxminus \Z, w \boxplus w^{\prime})]$ under the equivalences
\begin{align*}
 \dabs[\mathsf{fact}(k[x_1,y_1,z_1,x_2,y_2,z_2], \Z \boxminus \Z, w \boxplus w^{\prime})] & \cong [\mathsf{proj}(k[x_1,y_1,z_1],\Z,w) \circledast \mathsf{proj}(k[x_2,y_2,z_2],\Z,w^{\prime})] \\
 & \cong [\inj{\Sigma} \circledast \inj{\Sigma^{\prime}}] \\
 & \cong \dbcoh{\Sigma \times \Sigma^{\prime}}
\end{align*}
coming from Proposition~\ref{prop: projective enhancement}, Corollary~\ref{cor: Morita product of MF}, Theorem~\ref{thm:Orlov generalized}, and Theorem~\ref{theorem: Morita product of varieties}.

Corollary~\ref{corollary: gcd cover} says that $\dabs[\mathsf{fact}(k[x_1,y_1,z_1,x_2,y_2,z_2], \Z \boxminus \Z, w \boxplus w^{\prime})]$ is a $\Z/3\Z$-cover of $\dabs[\mathsf{fact}(k[x_1,y_1,z_1,x_2,y_2,z_2], \Z, w \boxplus w^{\prime})]$. Again applying Theorem~\ref{thm:Orlov generalized}, under the equivalences above, $\dbcoh{\Sigma \times \Sigma^{\prime}}$ is a $\Z/3\Z$-cover of $\dbcoh{Y, \beta}$ where $(Y, \beta)$ is the gerby K3 surface from Example~\ref{ex: product of elliptic}.
\end{example}

\begin{remark}
The $\Z/3\Z$-covering relating $\Sigma \times \Sigma^{\prime}$ and $Y$ from Example~\ref{example: autoequivalence identification} can be viewed as coming from a symplectic automorphism. Homological Mirror Symmetry provides exact equivalences
\begin{align*}
 \varphi & : \dbcoh{\Sigma} \to \op{D}(\op{FS}(\widehat{\Sigma})) \\
 \varphi^{\prime} & : \dbcoh{\Sigma^{\prime}} \to \op{D}(\op{FS}(\widehat{\Sigma}^{\prime}))
\end{align*}
between the derived categories of coherent sheaves on $\Sigma$ and $\Sigma^{\prime}$ and the derived Fukaya category of the mirror symplectic tori, $\widehat{\Sigma}$ and $\widehat{\Sigma^{\prime}}$ \cite{PZ}. The spherical objects in $\dbcoh{\Sigma}$ and $\dbcoh{\Sigma^{\prime}}$, give honest Lagrangian spheres (circles in this case) in $\widehat{\Sigma}$  $\widehat{\Sigma^{\prime}}$ (motivating the terminology).

As such, these spherical twists can be viewed as autoequivalences induced by Dehn twists about these circles.  Hence, the composition of these spherical twists, is nothing more than the autoequivalence induced by the symplectic automorphism, $\psi$, consisting of the composition of these Dehn twists. Conjugation by the equivalence, $\varphi \circ \Phi$, allows one to view the autoequivalence, $(1)$, of $\proj{k[x_1,y_1,z_1]}{\Z}{w}$  as an autoequivalence induced by a symplectic automorphism, $\psi$, of $\widehat{\Sigma}$. Similarly, the autoequivalence, $(1)$, of $\proj{k[x_2,y_2,z_2]}{\Z}{w^{\prime}}$, conjugated to become an autoequivalence of $\op{FS}(\widehat{\Sigma^{\prime}})$ is isomorphic to an autoequivalence induced by some symplectic automorphism, $\psi^{\prime}$ of $\widehat{\Sigma^{\prime}}$.

The action in Example~\ref{ex: product of elliptic}, of $\Z/3\Z$ on $\proj{k[x_1,y_1,z_1,x_2,y_2,z_2]}{\Z \oplus \Z/(3,-3)}{w \boxplus v }$ is given by powers of $(-1,1)$.

Now the equivalence,
\[
 \beta: \op{Fuk}(\widehat{\Sigma}) \circledast \op{Fuk}(\widehat{\Sigma^{\prime}}) \to\op{Fuk}(\widehat{\Sigma} \times \widehat{\Sigma^{\prime}}),
\]
is induced by generating $\op{Fuk}(\widehat{\Sigma} \times \widehat{\Sigma^{\prime}})$ using Lagrangian tori \cite{AS}. Furthermore, we know that $(-1,1)$ corresponds to $\psi^{-1} \times \psi^{\prime}$ via conjugation by the composition of equivalences,
\begin{align*}
 \proj{k[x,y,z,u,v,w]}{M}{w \boxplus w^{\prime}}  & \simeq  \proj{k[x,y,z]}{\Z}{w} \circledast \proj{k[u,v,w]}{\Z}{w^{\prime}} \\
  & \overset{\varphi \otimes \varphi^{\prime}}{\simeq} \op{Fuk}(\widehat{\Sigma}) \circledast \op{Fuk}(\widehat{\Sigma^{\prime}}) \\
  & \overset{\beta}{\simeq} \op{Fuk}(\widehat{\Sigma} \times \widehat{\Sigma^{\prime}}).
\end{align*}
In conclusion, taking the orbit category by the $\Z/3\Z$-action of $\Phi_P$ on $\dbcoh{\Sigma \times \Sigma^{\prime}}$ producing the gerby K3 surface $(Y, \beta)$, amounts, under mirror symmetry, to taking a three to one, symplectic quotient in the mirror by the symplectic automorphism, $\psi^{-1} \times \psi^{\prime}$. A similar observation appears in \cite{KP}.
\end{remark}

\begin{example} \label{example: double solid}
 Consider a quartic $K3$ surface, $Y$, defined by $w(x_1,x_2,x_3,x_4)$ in $\P^3$.  Let $Q$ be the (stacky) quartic double solid defined by the equation, $t^2 -w$, in weighted projective space $\P(2 \! : \! 1 \! : \! 1 \! : \! 1 \! : \!1)$. As mentioned above, $\proj{k[t]}{\Z}{t^2}$ is quasi-equivalent to the dg category of complexes of vector spaces.  Hence,
 \begin{align*}
  \dabs[\mathsf{fact}(k[t,x_1,x_2,x_3,x_4],\Z,t^2-w)] &\cong [\proj{k[t]}{\Z}{t^2} \circledast \proj{k[x_1,x_2,x_3,x_4]}{\Z}{w}]\\
  & \cong [\barproj{k[x_1,x_2,x_3,x_4]}{\Z}{w}] \\
  &\cong [\mathsf{Inj}_{\op{coh}}(Y)] \cong \dbcoh{Y}.
 \end{align*}
 The first equivalence is Corollary~\ref{cor: Morita product of MF}, the second equivalence is Equation~\eqref{+x^2}, and the third equivalence is Theorem~\ref{thm:Orlov generalized}.  The final equivalence is standard. Therefore, $\dbcoh{Y}$ is a $\Z/2\Z$-cover of $\dabs[\mathsf{fact}(k[t,x_1,x_2,x_3,x_4],\Z,t^2-w)]$, an admissible subcategory of $\dbcoh{Q}$ by Theorem~\ref{thm:Orlov generalized}.
\end{example}

\begin{remark}
 In \cite{IK}, Ingalls and Kuznetsov prove that for any nodal Enriques surface there is an associated singular quartic surface, such that a certain admissible subcategory of the derived category of coherent sheaves on the Enriques surface is also an admissible subcategory of the derived category of coherent sheaves on a small resolution of a double cover of $\P^3$ ramified along the quartic surface. Ingalls and Kuznetsov's work achieves part of a mirror symmetry based conjecture found in \cite{IKP}.   It would be interesting to compare Example~\ref{example: double solid} to their result.
\end{remark}

\section{Applications to generation time} \label{sec: gt}

\subsection{Preliminaries} \label{subsec: prelim on Rouquier dimension}

We recall the following definitions.  For a more complete treatment see, \cite{Ro2,BFK}. Let $\mathcal T$ be a triangulated category. For a full subcategory, $\mathcal I$, of $\mathcal T$ we denote by $\langle \mathcal I \rangle$ the full subcategory of $\mathcal T$ whose objects are isomorphic to summands of finite coproducts of shifts of objects in $\mathcal I$. In other words, $\langle \mathcal I \rangle$ is the smallest full subcategory containing $\mathcal I$ which is closed under isomorphisms, shifting, and taking finite coproducts and summands. For two full subcategories, $\mathcal I_1$ and $\mathcal I_2$, we denote by $\mathcal I_1 \ast \mathcal I_2$ the full subcategory of objects, $T$, such that there is a distinguished triangle,
\begin{displaymath}
 I_1 \to T \to I_2 \to I_1[1],
\end{displaymath}
with $I_i \in \mathcal I_i$.  Set
\[
\mathcal I_1 \diamond \mathcal I_2 := \langle \mathcal I_1 \ast \mathcal I_2 \rangle,
\]
\[
\langle \mathcal I \rangle_0 :=\langle \mathcal I \rangle,
\]
 and, for $n \geq 1$, inductively define,
\begin{displaymath}
 \langle \mathcal I \rangle_n := \langle \mathcal I \rangle_{n-1} \diamond \langle \mathcal I \rangle.
\end{displaymath}
Similarly we define
\begin{displaymath}
 \langle \mathcal I \rangle_{\infty} := \bigcup_{n \geq 0} \langle \mathcal I \rangle_{n}.
\end{displaymath}
For an object, $E \in \mathcal T$, we notationally identify $E$ with the full subcategory consisting of $E$ in writing, $\langle E \rangle_n$.  

\begin{remark}
 The reader is warned that, in some of  the previous literature, $\langle \mathcal I \rangle_0 := 0$ and $\langle \mathcal I \rangle_1 := \langle \mathcal I \rangle$. We follow the notation in \cite{BF, BFK}.  With our convention, the index equals the number of cones allowed.
\end{remark}

\begin{definition} \label{defn: generation time}
Let $\mathcal I$ be a full subcategory of a triangulated category, $\mathcal{T}$.  If there is an $n$ with $\langle \mathcal I \rangle_{n} = \mathcal T$, we set
\begin{displaymath}
 \tritime_{\mathcal T}(\mathcal I):=  \text{min } \lbrace  n \geq 0  \mid \langle \mathcal I \rangle_{n} = \mathcal T \rbrace.
\end{displaymath}
Otherwise, we set $\tritime_{\mathcal T}(\mathcal I) := \infty$.   We call $\tritime_{\mathcal T}(\mathcal I)$ the \newterm{generation time} of $\mathcal I$. When, $\mathcal T$ is clear from context, we omit it and simply write $\tritime(\mathcal I)$.
\end{definition}

Most commonly we consider the case when $\mathcal I = E$ is a single object.
\begin{definition} \label{defn: OSpec rdim}
Let $E$ be an object of a triangulated category, $\mathcal T$.  If $\langle E \rangle_{\infty}$ equals $\mathcal{T}$, we say that $E$ is a \newterm{generator}. If $\tritime(E)$ is finite, we say that $E$ is a \newterm{strong generator}. The \newterm{Orlov spectrum} of $\mathcal T$, denoted $\op{OSpec}\mathcal T$, is the set,
\begin{displaymath}
 \op{OSpec}\mathcal T := \lbrace \tritime(G)  \mid G \in \mathcal T, \ \tritime(G) < \infty \rbrace \subseteq \mathbb{Z}_{\geq 0}.
\end{displaymath}
The \newterm{Rouquier dimension} of $\mathcal T$, denoted $\op{rdim }\mathcal T$, is the infimum of the set of all generation times,
\[
\op{rdim }\mathcal T := \op{min }\lbrace \tritime(G)   \mid  G \in \mathcal T, \ \tritime(G) < \infty \rbrace.
\]
\end{definition}

The results from the previous sections can be applied to the study of Orlov spectra and Rouquier dimension.  In particular, in some special cases we will get a positive answer to Conjecture~\ref{conj: dimension}.

\begin{definition}
 Let $\mathcal T$ be a triangulated category, $f: X \to Y$ be a morphism, and $\mathcal I$ be a full subcategory.  We say that $f$ is \newterm{$\mathcal I$ ghost} if, for all $I \in \mathcal I$, the induced map,
\begin{displaymath}
 f \circ -: \op{Hom}_{\mathcal T}(I,X) \to \op{Hom}_{\mathcal T}(I,Y),
\end{displaymath}
 is zero.  We say that $f$ is \newterm{$\mathcal I$ co-ghost} if, for all $I \in \mathcal I$, the induced map,
\begin{displaymath}
 - \circ f: \op{Hom}_{\mathcal T}(Y,I) \to \op{Hom}_{\mathcal T}(X,I)
\end{displaymath}
 is zero. Analogously, for an object $G$ of $\mathcal T$, $f$ is \newterm{$G$ ghost} if $f$ is $\langle G \rangle_0$ ghost and $f$ is \newterm{$G$ co-ghost} if $f$ is $\langle G \rangle_0$ co-ghost.
\end{definition}

One common source of ghosts is the following.

\begin{proposition} \label{prop: nat trans as ghosts}
 Assume we have a natural transformation, $\nu: \op{Id} \to F$, for some exact endofunctor of a triangulated category $\mathcal T$. Let $T$ be an object of $\mathcal T$ such that $\nu|_{\langle T \rangle_0} = 0$ for all $i \in \Z$. Then, for any $T^{\prime} \in \mathcal T$, $\nu_{T^{\prime}}: T^{\prime} \to F(T^{\prime})$ is $T$ ghost.
\end{proposition}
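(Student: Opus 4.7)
The plan is to reduce the claim to a direct application of naturality of $\nu$. Fix $T^{\prime} \in \mathcal T$ and an object $X \in \langle T \rangle_0$ together with a morphism $g : X \to T^{\prime}$. To say that $\nu_{T^{\prime}}$ is $T$ ghost is exactly to say that $\nu_{T^{\prime}} \circ g = 0$ for all such $X$ and $g$, so the entire content is in verifying this composition vanishes.

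First I would record the naturality square for $g$: since $\nu : \op{Id} \to F$ is a natural transformation and $F$ is exact, one has
\begin{displaymath}
 \nu_{T^{\prime}} \circ g \; = \; F(g) \circ \nu_X.
\end{displaymath}
Thus it suffices to show that $\nu_X = 0$ for every $X \in \langle T \rangle_0$. By hypothesis $\nu_{T[i]} = 0$ for every $i \in \Z$ (this is how I read the condition $\nu|_{\langle T \rangle_0} = 0$ evaluated on shifts of $T$). Next I would extend this vanishing across the three closure operations defining $\langle T \rangle_0$: (i) shifts are already handled; (ii) for a finite coproduct $X = \bigoplus_j T[i_j]$, the component $\nu_X$ is the direct sum of the zero maps $\nu_{T[i_j]}$, and hence is zero; (iii) if $X$ is a summand of such a coproduct $Y$ with inclusion $\iota : X \to Y$ and projection $\pi : Y \to X$, then by naturality $\nu_X = \pi \circ \nu_Y \circ \iota = 0$. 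Combining these, $\nu_X = 0$ for every object $X$ of $\langle T \rangle_0$, and substituting back into the naturality identity gives $\nu_{T^{\prime}} \circ g = F(g) \circ 0 = 0$.

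The argument is essentially formal, so there is no real obstacle; the only point to be careful with is interpreting the hypothesis. The statement as printed has a small typographical awkwardness ("for all $i \in \Z$" attached to the phrase $\nu|_{\langle T \rangle_0} = 0$), but the intended reading is that $\nu$ vanishes on every shift of $T$, equivalently on the subcategory $\langle T \rangle_0$. Once this reading is fixed, the three-step closure argument together with one application of naturality yields the conclusion. I would present the proof precisely in that compact form: state the naturality square, verify vanishing on shifts, extend additively and across summands, and conclude.
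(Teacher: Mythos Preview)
Your argument is correct and follows essentially the same route as the paper: both proofs use the naturality square $\nu_{T'}\circ g = F(g)\circ \nu_X$ and then invoke the vanishing of $\nu_X$ for $X\in\langle T\rangle_0$. The only difference is that the paper reads the hypothesis $\nu|_{\langle T\rangle_0}=0$ literally and uses $\nu_P=0$ directly, whereas you interpret it as vanishing on shifts of $T$ and then carefully propagate through coproducts and summands; this extra step is harmless but unnecessary given the stated hypothesis.
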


\begin{proof}
 Since $\nu$ is natural transformation, for any morphism, $p: P \to T^{\prime}$ with $P \in \langle T \rangle_0$, we have a commutative diagram,
\begin{center}
\begin{tikzpicture}[description/.style={fill=white,inner sep=2pt}]
\matrix (m) [matrix of math nodes, row sep=3em, column sep=3em, text height=1.5ex, text depth=0.25ex]
{ T^{\prime} & F(T^{\prime}) \\
  P & F(P)  \\ };
\path[->,font=\scriptsize]
(m-1-1) edge node[above] {$\nu_{T^{\prime}}$} (m-1-2)
(m-2-1) edge node[left] {$p$}(m-1-1)
(m-2-1) edge node[above] {$\nu_{P}$} (m-2-2)
(m-2-2) edge node[right] {$F(p)$} (m-1-2)
;
\end{tikzpicture}
\end{center}
 Since we have assumed that $\nu_{P} = 0$, we have that$\nu_{T^{\prime}} \circ p = 0$. Therefore, $\nu_{T^{\prime}}$ is $T$ ghost.
\end{proof}

The following lemma is a standard tool in the study of generation time.  It seems to appear independently multiple times in the literature.  The authors first learned it from \cite{Ro2}, however, it goes back, at least, to \cite{Kelly}.

\begin{lemma} [Ghost Lemma] \label{lem: category ghost lemma}
Let $\mathcal T$ be a triangulated category and $\mathcal I$ be a full subcategory.  If there exists a sequence,
\[
\begin{CD}
X_0      @>f_1>> X_1    @>f_2>> \cdots @>f_{n-1}>> X_{n-1} @>f_n>> X_n,   \\
\end{CD}
\]
of maps in $\mathcal T$ such that all the $f_i$ are ghosts for $\mathcal I$ and $f_n \circ \cdots \circ f_1 \neq 0$.  Then,
\[
X_0 \notin \langle \mathcal I \rangle_{n-1}.
\]
In particular,
\[
n-1 \leq \tritime(\mathcal I).
\]
\end{lemma}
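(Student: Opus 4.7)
The plan is to prove the contrapositive by induction on $n$: if $X_0 \in \langle \mathcal I \rangle_{n-1}$, then every composition of $n$ consecutive $\mathcal I$-ghost maps out of $X_0$ vanishes. The basic fact I will use throughout is that any $\mathcal I$-ghost map $g$ satisfies $g \circ h = 0$ whenever the source of $h$ lies in $\langle \mathcal I \rangle_0$; this follows because such a source is a summand of a shift of a finite coproduct of objects of $\mathcal I$, on which $g$ vanishes by definition. The base case $n = 1$ then follows by applying this to $h = \op{id}_{X_0}$.

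For the inductive step I would use the defining decomposition $\langle \mathcal I \rangle_{n-1} = \langle \mathcal I \rangle_{n-2} \diamond \langle \mathcal I \rangle$ to present $X_0$ as a summand of some $Y$ sitting in a distinguished triangle $T \to Y \to I \to T[1]$ with $T \in \langle \mathcal I \rangle_{n-2}$ and $I \in \langle \mathcal I \rangle_0$. Let $\iota: X_0 \to Y$ and $\pi: Y \to X_0$ realize the summand, and set $\psi_k := f_k \cdots f_1 \pi$, so that $\psi_k \iota = f_k \cdots f_1$. Precomposing with $T \to Y$ absorbs the resulting first map $\pi \circ (T \to Y): T \to X_0$ into $f_1$ to produce a ghost map $T \to X_1$; together with $f_2, \ldots, f_{n-1}$ this gives a ghost sequence of length $n-1$ out of $T \in \langle \mathcal I \rangle_{n-2}$. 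The inductive hypothesis forces $\psi_{n-1} \circ (T \to Y) = 0$, and hence $\psi_n \circ (T \to Y) = f_n \psi_{n-1} \circ (T \to Y) = 0$. The long exact Hom sequence for the triangle then provides factorizations $\psi_{n-1} = \sigma \circ (Y \to I)$ and $\psi_n = \rho \circ (Y \to I)$ with $\sigma: I \to X_{n-1}$ and $\rho: I \to X_n$.

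To close, the identity $f_n \psi_{n-1} = \psi_n$ gives $(\rho - f_n \sigma) \circ (Y \to I) = 0$, so the same LES yields $\rho = f_n \sigma + \tau \circ (I \to T[1])$ for some $\tau: T[1] \to X_n$. Writing $f_n \cdots f_1 = \rho \circ (Y \to I) \circ \iota$ and substituting, the term $f_n \sigma \circ (Y \to I) \circ \iota$ vanishes because $\sigma$ has source $I \in \langle \mathcal I \rangle_0$ and $f_n$ is ghost, while the term $\tau \circ (I \to T[1]) \circ (Y \to I) \circ \iota$ vanishes since $(I \to T[1]) \circ (Y \to I) = 0$ as consecutive morphisms in a distinguished triangle. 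This contradicts $f_n \cdots f_1 \neq 0$ and completes the induction. The hard part will be precisely this final assembly: individually the factorizations $\sigma$ and $\rho$ are unique only up to maps through $T[1]$, and it is the LES-controlled decomposition of $\rho$ that forces the potentially nonzero cross term $\tau \circ (I \to T[1])$ to be annihilated by $(Y \to I) \circ \iota$.
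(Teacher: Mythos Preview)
Your argument is correct and is the standard inductive proof of the ghost lemma. The paper does not give its own proof; it simply cites \cite[Lemma~4.11]{Ro2}, so there is no alternative approach to compare against---what you wrote is essentially Rouquier's argument.

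One remark: the final paragraph is more elaborate than needed. Once you have the factorization $\psi_{n-1} = \sigma \circ (Y \to I)$ with $I \in \langle \mathcal I \rangle_0$, you are already done, since
\[
f_n \cdots f_1 = \psi_n \iota = f_n \psi_{n-1} \iota = (f_n \sigma) \circ (Y \to I) \circ \iota,
\]
and $f_n \sigma = 0$ because $f_n$ is $\mathcal I$-ghost and $\sigma$ has source in $\langle \mathcal I \rangle_0$. The separate factorization $\psi_n = \rho \circ (Y \to I)$, the comparison $\rho - f_n\sigma$, and the correction term $\tau$ are all unnecessary: you never need to worry about nonuniqueness of the lift, because the lift you already have does the job directly. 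So the step you flagged as ``the hard part'' is in fact not hard at all.
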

\begin{proof}
See, for example, \cite[Lemma 4.11]{Ro2}.
\end{proof}

When $\mathcal I = G$ is a single object and $\mathcal T$ is a reasonably nice triangulated category there is also a converse.

\begin{lemma} [Ghost/Co-ghost Lemma and Converse]  \label{lem: ghost lemma}
Let $\mathcal S$ be a triangulated category for which small coproducts exist.  Let $\mathcal T$ be a full triangulated subcategory of the category of compact objects of $\mathcal S$.   Let $G$ and $X_0$ be objects in $\mathcal T$.  The following are equivalent:
\renewcommand{\labelenumi}{\emph{\roman{enumi})}}
\begin{enumerate}
\item one has $X_0 \in \langle G \rangle_n$ and $X_0 \notin \langle G \rangle_{n-1}$;

\item there exists a sequence, 
\begin{displaymath}
\begin{CD}
X_0      @>f_1>> X_1    @>f_2>> \cdots @>f_{n-1}>> X_{n-1} @>f_n>> X_n,   \\
\end{CD}
\end{displaymath}
of maps in $\mathcal T$ such that all the $f_i$ are ghost for $G$ and $f_n \circ \cdots \circ f_1 \neq 0$.  Furthermore there is no such sequence for $n+1$.

\item there exists a sequence, 
\begin{displaymath}
\begin{CD}
X_n      @>f_{n}>> X_{n-1}    @>f_{n-1}>> \cdots @>f_2>> X_{n-1} @>f_1>> X_0,   \\
\end{CD}
\end{displaymath}
of maps in $\mathcal T$ such that all the $f_i$ are co-ghost for $G$ and $f_1 \circ \cdots \circ f_n \neq 0$.  Furthermore there is no such sequence for $n+1$.

\item there exists a sequence, 
\begin{displaymath}
\begin{CD}
X_0      @>f_1>> X_1    @>f_2>> \cdots @>f_{n-1}>> X_{n-1} @>f_n>> X_n,   \\
\end{CD}
\end{displaymath}
of maps in $\mathcal S$ such that all the $f_i$ are ghost for $G$ and $f_n \circ \cdots \circ f_1 \neq 0$.  Furthermore there is no such sequence for $n+1$.

\item there exists a sequence, 
\begin{displaymath}
\begin{CD}
X_n      @>f_{n}>> X_{n-1}    @>f_{n-1}>> \cdots @>f_2>> X_{n-1} @>f_1>> X_0,   \\
\end{CD}
\end{displaymath}
of maps in $\mathcal S$ such that all the $f_i$ are co-ghost for $G$ and $f_1 \circ \cdots \circ f_n \neq 0$.  Furthermore there is no such sequence for $n+1$.
\end{enumerate}
\end{lemma}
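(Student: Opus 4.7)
The plan is to prove the cycle (\emph{i}) $\Leftrightarrow$ (\emph{iv}) $\Leftrightarrow$ (\emph{ii}) by constructing a universal ghost tower in $\mathcal S$, and then deduce (\emph{iii}) $\Leftrightarrow$ (\emph{v}) $\Leftrightarrow$ (\emph{i}) by a dual argument.

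First I would establish the classical Ghost Lemma bound: if $X_0 \in \langle G \rangle_n$, then any composition of $n+1$ ghost maps starting at $X_0$ vanishes. The argument is by induction on $n$. Write $X_0$ as a summand of $Z$ in a triangle $A \to Z \to B \to A[1]$ with $A \in \langle G \rangle_{n-1}$ and $B \in \langle G \rangle_0$. Extending a ghost sequence from $X_0$ to one from $Z$, the inductive hypothesis applied to $A$ forces the first $n$ composites to vanish on $A$, so the $n$-fold composite out of $Z$ factors through $B$; the base case $n=0$ (a single ghost out of an object of $\langle G \rangle_0$ is zero by evaluation on each summand of a finite sum of shifts of $G$) then kills the final ghost. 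This argument is formally self-dual and yields the co-ghost version, so it handles the maximality clauses of (\emph{ii})--(\emph{v}) simultaneously.

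Next I would construct the universal ghost tower in $\mathcal S$. Set $Y_0 := X_0$ and inductively form triangles
\begin{displaymath}
 P_j \to Y_j \xrightarrow{f_{j+1}} Y_{j+1} \to P_j[1]
\end{displaymath}
with $P_j := \coprod_{i \in \Z,\; \phi \in \op{Hom}_{\mathcal S}(G[i], Y_j)} G[i]$ mapping tautologically to $Y_j$---this uses exactly the small coproducts hypothesis on $\mathcal S$. Each $f_{j+1}$ is $G$-ghost because every map $G[i] \to Y_j$ lifts through $P_j$. The key claim is that the $n$-fold composite $g_n := f_n \circ \cdots \circ f_1 : X_0 \to Y_n$ vanishes if and only if $X_0 \in \langle G \rangle_{n-1}$. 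The "if" direction is the upper bound above. For "only if", when $g_n = 0$ the triangle $Z \to X_0 \to Y_n \to Z[1]$ obtained from the $n$-fold composite splits, exhibiting $X_0$ as a summand of $Z$; iterated application of the octahedral axiom shows that $Z$ is an $n$-step extension of (shifts of) the $P_j$'s, and compactness of $X_0$ lets us replace each $P_j$ by a finite subcoproduct through which the structure maps into $Z$ factor, so $X_0$ is a summand of an $n$-step extension of objects of $\langle G \rangle_0$, placing $X_0$ in $\langle G \rangle_{n-1}$. This yields (\emph{i}) $\Leftrightarrow$ (\emph{iv}). The passage to (\emph{ii}) is now easy: ghost sequences in $\mathcal T$ remain ghost in $\mathcal S$, and conversely the tower above can be truncated to $\mathcal T$ by replacing each $P_j$ by the finite subcoproduct through which the relevant morphisms factor, preserving the nonzero composite.

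Finally, the co-ghost equivalences (\emph{iii}) and (\emph{v}) follow by a dual argument. \textbf{The main obstacle} is that $\mathcal S$ is only assumed to have small coproducts, not products, so the naive dual tower built from products of shifts of $G$ need not exist in $\mathcal S$. We sidestep this by working inside $\mathcal T$ itself: given the ghost tower with nonzero $n$-fold composite $g_n$, one builds a co-ghost sequence of length $n$ in $\mathcal T$ by a mirror-image construction, using finite extensions of shifts of $G$ in place of the infinite coproducts $P_j$, and verifying nonvanishing by pairing the resulting composite against $g_n$. The upper bound from the first step then precludes a co-ghost sequence of length $n+1$ with nonzero composite, completing (\emph{i}) $\Leftrightarrow$ (\emph{iii}) $\Leftrightarrow$ (\emph{v}).
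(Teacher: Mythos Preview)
The paper does not actually prove this lemma; its ``proof'' consists entirely of citations to Oppermann and Seidel, with a pointer to \cite{BFK} for the hom-finite case. Your argument for (\emph{i}) $\Leftrightarrow$ (\emph{ii}) $\Leftrightarrow$ (\emph{iv}) via the universal ghost tower in $\mathcal S$ and compactness of $X_0$ is exactly the standard one from those references and is correct.

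The co-ghost portion, however, contains a genuine gap that you have correctly identified but not closed. Your ``mirror-image construction using finite extensions of shifts of $G$'' requires, at each step, a map $W_j \to Q_j$ with $Q_j \in \langle G\rangle_0$ such that \emph{every} morphism $W_j \to G[i]$ factors through it; with only finitely many summands available in $\mathcal T$ and no hom-finiteness hypothesis, there is no reason such a $Q_j$ exists, so the resulting map $W_{j+1} \to W_j$ need not be co-ghost. The phrase ``verifying nonvanishing by pairing the resulting composite against $g_n$'' is also too vague to constitute an argument: the ghost composite $g_n$ goes out of $X_0$, the co-ghost composite comes into $X_0$, and there is no evident nondegenerate pairing between them in this generality. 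The cited references handle this either by assuming enough structure for products to exist in the ambient category (e.g.\ compact generation of $\mathcal S$, which yields products via Brown representability, so the dual tower can be built), or by working in a hom-finite setting where finite sums suffice. Under the bare hypotheses stated here, you should either add such an assumption or spell out a different mechanism; as written, the passage from (\emph{i}) to (\emph{iii}) and (\emph{v}) is not established.
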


\begin{proof}
See \cite{Opp2} or \cite{SeiBook} for the converse.  A simple proof in the case where $\mathcal T$ is $k$-linear and has finite dimensional morphism spaces  can be found in \cite[Lemma 2.17]{BFK}.
\end{proof}

In \cite{BFK}, a more in-depth analysis of the relationship between generation times and semi-orthogonal decompositions is provided. Here, however, we will only need some basic results.

\begin{lemma} \label{lem:decompositions and generation time}
 Let $\mathcal T = \mathcal \langle \mathcal S_1, \ldots , \mathcal S_s \rangle$ be a semi-orthogonal decomposition.  For each $i$, let $G_i$ be a strong generator of $\mathcal S_i$. Then $\bigoplus_{i=1}^{s} G_i$ is a strong generator of $\mathcal T$ and
\[
\tritime_{\mathcal T}(\bigoplus_{i=1}^{s} G_i) \leq \sum_{i=1}^s \tritime_{\mathcal S_i}(G_i) + s-1.
\]
  Furthermore if $\mathcal S$ is an admissible subcategory of $\mathcal T$ then,
\[
 \op{rdim}(\mathcal S) \leq \op{rdim}(\mathcal T).
\]
\end{lemma}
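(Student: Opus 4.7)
The plan for (i) is to feed the semi-orthogonal filtration through the subcategories $\langle G \rangle_n$ with $G := \bigoplus_{j=1}^s G_j$. Given $T \in \mathcal T$, Definition~\ref{def:SO} yields a tower $0 = T_s \to T_{s-1} \to \cdots \to T_0 = T$ together with distinguished triangles $T_k \to T_{k-1} \to S_k \to T_k[1]$ where $S_k \in \mathcal S_k$. Setting $\tau_i := \tritime_{\mathcal S_i}(G_i)$, each $S_k$ lies in $\langle G_k \rangle_{\tau_k} \subseteq \langle G \rangle_{\tau_k}$, the last inclusion holding because $G_k$ is a summand of $G$ and $\langle - \rangle_0$ is closed under summands and shifts. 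I would then combine this with the elementary containment $\langle G \rangle_a \diamond \langle G \rangle_b \subseteq \langle G \rangle_{a+b+1}$, which follows by a short induction on $a$ from the recursive definition $\langle G \rangle_n = \langle G \rangle_{n-1} \diamond \langle G \rangle_0$. An induction on $k$ up the tower then yields $T_{s-k} \in \langle G \rangle_{\tau_{s-k+1} + \cdots + \tau_s + (k-1)}$, and taking $k = s$ proves both the bound and that $G$ strongly generates $\mathcal T$.

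For (ii), admissibility of $\mathcal S \subseteq \mathcal T$ provides a right adjoint $R : \mathcal T \to \mathcal S$ to the inclusion $i$. Since $i$ is fully faithful, the counit $i \circ R \to \op{Id}_{\mathcal T}$ restricts on $\mathcal S$ to the identity, so $R(S) \cong S$ for every $S \in \mathcal S$. Furthermore, as a right adjoint of a triangulated functor which itself admits the left adjoint $i$, $R$ is triangulated and hence exact. The strategy is to transport any strong generator $G$ of $\mathcal T$ achieving $\tritime_{\mathcal T}(G) = \op{rdim}(\mathcal T)$ to a strong generator $R(G)$ of $\mathcal S$. For $S \in \mathcal S$, the containment $S \in \langle G \rangle_{\op{rdim}(\mathcal T)}$ holds in $\mathcal T$; applying the exact functor $R$, which preserves triangles, shifts, coproducts, and summands, and therefore the towers $\langle - \rangle_n$ by induction, gives $S \cong R(S) \in \langle R(G) \rangle_{\op{rdim}(\mathcal T)}$ in $\mathcal S$. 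Thus $\tritime_{\mathcal S}(R(G)) \leq \op{rdim}(\mathcal T)$, and taking the infimum over strong generators of $\mathcal S$ yields the inequality.

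Neither step poses a substantive obstacle: the argument reduces to unwinding the definition of $\langle - \rangle_n$, the semi-orthogonal filtration, and the adjunction between $i$ and $R$. The only piece requiring any real bookkeeping is verifying the subadditivity $\langle G \rangle_a \diamond \langle G \rangle_b \subseteq \langle G \rangle_{a+b+1}$, which is the standard induction, together with the observation that exact functors preserve this filtration. If anything counts as a subtlety, it is making sure that the ``$+(s-1)$'' term really does come from the $s-1$ gluing cones of the filtration and not from overcounting — a point I would verify explicitly by walking through the base case $k = 1$ and the inductive step.
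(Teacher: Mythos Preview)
Your argument is correct and is precisely the standard unwinding of the definitions that the paper has in mind; the paper itself gives no details, writing only ``This follows straightforwardly from the definitions. For a full set of details, see \cite{BFK}.'' Both parts of your proposal---the inductive climb up the semi-orthogonal filtration using $\langle G\rangle_a \diamond \langle G\rangle_b \subseteq \langle G\rangle_{a+b+1}$, and the transport of a strong generator along the adjoint $R$---are exactly how one fills in that reference.
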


\begin{proof}
 This follows straightforwardly from the definitions. For a full set of details, see \cite{BFK}.
\end{proof}

\begin{proposition} \label{prop: orbit dimension}
 Let $\mathcal S$ be a triangulated category for which small coproducts exist.  Let $\mathcal T$ be a full triangulated subcategory of the category of compact objects of $\mathcal S$.  Let $\Gamma$ be a finite Abelian group of autoequivalences of $\mathcal T$.  Suppose $\pi: \mathcal T \to \mathcal T^{\prime}$ is a $\Gamma$-cover.

 If $\pi(G)$ is a generator of $\mathcal T^{\prime}$ then $\bigoplus_{\gamma \in \Gamma} \gamma(G)$ is a generator of $\mathcal T$.  Furthermore, if $\pi(G)$ is a strong generator then so is $\bigoplus_{\gamma \in \Gamma} \gamma(G)$ and one has,
\[
\tritime_{\mathcal T}(\bigoplus_{\gamma \in \Gamma} \gamma(G)) = \tritime_{\mathcal T^{\prime}}(\pi(G)).
\]
 In fact, for any object $X \in \mathcal T$,
\[
\pi(X) \in \langle \pi(G) \rangle_s \text{ if and only if } X \in \langle \bigoplus_{\gamma \in \Gamma} \gamma(G)  \rangle_s.
\]
\end{proposition}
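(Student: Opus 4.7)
The plan is to establish the biconditional $\pi(X) \in \langle \pi(G) \rangle_s \Leftrightarrow X \in \langle \bigoplus_{\gamma \in \Gamma} \gamma(G) \rangle_s$, from which all three assertions follow formally: the (non-strong) generation claim by letting $s$ range over all natural numbers, and the generation-time equality by specializing to the smallest admissible $s$ on each side. First I would record two structural facts. The natural functor $\mathcal T \to \mathcal T/\Gamma$ is faithful: under $\op{Hom}_{\mathcal T/\Gamma}(X,Y) = \bigoplus_{g \in \Gamma} \op{Hom}_{\mathcal T}(X,g(Y))$ it is the inclusion of the $g=e$ summand. And for every $\gamma$, the objects $\pi(G)$ and $\pi(\gamma(G))$ are canonically isomorphic in $\mathcal T/\Gamma$: from the composition rule of Definition~\ref{defn: orbit category} the elements $\op{id}_G \in \op{Hom}_{\mathcal T}(G, \gamma^{-1}\gamma(G))$ in the $g=\gamma^{-1}$ summand of $\op{Hom}_{\mathcal T/\Gamma}(G, \gamma(G))$ and $\op{id}_{\gamma(G)} \in \op{Hom}_{\mathcal T}(\gamma(G), \gamma(G))$ in the $g=\gamma$ summand of $\op{Hom}_{\mathcal T/\Gamma}(\gamma(G), G)$ compose to identities. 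Granting these, the direction ($\Leftarrow$) of the biconditional is immediate upon applying the exact functor $\pi$ and absorbing the finite sum.

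The core of the argument is a ghost identification: a morphism $f : X \to Y$ in $\mathcal T$ is $\bigoplus_{\gamma} \gamma(G)$-ghost if and only if $\pi(f)$ is $\pi(G)$-ghost in $\mathcal T'$. The left-hand condition is that $\op{Hom}_{\mathcal T}(\gamma(G), X) \to \op{Hom}_{\mathcal T}(\gamma(G), Y)$ vanishes for every $\gamma$; applying the autoequivalence $\gamma^{-1}$ to Hom sets and reindexing, this is the vanishing of $\op{Hom}_{\mathcal T}(G, \gamma(X)) \to \op{Hom}_{\mathcal T}(G, \gamma(Y))$ for every $\gamma$. Because $\Gamma$ is finite, summing over $\gamma$ and checking against the orbit-category composition rule identifies this with the vanishing of $\op{Hom}_{\mathcal T/\Gamma}(G,X) \to \op{Hom}_{\mathcal T/\Gamma}(G,Y)$, which by fullness of $\mathcal T/\Gamma \hookrightarrow \mathcal T'$ is exactly $\pi(G)$-ghostness of $\pi(f)$.

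With this identification in hand, the direction ($\Rightarrow$) follows via the ghost/co-ghost lemma. Suppose $\pi(X) \in \langle \pi(G) \rangle_s$. By Lemma~\ref{lem: category ghost lemma} applied in $\mathcal T'$, every composition of $s+1$ $\pi(G)$-ghosts starting at $\pi(X)$ vanishes. For any chain $f_1, \ldots, f_{s+1}$ of $\bigoplus_{\gamma} \gamma(G)$-ghosts in $\mathcal T$ starting at $X$, the chain $\pi(f_i)$ is a chain of $\pi(G)$-ghosts starting at $\pi(X)$, so $\pi(f_{s+1} \circ \cdots \circ f_1) = 0$; faithfulness of $\pi$ then gives $f_{s+1} \circ \cdots \circ f_1 = 0$. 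Invoking the converse of Lemma~\ref{lem: ghost lemma} in the ambient pair $(\mathcal T, \mathcal S)$ with the compact object $\bigoplus_{\gamma} \gamma(G)$ yields $X \in \langle \bigoplus_{\gamma} \gamma(G) \rangle_s$. The equality of generation times then follows from the biconditional by taking the smallest $s$ on each side, using that every object of $\mathcal T'$ is a summand of some $\pi(X)$ and that $\langle \pi(G) \rangle_s$ is closed under summands.

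The main obstacle is setting up the ghost identification cleanly. It relies crucially on the finiteness of $\Gamma$---so that $\bigoplus_\gamma \op{Hom}_{\mathcal T}(G, \gamma(-))$ really does compute $\op{Hom}_{\mathcal T/\Gamma}(G,-)$---and on the faithfulness of the natural functor $\mathcal T \to \mathcal T/\Gamma$, which is what lets ghost vanishing be transported both ways between $\mathcal T$ and $\mathcal T'$. Once this is done, everything else is a routine invocation of Lemma~\ref{lem: ghost lemma} and its converse.
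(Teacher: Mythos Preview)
Your proof is correct and follows essentially the same route as the paper's. The paper argues the contrapositive of ($\Rightarrow$) using co-ghost sequences---where the identification $\op{Hom}_{\mathcal T'}(\pi(X_i),\pi(G)) \cong \op{Hom}_{\mathcal T}(X_i,\bigoplus_\gamma \gamma(G))$ is immediate from the orbit-category Hom formula---while you argue directly via ghosts and an explicit reindexing; but the structure (ghost/co-ghost identification, faithfulness of $\pi$, and Lemmas~\ref{lem: category ghost lemma} and~\ref{lem: ghost lemma}) is identical.
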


\begin{proof}
 Suppose that $X \in \langle \bigoplus_{\gamma \in \Gamma} \gamma(G)\rangle_n$. We have
 \begin{displaymath}
  \langle \pi \left( \bigoplus_{\gamma \in \Gamma} \gamma(G) \right) \rangle_0 = \langle \pi(G) \rangle_0
 \end{displaymath}
 since $\pi(\gamma(G)) \cong \pi(G)$ for all $\gamma \in \Gamma$. Thus, $\pi(X) \in \langle \pi(G) \rangle_n$.

 Suppose $X \notin \langle \bigoplus_{\gamma \in \Gamma} \gamma(G)\rangle_n$. By Lemma~\ref{lem: ghost lemma}, there exists a nonzero co-ghost sequence for $\bigoplus_{\gamma \in \Gamma} \gamma(G)$,
\begin{displaymath}
 X_1 \to \cdots  \to X_{n+1}=X.
\end{displaymath}
 By applying $\pi$, we obtain a sequence of morphisms in the orbit category,
\begin{displaymath}
 \pi(X_1) \to \cdots  \to \pi(X_{n+1})=\pi(X).
\end{displaymath}
 The total composition is nonzero as $\pi$ is faithful. Furthermore, all maps in the sequence are co-ghost for $\pi(G)$. This follows from the functorial isomorphism,
 \begin{displaymath}
  \op{Hom}_{\mathcal T^{\prime}}(\pi(X_i), \pi(G)) = \op{Hom}_{\mathcal T}(X_i, \bigoplus_{\gamma \in \Gamma}\gamma(G)). 
 \end{displaymath}
 Hence by Lemma~\ref{lem: category ghost lemma}, $\pi(X) \notin \langle \pi(G) \rangle_n$.  This proves the final statement of the proposition.

 The final statement of the proposition implies that
 \begin{displaymath}
  \tritime_{\mathcal T^{\prime}}(\pi(G)) \geq \tritime_{\mathcal T}(\bigoplus_{\gamma \in \Gamma} \gamma(G)).
 \end{displaymath}
 Since we have assumed that every object is a summand of the essential image of $\mathcal T$, we have
 \begin{displaymath}
  \langle \pi(\mathcal T) \rangle_0 = \mathcal T^{\prime}.
 \end{displaymath}
 Thus, if $\langle \bigoplus_{\gamma \in \Gamma} \gamma(G) \rangle_n = \mathcal T$, we have
 \begin{displaymath}
  \mathcal T^{\prime} = \langle \pi(\mathcal T) \rangle_0 = \langle \oplus_{\gamma \in \Gamma} \pi(\gamma(G)) \rangle_n = \langle \pi(G) \rangle_n
 \end{displaymath}
 since $\pi(\gamma(G)) \cong \pi(G)$ in $\mathcal T^{\prime}$. Consequently,
 \begin{displaymath}
  \tritime_{\mathcal T}(\bigoplus_{\gamma \in \Gamma} \gamma(G)) \geq \tritime_{\mathcal T^{\prime}}(\pi(G)).
 \end{displaymath}
 This proves the second statement.

 The first statement is consequence of the previous arguments.
\end{proof}

\begin{remark}
 We will use Proposition~\ref{prop: orbit dimension} above to compare generation time in categories of matrix factorizations under grading change.  Rather than quotient by a finite torsion subgroup, one can also quotient by a divisor of the degree of the function, $w$.  In this case, while $\Gamma$ is not finite, the quotient of $\Gamma$ by its intersection with the powers of the shift functor, $\bar{\Gamma}$, is.  One can then extend Proposition~\ref{prop: orbit dimension} to this context by comparing $\tritime(\pi(G))$ with $\bigoplus_{\gamma \in \bar{\Gamma}}(\gamma(G))$.  Recall that, in general, the orbit category will not be triangulated.  There is a wall of rectifying this by discussing dg-orbit categories as in \cite{Kel2}.  However, one only obtains an inequality in this case and, in what follows, we will only need the case where $\Gamma$ is finite.
\end{remark}

\begin{remark}
 One could also assume $\Gamma$ is infinite but that for any object $X \in \mathcal T$, $\op{Hom}_{\mathcal T}(\gamma(G), X) = 0$ for all but finitely many $\gamma \in \Gamma$.  In this case the full subcategory consisting of all objects $\gamma(G)$, for all $\gamma \in \Gamma$ still generates $\mathcal T$ in equal amount of time to $G$ but can't be summed to form an object of the category.
\end{remark}

While Orlov spectra may change under $\Gamma$-coverings, the Rouquier dimension does not.

\begin{corollary} \label{cor: orbit dimension}
 Let $\mathcal S$ be a triangulated category for which small coproducts exist. Let $\mathcal T$ be a full triangulated subcategory of the category of compact objects of $\mathcal S$. Let $\Gamma$ be a finite group. Suppose that $\mathcal T^{\prime}$ is a $\Gamma$-cover of $\mathcal T$. Then $\mathcal T$ and $\mathcal T^{\prime}$ have the same Rouquier dimension,
 \[
  \op{rdim }\mathcal T = \op{rdim } \mathcal T^{\prime}.
 \]
\end{corollary}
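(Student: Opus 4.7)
The plan is to establish the two inequalities separately, leveraging the $\Gamma$-cover $\pi: \mathcal T^{\prime} \to \mathcal T$ together with Proposition~\ref{prop: orbit dimension}. For the inequality $\op{rdim } \mathcal T \leq \op{rdim } \mathcal T^{\prime}$, I would take any strong generator $G^{\prime}$ of $\mathcal T^{\prime}$ realizing its Rouquier dimension and argue that $\pi(G^{\prime})$ generates $\mathcal T$ in no more time. This direction is essentially formal: since $\pi$ is exact and preserves direct sums, shifts, and summands, an induction on $n$ shows that $X \in \langle G^{\prime} \rangle_n$ implies $\pi(X) \in \langle \pi(G^{\prime}) \rangle_n$, so the essential image $\pi(\mathcal T^{\prime})$ sits inside $\langle \pi(G^{\prime}) \rangle_{\tritime(G^{\prime})}$. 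By the definition of $\Gamma$-cover, every object of $\mathcal T$ is a summand of an object of $\pi(\mathcal T^{\prime})$, and since $\langle \pi(G^{\prime}) \rangle_n$ is closed under summands by construction, we conclude $\mathcal T = \langle \pi(G^{\prime}) \rangle_{\tritime(G^{\prime})}$.

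For the reverse inequality $\op{rdim } \mathcal T^{\prime} \leq \op{rdim } \mathcal T$, I would select a strong generator $H$ of $\mathcal T$ achieving $\tritime(H) = \op{rdim } \mathcal T$. Invoking the essential surjectivity up to summands built into the $\Gamma$-cover definition, there exists $G^{\prime} \in \mathcal T^{\prime}$ such that $H$ is a summand of $\pi(G^{\prime})$. Since $\langle \pi(G^{\prime}) \rangle_0$ contains every summand of $\pi(G^{\prime})$, in particular $H \in \langle \pi(G^{\prime}) \rangle_0$, and therefore $\pi(G^{\prime})$ generates $\mathcal T$ with $\tritime(\pi(G^{\prime})) \leq \tritime(H) = \op{rdim } \mathcal T$. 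Proposition~\ref{prop: orbit dimension} then yields that $\bigoplus_{\gamma \in \Gamma} \gamma(G^{\prime})$ is a strong generator of $\mathcal T^{\prime}$ with exactly the same generation time, from which $\op{rdim } \mathcal T^{\prime} \leq \tritime(\pi(G^{\prime})) \leq \op{rdim } \mathcal T$ follows.

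The only point requiring care is to confirm that the essential density clause in the definition of a $\Gamma$-cover is exactly what is needed to transfer the generating property through summands, and that the finiteness of $\Gamma$ ensures $\bigoplus_{\gamma \in \Gamma} \gamma(G^{\prime})$ is a bona fide object of $\mathcal T^{\prime}$ to which Proposition~\ref{prop: orbit dimension} applies. There is no substantial obstacle: the argument is a clean two-line application of the preceding proposition, combined with the routine observation that $\langle - \rangle_n$ is closed under summands at every level.
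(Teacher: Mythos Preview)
Your proof is correct and follows essentially the same two-inequality strategy as the paper, with both directions ultimately resting on Proposition~\ref{prop: orbit dimension}. One cosmetic difference: for the inequality $\op{rdim}\,\mathcal T \leq \op{rdim}\,\mathcal T'$ you argue directly that an exact functor carries $\langle G'\rangle_n$ into $\langle \pi(G')\rangle_n$ and then use density up to summands, whereas the paper routes this through Proposition~\ref{prop: orbit dimension} as well (taking a generator of the cover, applying the equality $\tritime(\pi(G)) = \tritime(\bigoplus_\gamma \gamma(G))$, and noting $\bigoplus_\gamma \gamma(G)$ cannot generate slower than $G$). Your direct argument is slightly more elementary but amounts to the same thing. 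Also note that the paper's proof silently swaps the roles of $\mathcal T$ and $\mathcal T'$ relative to the statement (treating $\mathcal T$ as the cover with $\pi:\mathcal T\to\mathcal T'$), whereas you read the statement literally; since the conclusion is symmetric this is harmless, but be aware of the discrepancy when matching up which category is assumed to sit in compact objects for the application of Proposition~\ref{prop: orbit dimension}.
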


\begin{proof}
Let $G$ be a generator of $\mathcal T$ such that,
\[
\tritime(G) = \op{rdim }\mathcal T.
\]
Therefore by Proposition~\ref{prop: orbit dimension} and the fact that $G \in \langle \bigoplus_{\gamma \in \Gamma} \gamma(G) \rangle_0$, we have
\[
 \op{rdim }\mathcal T^{\prime} \leq \tritime(\pi(G)) = \tritime (\bigoplus_{\gamma \in \Gamma} \gamma(G)) \leq \tritime(G) = \op{rdim }\mathcal T.
\]

Conversely, let $G^{\prime}$ be a generator of $\mathcal T^{\prime}$ such that,
\[
\tritime(G^{\prime}) = \op{rdim }\mathcal T^{\prime}.
\]
Now, $G^{\prime} \in \mathcal T^{\prime}$ is a summand of $\pi(G)$ for some generator $G \in \mathcal T$ (we may always make $G$ into a generator by adding additional summands).
Therefore by Proposition~\ref{prop: orbit dimension}
\[
\op{rdim }\mathcal T^{\prime} = \tritime(G^{\prime}) \geq \tritime(\pi(G)) = \tritime (\bigoplus_{\gamma \in \Gamma} \gamma(G)) \geq \op{rdim }\mathcal T.
\]
\end{proof}

\begin{corollary} \label{cor: equality of curves}
Let $G$ be a finite Abelian group acting on a a smooth Deligne-Mumford stack $\mathcal X$ over $k$. One has
\[
 \op{rdim }\dbcoh{\mathcal X} =  \op{rdim }\dbcoh{[\mathcal X/G]}.
\]

\end{corollary}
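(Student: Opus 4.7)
The plan is to combine Proposition~\ref{prop: spaces as covers} with Corollary~\ref{cor: orbit dimension}. Since $G$ is finite and Abelian, its character group $\widehat G$ is likewise finite. Proposition~\ref{prop: spaces as covers} then supplies a $\widehat G$-cover
\[
 \dbcoh{\mathcal X} \to \dbcoh{[\mathcal X/G]},
\]
realized by the restriction functor $\op{Res}^G_1$, where $\widehat G$ acts on $\dbcoh{[\mathcal X/G]}$ by character twists.

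To invoke Corollary~\ref{cor: orbit dimension}, I need to exhibit a triangulated category $\mathcal S$ admitting small coproducts such that $\dbcoh{\mathcal X}$ is a full triangulated subcategory of its compact objects. Taking $\mathcal S := \dqcoh{\mathcal X}$ does the job: small coproducts exist, and since $\mathcal X$ is a smooth Deligne-Mumford stack, bounded complexes of coherent sheaves coincide with perfect complexes, which are precisely the compact objects of $\dqcoh{\mathcal X}$. Thus the hypotheses of Corollary~\ref{cor: orbit dimension} are satisfied with $\mathcal T = \dbcoh{\mathcal X}$ and $\mathcal T^{\prime} = \dbcoh{[\mathcal X/G]}$, and the conclusion
\[
 \op{rdim }\dbcoh{\mathcal X} = \op{rdim }\dbcoh{[\mathcal X/G]}
\]
follows immediately.

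There is essentially no obstacle here; the result is a direct assembly of two statements already established in the excerpt. The only minor subtlety worth noting is the verification of the ambient category $\mathcal S$ condition, which is handled by smoothness of $\mathcal X$ (ensuring coherent $=$ perfect on the compact-object side).
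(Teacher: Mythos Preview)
Your proposal is correct and matches the paper's intended argument: the corollary is stated without proof precisely because it is the immediate combination of Proposition~\ref{prop: spaces as covers} and Corollary~\ref{cor: orbit dimension}, which is exactly what you do. One small slip: the functor $\op{Res}^G_1$ goes from $\dbcoh{[\mathcal X/G]}$ to $\dbcoh{\mathcal X}$, not the other way, so the covering direction (and hence which category plays the role of $\mathcal T$ versus $\mathcal T'$) is reversed from what you wrote---but since $[\mathcal X/G]$ is also a smooth Deligne-Mumford stack, the ambient $\mathcal S$ condition holds on either side and the argument goes through unchanged.
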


\begin{corollary} \label{cor: compare rdim}
 Let $\pi: A \to \Z$ and $\pi^{\prime}: A^{\prime} \to \Z$ be surjective maps of Abelian groups with finite kernel. Let $R$ be a regular finitely-generated $k$-algebra admitting $A$ and $A^{\prime}$-gradings each of which induce the same $\Z$-grading under $\pi$ and $\pi^{\prime}$.  Suppose $w \in R$ is homogeneous with respect to all gradings.  Then we have an equality of Rouquier dimensions
\[
 \op{rdim }\dabs[\mathsf{fact}(R,A,w)] = \op{rdim }\dabs[\mathsf{fact}(R,A^{\prime},w)].
\]
\end{corollary}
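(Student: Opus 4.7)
The plan is to compare both Rouquier dimensions to that of the common $\Z$-graded factorization category $\dabs[\mathsf{fact}(R,\Z,w)]$, using grading-change as a finite cover and invoking Corollary~\ref{cor: orbit dimension}.

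Let $\Gamma := \ker(\pi: A \to \Z)$ and $\Gamma^{\prime} := \ker(\pi^{\prime}: A^{\prime} \to \Z)$. By hypothesis both are finite, so $A/\Gamma \cong \Z \cong A^{\prime}/\Gamma^{\prime}$, and the $\Z$-gradings on $R$ induced by $\pi$ and $\pi^{\prime}$ agree by assumption. Since $w$ is simultaneously homogeneous for the $A$-, $A^{\prime}$-, and $\Z$-gradings, Proposition~\ref{prop: MF grading change} directly yields that the restriction functors
\[
 \op{R}^A_{\Z} : \dabs[\mathsf{fact}(R,A,w)] \to \dabs[\mathsf{fact}(R,\Z,w)], \qquad \op{R}^{A^{\prime}}_{\Z} : \dabs[\mathsf{fact}(R,A^{\prime},w)] \to \dabs[\mathsf{fact}(R,\Z,w)]
\]
are $\Gamma$- and $\Gamma^{\prime}$-covers, respectively. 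Finite generation is preserved by restriction (which only refines the grading) and by induction (since the fibers of $\pi$ and $\pi^{\prime}$ are finite, cf.\ Proposition~\ref{proposition: restriction and induction properties}), so both adjoint functors stay inside the categories of finitely-generated factorizations.

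To apply Corollary~\ref{cor: orbit dimension}, one must realize each source as a triangulated subcategory of compact objects in an ambient category with small coproducts. The natural choice is $\mathcal{S} := \dabs[\mathsf{Fact}(R,A,w)]$, which by Proposition~\ref{prop: projective enhancement} (using regularity of $R$) is equivalent to $[\mathsf{Proj}(R,A,w)]$, and hence admits arbitrary coproducts of projective factorizations. The subcategory $\dabs[\mathsf{fact}(R,A,w)]$ then sits inside $\mathcal{S}$ as a full triangulated subcategory of compact objects, via the standard argument that $\op{Hom}$ out of a finitely-generated projective factorization commutes with arbitrary coproducts. The identical statement holds on the $A^{\prime}$-side.

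With both hypotheses verified, Corollary~\ref{cor: orbit dimension} applies to each cover and gives
\[
 \op{rdim}\dabs[\mathsf{fact}(R,A,w)] = \op{rdim}\dabs[\mathsf{fact}(R,\Z,w)] = \op{rdim}\dabs[\mathsf{fact}(R,A^{\prime},w)],
\]
which is the claimed equality. The main obstacle is the compactness verification needed to invoke Corollary~\ref{cor: orbit dimension}; everything else is a direct chain of previously established results, with the finiteness of $\Gamma$ and $\Gamma^{\prime}$ doing the essential work.
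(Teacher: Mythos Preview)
Your proof is correct and follows precisely the approach the paper takes: the paper's own proof is the single line ``This follows directly from Corollary~\ref{cor: orbit dimension} and Proposition~\ref{prop: MF grading change},'' and you have simply unpacked that sentence, routing both $A$- and $A'$-graded categories through the common $\Z$-graded one and verifying the compactness hypothesis needed for Corollary~\ref{cor: orbit dimension}. One minor remark: Proposition~\ref{prop: MF grading change} is stated for polynomial $R$, whereas here $R$ is only regular, but its proof uses nothing beyond the restriction/induction adjunction of Proposition~\ref{proposition: restriction and induction properties}, so this is harmless.
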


\begin{proof}
 This follows directly from Corollary~\ref{cor: orbit dimension} and Proposition~\ref{prop: MF grading change}. 
\end{proof}

\subsection{Rouquier dimension of weighted Fermat hypersurfaces} \label{subsec: Fermat}

In this section, we would like to apply our results to estimate the Rouquier dimension of products of quivers of type ADE and their corresponding weighted Fermat hypersurfaces.  An upper bound will come from the interpretation of this category as an algebra.

\begin{definition} \label{defn: LL}
 Let $A$ be a finite dimensional (possibly noncommutative) algebra over $k$. Let $N$ be the nilradical of $A$. The \newterm{Loewy length} of $A$ is
\begin{displaymath}
 \op{LL}(A) := \op{min} \{ t \mid N^t = 0 \}.
\end{displaymath}
\end{definition}

We will need the following lemma due to Rouquier.

\begin{lemma}\label{lem: ADE-1}
 Let $A$ be a finite dimensional algebra of Loewy length $r$ and $Q$ be a quiver whose underlying graph is Dynkin of type ADE. The Rouquier dimension of $\op{D}^{\op{b}}(\op{mod }A \otimes_k kQ)$ is at most $r-1$.
\end{lemma}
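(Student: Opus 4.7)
The plan is to exhibit an explicit object that generates $\op{D}^{\op{b}}(\op{mod }A \otimes_k kQ)$ in at most $r-1$ cones, using an $N$-adic filtration combined with the fact that the path algebra of a Dynkin quiver has Rouquier dimension zero.

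First, let $N$ denote the nilradical of $A$, so $N^r = 0$ by hypothesis on the Loewy length. Since $k$ is algebraically closed, the semisimple quotient decomposes as $A/N \cong \prod_j M_{n_j}(k)$. On the $kQ$ side, because $Q$ is Dynkin the algebra $kQ$ is finite-dimensional, hereditary, and of finite representation type; let $M_1, \ldots, M_s$ be a complete set of indecomposable $kQ$-modules and set $M := \bigoplus_i M_i$. I would take as a generator
\[
 G := (A/N) \otimes_k M,
\]
viewed as an $A \otimes_k kQ$-module via the projection $A \twoheadrightarrow A/N$ on the first tensor factor.

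Second, I would verify that every object of $\op{D}^{\op{b}}(\op{mod }(A/N)\otimes_k kQ)$, pulled back to $\op{D}^{\op{b}}(\op{mod }A \otimes_k kQ)$ along $A \twoheadrightarrow A/N$, lies in $\langle G \rangle_0$. Indeed $(A/N) \otimes_k kQ \cong \prod_j M_{n_j}(kQ)$ is Morita equivalent to a finite product of copies of $kQ$, hence is hereditary of finite representation type; its indecomposable modules are the objects $(A/N)e_j \otimes_k M_i$, each of which is a direct summand of $G$. Since the algebra is hereditary, any bounded complex is formal, i.e.\ quasi-isomorphic to the direct sum of shifts of its cohomology modules; each such cohomology module is a finite direct sum of indecomposables, hence a summand of a direct sum of copies of $G$. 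Quasi-isomorphisms of complexes of $(A/N)\otimes_k kQ$-modules remain quasi-isomorphisms after restriction of scalars along $A \otimes_k kQ \to (A/N) \otimes_k kQ$, so the conclusion survives the pullback.

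Third, I would apply the $N$-adic filtration termwise. For any bounded complex $C^\bullet$ of finitely generated $A \otimes_k kQ$-modules, the subcomplexes $N^i C^\bullet$ give a filtration
\[
 C^\bullet = N^0 C^\bullet \supseteq N^1 C^\bullet \supseteq \cdots \supseteq N^{r-1} C^\bullet \supseteq N^r C^\bullet = 0,
\]
whose successive quotients $N^i C^\bullet / N^{i+1} C^\bullet$ are annihilated by $N$ and therefore define complexes of $(A/N) \otimes_k kQ$-modules. By the previous step, each quotient lies in $\langle G \rangle_0$. The associated distinguished triangles let one assemble $C^\bullet$ out of the $r$ quotients using $r-1$ cones, yielding $C^\bullet \in \langle G \rangle_{r-1}$ and therefore $\op{rdim}\,\op{D}^{\op{b}}(\op{mod }A\otimes_k kQ) \leq r-1$.

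The principal obstacle is the middle step: one must confirm that formality of bounded complexes over the hereditary algebra $(A/N)\otimes_k kQ$ passes through the non-fully-faithful restriction functor to $\op{D}^{\op{b}}(\op{mod }A\otimes_k kQ)$, and that every indecomposable $(A/N)\otimes_k kQ$-module genuinely appears as a summand of $G$ after the Morita reduction. This is where the hypothesis on $Q$ being Dynkin is used in an essential way, since it is precisely what forces $\op{rdim}\,\op{D}^{\op{b}}(\op{mod }kQ) = 0$.
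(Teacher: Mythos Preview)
Your proof is correct and takes essentially the same approach as the paper: the paper invokes \cite[Lemma 7.36]{Ro2} with the nilpotent ideal $I = N \otimes_k kQ$ of $A \otimes_k kQ$, observing that $(A \otimes_k kQ)/I \cong (A/N) \otimes_k kQ$ is a finite product of copies of $kQ$ and hence has Rouquier dimension zero. Your argument simply unpacks Rouquier's lemma by hand, exhibiting the generator and the $N$-adic filtration explicitly; the concern you raise at the end is not an obstacle, since restriction of scalars along a surjection is exact and thus preserves the quasi-isomorphisms witnessing formality.
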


\begin{proof}
 Let $N$ be the nilradical of $A$ and set $I = N \otimes_k kQ$ in \cite[Lemma 7.36]{Ro2}.  Furthermore, notice that $A \otimes_k kQ/I$ is isomorphic to a finite direct sum of algebras $kQ$, hence $\text{D}^\text{b}(\op{mod }A \otimes_k kQ/I)$ has Rouquier dimension zero. 
\end{proof}

For a lower bound we will need the following statement.

\begin{lemma} \label{lem:lowerstacks}
 Let $\mathcal X$ be a tame Deligne-Mumford stack with a reduced and separated coarse moduli space. The Rouquier dimension of $\dbcoh{\mathcal X}$ is at least the dimension of $\mathcal X$,
\[
 \op{dim }\mathcal X \leq \op{rdim }\dbcoh{\mathcal X}.
\]
\end{lemma}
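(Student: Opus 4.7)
I will adapt Rouquier's proof of the analogous statement for reduced, separated, finite-type schemes (\cite{Ro2}) via the Ghost Lemma (Lemma~\ref{lem: category ghost lemma}). Let $d := \op{dim}\mathcal X$ and fix an arbitrary generator $G \in \dbcoh{\mathcal X}$ of generation time $r$. The goal is to show $r \geq d$ by exhibiting a length-$d$ sequence of $G$-ghost morphisms with nonzero composition starting from a suitable object of $\dbcoh{\mathcal X}$.

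First, reduce to the smooth case. Reducedness of the coarse moduli space, combined with exactness of the coarse-moduli pushforward (guaranteed by tameness), forces $\mathcal X$ to be reduced; hence its smooth locus $\mathcal X^{\op{sm}} \subseteq \mathcal X$ is an open dense substack of dimension $d$. Restriction along the open immersion $\mathcal X^{\op{sm}} \hookrightarrow \mathcal X$ is exact, and essentially surjective on bounded coherent complexes up to summands (any coherent sheaf on $\mathcal X^{\op{sm}}$ extends to a coherent sheaf on $\mathcal X$), so it sends $G$ to a generator of $\dbcoh{\mathcal X^{\op{sm}}}$ of generation time at most $r$. This yields $\op{rdim}\dbcoh{\mathcal X} \geq \op{rdim}\dbcoh{\mathcal X^{\op{sm}}}$, so I may assume $\mathcal X$ is smooth, tame DM, of dimension $d$.

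Now pick a closed point $p \in \mathcal X$. By tameness, an étale neighborhood of $p$ has the form $[U/\Gamma_p]$ with $U$ a smooth affine scheme and $\Gamma_p$ a finite linearly reductive group fixing a point $u \in U$ above $p$. Linear reductivity lets me diagonalize the $\Gamma_p$-action on $T^*_u U$ and pick a $\Gamma_p$-semi-invariant regular system of parameters $f_1, \ldots, f_d$ at $u$ with weights $\chi_1, \ldots, \chi_d$. For suitably chosen simple $\Gamma_p$-representations $V$ and $W \cong V \otimes \chi_1 \cdots \chi_d$, the skyscraper-type objects $E_p := \iota_* V$ and $E_p' := \iota_* W$ in $\dbcoh{\mathcal X}$ (where $\iota \colon \mathcal G_p \hookrightarrow \mathcal X$ is the residual gerbe inclusion) satisfy $\op{Ext}^d_{\mathcal X}(E_p, E_p') \neq 0$, with distinguished nonzero cup-product class $\xi := \bar f_1 \cup \cdots \cup \bar f_d$ computed from the $\Gamma_p$-invariant part of the top Koszul cohomology on $U$.

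Finally, I apply Rouquier's iterative universal-ghost construction with $X_0 := E_p$: set $X_{i+1} := \op{cone}(G^{\op{Hom}(G, X_i)} \to X_i)$, so that each canonical morphism $X_i \to X_{i+1}$ is tautologically a $G$-ghost. A standard tower-of-triangles argument, as in Rouquier's scheme case, identifies the iterated composition $X_0 \to X_1 \to \cdots \to X_d$ with $\xi$ under a natural comparison $\op{Ext}^d_{\mathcal X}(E_p, E_p') \to \op{Hom}_{\mathcal X}(E_p, X_d)$ induced by the defining triangles, forcing this composition to be nonzero. Lemma~\ref{lem: category ghost lemma} then gives $E_p \notin \langle G \rangle_{d-1}$, so $r \geq d$. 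The hard part is carrying out this tower identification equivariantly: one must verify that the $\Gamma_p$-invariant piece of the Koszul Ext computation on $U$ faithfully corresponds to the relevant Ext on $\mathcal X$ and that nonvanishing survives descent. This descent is precisely what the tameness hypothesis underwrites, since it makes $(-)^{\Gamma_p}$ exact and so preserves the nonvanishing of the top cup-product class in passing from $U$ to $\mathcal X$.
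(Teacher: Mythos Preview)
The paper does not argue this lemma; it simply cites \cite[Lemma~2.13]{BF}. Your reduction to the smooth locus and the equivariant Koszul construction of a nonzero class $\xi \in \op{Ext}^d_{\mathcal X}(E_p,E_p')$ are correct and match the standard opening moves.

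The gap is in the last step. The universal ghost tower $E_p = X_0 \to X_1 \to \cdots$ does produce $G$-ghost maps, but your claimed ``natural comparison'' $\op{Ext}^d_{\mathcal X}(E_p,E_p') \to \op{Hom}_{\mathcal X}(E_p,X_d)$ carrying $\xi$ to the tower composition does not exist: the object $X_d$ is built entirely from $G$ and bears no a priori relation to $E_p'$, so the defining triangles induce no map in that direction. (Check $\mathcal X = \mathbb{A}^1$, $G = \mathcal O$, $E_p = E_p' = k(0)$: one finds $X_1 \cong \mathcal O[1]$, and the only natural identification the triangle provides is $\op{Hom}(k(0),X_1)\cong \op{Hom}(k(0),k(0))$, not a map from $\op{Ext}^1(k(0),k(0))$.) More fundamentally, nonvanishing of the composite $X_0 \to X_d$ is, by the converse ghost lemma, \emph{equivalent} to $E_p \notin \langle G\rangle_{d-1}$, so asserting it is circular. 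This is also not ``Rouquier's scheme case'': his lower-bound proof does not route through the universal tower at all. What is actually required --- and what \cite{Ro2} and \cite{BF} carry out --- is to factor $\xi$ itself (or a closely related nonzero class) as a composite of $d$ explicit maps and then verify, using that $G$ is a bounded coherent complex together with the local structure near $p$, that each factor is $G$-ghost for \emph{arbitrary} $G$. That verification is the substantive step, and it is missing from your sketch.
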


\begin{proof}
 This is a slight generalization of a result of Rouquier. The proof can be found in \cite[Lemma 2.13]{BF}.
\end{proof}

Recall, from Definition~\ref{definition: sign of weight sequence}, that a weight sequence, $(d_0, \ldots , d_n)$, is negative, zero, positive, nonnegative, or nonpositive if
\begin{displaymath}
 -1 + \sum_{i=0}^n \frac{1}{d_i}
\end{displaymath}
is negative, zero, positive, nonnegative, or nonpositive, respectively. And, from Definition~\ref{definition: ADE for weight sequence}, a weight sequence is ADE if it coincides with any of the following weight sequences
\[
 (2,\ldots,2,a), (2,\ldots,2,3,3), (2,\ldots,2,3,4), \tor (2,\ldots,2,3,5)
\]
where $2$ can occur with any multiplicity including zero.

Given a weight sequence, $\mathbf{d} =(d_0, \ldots, d_n)$, set
\begin{displaymath}
 q(\mathbf{d}) := \op{min} \{s \mid \mathbf{ d} = \coprod_{i=1}^s \mathbf{d}_i \text{ where } \mathbf{d}_i \text{ is nonpositive for all } i\},
\end{displaymath}
and
\begin{displaymath}
 h(\mathbf{d}) := \op{min} \{s \mid \mathbf{ d} = \coprod_{i=1}^s \mathbf{d}_i \text{ where } \mathbf{d}_i \text{ is ADE for all } i\}.
\end{displaymath}
Note that $h(\mathbf{d}),q(\mathbf{d}) \leq n+1$.

\begin{theorem} \label{thm: Fermat bound}
 Let $\mathbf{d}=(d_0, \ldots d_n)$ be a weight sequence. Suppose $\mathbf{d}$ decomposes into a disjoint union of nonpositive weight sequences,
\[
 \mathbf{d} = \coprod_{i=1}^s \mathbf{d}_i.
\]
 One has an inequality,
\[
 n+1-2s \leq \op{rdim}(\dbcoh{\prod_{i=1}^s Z_{\mathbf{d}_i}}) \leq \op{rdim}(\op{D}^{\op{b}}(\op{mod }\bigotimes_{i=0}^n A_{d_i-1})) \leq h(\mathbf{d})-1.
\]
 Hence,
\[
 n+1-2q(\mathbf{d}) \leq \op{rdim}(\op{D}^{\op{b}}(\op{mod }\bigotimes_{i=0}^n A_{d_i-1})) \leq h(\mathbf{d})-1
\]
\end{theorem}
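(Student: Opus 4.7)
The plan is to prove the main chain of three inequalities in turn; the concluding statement with $q(\mathbf{d})$ follows at once by choosing the decomposition that achieves the minimum $s = q(\mathbf{d})$.

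For the leftmost inequality $n+1-2s \leq \op{rdim}\dbcoh{\prod_{i=1}^s Z_{\mathbf{d}_i}}$, I would first compute the Krull dimension of the product. If $\mathbf{d}_i$ has length $n_i+1$, then $Y_{\sum_j x_j^{d_j^i}}$ is an $n_i$-dimensional hypersurface in $\mathbb{A}^{n_i+1}$ and the group $G(B_{\mathbf{d}_i})$ is one-dimensional, so $\dim Z_{\mathbf{d}_i} = n_i - 1$ and summing yields $\dim \prod_i Z_{\mathbf{d}_i} = (n+1) - 2s$. Each $Z_{\mathbf{d}_i}$ is a smooth tame Deligne-Mumford stack (the Fermat polynomial has an isolated critical point at the origin and $G(B_{\mathbf{d}_i})$ is linearly reductive in characteristic zero), which passes to the product, so Lemma~\ref{lem:lowerstacks} supplies the bound. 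The middle inequality is essentially immediate from Proposition~\ref{prop: sitting inside ADE}, which exhibits $\dbcoh{\prod Z_{\mathbf{d}_i}}$ as an admissible subcategory of $\op{D}^{\op{b}}(\op{mod }\bigotimes_{i=0}^n A_{d_i-1})$, combined with the admissibility half of Lemma~\ref{lem:decompositions and generation time}.

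The upper bound $\op{rdim}\op{D}^{\op{b}}(\op{mod }\bigotimes_{i=0}^n A_{d_i-1}) \leq h(\mathbf{d})-1$ is the step I expect to be the main obstacle. I would first invoke Proposition~\ref{prop: is ADE} to rewrite the category as $\op{D}^{\op{b}}(\op{mod }\bigotimes_{j=1}^{h(\mathbf{d})} kQ_j)$ for ADE Dynkin path algebras $kQ_j$, and then induct on $m = h(\mathbf{d})$, aiming for $\op{rdim}\op{D}^{\op{b}}(\op{mod }\bigotimes_{j=1}^m kQ_j) \leq m - 1$. The base case $m = 1$ is immediate: each $kQ_j$ is representation-finite hereditary, so the direct sum of its indecomposable modules is an Auslander generator realising Rouquier dimension zero. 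For the inductive step I would apply Lemma~\ref{lem: ADE-1} with $A$ the partial tensor product of the first $m-1$ factors and $Q = Q_m$, so that an additional ADE factor contributes at most one. The difficulty is that Lemma~\ref{lem: ADE-1} bounds by $\op{LL}(A) - 1$ rather than by $\op{rdim}(A) + 1$ directly, so I would need to arrange the induction — presumably by ordering the factors to keep Loewy lengths in check, or by exploiting the finer tilting structure available on tensor products of ADE path algebras — so that the cumulative bound telescopes to $h(\mathbf{d}) - 1$ rather than blowing up with the Loewy lengths of the intermediate products.
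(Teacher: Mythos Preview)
Your treatment of the first two inequalities is exactly what the paper does: compute the Krull dimension as $n+1-2s$, invoke Lemma~\ref{lem:lowerstacks}, then use Proposition~\ref{prop: sitting inside ADE} together with the admissibility half of Lemma~\ref{lem:decompositions and generation time}.

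For the upper bound you have the right ingredients but not the right assembly, and you have correctly spotted where your argument stalls. Induction on $m$ via Lemma~\ref{lem: ADE-1} does not telescope: with $A = kQ_1 \otimes_k \cdots \otimes_k kQ_{m-1}$ the Loewy length of $A$ is governed by the lengths of the longest paths in the $Q_j$, not by $m-1$, so the bound you get is far worse than $m-1$. Reordering the factors cannot fix this.

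The paper does not induct; it applies Lemma~\ref{lem: ADE-1} \emph{once}, after a tilting step that controls the Loewy length directly. For each $j \leq h(\mathbf d)-1$, the simple $kQ_j$-modules (suitably shifted) form a full strong exceptional collection whose endomorphism algebra has Loewy length exactly $2$, because $\op{Ext}^{\geq 2}$ between simples vanishes for a hereditary algebra. Tensoring these collections over the first $h(\mathbf d)-1$ factors gives a full strong exceptional collection with endomorphism algebra $R$ of Loewy length $h(\mathbf d)$ (Loewy length is additive minus one under $\otimes_k$). For the last factor use the projectives of $kQ_{h(\mathbf d)}$ themselves. The combined collection then has endomorphism algebra $R \otimes_k kQ_{h(\mathbf d)}$, giving a derived equivalence $\op{D}^{\op{b}}(\op{mod}\,\bigotimes_j kQ_j) \cong \op{D}^{\op{b}}(\op{mod}\, R \otimes_k kQ_{h(\mathbf d)})$. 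Now Lemma~\ref{lem: ADE-1} with $A = R$ and $Q = Q_{h(\mathbf d)}$ yields the bound $\op{LL}(R)-1 = h(\mathbf d)-1$. The missing idea in your sketch is precisely this replacement of $h(\mathbf d)-1$ of the path algebras by their Loewy-length-$2$ tilts before taking the tensor product.
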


\begin{proof}
 We start with the upper bound. By Proposition~\ref{prop: is ADE}, $\op{D}^{\op{b}}(\op{mod }\bigotimes_{i=0}^n A_{d_i-1})$ is equivalent to $\op{D}^{\op{b}}(\op{mod }\bigotimes_{l=1}^{h(\mathbf{d})} kQ_l)$ where each $Q_l$ is a quiver whose underlying graph is of type ADE. Now, we may assume that each of the quivers, $Q_i$, are as displayed in Figure~\ref{fig: ADE}. Let $S^i_v$ denote the simple left $kQ_i$-module corresponding to the vertex, $v$. The $S^i_v$ for all $v$ in the quiver give representatives of all the isomorphism classes of simple objects. A straightforward computation shows that,
\[
\op{Ext}^t(S^i_v, S^i_{v^{\prime}}) \cong \begin{cases} k & \text{ if } t=0, v=v^{\prime} \\ k & \text{ if } t=1 \text{ and there exists an arrow starting at }v^{\prime}\text{ and ending at }v \\
					   0 & \text{otherwise}.
                             \end{cases}
\]
 Hence, for any $i$, the simple modules in $\op{mod }(kQ_i)$, after appropriate shifting, form a full strong exceptional collection whose endomorphism algebra has Loewy length $2$, as
\begin{displaymath}
 \op{Ext}^2_{kQ_i}(S^i_v, S^i_{v^{\prime}}) = 0
\end{displaymath}
 for all $v,v^{\prime}$.  Tensoring together, after shifting, the simple modules of $\op{mod }(\bigotimes_{l=1}^{h(\mathbf{d})-1} kQ_l)$ form a full strong exceptional collection whose Loewy length is $h(\mathbf{d})$.  Let $R$ be the endomorphism algebra of this full strong exceptional collection in $\op{mod }\bigotimes_{l=1}^{h(\mathbf{d})-1} kQ_l$. Let $P^{h(\mathbf{d})}_v$ be the projective module corresponding the vertex $v$ in $kQ_{h(\mathbf{d})}$. Obviously, the $P^{h(\mathbf{d})}_v$ for all $v$ in the quiver for $kQ_{h(\mathbf{d})}$ form a full strong exceptional collection for $\op{D}^{\op{b}}(\op{mod } kQ_{h(\mathbf{d})})$ (whose endomorphism algebra returns the path algebra of $kQ_{h(\mathbf{d})}$).  Therefore, appropriate shifts of the objects, $(\bigotimes_{i=0}^{h(\mathbf{d})-1} S^i_{v_i}) \otimes_k P^{h(\mathbf{d})}_{v^{\prime}}$ for any vertices, $v_i \in Q_i$ $v^{\prime} \in Q_{h(\mathbf{d})}$
form a full strong exceptional collection in $\op{D}^{\op{b}}(\op{mod }\bigotimes_{l=1}^{h(\mathbf{d})}
kQ_l)$ with endomorphism algebra, $R \otimes_k kQ_{h(\mathbf{d})}$.  This induces an equivalence,
\begin{displaymath}
 \op{D}^\text{b}(\op{mod }\bigotimes_{l=1}^{h(\mathbf{d})} kQ_l) \cong \text{D}^\text{b}(\op{mod }R \otimes_k kQ_{h(\mathbf{d})}),
\end{displaymath}
 by \cite{Bae, Bon}.  Applying Lemma~\ref{lem: ADE-1} we arrive at the upper bound.

 For the lower bound, suppose the weight sequence, $\mathbf{d} = (d_0, \ldots , d_n)$, decomposes into a disjoint union of nonpositive weight sequences,
\[
 \mathbf{d} = \coprod_{i=1}^s \mathbf{d}_i.
\]
 Then, by Proposition~\ref{prop: sitting inside ADE}, the category, $\dbcoh{\prod_{i=1}^s Z_{\mathbf{d}_i}}$, is an admissible subcategory of $\op{D}^{\op{b}}(\op{mod }\bigotimes_{i=0}^n A_{d_i-1})$. Hence,
\begin{equation} \label{eq: inequality for ADE}
 n+1-2s = \op{dim}(\prod_{i=1}^s Z_{\mathbf{d}_i}) \leq \op{rdim}(\dbcoh{\prod_{i=1}^s Z_{\mathbf{d}_i}}) \leq \op{rdim}(\op{D}^{\op{b}}(\op{mod }\bigotimes_{i=0}^n A_{d_i-1})),
\end{equation}
 where the first inequality comes from Lemma~\ref{lem:lowerstacks} and the second inequality comes from Lemma~\ref{lem:decompositions and generation time}.
\end{proof}

Recall, from Definitions~\ref{defn: weight sequence} and \ref{def: Fermat}, that to each weight sequence, $(d_0, \ldots ,d_n)$ we attach a finitely-generated Abelian group, $B_{\mathbf{d}}$, and two Abelian algebraic groups,  $\smallfermatgroup$ and $G(B_{\mathbf{d}})$.  To these algebraic groups we associate two stacks, $  Z_{({\mathbf{d}}, \op{min})}$ and $Z_{({\mathbf{d}})}$ respectively.
Furthermore, to any intermediate group,
$\smallfermatgroup \subseteq \intermediatefermatgroup \subseteq G(B_{\mathbf{d}}),$
we attach a weighted Fermat hypersurface $Z_{({\mathbf{d}}, \intermediatefermatgroup)}$.

\begin{corollary} \label{cor: Orlov conjecture fermat}
 Let $(d_0, \ldots , d_n)$ be a weight sequence such that
\[
 n + 1 = h(\mathbf{d}) + 2q(\mathbf{d}) - 1.
\]
 Let,
\[
\mathbf{d} = \coprod_{i=1}^{q(\mathbf{d})} \mathbf{d}_i,
\]
 be a decomposition of $\mathbf{d}$ into a disjoint union of nonpositive weight sequences.  For each weight sequence, $\mathbf{d}_i$, choose a group,
\[
 \bar{G}(B_{\mathbf{d}_i}) \subseteq H_i \subseteq G(B_{\mathbf{d}_i})
\]
 Then,
\[
 n+1-2q(\mathbf{d})  = \op{rdim}(\dbcoh{\prod_{i=1}^{q(\mathbf{d})} Z_{\mathbf{d}_i,H_i}}) = \op{rdim}(\op{D}^{\op{b}}(\op{mod }\bigotimes_{i=0}^n A_{d_i-1})).
\]
 In particular, Conjecture~\ref{conj: dimension} holds for the product of weighted Fermat hypersurfaces, $Z_{(\mathbf{d}_i, H_i)}$.
\end{corollary}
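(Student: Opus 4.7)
The plan is to chain together Theorem~\ref{thm: Fermat bound} with the invariance of Rouquier dimension under finite Abelian quotients (Corollary~\ref{cor: equality of curves}), and then check that the numerical hypothesis collapses the two-sided bound of the theorem into an equality.

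First I would apply Theorem~\ref{thm: Fermat bound} with the decomposition $\mathbf{d} = \coprod_{i=1}^{q(\mathbf{d})} \mathbf{d}_i$ (which achieves the minimum number of nonpositive pieces by the definition of $q$). This yields the chain
\[
 n+1-2q(\mathbf{d}) \leq \op{rdim}\bigl(\dbcoh{\textstyle\prod_i Z_{\mathbf{d}_i}}\bigr) \leq \op{rdim}\bigl(\op{D}^{\op{b}}(\op{mod}\textstyle\bigotimes_{j=0}^n A_{d_j-1})\bigr) \leq h(\mathbf{d})-1.
\]
The assumption $n+1 = h(\mathbf{d})+2q(\mathbf{d})-1$ is exactly the statement $h(\mathbf{d})-1 = n+1-2q(\mathbf{d})$, so the outer bounds agree and every inequality collapses to equality. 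In particular $\op{rdim}(\op{D}^{\op{b}}(\op{mod}\bigotimes_j A_{d_j-1})) = n+1-2q(\mathbf{d})$, which handles the right-hand equality in the corollary.

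Next, I would pass from the maximally weighted Fermat hypersurfaces $Z_{\mathbf{d}_i}$ (those with group $G(B_{\mathbf{d}_i})$) to arbitrary intermediate groups $H_i$ with $\bar{G}(B_{\mathbf{d}_i}) \subseteq H_i \subseteq G(B_{\mathbf{d}_i})$. Since $\bar{G}(B_{\mathbf{d}_i})$ already contains the connected component of $G(B_{\mathbf{d}_i})$, the quotient $\Gamma_i := G(B_{\mathbf{d}_i})/H_i$ is a finite Abelian group. Consequently, $\prod_i Z_{\mathbf{d}_i} \cong [\prod_i Z_{(\mathbf{d}_i,H_i)}/\prod_i \Gamma_i]$ as stacks, and Corollary~\ref{cor: equality of curves} applied to this finite Abelian quotient gives
\[
 \op{rdim}\bigl(\dbcoh{\textstyle\prod_i Z_{(\mathbf{d}_i,H_i)}}\bigr) = \op{rdim}\bigl(\dbcoh{\textstyle\prod_i Z_{\mathbf{d}_i}}\bigr) = n+1-2q(\mathbf{d}),
\]
which is the remaining equality. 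For Conjecture~\ref{conj: dimension}, note that each $H_i$ is one-dimensional, so $Z_{(\mathbf{d}_i,H_i)}$ has stack dimension $|\mathbf{d}_i|-2$; summing over $i$ gives $\op{dim}\prod_i Z_{(\mathbf{d}_i,H_i)} = (n+1)-2q(\mathbf{d})$, matching the Rouquier dimension.

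The bulk of the work has already been done upstream, so there is no significant obstacle to overcome here; the only thing to verify carefully is the finite-quotient passage. Specifically, I would make sure that the finite Abelian group action of $\prod_i \Gamma_i$ on the product stack is identified with the pointwise quotient $\prod_i Z_{(\mathbf{d}_i,H_i)} \to \prod_i Z_{\mathbf{d}_i}$ (so that Proposition~\ref{prop: spaces as covers} and thus Corollary~\ref{cor: equality of curves} apply), and that each $Z_{(\mathbf{d}_i,H_i)}$ is a smooth tame Deligne--Mumford stack with reduced coarse moduli space (so the dimension lower bound from Lemma~\ref{lem:lowerstacks} is available and matches the Krull dimension).
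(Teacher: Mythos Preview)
Your proof is correct and follows essentially the same approach as the paper: the paper first reduces to the maximally weighted case via Proposition~\ref{prop: spaces as covers} and Corollary~\ref{cor: orbit dimension} (which together are exactly your invocation of Corollary~\ref{cor: equality of curves}), and then observes that the numerical hypothesis collapses the bounds in Theorem~\ref{thm: Fermat bound}; you simply perform these two steps in the opposite order.
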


\begin{proof}
 Notice that $G(B_{\mathbf{d}_i})/\intermediatefermatgroup_i$ is a finite group for all $i$ which is a summand of $\intermediatefermatgroup_i$. Applying Proposition~\ref{prop: spaces as covers}, one deduces that $\dbcoh{Z_{(\mathbf{d}_i, \intermediatefermatgroup_i)}}$ is a $\widehat{G(B_{\mathbf{d}_i})/\intermediatefermatgroup_i}$-cover of $\dbcoh{Z_{\mathbf{d}_i}}$ for any $\intermediatefermatgroup_i$ as above. Similarly, $\dbcoh{\prod_{i=1}^{q(\mathbf{d})}  Z_{\mathbf{d}_i,H_i}}$ is a $\Gamma$-cover of $\dbcoh{\prod_{i=1}^{q(\mathbf{d})} Z_{\mathbf{d}_i}}$ for finite $\Gamma$.

 Therefore, by Corollary~\ref{cor: orbit dimension},
\[
 \op{rdim} \dbcoh{\prod_{i=1}^{q(\mathbf{d})} Z_{\mathbf{d}_i,H_i}} = \op{rdim} \dbcoh{\prod_{i=1}^{q(\mathbf{d})}  Z_{\mathbf{d}_i}}.
\]
 Hence, it suffices to prove the proposition for $H_i = G(B_{\mathbf{d}_i})$, i.e.~just for the maximally weighted Fermat hypersurfaces, $Z_{\mathbf{d}_i}$.

 Now when $n+1 = h(\mathbf{d}) + 2q(\mathbf{d}) -1$, the lower bound and upper bound in Theorem~\ref{thm: Fermat bound} agree. Hence,
\[
 \op{dim}(\prod_{i=1}^{q(\mathbf{d})}  Z_{\mathbf{d}_i}) = n+1-2q(\mathbf{d}) = \op{rdim}(\dbcoh{\prod_{i=1}^s  Z_{\mathbf{d}_i}}).
\]
\end{proof}

\begin{example}
 Consider the weight sequence $\mathbf{d} =(3,3,3)$.  We have $h(\mathbf{d}) = 2$ from the ADE decomposition,
 \begin{displaymath}
  \mathbf{d} = (3) \coprod (3,3).
 \end{displaymath}
 The trivial decomposition is nonpositive, so $q(\mathbf{d}) = 1$. Hence the conditions of Corollary~\ref{cor: Orlov conjecture fermat} and Conjecture~\ref{conj: dimension} hold for the Fermat elliptic curve and similarly for the hypersurface weighted projective line corresponding to this weight sequence.
 \end{example}

 \begin{remark}
Conjecture~\ref{conj: dimension}  holds for smooth curves in general, see \cite{O4}, and for tubular weighted projective lines, i.e.~quotients of elliptic curves by finite groups of automorphisms which preserve the identity, \cite{Opp}.  Note that the latter also follows from the former using Corollary~\ref{cor: equality of curves}.
 \end{remark}


\begin{example} \label{eg: ExExK3}
 Extending the previous example, we have the following. Consider the weight sequence $\mathbf{d} = (3,3,3,3,3,3,4,4,4,4)$.  We have $h(\mathbf{d}) = 5$ from the ADE decomposition,
 \begin{displaymath}
  \mathbf{d} = (3,4) \coprod (3,4) \coprod (3,4) \coprod (3,4) \coprod (3,3),
 \end{displaymath}
 and $q(\mathbf{d}) = 3$ from
 \begin{displaymath}
  \mathbf{d} = (3,3,3) \coprod (3,3,3) \coprod (4,4,4,4).
 \end{displaymath}
 Hence, the conditions of Corollary~\ref{cor: Orlov conjecture fermat} hold. Consider the (usual) Fermat elliptic curve, $Z_{((3,3,3), \op{min})}$, and the (usual) Fermat K3 surface, $Z_{((4,4,4,4), \op{min})}$. Hence, Conjecture~\ref{conj: dimension} holds for $Z_{((3,3,3), \op{min})} \times Z_{((3,3,3), \op{min})} \times Z_{((4,4,4,4), \op{min})}$.
\end{example}

\begin{example}
 Let $\mathbf{d} = (2,d_1, \ldots , d_n)$ be a nonpositive weight sequence so that $q(\mathbf{d})=1$.  Notice that $h(\mathbf{d}) = n$ as we can decompose $\mathbf{d}$ into ADE sequences,
 \[
 \mathbf{d} = (2,d_1) \coprod (d_2) \cdots \coprod (d_n).
 \]
 and if there were any more ADE sequences then $\mathbf{d}$ would be negative as the value of
\[
 -1 + \sum_{i=0}^n \frac{1}{d_i^{\prime}} > \frac{1}{2}
 \]
 for any ADE sequence, $(d_0^\prime, ..., d_n^\prime)$ with $n > 0$.
 Hence,
\[
 n-1  = \op{rdim}(Z_{(\mathbf{d}, H)}) = \op{rdim}(\op{D}^{\op{b}}(\op{mod }\bigotimes_{i=1}^n A_{d_i-1})).
\]
 In particular, Conjecture~\ref{conj: dimension} holds for the corresponding weighted Fermat hypersurface, $Z_{(\mathbf{d}, \intermediatefermatgroup)}$ for any $\smallfermatgroup \subseteq \intermediatefermatgroup \subseteq G(B_{\mathbf{d}})$.
\end{example}

\begin{remark}
 Truncate the weight sequence in the previous example to $\mathbf{d}^{\prime}=(d_1, \ldots , d_n)$  with $\sum_{i=1}^n \frac{1}{d_i} \leq \frac{1}{2}$. Then the comparison between the quantities, $\op{rdim }\dbcoh{Z_{(\mathbf{d}^{\prime}, \intermediatefermatgroup)}}$ and $\op{rdim }\op{D}^{\op{b}}(\op{mod }\bigotimes_{i=1}^n A_{d_i-1})$, is strict if Conjecture~\ref{conj: dimension} holds.  As in this case,
\[
 n-2 = \op{dim}(Z_{(\mathbf{d}^{\prime}, \intermediatefermatgroup)}) =  \op{rdim } \dbcoh{Z_{(\mathbf{d}^{\prime}, L)}} \lneq \op{rdim }\op{D}^{\op{b}}(\op{mod }\bigotimes_{i=1}^n A_{d_i-1}) = n-1
\]
 where the second equality uses the example above.

 For example, as mentioned above, we know that the Rouquier dimension of the bounded derived category of coherent sheaves on any smooth curve is one, \cite{O4}, however, the above argument shows that the category of matrix factorizations of a Fermat curve of sufficiently high genus has Rouquier dimension two.
\end{remark}

This is not an exhaustive list.  By applying Corollary~\ref{cor: Orlov conjecture fermat}, one can obtain many other examples which provide new valid cases of Conjecture~\ref{conj: dimension}.

\end{document}